\newtheorem{theorem}{Theorem}[section]
\newtheorem{corollary}[theorem]{Corollary}
\newtheorem{proposition}[theorem]{Proposition}
\newtheorem{lemma}[theorem]{Lemma}
\newtheorem*{theorem*}{Theorem}
\theoremstyle{definition}
\newtheorem{definition}[theorem]{Definition}
\newtheorem{example}[theorem]{Example}
\newtheorem{remark}[theorem]{Remark}
\newtheorem{question}{Question}
\newtheorem{thm}{Theorem}
\newtheorem{defn}{Definition}
\newcommand{\tc}{\mathcal{T}}
\newcommand{\ac}{\mathcal{A}}
\newcommand{\bc}{\mathcal{B}}
\newcommand{\mc}{\mathcal{M}}
\newcommand{\nc}{\mathcal{N}}
\newcommand{\uc}{\mathcal{U}}
\newcommand{\ec}{\mathcal{E}}
\newcommand{\fc}{\mathcal{F}}
\newcommand{\xc}{\mathcal{X}}
\newcommand{\yc}{\mathcal{Y}}
\newcommand{\vc}{\mathcal{V}}
\newcommand{\wc}{\mathcal{W}}
\newcommand{\Ext}{\operatorname{Ext}}
\newcommand{\Ker}{\operatorname{Ker}}
\newcommand{\Hom}{\operatorname{Hom}}
\newcommand{\add}{\operatorname{add}}
\newcommand{\proj}{\operatorname{proj}}
\title{Higher extension closure and $d$-exact categories}
\author[Kvamme]{Sondre Kvamme}
\address{Department of Mathematical Sciences\\ 
        NTNU\\ 
        NO-7491 Trondheim\\ 
        Norway}
\email{sondre.kvamme@ntnu.no}
\urladdr{https://sondkv.folk.ntnu.no}
\begin{document}

\keywords{%
Cluster tilting subcategory, exact category, higher homological algebra, Quillen-Gabriel embedding, torsion class, triangulated category, wide subcategory.}
\subjclass[2020]{Primary 18E99, 18E20; Secondary 18G80, 18E40.}

\begin{abstract}
We prove that any weakly idempotent complete $d$-exact category is equivalent to a $d$-cluster tilting subcategory of a weakly idempotent complete exact category, and that any weakly idempotent complete algebraic $(d+2)$-angulated category is equivalent to a $d$-cluster tilting subcategory of an algebraic triangulated category closed under $d$-shifts.  Furthermore, we show that the ambient exact category of a $d$-cluster tilting subcategory is unique up to exact equivalence, assuming it is weakly idempotent complete. This follows from a universal property of the inclusion of the $d$-cluster tilting subcategory. As a consequence of our theory we also get that any $d$-torsion class is $d$-cluster tilting in an extension-closed subcategory, and we recover the fact that any $d$-wide subcategory is $d$-cluster tilting in a unique wide subcategory. In the last part of the paper we rectify the description of the $d$-exact structure of a $d$-cluster tilting subcategory of a non-weakly idempotent complete exact category. 
\end{abstract}
	
	\maketitle

 \setcounter{tocdepth}{1}
 \tableofcontents
	
	\section{Introduction}

Higher homological algebra is a fundamental tool in higher Auslander--Reiten (AR) theory. Its chief actors are $(d+2)$-angulated, $d$-abelian, and $d$-exact categories \cite{GKO13,Jas16}. The first has a central role in the recent proof of the Donovan-Wemyss conjecture \cite{JKM22}, while the second and third play an important part in generalizing fundamental concepts from homological algebra \cite{J16,HJV20,HJ21,AMS22,FJS26,Gul25,AHJKPT25}. Their stage is higher AR theory, which itself has applications spanning algebraic geometry \cite{JKM22,HIMO23}, algebraic K-theory \cite{DJW19}, commutative algebra \cite{Iya07a,IY08,IT13}, combinatorics \cite{OT12}, and symplectic geometry \cite{DJL21}.

One of the main objects of study in higher AR theory are $d$-cluster tilting subcategories of triangulated, abelian, and exact categories. The concepts of $(d+2)$-angulated, $d$-abelian, and $d$-exact categories axiomatize the intrinsic properties of these subcategories. They resemble that of triangulated, abelian, and exact categories, with the difference roughly being that short exact sequences are replaced by longer exact sequences. This enables us to use intuition from classical homological algebra to study $d$-cluster tilting subcategories, giving us a powerful tool.

 A natural question is whether the converses hold. That is, whether every $(d+2)$-angulated, $d$-abelian, or $d$-exact category is equivalent to a $d$-cluster tilting subcategory of a triangulated, abelian, or exact category, respectively.  The answer is important, as it reveals whether the axiomatizations capture all the intrinsic properties of $d$-cluster tilting subcategories.
 
 For $d$-abelian categories an affirmative answer was given in \cite{Kva22,ENI22}, with prior results for projectively generated $d$-abelian categories \cite{Jas16} and $d$-abelian dualizing varieties \cite{IJ17}.
 
 In this paper we give an affirmative answer for weakly idempotent complete $d$-exact categories.

 \begin{thm}[\Cref{Cor:WeaklyIdemdExactdCT} and \Cref{Theorem:UniquenessAmbientExact}]\label{TheoremA}
    If $\mc$ is a weakly idempotent complete $d$-exact category, then there exists an equivalence $\mc\cong \nc$ of $d$-exact categories where $\nc$ is a $d$-cluster tilting subcategory of a weakly idempotent complete exact category. Furthermore, the exact category is unique up to exact equivalence.
 \end{thm}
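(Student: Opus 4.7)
The strategy is to construct the ambient exact category as an extension closure of $\mc$ inside a larger abelian envelope, in the spirit of the Gabriel--Quillen embedding. Concretely, I would first build an exact embedding of $\mc$ into an abelian category $\ac$ that sends $d$-exact sequences in $\mc$ to long exact sequences in $\ac$; a natural candidate is a quotient of the functor category $\operatorname{Mod}\mc^{\mathrm{op}}$ by the Serre subcategory of functors vanishing on the admissible $d$-exact sequences, parallel to the $d$-abelian constructions of \cite{Kva22,ENI22}. Writing $\nc$ for the essential image of $\mc$ in $\ac$, I would take $\ec$ to be the full subcategory of $\ac$ whose objects are iterated extensions of objects of $\nc$, endowed with the induced exact structure.

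The key technical task is to verify that $\nc$ is a $d$-cluster tilting subcategory of $\ec$. I would proceed by induction on the ``extension length'' needed to build an object of $\ec$ from $\nc$: each such $M$ fits into a conflation $N\infl X\defl M$ with $X\in\nc$, and the required $\nc$-approximations for $M$ are produced by decomposing a suitable $d$-exact sequence in $\mc$ into $d$ short exact sequences in $\ec$ and splicing with the inductive data for $N$. Jasso's axioms guarantee that every morphism of $\mc$ extends to a $d$-exact sequence, furnishing exactly the approximations one needs. Weak idempotent completeness of $\ec$ then reduces, inductively along the defining conflations, to that of $\mc$.

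For uniqueness, the plan is to establish a universal property of the inclusion $\nc\hookrightarrow\ec$: any exact functor from $\nc$ into a weakly idempotent complete exact category $\fc$ which respects the $d$-exact structure extends, uniquely up to natural isomorphism, to an exact functor $\ec\to\fc$. The extension sends an iterated extension of objects of $\nc$ in $\ec$ to the corresponding iterated extension in $\fc$, and is well-defined precisely because $\ec$ is generated by $\nc$ under extensions. Applying the universal property in both directions to two candidate ambient categories for the same $\mc$ produces mutually quasi-inverse exact equivalences.

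The main obstacle will be the $d$-cluster tilting property, specifically the implication that an object $M\in\ec$ satisfying $\Ext^i_\ec(M,\nc)=0$ for $1\le i\le d-1$ must lie in $\nc$. The extension closure only offers direct control via short exact sequences, whereas $d$-cluster tilting is inherently a higher-$\Ext$ condition; translating between them requires showing that $d$-exact sequences in $\mc$ encode exactly the vanishing of the intermediate Yoneda extensions in $\ec$. Weak idempotent completeness of $\mc$ enters crucially here, eliminating pathological splittings in the intermediate syzygies that would otherwise obstruct the required higher-extension vanishing.
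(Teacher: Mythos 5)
There is a genuine gap, and it is located exactly where you place the ``main obstacle'': your candidate for the ambient category is wrong. You propose to take $\ec$ to be the extension closure $\operatorname{Filt}(\nc)$ of $\nc$ inside the abelian envelope $\ac$. This category is in general strictly too small. Any object filtered by $\nc$ contains a nonzero subobject lying in $\nc$; so if $\mc$ is a $d$-cluster tilting subcategory of $\operatorname{mod}\Lambda$ with $d\geq 2$, a simple module $S\notin\mc$ can never lie in $\operatorname{Filt}(\mc)$, whereas the (unique) ambient exact category must be all of $\operatorname{mod}\Lambda$. The paper's ambient category $\ec(\mc)$ is instead built from \emph{length-$d$ resolutions and coresolutions}: it is $\uc_d(\uc^d(\mathcal{L}(\mc)))$ for $\uc=Y(\mc)$, i.e.\ objects admitting an exact sequence $0\to \mc(-,X_d)\to\cdots\to\mc(-,X_1)\to F\to 0$ whose intermediate cokernels themselves admit length-$d$ coresolutions by representables. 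The hard part of the existence proof is not an induction on extension length but showing that $\uc^d(\ec)$ and $\uc_d(\uc^d(\ec))$ are extension-closed at all, which requires the hypothesis that $Y(\mc)$ is closed under $d$-extensions in $\mathcal{L}(\mc)$ (\Cref{thm: extclosed}, using \cite{EN23}), and then a stabilization argument showing $\uc^d(\uc_d(\uc^d(\ec)))=\uc_d(\uc^d(\ec))$ so that $\uc$ is simultaneously left and right maximal $d$-rigid there. None of this is visible from the extension-closure picture.

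The same flaw propagates into your uniqueness argument. Your proposed universal property is the right statement, but your proof of well-definedness rests on ``$\ec$ is generated by $\nc$ under extensions,'' which is false by the example above. The paper instead proves \Cref{Lemma:d-CTFiltrations}: every object of $\ec$ becomes, after adding a suitable summand from $\mc$, filtered by the subcategories $\ec_{m,n}$ of objects occurring as images inside admissible $d$-exact sequences of $\mc$-objects --- and these $\ec_{m,n}$ are genuinely larger than $\mc$. Even with that generation statement in hand, one cannot directly define the extension on iterated extensions in an exact target (there are no cokernels available); the paper first extends $F$ to a right exact functor $\ec\to\ac$ into an \emph{abelian} category via $\operatorname{mod}\mc$ (\Cref{Proposition:UnivPropRightExact}), proves exactness of the extension (\Cref{Proposition:LiftExact}), and only then descends to an exact target by embedding it into $\mathcal{L}(\ec')$ and using \Cref{Lemma:d-CTFiltrations} to check the image lands in $\ec'$. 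Your last paragraph correctly identifies that translating between short exact sequences and higher $\Ext$-vanishing is the crux, but the proposal as written does not supply the mechanism ($d$-extension closure and the $\uc^d$, $\uc_d$ operators) that actually resolves it.
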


 \Cref{TheoremA} is particularly important due to the ubiquity of $d$-exact categories. They capture situations in higher AR theory that $d$-abelian categories cannot. This includes $d$-representation infinite algebras \cite{HIO14,HIMO23}, $d$-complete algebras \cite{Iya11}, maximal Cohen-Macaulay modules over commutative local rings \cite{Han25} and orders \cite{Iya07a,Iya07}, hypersurface singularities \cite{BIKR08}, (non-local) commutative rings \cite{IW14}, and graded Cohen-Macaulay modules over graded commutative rings \cite{IT13,Han24}. They also occur as NCCR's of isolated singularities \cite{IW14} and in the higher dimensional generalization of McKay correspondence \cite{Iya07a,Iya07,IY08,IW14}.  Furthermore, they arise from Frobenius exact enhancements of triangulated categories with $d$-cluster tilting objects. This includes cluster categories of acyclic quivers \cite{Kel05a,BMRRT06}, subcategories of preprojective algebras \cite{BIRS09}, Amiot's cluster category \cite{Ami09}, generalized cluster categories from Jacobi-finite Ginzburg dg-algebras \cite{Ami09,Kel11}, and higher cluster categories of $\tau_d$-finite algebras \cite{IO13}. A challenge in higher AR theory is the lack of $d$-abelian categories, see \cite{HJS22,Vas23,ST24}, and we expect that $d$-exact categories will play a more fundamental role in developing the theory going forward. 

 Another example where $d$-exact categories take part is in enhancements of $(d+2)$-angulated categories. Following \cite{Jas16}, a $(d+2)$-angulated category is called algebraic if it has an enhancement by a Frobenius $d$-exact category. As a consequence of \Cref{TheoremA} we can give an affirmative answer to the question above for algebraic $(d+2)$-angulated categories.

\begin{thm}[\Cref{Theorem:Algd+2AngulatedisdCT}]\label{TheoremB}
    If $\fc$ is a weakly idempotent complete algebraic $(d+2)$-angulated category, then there exists an equivalence $\fc\cong \mathcal{G}$ of $(d+2)$-angulated categories where $\mathcal{G}$ is a $d$-cluster tilting subcategory of an algebraic triangulated category satisfying $\mathcal{G}[d]=\mathcal{G}$.
\end{thm}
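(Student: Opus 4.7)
The plan is to reduce \Cref{TheoremB} to \Cref{TheoremA} by lifting an algebraic enhancement of $\fc$ through the $d$-cluster tilting embedding. By hypothesis, $\fc$ is algebraic, so there exists a Frobenius $d$-exact category $\mc$ with an equivalence $\underline{\mc} \cong \fc$ of $(d+2)$-angulated categories, where $\underline{\mc}$ denotes the stable category modulo the class $\pc_\mc$ of projective-injectives. I would first replace $\mc$ by its weak idempotent completion; passing to this completion preserves the Frobenius $d$-exact structure, and since $\fc$ is weakly idempotent complete, it does not change the stable category. Hence we may assume $\mc$ is weakly idempotent complete Frobenius $d$-exact.

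Applying \Cref{TheoremA} to $\mc$ produces a weakly idempotent complete exact category $\uc$ together with an identification of $\mc$ as a $d$-cluster tilting subcategory of $\uc$ preserving the $d$-exact structure. The goal is then to show that $\uc$ is Frobenius with projective-injectives exactly $\pc_\mc$. Granting this, Happel's construction yields an algebraic triangulated category $\tc:=\uc/\pc_\mc$, the induced functor $\underline{\mc}\to\tc$ is fully faithful with essential image a subcategory $\mathcal{G}$ equivalent to $\fc$, and a direct verification transports the $d$-cluster tilting property from $\mc\subseteq\uc$ to $\mathcal{G}\subseteq\tc$. The condition $\mathcal{G}[d]=\mathcal{G}$ follows by comparing the $d$-th power of the triangulated shift on $\tc$ with the $(d+2)$-angulated shift on $\fc$, which agree on $\mathcal{G}$.

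The main obstacle is establishing the Frobenius property of $\uc$ with projective-injectives $\pc_\mc$. For the key inclusion, given $P\in\pc_\mc$, the $d$-cluster tilting property gives $\Ext^i_\uc(P,M)=0$ for $1\le i\le d-1$ and all $M\in\mc$, while projectivity of $P$ in the $d$-exact structure on $\mc$ yields $\Ext^d_\uc(P,M)=0$. Using that every object of $\uc$ admits a finite admissible $\mc$-coresolution of length $d-1$, a standard consequence of the $d$-cluster tilting property, this vanishing bootstraps to $\Ext^1_\uc(P,U)=0$ for all $U\in\uc$, so $P$ is projective in $\uc$; injectivity is dual. Conversely, a projective-injective object of $\uc$ must already lie in $\mc$ by the $\Ext$-vanishing characterizing $d$-cluster tilting, and is then projective-injective in the induced $d$-exact structure. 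Finally, $\uc$ has enough projectives since every $U\in\uc$ admits an admissible epimorphism $M\twoheadrightarrow U$ with $M\in\mc$, and $M$ in turn admits one from $\pc_\mc$ by the Frobenius property of $\mc$; the case of injectives is dual.
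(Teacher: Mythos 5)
Your overall strategy matches the paper's: reduce to a weakly idempotent complete Frobenius $d$-exact model $\mc$ (the paper's \Cref{Lemma:WeaklyIdemComFrobModel}), embed it via \Cref{TheoremA} as a $d$-cluster tilting subcategory of a weakly idempotent complete exact category, show that this ambient category is Frobenius with the same projective-injectives (the paper's \Cref{Proposition:FrobdExactImpliesFrobExact}), and pass to stable categories. Where you genuinely diverge is in the proof of the key sub-lemma that an object $P$ projective for the $d$-exact structure on $\mc$ is projective in the ambient exact category: the paper deduces this from the dual of \Cref{Proposition:LiftExact} (the machinery for extending right exact functors along the inclusion $\mc\to\ec$), whereas you give a direct argument via $\Ext$-vanishing and dimension shifting. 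Your route is more elementary and self-contained; the paper's buys a cleaner statement (\Cref{Lemma:ProjectivedCTvsExact}) as a byproduct of machinery it needs anyway for the universal property. Note also that the paper records explicitly that $\mc$ is $d\mathbb{Z}$-cluster tilting in the ambient category, which is how it gets $\mathcal{G}[d]=\mathcal{G}$; your cosyzygy comparison is an equivalent formulation. Finally, to conclude that $\fc\cong\mathcal{G}$ is an equivalence of $(d+2)$-angulated categories (not merely of additive categories) you need the comparison of the standard construction with Jasso's stable-category construction, i.e.\ \cite[Theorem 5.16]{Jas16}; this should be cited rather than left as ``a direct verification.''

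There is one concrete error in the bootstrap step, though it is easily repaired. You correctly establish $\Ext^i_\uc(P,M)=0$ for $1\le i\le d$ and all $M\in\mc$, but then invoke a finite $\mc$-\emph{coresolution} $0\to U\to M^1\to\cdots\to M^d\to 0$ to deduce $\Ext^1_\uc(P,U)=0$. Dimension shifting along a coresolution runs in the wrong direction: with cosyzygies $C^i$ it yields $\Ext^j_\uc(P,C^i)\cong\Ext^{j+1}_\uc(P,C^{i-1})$ and hence only $\Ext^j_\uc(P,U)=0$ for $2\le j\le d$, while $\Ext^1_\uc(P,U)$ is identified with the cokernel of $\Hom_\uc(P,M^1)\to\Hom_\uc(P,C^1)$, whose vanishing is essentially what you are trying to prove. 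The correct tool is the $\mc$-\emph{resolution} $0\to M_d\to\cdots\to M_1\to U\to 0$ furnished by \Cref{Reformulation:d-CT}: with syzygies $K_i$ one gets $\Ext^1_\uc(P,U)\cong\Ext^2_\uc(P,K_1)\cong\cdots\cong\Ext^d_\uc(P,M_d)=0$, using exactly the vanishing range you established. The dual statement for injectivity then uses the coresolution. With this swap the argument goes through.
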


There exists a different definition of algebraic $(d+2)$-angulated category in \cite{JKM22} for idempotent complete categories, using enhancements by DG categories. We deduce that it coincides with the one of \cite{Jas16} using \Cref{TheoremA} and \Cref{TheoremB}.

  Uniqueness in \Cref{TheoremA} is a consequence of the inclusion of $\nc$ into its ambient exact category being universal with respect to functors that send admissible $d$-exact sequences to acyclic complexes. Using this we construct an equivalence between the $2$-category of weakly idempotent complete $d$-exact categories and the $2$-category of $d$-cluster tilting subcategories of weakly idempotent complete exact categories. Hence, we can freely switch between these two perspectives without losing information, and depending on the problem choose the most advantageous approach. For example, to construct new examples it is often easier to look for $d$-exact structures \cite{Kla21,HAZ25,Kla24}. On the other hand, $d$-cluster tilting subcategories allow for a straightforward definition of admissible morphism and are therefore more convenient for proving e.g. the Auslander correspondence \cite{EN21}. 
  
  Showing the universal property for $d$-cluster tilting subcategories of exact categories is more complicated than for $d$-cluster tilting subcategories of abelian categories. The main issue is the lack of limits and colimits for exact categories. To circumvent this, we use the Quillen--Gabriel embedding to reduce the general case to that of a functor to an abelian category. The reduction relies on \Cref{Lemma:d-CTFiltrations}, which states how an exact category with a $d$-cluster tilting subcategory is generated by filtrations of objects with certain Ext-vanishing property. We believe this lemma is of independent interest.
  
One of the main difficulties in constructing the exact category in \Cref{TheoremA} is the absence of a well-developed theory of localizations for exact categories. This means that the proof for $d$-abelian categories in \cite{Kva22,ENI22}, by generalizing Auslander's formula, does not adapt to $d$-exact categories in a straightforward way. Furthermore, even though there is a version of Auslander's formula for exact categories \cite{HKR22}, it requires a notion of admissible morphism, and it is not clear how to define this for $d$-exact categories.

A different approach was used in \cite{Ebr21}, by generalizing Quillen-Gabriel embedding theorem from exact to $d$-exact categories. More precisely, they construct a fully faithful functor from a $d$-exact category $\mc$ into the abelian category $\mathcal{L}(\mc)$ of left exact functors $\mathcal{M}^{\operatorname{op}}\to \operatorname{Ab}$, and demonstrate that the image is $d$-rigid, i.e. that the Ext-groups vanish in degrees strictly between $0$ and $d$. This is a first step towards being $d$-cluster tilting. Unfortunately, $\mc$ cannot be $d$-cluster tilting in $\mathcal{L}(\mc)$, since $\mathcal{L}(\mc)$ is in some sense too big. Consequently, we are left with the problem of determining where $\mc$ could be $d$-cluster tilting. This leads to the central concept studied in this paper: higher extension closure.  

\begin{defn}\label{d-extension closure}
A $d$-rigid subcategory $\uc$ of an exact category $\ec$ is called \textit{$d$-extension closed} or \textit{closed under }$d$\textit{-extensions} in $\ec$ if any exact sequence
\[
0\to U\to E_1\to \cdots \to E_d\to U'\to 0
\]
with $U,U'\in \mathcal{U}$ is Yoneda-equivalent to an exact sequence
\[
0\to U\to U_1\to \cdots \to U_d\to U'\to 0
\]
where $U_i\in \uc$ for all $1\leq i\leq d$.
\end{defn} 

By \cite{EN23} the image of $\mc\to \mathcal{L}(\mc)$ is closed under $d$-extensions. The construction of the exact category in \Cref{TheoremA} is therefore a consequence of the following result.

\begin{thm}\label{TheoremC}[\Cref{Theorem:dCT} and \Cref{Theorem:UniquenessdCT}]
 Let $\uc$ be a weakly idempotent complete $d$-rigid and $d$-extension closed subcategory of an exact category $\ec$. Then there exists a unique extension closed subcategory $\ec'$ of $\ec$ such that $\uc$ is $d$-cluster tilting in $\ec'$ and the canonical map $$\Ext^d_{\ec'}(U,V)\to \Ext^d_{\ec}(U,V)$$ is an isomorphism for all $U,V\in \uc$.
\end{thm}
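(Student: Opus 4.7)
My plan is to take $\ec'$ to be the full subcategory of $\ec$ consisting of those objects $X$ admitting both a $\uc$-coresolution
\[
0 \to X \to U_0 \to U_1 \to \cdots \to U_{d-1} \to 0
\]
and a $\uc$-resolution
\[
0 \to V_{d-1} \to \cdots \to V_1 \to V_0 \to X \to 0,
\]
exact in $\ec$, with $U_i, V_j \in \uc$. Trivially $\uc \subseteq \ec'$ using identity-padded sequences. Uniqueness in the theorem will force any candidate ambient exact category to coincide with this $\ec'$, so the content is that this $\ec'$ indeed works.

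The first and most delicate step is showing that $\ec'$ is closed under extensions in $\ec$. Given a conflation $0 \to A \to B \to C \to 0$ with $A, C \in \ec'$, I would build a $\uc$-coresolution of $B$ via a horseshoe-type iterative construction, taking the direct sum of the coresolutions of $A$ and $C$ and adjusting the differentials so as to produce, at each step, a conflation $0 \to B_i \to U_i^A \oplus U_i^C \to B_{i+1} \to 0$. The classical horseshoe lemma requires $\Ext^1_{\ec}(C, U_0^A) = 0$, which we do not have in $\ec$; this is precisely where the $d$-extension closed hypothesis enters. The obstruction to lifting a morphism $A \to U_0^A$ to $B$ can be identified with a piece of a $d$-fold Yoneda extension whose end-terms are in $\uc$ (obtained by splicing), and $d$-extension closure lets us replace such an extension with one whose middle terms lie in $\uc$; feeding these middle terms back into the construction produces the needed lift. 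I expect this step, carried out recursively in the length of the coresolution, to be the principal obstacle.

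Granted that $\ec'$ is extension closed, it inherits an exact structure from $\ec$, and $\uc \subseteq \ec'$ is $d$-cluster tilting as a consequence of the existence of $\uc$-(co)resolutions together with $d$-rigidity: given $X \in \ec'$ with $\Ext^i_{\ec'}(X, \uc) = 0$ for $1 \le i \le d-1$, dimension-shifting along $X$'s coresolution inside $\ec'$ forces the first inflation $X \to U_0$ to split, so $X \in \uc$; the dual statement is symmetric. For the comparison map $\Ext^d_{\ec'}(U, V) \to \Ext^d_{\ec}(U, V)$ at $U, V \in \uc$, surjectivity is a direct restatement of $d$-extension closure, while injectivity is a zig-zag argument: any Yoneda equivalence in $\ec$ between two $d$-extensions with middle terms in $\uc$ can be realized stepwise by equivalences living inside $\ec'$, again invoking $d$-extension closure on the intermediate terms that arise.

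For uniqueness, given another extension-closed $\ec'' \subseteq \ec$ containing $\uc$ as $d$-cluster tilting with the analogous $\Ext^d$-comparison, the definition of $d$-cluster tilting forces $\ec''$ to be contained in the class of objects having $\uc$-(co)resolutions of length $d$, hence $\ec'' \subseteq \ec'$. For the reverse inclusion, any $X \in \ec'$ with coresolution $0 \to X \to U_0 \to \cdots \to U_{d-1} \to 0$ in $\ec$ has partial cokernels $L_1, \ldots, L_{d-1}$ which, by induction on $d$ using the $\Ext^d$-isomorphism for $\ec''$, can be identified with objects of $\ec''$; extension closure of $\ec''$ then places $X$ in $\ec''$.
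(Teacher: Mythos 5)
Your candidate category and the overall architecture (extension closure as the crux, surjectivity of the comparison map as a restatement of $d$-extension closure, uniqueness by a double inclusion) match the paper in spirit, but the two technical engines that make the argument run are missing, and the sketched substitutes do not work as stated. First, the horseshoe step: the obstruction to lifting $A\to U_0^A$ along $A\to B$ lives in $\Ext^1_{\ec}(C,U_0^A)$ with $C\in\ec'$ generally not in $\uc$, and splicing it with a length-$d$ resolution or coresolution of $C$ produces a class in $\Ext^{d+1}$, not a $d$-fold extension between objects of $\uc$ --- the dimensions do not line up until you first reduce to the case where the subobject is in $\uc$ (the paper does this by pushing out along an inflation $A\to U$ with $U\in\uc$, see the proof of \Cref{thm: extclosed}). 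More seriously, even after that reduction, $d$-extension closure only gives you a \emph{Yoneda equivalence} (a zig-zag through auxiliary exact sequences) to an all-$\uc$ sequence; ``feeding the middle terms back into the construction'' requires first straightening that zig-zag into a single morphism of complexes which is the identity on both end terms. That straightening is \Cref{Lemma:YonedaEquivMorphism}, a nontrivial diagram chase relying on $d$-rigidity, and it is the cone of the resulting morphism (with the identity components stripped off) that produces the coresolution of the extension term. Nothing in your sketch supplies this mechanism.

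Second, your $\ec'$ asks only for a resolution and a coresolution exact \emph{in $\ec$}, with no control on where the syzygies live. Your dimension-shifting argument for maximality (and hence for the $d$-cluster tilting property, and for injectivity of $\Ext^d_{\ec'}\to\Ext^d_{\ec}$) silently assumes these (co)resolutions are exact in $\ec'$, i.e.\ that all intermediate kernels and cokernels lie in $\ec'$. This is not automatic, and resolving it is a substantial part of the paper's proof: the ambient category is built as the nested $\uc_d(\uc^d(\ec))$ rather than the intersection $\uc_d(\ec)\cap\uc^d(\ec)$, and one must prove that the iteration $\cdots\subseteq\uc^d(\uc_d(\uc^d(\ec)))\subseteq\uc_d(\uc^d(\ec))$ stabilizes, which is the content of \Cref{Lemma:TowardsdCT} and the proof of \Cref{Theorem:dCT}. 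The same lemma is what drives the reverse inclusion in the uniqueness argument, which you also assert without proof. Finally, closure of $\uc$ under direct summands (needed for $d$-cluster tilting) comes from weak idempotent completeness via \Cref{Lemma:UClosedUnderDirectSummand}, a point your proposal does not address.
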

Since the map $\Ext^d_{\ec'}(U,V)\to \Ext^d_{\ec}(U,V)$ is an isomorphism, the $d$-exact structure on $\uc$ consists of the complexes of length $d+2$ in $\uc$ which are acyclic in $\ec$. For $d=1$ this recovers the exact structure on an extension closed subcategory of an exact category.

The construction of $\ec'$ in \Cref{TheoremC} is done in several steps. We first consider one-sided versions of $d$-cluster tilting, called right and left maximal $d$-rigid, see \Cref{Definition:MaxRigid}. We show that $\uc$ is contained as a right maximal $d$-rigid subcategory of
\begin{align*}
	\uc^{d}(\ec):=\{E\in \ec \mid \exists\text{ }0\to E\to U^1\to \cdots \to U^d\to 0 \text{ exact, }U^i\in \uc \text{, }1\leq i\leq d\}
\end{align*}
and a left maximal $d$-rigid subcategory of
\begin{align*}
    \uc_{d}(\ec):=\{E\in \ec \mid \exists\text{ }0\to U_d\to \cdots \to U_1\to E\to 0 \text{ exact, }U_i\in \uc \text{, }1\leq i\leq d\}.
	\end{align*}
Most of the work here involves showing that $\uc^{d}(\ec)$ and $\uc_{d}(\ec)$ are extension closed in $\ec$ and therefore inherit an exact structure. This relies on $\uc$ being $d$-extension closed, see \Cref{thm: extclosed}. Next we repeat the procedure iteratively, giving a sequence of subcategories $$\dots \subseteq \uc^d(\uc_d(\uc^d(\ec)))\subseteq \uc_d(\uc^d(\ec))\subseteq \uc^d(\ec) \subseteq \ec.$$ With a bit of work we show that the sequence stabilizes after $2$-steps. This implies that
\[
\uc_d(\uc^d(\ec))=\uc^d(\uc_d(\ec))
\]
and $\uc$ must be $d$-cluster tilting in this subcategory, since it is left and right maximal $d$-rigid.

Closure under $d$-extensions has previously been considered for subcategories of $d$-cluster tilting subcategories, see \cite{Fed19,Fed20}. In particular, it appears in the characterization of higher wide subcategories \cite{HJV20} and higher torsion classes \cite{J16,AHJKPT25}. Our results imply the following in these cases.

\begin{thm}\label{TheoremD}
Let $\mc$ be a $d$-cluster tilting subcategory of an abelian category $\ac$.
    \begin{enumerate}
    \item\label{TheoremD:1}   If $\wc$ is a $d$-wide subcategory of $\mc$, then there exists a unique wide subcategory $\bc$ of $\ac$ such that $\wc$ is a $d$-cluster tilting subcategory of $\bc$. Furthermore
    \begin{align*}
        \bc&=\{A\in \ac\mid \exists\text{ }0\to W_d\to \cdots \to W_1\to A\to 0 \text{ exact, }W_i\in \wc \text{, }1\leq i\leq d\} \\
        & =\{A\in \ac\mid \exists\text{ }0\to A\to W^1\to \cdots \to W^d \to 0 \text{ exact, }W^i\in \wc \text{, }1\leq i\leq d\}.
    \end{align*}
        \item\label{TheoremD:2} Assume $\ac$ is an abelian length category. If $\uc$ is a $d$-torsion class of $\mc$, then 
    \[
    \uc_d(\ac)=\{A\in \ac\mid \exists\text{ }0\to U_d\to \cdots \to U_1\to A\to 0 \text{ exact, }U_i\in \uc \text{, }1\leq i\leq d\}
    \]
    is closed under extensions and contains $\uc$ as a $d$-cluster tilting subcategory.
    \end{enumerate}
\end{thm}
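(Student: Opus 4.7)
Both parts will be derived from \Cref{TheoremC} applied to the subcategory (viewed inside $\ac$ as an exact category), the key hypotheses to verify being $d$-rigidity and closure under $d$-extensions in $\ac$. Since $\wc,\uc\subseteq\mc$ and $\mc$ is $d$-cluster tilting, $\Ext^i_\ac(-,-)$ vanishes on $\mc$ for $1\leq i\leq d-1$, so $\wc$ and $\uc$ are automatically $d$-rigid in $\ac$. For closure under $d$-extensions in $\ac$, we use the characterizations of $d$-wide subcategory from \cite{HJV20} and of $d$-torsion class from \cite{AHJKPT25}, both of which imply closure under $d$-extensions inside the $d$-exact category $\mc$. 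To upgrade this to $\ac$, observe that the canonical map $\Ext^d_\mc(M,M')\to \Ext^d_\ac(M,M')$ is an isomorphism for $M,M'\in\mc$ (a standard property of $d$-cluster tilting subcategories), so any length-$(d+2)$ exact sequence in $\ac$ with outer terms in $\mc$ is Yoneda-equivalent in $\ac$ to a $d$-exact sequence lying in $\mc$, which by the $d$-extension closure in $\mc$ can then be chosen with all terms in $\wc$ (resp.\ $\uc$).

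With both hypotheses verified, \Cref{TheoremC} produces a unique extension-closed $\bc\subseteq\ac$ containing $\wc$ (resp.\ $\uc$) as a $d$-cluster tilting subcategory. To identify $\bc$ with the stated formulas, note that the $d$-cluster tilting property of $\wc$ in $\bc$ gives $\bc\subseteq\wc_d(\ac)\cap\wc^d(\ac)$, and analogously $\bc\subseteq\uc_d(\ac)$ in part (2). The reverse inclusions follow because $\bc$ is extension-closed in $\ac$: any object admitting a length-$d$ resolution or coresolution by $\wc\subseteq\bc$ or $\uc\subseteq\bc$ is assembled from $d$ iterated short exact sequences whose outer terms lie in $\bc$. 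Uniqueness of $\bc$ in both parts is inherited directly from the uniqueness in \Cref{TheoremC}. This settles all of part (2).

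For part (1), it remains to show $\bc$ is wide in $\ac$, i.e.\ closed under abelian kernels and cokernels. Given $f\colon B\to B'$ in $\bc$, I would resolve $B$ and $B'$ by objects of $\wc$ using the $d$-cluster tilting property of $\wc$ in $\bc$, lift $f$ to a chain map between the resolutions via a horseshoe-type argument, and then invoke the $d$-wide property of $\wc$ in $\mc$ (closure under $d$-kernels and $d$-cokernels) to produce length-$d$ $\wc$-resolutions of $\ker f$ and $\coker f$. By the identification $\bc=\wc_d(\ac)=\wc^d(\ac)$ already established, this places $\ker f$ and $\coker f$ in $\bc$. I expect this last step to be the main obstacle: the $d$-wide property is formulated for morphisms living inside $\mc$, whereas $f$ lies in the potentially larger category $\bc$, so care is needed to reduce the construction of $d$-(co)kernels to a statement about morphisms in $\mc$, presumably via the induced maps on resolution terms and the fact that homology of the lifted chain map in $\ac$ agrees with the abelian kernel and cokernel of $f$.
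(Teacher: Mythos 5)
Your overall strategy (feed the subcategory into \Cref{TheoremC} after checking $d$-rigidity and $d$-extension closure in $\ac$) matches the paper's, and the verification of those two hypotheses is fine. But there are concrete gaps in the rest.

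First, the identification of $\bc$ with the stated formulas is where your argument breaks. You claim the inclusion $\wc_d(\ac)\subseteq\bc$ (and $\uc_d(\ac)\subseteq\bc$) follows "because $\bc$ is extension-closed": an object $A$ with a resolution $0\to W_d\to\cdots\to W_1\to A\to 0$ sits in short exact sequences $0\to K_i\to W_i\to K_{i-1}\to 0$ in which $A=K_0$ is an \emph{outer} term and the only terms known to lie in $\bc$ are the \emph{middle} terms $W_i$; extension-closure concludes nothing here (it would give you the middle term from the two outer ones, which is the opposite of what you need). What you never use, and what actually drives the identification, is closure under $d$-quotients (resp.\ $d$-cokernels and $d$-kernels): from a $\wc$-resolution of $A$ one takes a $d$-cokernel of $W_2\to W_1$ inside $\mc$ to manufacture a $\wc$-coresolution $0\to A\to V^1\to\cdots\to V^d\to 0$, giving $\wc_d(\ac)\subseteq\wc^d(\ac)$ (and dually), whence $\wc_d(\wc^d(\ac))=\wc_d(\ac)=\wc^d(\ac)$ and the category produced by \Cref{TheoremC} is exactly the one in the statement. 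In part (2) only the inclusion $\uc_d(\ac)\subseteq\uc^d(\ac)$ is available, but it suffices for $\uc_d(\uc^d(\ac))=\uc_d(\ac)$.

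Second, uniqueness in part (1) does not follow from the uniqueness in \Cref{TheoremC}: that theorem characterises the unique \emph{extension-closed} subcategory subject to the extra condition that $\Ext^d_{\ec'}(U,V)\to\Ext^d_\ac(U,V)$ is an isomorphism, whereas \Cref{TheoremD}\,(1) asserts uniqueness among \emph{wide} subcategories with no $\Ext^d$ condition imposed. A priori a wide $\bc'$ could contain $\wc$ as $d$-cluster tilting without satisfying that isomorphism condition, so the two uniqueness statements are not interchangeable. The paper instead argues directly: closure of $\bc'$ under kernels forces $\wc^d(\ac)\subseteq\bc'$, while right maximal $d$-rigidity of $\wc$ in $\bc'$ forces $\bc'\subseteq\wc^d(\ac)$. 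Finally, you correctly flag that wideness of $\bc$ is the remaining obstacle; your horseshoe sketch is not carried out, and the paper's actual argument is more elementary — for $f\colon B\to B'$ it picks an epimorphism $W\to B$ and a monomorphism $B'\to W'$ with $W,W'\in\wc$, forms a pushout to get a conflation $0\to B'\to W'\oplus\Coker f\to C'\to 0$ with $C'\in\wc^d(\ac)$ by closure under $d$-cokernels, and then uses extension-closure together with closure under additive complements of $\wc^d(\ac)$ to conclude $\Coker f\in\bc$.
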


\Cref{TheoremD} \eqref{TheoremD:1} generalizes \cite[Theorem A]{HJV20} by dropping the assumptions on functorially finiteness and on the ambient abelian category being the module category of a finite-dimensional algebra. \Cref{TheoremD} \eqref{TheoremD:2} is new.

We end the introduction by mentioning some open problems. The first is whether we can drop the weak idempotent completeness hypothesis in \Cref{TheoremA}. 

\begin{question}\label{Question:NonweaklyIdemComplete}
    Let $\mc$ be a non-weakly idempotent complete $d$-exact category. Does there exist an equivalence $\mc\cong \nc$ of $d$-exact categories where $\nc$ is a $d$-cluster tilting subcategory of an exact category?
\end{question}
 Note that uniqueness in \Cref{TheoremA} does not hold without the ambient exact category being weakly idempotent complete, see \Cref{Example:NonUniqueness}. We refer to \Cref{Theorem:dCTImpliesdExactNotWeakIdemPotent} for the $d$-exact structure of a $d$-cluster tilting subcategory of a non-weakly idempotent complete exact category. The description in \cite[Theorem 4.14]{Jas16} is not completely correct in this case, since acyclicity is not preserved by weak isomorphisms for non-weakly idempotent complete categories. In particular, \cite[Example 2.5]{Ebr21} is not a counterexample to \Cref{Question:NonweaklyIdemComplete}. 

Next we ask whether we can drop the algebraic assumption in \Cref{TheoremB}.

\begin{question}\label{Question:NonAlgebraic}
    Let $\fc$ be a weakly idempotent complete $(d+2)$-angulated category. Does there exist an equivalence $\fc\cong \mathcal{G}$ of $(d+2)$-angulated categories where $\mathcal{G}$ is a $d$-cluster tilting subcategory of a triangulated category satisfying $\mathcal{G}[d]=\mathcal{G}$?
\end{question}
Note that weak idempotent completeness is a necessary assumption, since any triangulated category is weakly idempotent complete, and hence any $d$-cluster tilting subcategory of a triangulated category inherits this property. 

Finally, we ask about uniqueness of the ambient triangulated category.

\begin{question}\label{Question:UniquenessTriang}
    Let $\fc_1$ and $\fc_2$ be $d$-cluster tilting subcategories of triangulated categories $\tc_1$ and $\tc_2$, and assume $\fc_1[d]=\fc_1$ and $\fc_2[d]=\fc_2$. If we have an equivalence $\fc_1\cong\fc_2$ of $(d+2)$-angulated categories, does there exists an equivalence $\tc_1\cong \tc_2$ of triangulated categories? 
\end{question}
Note that \Cref{Question:UniquenessTriang} cannot be deduced from the uniqueness in \Cref{TheoremA}, even for algebraic triangulated categories. The issue is that lifting the equivalence $\fc_1\cong\fc_2$ to a functor between the enhancements might not be possible.

The structure of the paper is as follows. In \Cref{Section:Preliminaries} we recall properties we need for exact categories, $d$-cluster tilting subcategories, $d$-exact categories, and the generalization of Quillen-Gabriel embedding theorem from \cite{Ebr21}. In \Cref{Section:MaximalRigidSubcat} we study right and left maximal $d$-rigid subcategories and give necessary and sufficient conditions for a $d$-rigid subcategory $\uc$ to be right maximal $d$-rigid in $\uc^d(\ec)$, see \Cref{thm: extclosed}. In \Cref{Section:Higher extension-closure} we assume in addition that $\uc$ is closed under $d$-extensions, and show that the closure of $\uc$ under additive complements is $d$-cluster tilting in $\uc_d(\uc^d(\ec))$, see \Cref{Theorem:dCT}. We also prove a uniqueness property of $\uc_d(\uc^d(\ec))$, see \Cref{Theorem:UniquenessdCT}. Together these results imply \Cref{TheoremC}. In the last part of \Cref{Section:Higher extension-closure} we prove \Cref{TheoremD}. 

In \Cref{Section:EmbeddingThm} we obtain an extension closed subcategory $\ec(\mc)$ of the category of left exact functors $\mathcal{L}(\mc)$ which contains the weak idempotent completion of $\mc$ as a $d$-cluster tilting subcategory, see \Cref{Cor:WeaklyIdemdExactdCT}. This implies half of \Cref{TheoremA}. As a consequence we show that axiom (E1) for $d$-exact categories is redundant, see \Cref{Corollary:E1IsRedundant}. In \Cref{Section:UniversalProperty} we prove the universal property of the ambient exact category of a $d$-cluster tilting subcategory, assuming it is weakly idempotent complete, see \Cref{Theorem:UnivProperty}. As a consequence we obtain an equivalence between the $2$-category of weakly idempotent complete $d$-exact categories and the $2$-category of $d$-cluster tilting subcategories of weakly idempotent complete exact categories, see \Cref{Theorem:2equivalence}. This implies the second half of \Cref{TheoremA} on uniqueness of the ambient exact category.

In \Cref{Section:(d+2)-Angulated} we consider algebraic $(d+2)$-angulated categories and prove \Cref{TheoremB}. We also show that the notions of algebraic $(d+2)$-angulated categories in \cite{Jas16} and in \cite{JKM22} coincide, see \Cref{(d+2)AngulatedDefCoincides}. 

In \Cref{Section:NonWeaklyIdemComp} we describe the $d$-exact structure of a $d$-cluster tilting subcategory of a non-weakly idempotent complete exact category, see \Cref{Theorem:dCTImpliesdExactNotWeakIdemPotent}. We also explain how \cite[Example 2.5]{Ebr21} does not give a counterexample to \Cref{Question:NonweaklyIdemComplete}, see \Cref{Example:CounterExdExNotdCTNotValid}.

	\subsection*{Conventions}
	All categories are assumed to be additive, i.e. enriched over abelian groups and admitting finite biproducts.  Subcategories are assumed to be closed under finite direct sums. Given a class $\xc$ of objects in an additive category $\mathcal{C}$, we write $\operatorname{add}\xc$ for the smallest subcategory of $\mathcal{C}$ containing $\xc$ and which is closed under summands and finite direct sums. For a finite-dimensional algebra $\Lambda$, we write $\operatorname{mod}\Lambda$ for the category of finitely generated left $\Lambda$-modules. The category of abelian groups is denoted $\operatorname{Ab}$. The suspension functor of a triangulated category $\tc$ is denoted $[1]\colon \tc\to \tc$, and its composites $[n]=\underbrace{[1]\circ\cdots\circ [1]}_{n\text{ times}}$.
	
	\section{Preliminaries}\label{Section:Preliminaries}
	
	\subsection{Exact categories}
	Here we summarize the basic concepts and results needed from the theory of exact categories. For more details on exact categories see \cite{Bue10} or \cite{Kel90}.
	
	A \textit{conflation category} is an additive category $\ec$ endowed with a class of kernel-cokernel pairs in $\ec$ closed under isomorphism, called \textit{conflations}. The kernel part of a conflation is called an \textit{inflation}, and the cokernel part is called a \textit{deflation}. An \textit{exact category} (in the sense of Quillen) is a conflation category satisfying
	\begin{itemize}
		\item[\textbf{R0}] For any object $E\in \ec$, the identity $1_E$ is an inflation.
		\item[\textbf{L0}] For any object $E\in \ec$, the identity $1_E$ is a deflation.
		\item[\textbf{R1}] Inflations are closed under composition.
		\item[\textbf{L1}] Deflations are closed under composition.
		\item[\textbf{R2}] The pullback of a deflation exists, and deflations are stable under pullbacks.
		\item[\textbf{L2}] The pushout of an inflation exists, and inflations are stable under pushouts.
	\end{itemize}
     Axioms \textbf{R2} and \textbf{L2} tell us that given the solid part of the diagrams 
	\[
	\begin{tikzcd}[column sep=20, row sep=20]
	E_2 \arrow[r,dashed] \arrow[d,dashed,"h"] & E_1  \arrow[d,"f"] \\
	E_4 \arrow[r] & E_3   
	\end{tikzcd} \quad \text{and} \quad 
	\begin{tikzcd}[column sep=20, row sep=20]
	F_2 \arrow[r] \arrow[d,"g"] & F_1 \arrow[d,dashed,"k"] \\
	F_4\arrow[r,dashed,] & F_3  
	\end{tikzcd}
	\]
	with $f$ a deflation and $g$ an inflation, then the dashed arrows exists making the first square cartesian, i.e. a pullback square, and the second square cocartesian, i.e. a pushout square. Furthermore, the morphisms $h$ and $k$ will again be a deflation and an inflation, respectively. In this case both squares must be bicartesian, i.e. cartesian and cocartesian, see \mbox{\cite[Proposition 2.12]{Bue10}}.

 An \textit{exact functor} is a functor $F\colon \ec\to \ec'$ between exact categories $\ec,\ec'$ that preserves conflations, i.e. whenever $0\to E_3\to E_2\to E_1\to 0$ is a conflation in $\ec$, then 
     \[
     0\to F(E_3)\to F(E_2)\to F(E_1)\to 0
     \]
     is a conflation in $\ec'$. An \textit{exact equivalence} is an exact functor which is also an equivalence of categories.

 A subcategory $\ec'$ of an exact category $\ec$ is \textit{extension-closed} or \textit{closed under extensions} if for any conflation
    \[
    0\to E_3\to E_2\to E_1\to 0
    \]
    in $\ec$ with $E_1,E_3\in \ec'$, then $E_2\in \ec'$. In this case, $\ec'$ inherits an exact structure from $\ec$, where the conflations in $\ec'$ are the conflations in $\ec$ whose objects all belong to $\ec'$. This makes the inclusion functor $\ec'\to \ec$ into an exact functor. Note that any exact category is equivalent to an extension-closed subcategory of an abelian category, and if the abelian category is endowed with the exact structure consisting of all its kernel-cokernel pairs and the subcategory with the induced exact structure, then this is an exact equivalence, see \cite[Proposition A.2]{Kel90}.  Throughout this paper we always endow an extension-closed subcategory with the induced exact structure from the ambient exact category.

    An additive category $\ec$ is \textit{weakly idempotent complete} if any split epimorphism has a kernel, or equivalently if any split monomorphism has a cokernel, see \cite[Lemma 7.1]{Bue10}. If $\ec$ is exact then this implies that all split epimorphisms are deflations and all split monomorphisms are inflations, see \cite[Corollary 7.5]{Bue10}.  Note that a subcategory $\ec'$ of a weakly idempotent complete category $\ec$ is weakly idempotent complete if and only if it is \textit{closed under additive complements}, i.e. $E\oplus E'\in \ec'$ and $E\in \ec'$ implies $E'\in \ec'$.
    
    An additive category $\ec$ is \textit{idempotent complete} if all idempotents $e$ split, i.e. if there exists morphisms $r$ and $i$ such that $e=i\circ r$ and $r\circ i=\operatorname{id}_E$ for some object $E$ in $\ec$. In this case we call $(i,r)$ a \textit{splitting} of $e$.  Note that a subcategory $\ec'$ of an idempotent complete category $\ec$ is idempotent complete if and only if it is \textit{closed under direct summands}, i.e. $E\oplus E'\in \ec'$ implies $E\in \ec'$ and $E'\in \ec'$. 
    
    Any idempotent complete category must be weakly idempotent complete. Indeed, splittings $(r\colon E\to F,s\colon F\to E)$ and $(r'\colon E\to F',s'\colon F'\to E)$ of the idempotents $e$ and $1-e$, respectively, give rise to mutual inverse isomorphisms
    \[
    \begin{pmatrix}
        s&s'
    \end{pmatrix}\colon F\oplus F' \xrightarrow{\cong} E \quad \text{and} \quad \begin{pmatrix}
        r\\r'
    \end{pmatrix}\colon E\xrightarrow{\cong} F\oplus F'.
    \] 
    In particular, $F'$ is a kernel of $r$.  Since any split epimorphism $r$ occurs in a splitting $(i,r)$ of some idempotent, it must always have a kernel if the category is idempotent complete.

    An additive category $\ec$ has a fully faithful functor into an idempotent complete category  $\tilde{\ec}$ (resp. weakly idempotent complete category $\hat{\ec}$), which is universal with respect to all additive functors into idempotent complete categories (resp.  weakly idempotent complete categories), see \cite[Section 6 and Remark 7.8]{Bue10} and \cite[Proposition A.10]{HR19}. The category $\tilde{\ec}$ is called the \textit{idempotent completion} of $\ec$, and the category $\hat{\ec}$ is called the \textit{weak idempotent completion} of $\ec$. We often identify $\ec$ as full subcategories of $\tilde{\ec}$ and $\hat{\ec}$. Any object in $\tilde{\ec}$ must be a direct summand of an object in $\ec$, see the construction in \cite[Remark 6.3]{Bue10}. Similarly, for any object $F$ in $\hat{\ec}$ we can find an object $E\in \ec$ such that $F\oplus E \in \ec$, see \cite[Proposition A.11]{HR19}. If $\ec$ is exact, then $\tilde{\ec}$ and $\hat{\ec}$ can be endowed with an exact structure such that  the functors $\ec\to \tilde{\ec}$ and $\ec\to \hat{\ec}$ are exact and satisfy a universal property with respect to all exact functors into idempotent complete exact categories (resp. weakly idempotent complete exact categories), see \cite[Section 6 and Remark 7.8]{Bue10}.

Given an additive category $\ec$, let $C^b(\ec)$ denote the category of bounded chain complexes of $\ec$. Objects and morphisms in $C^b(\ec)$ are written as $X_\bullet$ and $f_\bullet\colon X_\bullet\to Y_\bullet$, so that $X_\bullet$ is given by the complex
\[
\cdots \xrightarrow{d_{i+2}^X} X_{i+1}\xrightarrow{d_{i+1}^X} X_i\xrightarrow{d_{i}^X} X_{i-1}\xrightarrow{d_{i-1}^X} \cdots
    \]
and $f_\bullet$ is given by a commutative diagram
       \begin{equation*}
	\begin{tikzcd}
	\cdots \arrow[r] & X_{i+1}\arrow[r,"d_{i+1}^X"] \arrow[d,"f_{i+1}"] & X_i \arrow[r,"d_i^X"] \arrow[d,"f_i"] & X_{i-1} \arrow[r,"d_{i-1}^X"] \arrow[d,"f_{i-1}"] \arrow[r]& \cdots  \\
	 \cdots \arrow[r]& Y_{i+1}\arrow[r,"d_{i+1}^Y"] & Y_i \arrow[r,"d_i^Y"]  & Y_{i-1} \arrow[r,"d_{i-1}^Y"] \arrow[r] & \cdots .
	\end{tikzcd} 
	\end{equation*}
where the rows are complexes. Given a morphism $f_\bullet$ as above, its \textit{cone} $C_\bullet$ is the complex
\[
\cdots \xrightarrow{} X_{i}\oplus Y_{i+1}\xrightarrow{\begin{pmatrix}-d_{i}^X&0\\f_{i}&d_{i+1}^Y\end{pmatrix}} X_{i-1}\oplus Y_{i}\xrightarrow{\begin{pmatrix}-d_{i-1}^X&0\\f_{i-1}&d_{i}^Y\end{pmatrix}} \cdots 
\]
where $X_{i-1}\oplus Y_i$ is in degree $i$. The inclusion and the projection
\[
Y_i\xrightarrow{\begin{pmatrix}0\\ \operatorname{id}_{Y_i}\end{pmatrix}} X_{i-1}\oplus Y_{i} \quad \quad X_{i-1}\oplus Y_{i} \xrightarrow{\begin{pmatrix}\operatorname{id}_{X_{i-1}}&0\end{pmatrix}} X_{i-1}
\]
induce morphisms $Y_\bullet\to C_\bullet$ and $C_\bullet\to X_\bullet[1]$ of chain complexes, where $X_\bullet[1]_i=X_{i-1}$ denotes the shift of $X_\bullet$. Hence, we get a sequence $$X_\bullet\xrightarrow{f_\bullet} Y_\bullet\to C_\bullet\to X_\bullet[1]$$ of morphisms of chain complexes, which forms a distinguished triangle in triangulated structure of the bounded homotopy category $K^b(\ec)$ of $\ec$.

Now assume $\ec$ is an exact category. A complex
   \[
X_\bullet=(\cdots \to X_{1}\to X_0\to X_{-1}\to \cdots )
    \]
    in $\ec$ is called \textit{acyclic} or \textit{exact} if there exists conflations 
    \[
    0\to Z_i(X_\bullet)\to X_i\to Z_{i-1}(X_\bullet)\to 0
    \]
    such that the morphism $X_i\to X_{i-1}$ is equal to the composite $X_i\to Z_{i-1}(X_\bullet)\to X_{i-1}$ for $i\in \mathbb{Z}$. The subcategory of bounded acyclic complexes is denoted by $\operatorname{Ac}^b(\ec)$. 
    
    Note that acyclicity might not be preserved under homotopy equivalence. In fact, if $(i,r)$ is a splitting of an idempotent $e\colon E'\to E'$, then the complex
    \[
    0\to E\xrightarrow{i}E'\xrightarrow{\operatorname{id}_{E'}-e}E'\xrightarrow{r}E\to 0
    \]
    is nullhomotopic, but not acyclic unless $r$ is a deflation.   In general, $\operatorname{Ac}^b(\ec)$ is closed under isomorphisms in $K^b(\ec)$ if and only if $\ec$ is weakly idempotent complete, see \mbox{\cite[Corollary 10.14]{Bue10}}. In this case, $\operatorname{Ac}^b(\ec)$ is also closed under direct summands in $K^b(\ec)$, see \mbox{\cite[Corollary 10.14]{Bue10}}.

   The cone of a morphism between acyclic complexes is always acyclic \cite[Lemma 1.1]{Nee90} even if $\ec$ is not weakly idempotent complete. Hence, $\operatorname{Ac}^b(\ec)$ is always a triangulated subcategory of $K^b(\ec)$ (not necessarily closed under isomorphism). The \textit{bounded derived category} of $\ec$ is defined as the Verdier quotient $$D^b(\ec)\colonequals K^b(\ec)/\operatorname{Ac}^b(\ec).$$ 
   Note that the inclusion $\ec\to \hat{\ec}$ into the weak idempotent completion $\hat{\ec}$ of $\ec$ induces a derived equivalence $D^b(\ec)\to D^b(\hat{\ec})$ \cite[Remark 1.12.3]{Nee90}.

Given an integer $n>0$, the $n$\textit{th Yoneda Ext-group} of two objects $X,Y$ in $\ec$ is defined to be 
	\[
	\Ext^n_\ec(X,Y)\colonequals \Hom_{D^b(\ec)}(X,Y[n]).
	\]
	The elements of $\Ext^n_\ec(X,Y)$ can be identified with equivalence classes of exact sequences
	\[
	0\to Y\to E_n\to \cdots \to E_1\to X\to 0
	\]
	with $n$ middle terms, where two sequences 
 \[
	0\to Y\to E_n\to \cdots \to E_1\to X\to 0 \quad \text{and} \quad 0\to Y\to E'_n\to \cdots \to E'_1\to X\to 0
\]
are equivalent if we can find a commutative diagram
\begin{equation*}
		\begin{tikzcd}
		0 \arrow[r] & Y\arrow[r] & E_n \arrow[r,""]  & E_{n-1} \arrow[r,""] & \cdots \arrow[r,""] & E_1\arrow[r,""] & X\arrow[r]  & 0 \\
		0 \arrow[r,""] & Y \arrow[u,equal] \arrow[d,equal] \arrow[r,""] & E_n'' \arrow[u,""] \arrow[d,""] \arrow[r,""] & E_{n-1}'' \arrow[u,""] \arrow[d,""] \arrow[r,""] & \cdots  \arrow[r,""] & E_1'' \arrow[u,""] \arrow[d,""] \arrow[r,""] & X \arrow[r] \arrow[u,equal] \arrow[d,equal] & 0 \\
		0 \arrow[r,""] & Y \arrow[r,""] & E_n' \arrow[r,""]  & E_{n-1}' \arrow[r,""]  & \cdots  \arrow[r,""]  & E_1'  \arrow[r,""] & X  \arrow[r]  & 0
		\end{tikzcd} 
		\end{equation*}
where the middle row is an exact sequence. This is equivalent to the existence of a similar commutative diagram, but where the direction of the vertical arrows are reversed. For more details see \cite[Proposition A.13]{Pos11} or \cite[Chapter 6]{FS10}. For a subcategory $\xc$ and an object $E$ of $\ec$, we write $\Ext^i_\ec(\xc,E)=0$ (respectively, $\Ext^i_\ec(E,\xc)=0$) to denote that $\Ext^i_\ec(X,E)=0$ (respectively, $\Ext^i_\ec(E,X)=0$) for all $X\in \xc$.

\subsection{Cluster tilting subcategories}

Fix an exact category $\ec$ and a subcategory $\mc$ of $\ec$.

 	  A \textit{right $\mc$-approximation} of an object $E$ in $\ec$ is a morphism $M\to E$ with $M\in \mc$ such that $\Hom_\ec(M',M)\to \Hom_\ec(M',E)$ is an epimorphism for all $M'\in \mc$.
 	 A \textit{left $\mc$-approximation} of $E$ is a morphism $E\to M$ with $M\in \mc$ such that $\Hom_\ec(M,M')\to \Hom_\ec(E,M')$ is an epimorphism for all $M'\in \mc$.
 	 $\mc$ is called \textit{contravariantly finite} if any object in $\ec$ has a right $\mc$-approximation, \textit{covariantly finite} if any object in $\ec$ has a left $\mc$-approximation, and \textit{functorially finite} if it is both contravariantly and covariantly finite.

    $\mc$ is called \textit{generating} if for any $E\in \ec$ there exists a deflation $M\to E$ with $M\in \mc$. Dually, $\mc$ is called \textit{cogenerating} if for any $E\in \ec$ there exists an inflation $E\to M$ with $M\in \mc$.

 If $\mc$ is generating and contravariantly finite, then for any $E\in \ec$ there exists a right $\mc$-approximation $M\to E$ which is a deflation, see e.g. proof of \cite[Proposition 4.4]{Kva21}. If $\ec$ is in addition weakly idempotent complete, then any right $\mc$-approximation must be a deflation. To see this, let $M\to E$ be a right $\mc$-approximation, and let $M'\to E$ be a deflation with $M'\in \mc$. Then $M\to E$ must factor through $M'\to E$, and by a strong version of the Obscure axiom it follows that $M\to E$ is a deflation, see \cite[Proposition 7.6]{Bue10}. Dual statements hold for cogenerating and covariantly finite subcategories. 

\begin{definition}\cite[Definition 2.2]{Iya07a}, \cite[Definition 4.13]{Jas16} 
The subcategory $\mc$ is called $d$-\textit{cluster tilting} if it is generating, cogenerating, functorially finite, and satisfies
 	\begin{align*}
 	\mc &=\{E\in \ec\mid \Ext^i_\ec(E,\mc)=0 \text{ for all }0<i<d\} \\
 		&=\{E\in \ec\mid \Ext^i_\ec(\mc,E)=0 \text{ for all }0<i<d\}.
 	\end{align*}
\end{definition}

Note that $\mc\subseteq \ec$ is $1$-cluster tilting if and only if $\mc=\ec$.

We often use the following reformulation of $d$-cluster tilting. Here $\mc$ is called $d$-\textit{rigid} if $\Ext^i_{\ec}(M,M')=0$ for all $0<i<d$ and $M,M'\in \mc$.
 
 \begin{proposition}\label{Reformulation:d-CT}
 	The subcategory $\mc$ is $d$-cluster tilting if and only if it is $d$-rigid, closed under direct summands, and for any $E\in \ec$ there exists exact sequences
  \begin{align*}
     & 0\to E\to M^1\to \cdots \to M^d\to 0 \\
     & 0\to M_d\to \cdots \to M_1\to E\to 0
  \end{align*}
  where $M_1,\dots, M_d\in \mc$ and $M^1,\dots, M^d\in \mc$.
 \end{proposition}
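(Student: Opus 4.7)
The plan is to prove the two implications separately, with the forward direction relying on iterated right approximations and the backward direction on dimension-shifts in Ext.

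For the forward implication, assume $\mc$ is $d$-cluster tilting. Then $d$-rigidity is immediate from the defining Ext-vanishing, and closure under direct summands follows because $\mc$ is cut out by the vanishing of Ext-groups. To construct the generating sequence $0 \to M_d \to \cdots \to M_1 \to E \to 0$, I would iteratively take right $\mc$-approximations that are deflations: since $\mc$ is generating and contravariantly finite, the sum of a right approximation with any deflation from $\mc$ produces a right approximation which is also a deflation. Setting $K_0 = E$ and writing $0 \to K_i \to M_i \to K_{i-1} \to 0$ for these conflations, apply $\Hom_\ec(\mc,-)$ to each. The approximation property forces $\Ext^1_\ec(\mc, K_1) = 0$, and $d$-rigidity together with the long exact sequence gives dimension shifts $\Ext^j_\ec(\mc, K_i) \cong \Ext^{j-1}_\ec(\mc, K_{i-1})$ for $2 \leq j \leq d-1$. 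Inductively, $\Ext^j_\ec(\mc, K_{d-1}) = 0$ for all $0 < j < d$, so $K_{d-1} \in \mc$ by the defining characterization. Setting $M_d \colonequals K_{d-1}$ yields the desired sequence, and the cogenerating sequence is constructed dually.

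For the backward implication, assume the three hypotheses. The sequences immediately show that $\mc$ is generating and cogenerating. For functorial finiteness, I would show that the terminal morphism $M_1 \to E$ is a right $\mc$-approximation (and dually $E \to M^1$ is a left approximation): for any $N \in \mc$ and $f \colon N \to E$, the obstruction to lifting $f$ lies in $\Ext^1_\ec(N, K_1)$, where $K_1 = \ker(M_1 \to E)$. Splicing the syzygy conflations and invoking $d$-rigidity yields
\[
\Ext^1_\ec(N, K_1) \cong \Ext^2_\ec(N, K_2) \cong \cdots \cong \Ext^{d-1}_\ec(N, K_{d-1}) = \Ext^{d-1}_\ec(N, M_d) = 0.
\]

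For the two Ext-vanishing characterizations, the inclusions $\mc \subseteq \{E : \Ext^i_\ec(\mc, E) = 0\}$ and $\mc \subseteq \{E : \Ext^i_\ec(E, \mc) = 0\}$ for $0 < i < d$ are immediate from $d$-rigidity. For the converse, given $E$ with $\Ext^i_\ec(\mc, E) = 0$ for $0 < i < d$, apply $\Hom_\ec(-, E)$ to the syzygies of the cogenerating sequence $0 \to E \to M^1 \to \cdots \to M^d \to 0$; the same type of dimension-shift produces $\Ext^1_\ec(Y^1, E) \cong \Ext^{d-1}_\ec(M^d, E) = 0$, where $Y^1 = \coker(E \to M^1)$. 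This forces the inflation $E \to M^1$ to split, so $E$ is a summand of $M^1 \in \mc$, hence $E \in \mc$ by closure under summands. The other characterization is proved symmetrically using the generating sequence, applying $\Hom_\ec(E, -)$ and showing that $M_1 \to E$ splits.

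The main obstacle is the careful bookkeeping of Ext-directions and index ranges: the two halves of the Ext-vanishing characterization require separate arguments using opposite sequences, and one must verify that every intermediate Ext-group encountered in a dimension shift indeed falls in the range $0 < j < d$ where $d$-rigidity (or the hypothesis on $E$) applies.
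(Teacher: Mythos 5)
Your argument is correct and complete. The paper itself does not prove this proposition but simply cites \cite[Proposition 4.4]{Kva21}; your proof is the standard argument that reference uses — iterated right/left $\mc$-approximations (made into deflations/inflations by adding a deflation/inflation from $\mc$, which you correctly invoke) for the forward direction, and dimension-shifting along the syzygies of the given (co)resolutions for approximations, for the splitting arguments, and for the two Ext-vanishing characterizations in the converse.
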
	
 
 \begin{proof}
 See \cite[Proposition 4.4]{Kva21}.
 \end{proof}

The following result shows that the $d$-cluster tilting property is preserved under (weak) idempotent completions.

\begin{proposition}\label{Prop:dCTFor(Weak)IdemComp}
    Let $\mc$ be a $d$-cluster tilting subcategory of $\ec$. The following hold.
    \begin{enumerate}
    \item\label{Prop:dCTFor(Weak)IdemComp:1} The weak idempotent completion of $\mc$ is a $d$-cluster tilting subcategory of the weak idempotent completion of $\ec$.
    \item\label{Prop:dCTFor(Weak)IdemComp:2} The idempotent completion of $\mc$ is a $d$-cluster tilting subcategory of the idempotent completion of $\ec$.
    \end{enumerate}
\end{proposition}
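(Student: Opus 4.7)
The plan is to verify the defining properties of $d$-cluster tilting for $\hat{\mc}$ in $\hat{\ec}$ directly from the Ext-definition, and to handle $\tilde{\mc}$ in $\tilde{\ec}$ by an analogous argument.

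To see $\hat{\mc}$ is $d$-rigid: given $F, G \in \hat{\mc}$ with complements $M', M'' \in \mc$ so that $F \oplus M', G \oplus M'' \in \mc$, the derived equivalence $D^b(\ec) \xrightarrow{\sim} D^b(\hat{\ec})$ gives $\Ext^i_{\hat{\ec}}(F \oplus M', G \oplus M'') = \Ext^i_\ec(F \oplus M', G \oplus M'') = 0$ for $0 < i < d$ by $d$-rigidity of $\mc$ in $\ec$, and $\Ext^i_{\hat{\ec}}(F, G) = 0$ follows as a direct summand. To see $\mc$ (hence $\hat{\mc}$) is functorially finite, generating, and cogenerating in $\hat{\ec}$: for $F \in \hat{\ec}$ pick $E \in \ec$ with $F \oplus E \in \ec$, take a deflation $M \twoheadrightarrow F \oplus E$ in $\ec$ from $\mc$, and compose with the split epimorphism $F \oplus E \twoheadrightarrow F$ (a deflation in the weakly idempotent complete $\hat{\ec}$) to obtain a deflation $M \to F$ that is readily checked to be a right $\mc$-approximation; the dual construction yields left $\mc$-approximations which are inflations.

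The main step is the Ext-characterization. Given $F \in \hat{\ec}$ with $\Ext^i_{\hat{\ec}}(F, \hat{\mc}) = 0$ for $0 < i < d$, pick $E \in \ec$ with $F \oplus E \in \ec$ and take a conflation $0 \to L \to N \to E \to 0$ in $\ec$ with $N \in \mc$. Direct-summing with the identity conflation on $F$ yields a conflation $0 \to L \to N \oplus F \to E \oplus F \to 0$ in $\hat{\ec}$; since $\ec$ is closed under extensions in $\hat{\ec}$, we have $N \oplus F \in \ec$. For $M \in \mc$ and $0 < i < d$,
\[
\Ext^i_\ec(N \oplus F, M) = \Ext^i_{\hat{\ec}}(N, M) \oplus \Ext^i_{\hat{\ec}}(F, M) = 0,
\]
by additivity of $\Ext$, the derived equivalence, $d$-rigidity of $\mc$ for the first summand, and the hypothesis (with $M \in \mc \subseteq \hat{\mc}$) for the second. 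The Ext-characterization of $\mc$ in $\ec$ now forces $N \oplus F \in \mc$, whence $F \in \hat{\mc}$ as an additive complement of $N \in \mc$. The dual Ext-characterization follows symmetrically from a left $\mc$-resolution of $E$, completing (1). For (2), I would first apply (1) to reduce to the weakly idempotent complete case and then run the same argument in $\tilde{\ec}$, using that every $\tilde F \in \tilde{\ec}$ has a summand complement $\tilde F' \in \tilde{\ec}$ with $\tilde F \oplus \tilde F' \in \ec$, the derived equivalence $D^b(\ec) \xrightarrow{\sim} D^b(\tilde{\ec})$, and the closure of $\ec$ under extensions in $\tilde{\ec}$.

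I expect the main obstacle to be the closure of $\ec$ under extensions in $\hat{\ec}$ and $\tilde{\ec}$ — a standard but subtle property of (weak) idempotent completions — without which the crucial step $N \oplus F \in \ec$ fails, breaking the Ext-computation. A secondary delicacy is bookkeeping in part (2) that the conflation constructions land in $\ec$ rather than merely in the idempotent completion, which may necessitate performing the $\mc$-deflation on the $\ec$-object $\tilde F \oplus \tilde F'$ rather than on $\tilde F$ alone and then projecting.
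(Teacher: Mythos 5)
Your part (1) is correct and follows the same strategy as the paper: identify $\hat{\mc}$ with a full subcategory of $\hat{\ec}$, use the invariance of Yoneda Ext-groups under passage to the (weak) idempotent completion, and transfer the Ext-characterization of $\mc$ by direct-summand arguments; generating, cogenerating and functorial finiteness descend as you describe. In fact you are more explicit than the paper on the hard inclusion $\{F\in\hat{\ec}\mid \Ext^i_{\hat{\ec}}(F,\hat{\mc})=0\}\subseteq\hat{\mc}$: your trick of producing $N\in\mc$ with $N\oplus F\in\ec$ via a conflation $0\to L\to N\to E\to 0$ in $\ec$ and extension-closure of $\ec$ in $\hat{\ec}$ is a clean way to reduce to the Ext-characterization inside $\ec$, where the paper simply asserts the transfer. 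Two minor points: the deflation $M\to F\oplus E$ should be chosen to be a right $\mc$-approximation (an arbitrary deflation from $\mc$ composed with the split projection need not be one), and for the idempotent completion the functor $D^b(\ec)\to D^b(\tilde{\ec})$ is only fully faithful, not an equivalence — but full faithfulness is all you use.

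Part (2) has a genuine gap. Your plan is to reduce to the weakly idempotent complete case via (1) and then ``run the same argument'' in $\tilde{\ec}$, but the key step of your part-(1) argument does not transfer. Given $\tilde F\in\tilde{\ec}$ with $\Ext^i_{\tilde{\ec}}(\tilde F,\tilde{\mc})=0$, the complement $\tilde F'$ with $\tilde F\oplus\tilde F'\in\ec$ lies only in $\tilde{\ec}$, not in $\ec$. Hence you cannot take the conflation $0\to L\to N\to \tilde F'\to 0$ inside $\ec$; taking it in $\tilde{\ec}$ instead leaves $L\in\tilde{\ec}$, so the extension $0\to L\to N\oplus\tilde F\to \tilde F\oplus\tilde F'\to 0$ has only one end in $\ec$ and closure of $\ec$ under extensions in $\tilde{\ec}$ gives nothing. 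Consequently $N\oplus\tilde F$ is not shown to lie in $\ec$, and the Ext-characterization of $\mc$ inside $\ec$ — the engine of your argument — cannot be invoked. Your flagged fix (performing the $\mc$-deflation on the $\ec$-object $\tilde F\oplus\tilde F'$ and projecting) does not repair this: the resulting candidate objects (the kernel of $M\to\tilde F'$, or $M\oplus\tilde F$) still fail to land in $\ec$. A complete argument for (2) needs a different mechanism, e.g.\ establishing that every object of $\tilde{\ec}$ admits a length-$d$ $\tilde{\mc}$-coresolution and -resolution (via iterated approximations and a Schanuel-type comparison with the resolutions of $\tilde F\oplus\tilde F'\in\ec$) and then invoking the reformulation of $d$-cluster tilting; the paper itself treats both completions uniformly via the Ext-isomorphisms of Balmer--Schlichting rather than via your conflation trick.
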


\begin{proof}
 Let $\hat{\mc}$ and $\hat{\ec}$ denote the weak idempotent completions and $\tilde{\mc}$ and $\tilde{\ec}$ the idempotent completions of $\mc$ and $\ec$. By the universal property of $\hat{\mc}$ and $\tilde{\mc}$, the inclusions $\mc\to \ec\to \hat{\ec}$ and $\mc\to \ec\to \tilde{\ec}$ extends to exact functors
 \[
 \hat{\mc}\to \hat{\ec} \quad \text{and} \quad \tilde{\mc}\to \tilde{\ec}.
 \]
  These functors are also fully faithful, since $\mc\to \ec\to \hat{\ec}$ and $\mc\to \ec\to \tilde{\ec}$ are fully faithful and any object in $\hat{\mc}$ and $\tilde{\mc}$ is a direct summand of an object in $\mc$.
 Hence, we can identify $\hat{\mc}$ and $\tilde{\mc}$ with full subcategories of $\hat{\ec}$ and $\tilde{\ec}$. 
 
 Now by \cite[Remark 1.12.3]{Nee90} and \cite[Theorem 2.8]{BS01} the canonical maps
 \[
 \Ext^i_\ec(E,F)\to \Ext^i_{\hat{\ec}}(E,F) \quad \text{and} \quad \Ext^i_\ec(E,F)\to \Ext^i_{\tilde{\ec}}(E,F)
 \]
are isomorphisms for all $E,F\in \ec$ and all $i>0$. Since
\[
\mc =\{E\in \ec\mid \Ext^i_\ec(E,\mc)=0 \text{ for }0<i<d\}=\{E\in \ec\mid \Ext^i_\ec(\mc,E)=0 \text{ for }0<i<d\}
\]
and any object in $\hat{\ec}$ or $\tilde{\ec}$ (respectively $\hat{\mc}$ or $\tilde{\mc}$) is a direct summand of an object in $\ec$ (respectively $\mc$), we get that
\begin{align*}
& \hat{\mc} =\{E\in \hat{\ec}\mid \Ext^i_{\hat{\ec}}(E,\hat{\mc})=0 \text{ for }0<i<d\}=\{E\in \hat{\ec}\mid \Ext^i_{\hat{\ec}}(\hat{\mc},E)=0 \text{ for }0<i<d\} \\
& \tilde{\mc} =\{E\in \tilde{\ec}\mid \Ext^i_{\tilde{\ec}}(E,\tilde{\mc})=0 \text{ for }0<i<d\}=\{E\in \tilde{\ec}\mid \Ext^i_{\tilde{\ec}}(\tilde{\mc},E)=0 \text{ for }0<i<d\}.
\end{align*}
Since generating, cogenerating, and functorially finiteness holds for $\hat{\mc}$ and $\tilde{\mc}$ if they hold for $\mc$, this proves the claim.
\end{proof}

	\subsection{Higher exact categories}\label{Subsection:d-ExactCat}
 Here we recall basic properties of $d$-exact categories, following \cite{Jas16}. Throughout we work in an additive category $\mc$. 

 A \textit{left $d$-exact sequence} is a complex $X_{d+1}\to X_d\to \cdots \to X_{0}$ in $\mc$ for which \begin{align*}
		 0\to \Hom_{\mc}(X,X_{d+1})\to \Hom_{\mc}(X,X_d)\to \cdots \to \Hom_{\mc}(X,X_{0}) 
  \end{align*}
  is exact for all $X\in \mc$. In this case $X_{d+1}\to \cdots \to X_{1}$ is called a \textit{$d$-kernel} of $X_1\to X_{0}$. A \textit{right $d$-exact sequence} is a complex $Y_{d+1}\to Y_d\to \cdots \to Y_{0}$ in $\mc$ for which
  \begin{align*}
      0\to \Hom_{\mc}(Y_{0},X)\to \cdots \to \Hom_{\mc}(Y_d,X) \to \Hom_{\mc}(Y_{d+1},X)
  \end{align*}
  is exact for all $X\in \mc$. In this case $Y_d\to \cdots \to Y_{0}$ is called a \textit{$d$-cokernel} of $Y_{d+1}\to Y_{d}$. A \textit{$d$-exact sequence} is a complex which is both a left and right $d$-exact sequence. For a class $\xc$ of $d$-exact sequences in $\mc$, we call its members  \textit{$\xc$-admissible $d$-exact sequences}. Given an $\xc$-admissible $d$-exact sequence
\[
X_0\to X_1\to \cdots \to X_d\to X_{d+1}
\]
the morphism $X_0\to X_1$ is called an $\xc$\textit{-admissible monomorphism} and the morphism $X_d\to X_{d+1}$ an $\xc$\textit{-admissible epimorphism}.

  Let $X_\bullet$ and $Y_\bullet$ be $d$-exact sequences. A \textit{morphism $h_\bullet\colon X_\bullet\to Y_\bullet$ of $d$-exact sequences} is just a morphism of complexes, i.e. a commutative diagram
   \begin{equation*}
	\begin{tikzcd}[column sep=15]
	X_{d+1}\arrow[r,""] \arrow[d,"h_{d+1}"] & X_d \arrow[r,""] \arrow[d,"h_d"] &  \cdots \arrow[r,""]  & X_{1} \arrow[r,""] \arrow[d,"h_{1}"] & X_0 \arrow[d,"h_{0}"]  \\
	 Y_{d+1} \arrow[r,""] & Y_{d} \arrow[r,""]  & \cdots \arrow[r,""] & Y_{1} \arrow[r,""]  & Y_{0}. 
	\end{tikzcd} 
	\end{equation*}
We say that $h_\bullet$ is a \textit{weak isomorphism} if there exists $0\leq i\leq d+1$ such that $h_{i+1}$ and $h_{i}$ are isomorphisms (where $h_{d+2}\colonequals h_0)$.

 Let $Y_\bullet=Y_d\xrightarrow{} Y_{d-1}\xrightarrow{} \cdots \xrightarrow{} Y_{0}$ be a complex in $\mc$. A $d$\textit{-pushout} of $Y_\bullet$ along a morphism $f\colon Y_d\to Z_d$ is a complex  $Z_\bullet= Z_d\xrightarrow{}Z_{d-1}\xrightarrow{}\cdots \xrightarrow{}Z_0$ in $\mc$ and a morphism of complexes $f_\bullet\colon Y_\bullet\to Z_\bullet$ satisfying $f_d=f$ and such that its cone
\[
Y_d\xrightarrow{} Y_{d-1}\oplus Z_d\xrightarrow{} \cdots \xrightarrow{}Y_0\oplus Z_{1}\xrightarrow{} Z_0
\]
is a right $d$-exact sequence. A $d$\textit{-pullback} of $Y_\bullet$ along a morphism $g\colon X_0\to Y_0$ is a complex $X_\bullet=X_d\xrightarrow{}X_{d-1}\xrightarrow{}\cdots \xrightarrow{}X_0$ in $\mc$ and a morphism of complexes $g_\bullet\colon X_\bullet\to Y_\bullet$ satisfying $g_0=g$ and such that its cone
\[
X_d\xrightarrow{} X_{d-1}\oplus Y_d\xrightarrow{} \cdots \xrightarrow{}X_0\oplus Y_{1}\xrightarrow{} Y_0
\]
is a left $d$-exact sequence.

\begin{definition}\cite[Definition 4.2]{Jas16}
    A \textit{$d$-exact structure} on $\mc$ is a class $\xc$ of $d$-exact sequences, closed under weak isomorphisms, and which satisfies the following axioms. 
    \begin{enumerate}
        \item[(E0)] The sequence $0\to 0\to \cdots \to 0$ with $(d+2)$-terms is in $\xc$.
        \item[(E1)] $\xc$-admissible monomorphisms are closed under composition.
        \item[(E1)$^{\operatorname{op}}$] $\xc$-admissible epimorphisms are closed under composition.
        \item[(E2)] Given a $\xc$-admissible $d$-exact sequence $Y_{d+1}\to \cdots \to Y_{0} $ and a morphism \mbox{$Y_{d+1}\to Z_{d+1}$}, there exists a $d$-pushout 
         \begin{equation*}
	\begin{tikzcd}[column sep=15]
	Y_{d+1}\arrow[r,""] \arrow[d] & Y_{d} \arrow[r,""] \arrow[d,""]  & \cdots \arrow[r,""] & Y_{2} \arrow[r,""] \arrow[d,""]& Y_1 \arrow[d,""]  \\
	 Z_{d+1} \arrow[r,""] & Z_{d} \arrow[r,""]   & \cdots \arrow[r,""] & Z_{2} \arrow[r,""] & Z_1 
	\end{tikzcd} 
	\end{equation*}
 where $Z_{d+1}\to Z_{d}$ is an $\xc$-admissible monomorphism.
 \item[(E2)$^{\operatorname{op}}$] Given a $\xc$-admissible $d$-exact sequence $Y_{d+1}\to \cdots \to Y_{0}$ and a morphism $X_{0}\to Y_{0}$, there exists a $d$-pullback 
         \begin{equation*}
	\begin{tikzcd}[column sep=15]
	X_d\arrow[r,""] \arrow[d,""] & X_{d-1} \arrow[r,""] \arrow[d,""] & \cdots \arrow[r,""] & X_{1} \arrow[d,""] \arrow[r,""] & X_{0} \arrow[d]  \\
	 Y_d \arrow[r,""] & Y_{d-1} \arrow[r,""] & \cdots \arrow[r,""] &  Y_{1}   \arrow[r,""] & Y_{0} 
	\end{tikzcd} 
	\end{equation*}
 where $X_1\to X_{0}$ is an $\xc$-admissible epimorphism.
    \end{enumerate}
A $d$\textit{-exact category} is an additive category together with a $d$-exact structure.  
\end{definition} 

We often omit the $d$-exact structure $\xc$ if it is clear from the context, and simply say admissible monomorphism, admissible epimorphisms, and admissible $d$-exact sequences. Note that a \mbox{$1$-exact} category is the same as an exact category in the sense of Quillen.

A \textit{$d$-exact functor} between $d$-exact categories $(\mc,\xc)$ and $(\nc,\yc)$ is a functor $F\colon \mc\to \nc$  which sends $\xc$-admissible $d$-exact sequences to $\yc$-admissible $d$-exact sequences. In other words, if $X_{d+1}\to \cdots \to X_{0}$ is an $\xc$-admissible $d$-exact sequence, then $F(X_{d+1})\to \cdots \to F(X_{0})$ is a $\yc$-admissible $d$-exact sequence. An \textit{equivalence of $d$-exact categories} is a $d$-exact functor $F\colon \mc\to \nc$  which has a quasi-inverse $G\colon \nc\to \mc$ which is also $d$-exact.

We need the following lemma on $d$-pushouts and $d$-pullbacks, cf. \cite[Proposition 2.12]{Bue10}.  

\begin{proposition}\label{Prop:dPushoutofdExact}
    Let $\mc$ be a $d$-exact category, let $X_{d+1}\to \cdots \to X_{0}$ be an admissible $d$-exact sequence. Then 
     \begin{equation*}
	\begin{tikzcd}[column sep=15]
	X_{d+1}\arrow[r,""] \arrow[d,""] & X_d \arrow[r,""] \arrow[d,""] & \cdots \arrow[r,""] & X_1 \arrow[d,""]  \\
	 Y_{d+1} \arrow[r,""] & Y_{d} \arrow[r,""]  & \cdots \arrow[r,""] & Y_1
	\end{tikzcd} 
	\end{equation*}
is a $d$-pushout if and only if there exists a morphism $Y_1\to X_{0}$ such that
\begin{equation*}
	\begin{tikzcd}[column sep=15]
	X_{d+1}\arrow[r,""] \arrow[d,""] & X_d \arrow[r,""] \arrow[d,""] & \cdots \arrow[r,""] & X_1 \arrow[d,""] \arrow[r]& X_{0} \arrow[d,equal]  \\
	 Y_{d+1} \arrow[r,""] & Y_{d} \arrow[r,""]  & \cdots \arrow[r,""] & Y_1\arrow[r]& X_{0}
	\end{tikzcd} 
	\end{equation*}
 is commutative, and the lower row is an admissible $d$-exact sequence.
\end{proposition}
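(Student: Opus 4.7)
The plan is to prove both directions using the Hom-functorial characterization of right $d$-exact sequences, together with axiom (E2) and the closure of the class $\xc$ of admissible $d$-exact sequences under weak isomorphisms.

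For the ``if'' direction, assume that we have a morphism $Y_1\to X_0$ such that the extended $(d+2)$-term diagram commutes and the lower row is an admissible $d$-exact sequence. Apply $\Hom_\mc(-,M)$ to both rows for an arbitrary $M\in \mc$: we obtain two left exact sequences of abelian groups of the same length whose rightmost term $\Hom_\mc(X_0,M)$ is shared, and the induced morphism between them is the identity there. A direct diagram chase (equivalently, the standard fact that the mapping cone of a quasi-isomorphism of cochain complexes is acyclic) gives exactness of
\[
0\to \Hom_\mc(Y_1,M)\to \Hom_\mc(X_1\oplus Y_2,M)\to \cdots \to \Hom_\mc(X_{d+1},M).
\]
Since $M$ was arbitrary, this is precisely the statement that the cone
\[
X_{d+1}\to X_d\oplus Y_{d+1}\to \cdots \to X_1\oplus Y_2\to Y_1
\]
is right $d$-exact, i.e.\ that the given diagram is a $d$-pushout.

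For the ``only if'' direction, suppose the diagram is a $d$-pushout. The morphism $(f_0,0)\colon X_1\oplus Y_2\to X_0$, where $f_0\colon X_1\to X_0$ is the rightmost map of the top row, vanishes after precomposition with the previous differential of the cone (a short matrix computation using $f_0\circ d=0$ in the top complex). Applying $\Hom_\mc(-,X_0)$ to the right $d$-exact cone and using exactness at $\Hom_\mc(X_1\oplus Y_2,X_0)$ yields a factorization of $(f_0,0)$ through the morphism $X_1\oplus Y_2\to Y_1$, and this factorization is the sought morphism $Y_1\to X_0$. Commutativity of the new rightmost square is immediate from this construction.

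It remains to show that the extended lower row $Y_{d+1}\to\cdots\to Y_1\to X_0$ is an admissible $d$-exact sequence. For this we invoke axiom (E2) applied to the top row and the morphism $X_{d+1}\to Y_{d+1}$, producing a $d$-pushout $Y'_\bullet$ extending to an admissible $d$-exact sequence $Y'_{d+1}\to\cdots\to Y'_1\to X_0$. A standard application of the universal property of $d$-pushouts (exactly the same factorization argument as in the previous paragraph, applied both ways) provides a morphism of complexes $Y_\bullet\to Y'_\bullet$ which is the identity at $Y_{d+1}$ and at $X_0$, i.e.\ a weak isomorphism of the extended sequences. Since $\xc$ is closed under weak isomorphisms, admissibility transfers from the $Y'$-sequence to the $Y$-sequence. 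The main obstacle is the comparison of the two $d$-pushouts in this last step, which requires a careful statement of the universal property of $d$-pushouts and verifying that the comparison morphism extends coherently to the additional term $X_0$ shared by both extensions.
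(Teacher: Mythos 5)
Your ``if'' direction does not go through as a purely formal $\Hom$-chase. Right $d$-exactness of a sequence $Y_{d+1}\to\cdots\to Y_1\to X_0$ gives exactness of $0\to\Hom(X_0,M)\to\Hom(Y_1,M)\to\cdots\to\Hom(Y_d,M)\to\Hom(Y_{d+1},M)$ only up to and including the position $\Hom(Y_d,M)$; there is no exactness statement at $\Hom(Y_{d+1},M)$. Consequently, in your mapping-cone argument the long exact sequence only kills $H^j$ of the cone for $j\le d-1$: at the top position $\Hom(X_d\oplus Y_{d+1},M)$ one must produce, from $(\alpha,\beta)$ with $\alpha\circ d_{d+1}^X=\beta\circ f_{d+1}$, a morphism $\delta\colon Y_d\to M$ with $\delta\circ d_{d+1}^Y=\beta$, and nothing in the two $\Hom$-exact sequences supplies such a lift (already for $d=1$ this is exactly the pushout property, whose proof in \cite[Proposition 2.12]{Bue10} uses the exact structure, not a diagram chase). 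This is precisely where the admissibility of the rows and the $d$-exact axioms must enter, which is why the paper delegates both directions to \cite[Proposition 4.8]{Jas16} after first verifying that $Y_{d+1}\to Y_d$ is an admissible monomorphism (via the comparison of $d$-pushouts \cite[Proposition 2.13]{Jas16} and \cite[Theorem B]{Kla22}).

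Your ``only if'' direction has a related circularity: you assert that the $d$-pushout produced by axiom (E2) ``extends to an admissible $d$-exact sequence $Y'_{d+1}\to\cdots\to Y'_1\to X_0$.'' Axiom (E2) only guarantees that $Y'_{d+1}\to Y'_d$ is an admissible monomorphism; that the resulting pushout complex, completed by the induced map to $X_0$, is itself an admissible $d$-exact sequence is exactly (a special case of) the statement you are trying to prove, and is again the content of \cite[Proposition 4.8]{Jas16}. The construction of the morphism $Y_1\to X_0$ from right $d$-exactness of the cone is correct, and the coherence issue you flag at $X_0$ is in fact harmless (the difference $g-g'\circ\phi_1$ is killed by $(f_1,d_2^Y)$, and injectivity of $\Hom(Y_1,X_0)\to\Hom(X_1\oplus Y_2,X_0)$ is part of right $d$-exactness of the cone), but the two gaps above are genuine and cannot be closed without invoking the $d$-exact axioms in the manner of Jasso's Proposition 4.8 or the paper's reduction to it.
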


\begin{proof}
This is well-known for $d=1$, see e.g. \cite[Proposition 2.12]{Bue10}, so we let $d\geq 2$. Assume we have a $d$-pushout as in the statement. We claim that $Y_{d+1}\to Y_d$ is an admissible monomorphism. Indeed,  there exists a $d$-pushout $X_\bullet\to Z_\bullet$ where $Z_{d+1}=Y_{d+1}$ and $Z_{d+1}\to Z_d$ is an admissible monomorphism by axiom (E2) for $d$-exact categories. Furthermore, by \cite[Proposition 2.13]{Jas16} there exists a morphism of complexes $Y_\bullet\to Z_{\bullet}$ where $Y_{d+1}\to Z_{d+1}$ is the identity. In particular, we have a commutative diagram
 \begin{equation*}
	\begin{tikzcd}[column sep=15]
	Y_{d+1}\arrow[r,""] \arrow[d,equal] & Y_d \arrow[d,""]  \\
	 Z_{d+1} \arrow[r,""] & Z_{d}  
	\end{tikzcd} 
	\end{equation*}
 Since $Z_{d+1}\to Z_d$ is an admissible monomorphism, the morphism $Y_{d+1}\to Y_d$ must be an admissible monomorphism by \cite[Theorem B]{Kla26}. This proves the claim. The remainder of the proposition then follows from \cite[Proposition 4.8]{Jas16}.
\end{proof}

The following result shows that $d$-cluster tilting subcategories are $d$-exact categories. Here we need to assume the ambient exact category is weakly idempotent complete for the description in \cite[Theorem 4.4]{Jas16} to hold. We discuss the non-weakly idempotent complete case in \Cref{Section:NonWeaklyIdemComp}.

\begin{theorem}\label{Theorem:dCTImpliesdExactWeakIdemPotent}\cite[Theorem 4.14]{Jas16}
Let $\mc$ be a $d$-cluster tilting subcategory of a weakly idempotent complete exact category $\ec$. Then the class of complexes 
    \[
0\to M_{d+1}\to \cdots \to M_{0}\to 0
    \]
    in $\mc$ which are acyclic in $\ec$ gives a $d$-exact structure on $\mc$. 
\end{theorem}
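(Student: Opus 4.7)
The plan is to verify that the class $\xc$ of length-$(d+2)$ complexes in $\mc$ which are acyclic in $\ec$ satisfies all requirements for being a $d$-exact structure on $\mc$: that each member is a $d$-exact sequence, that $\xc$ is closed under weak isomorphisms, and that axioms (E0)--(E2)$^{\operatorname{op}}$ hold.

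First I would check that every member of $\xc$ is genuinely a $d$-exact sequence. Given such an acyclic complex $0\to M_{d+1}\to \cdots\to M_0\to 0$, decomposing into conflations $0\to Z_i\to M_i\to Z_{i-1}\to 0$ in $\ec$ and applying $\Hom_\ec(N,-)$ or $\Hom_\ec(-,N)$ for $N\in\mc$ yields long exact sequences in $\Ext$. The $d$-rigidity condition $\Ext^i_\ec(\mc,\mc)=0$ for $0<i<d$ eliminates all intermediate $\Ext$ terms, so splicing the long exact sequences produces exactness of the associated Hom-complex, establishing both the left and right $d$-exact sequence properties.

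For closure under weak isomorphisms, the key input is that weak idempotent completeness of $\ec$ makes $\operatorname{Ac}^b(\ec)$ closed under isomorphism in $K^b(\ec)$, as recalled in \Cref{Subsection:d-ExactCat}. A weak isomorphism of complexes of length $d+2$ with two adjacent isomorphism components admits a standard reduction on mapping cones showing it becomes an isomorphism in $K^b(\ec)$, whence acyclicity transfers across such weak isomorphisms. The axiom (E0) is immediate. For (E1) and (E1)$^{\operatorname{op}}$, a composition of two admissible monomorphisms arising from members of $\xc$ is an inflation in $\ec$ by axiom \textbf{R1}; its cokernel in $\ec$ is then resolved iteratively via left $\mc$-approximations, which are inflations because $\mc$ is cogenerating and $\ec$ is weakly idempotent complete, with $d$-rigidity guaranteeing that their cones land back in $\mc$ and that the resulting complex is acyclic in $\ec$. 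For (E2) and (E2)$^{\operatorname{op}}$, one first forms the ordinary pushout or pullback in $\ec$ at the outermost position using \textbf{L2} or \textbf{R2}, which preserves inflations and deflations, and then completes the remaining positions by iterated $\mc$-approximations; \Cref{Prop:dPushoutofdExact} certifies that the resulting square is a genuine $d$-pushout or $d$-pullback.

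The main obstacle is controlling that these iterated $\mc$-approximations actually remain inside $\mc$ and produce complexes acyclic in $\ec$, rather than merely exact in some larger ambient category. This depends on the interplay between $d$-rigidity of $\mc$, which forces intermediate $\Ext$-obstructions to vanish so that approximations splice correctly, and weak idempotent completeness of $\ec$, which turns right and left $\mc$-approximations into deflations and inflations in $\ec$, respectively (as used in the proof of \Cref{Reformulation:d-CT}). Without weak idempotent completeness these approximations need not be deflations, the acyclicity of the resulting complexes can fail, and the description of the $d$-exact structure must be modified, as will be discussed later in the paper.
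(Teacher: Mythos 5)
There is a genuine gap in the step on closure under weak isomorphisms, which is precisely the part of the statement that the paper proves in detail (the remaining claims — that members of $\xc$ are $d$-exact sequences and that axioms (E0)--(E2)$^{\operatorname{op}}$ hold — are simply quoted from the proof of \cite[Theorem 4.14]{Jas16}, and your sketches of them are essentially that argument). Recall that a weak isomorphism $h_\bullet\colon X_\bullet\to Y_\bullet$ requires $h_i$ and $h_{i+1}$ to be isomorphisms for some $0\leq i\leq d+1$ \emph{with the convention} $h_{d+2}\colonequals h_0$. Your "standard reduction on mapping cones" applies only when $i\leq d$, i.e.\ when the two isomorphism components sit at genuinely consecutive positions of the complex; in that case $h_\bullet$ is a homotopy equivalence by \cite[Proposition 2.7]{Jas16}, and weak idempotent completeness transfers acyclicity. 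It does not apply in the wraparound case $i=d+1$, where the isomorphisms are $h_{d+1}$ and $h_0$, the first and last components: there is no differential connecting $X_0$ to $X_{d+1}$, the morphism $h_\bullet$ need not be a homotopy equivalence, and no cone reduction makes it an isomorphism in $K^b(\ec)$. This is exactly the case responsible for the failure of the statement over non-weakly idempotent complete $\ec$, so it cannot be waved through.

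The paper closes this case by a separate argument: assuming $Y_\bullet\in\xc$, the map $Y_1\to Y_0$ is a deflation, hence so is $Y_1\to Y_0\xrightarrow{\cong}X_0$, and weak idempotent completeness (via the strong obscure axiom \cite[Proposition 7.6]{Bue10}) forces $X_1\to X_0$ to be a deflation; one then coresolves its kernel inside $\mc$ using \Cref{Reformulation:d-CT} to produce an acyclic complex $Z_{d+1}\to\cdots\to Z_2\to X_1\to X_0$ in $\xc$, and the $d$-kernel property of $X_\bullet$ yields a comparison map to $X_\bullet$ which is a homotopy equivalence by \cite[Proposition 2.7]{Jas16}; acyclicity then transfers to $X_\bullet$. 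Some version of this (or another argument handling the end-terms case) must be added for your proof to be complete.
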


\begin{proof}
Let $\xc$ be the class of complexes in the theorem. By the proof of \mbox{\cite[Theorem 4.14]{Jas16}} the elements of $\xc$ are $d$-exact sequences, and $\xc$ satisfies axioms (E0)-(E2) and (E1)$^{\operatorname{op}}$-(E2)$^{\operatorname{op}}$. Hence, it only remains to show that $\xc$ is closed under weak isomorphisms. Let $h_\bullet\colon X_\bullet\to Y_\bullet$ be a weak isomorphism of $d$-exact sequences. It suffices to show that if $X_\bullet$ or $Y_\bullet$ is in $\xc$, then both of them are in $\xc$. We assume $Y_\bullet$ is in $\xc$, the other direction is proved dually. 

 By definition of weak isomorphisms we have that $h_i$ and $h_{i+1}$ are isomorphisms for $i\leq d+1$. If this holds for $i\leq d$, then $h_\bullet$ is a homotopy equivalence by \cite[Proposition 2.7]{Jas16}. Since $\ec$ is weakly idempotent complete, acyclic complexes are closed under homotopy equivalences.  Hence, $X_\bullet$ must be in $\xc$. 

It remains to prove that $X_\bullet$ is in $\xc$ when $h_{d+1}$ and $h_{d+2}\colonequals h_0$ are isomorphisms. In this case, we have a commutative diagram
\begin{equation*}
	\begin{tikzcd}[column sep=15]
	X_{d+1}\arrow[r,""] \arrow[d,"\cong"] & X_d \arrow[r,""] \arrow[d,""]  & \cdots \arrow[r,""] & X_1 \arrow[d,""] \arrow[r]& X_{0} \arrow[d,"\cong"]  \\
	 Y_{d+1} \arrow[r,""] & Y_{d} \arrow[r,""]  & \cdots \arrow[r,""] & Y_1\arrow[r]& Y_{0}
	\end{tikzcd} 
	\end{equation*}
 where the top row is a $d$-exact sequence and the bottom row is a $\xc$-admissible $d$-exact sequence. Since $\xc$ consists of acyclic complexes, the morphism $Y_{1}\to Y_{0}$ must be a deflation in $\ec$, and hence the composite $Y_1\to Y_{0}\xrightarrow{\cong}X_{0}$ must also be a deflation. Since $\ec$ is weakly idempotent complete, the morphism $X_1\to X_{0}$ is therefore a deflation, see \cite[Proposition 7.6]{Bue10}. Let $K$ denote its kernel. Since $\mc$ is $d$-cluster tilting, we can choose an exact sequence 
 \[
 0\to Z_{d+1}\to \cdots \to Z_{2}\to K\to 0
 \]
 with $Z_i\in \mc$ for all $i$, see \Cref{Reformulation:d-CT}. Then the induced complex
 \[
Z_{d+1}\to \cdots \to Z_{2}\to X_1\to X_{0}
 \]
 is acyclic in $\ec$, and is therefore a $\xc$-admissible $d$-exact sequence. Furthermore, we get a commutative diagram
 \begin{equation*}
	\begin{tikzcd}[column sep=15]
	 Z_{d+1} \arrow[d,dashed] \arrow[r,""] & Z_{d}  \arrow[d,dashed]\arrow[r,""] & \cdots \arrow[r,""] & Z_{2}\arrow[r]  \arrow[d,dashed]& X_1\arrow[r] \arrow[d,equal] & X_{0} \arrow[d,equal]  \\
	 X_{d+1} \arrow[r,""] & X_{d} \arrow[r,""] &  \cdots \arrow[r,""] & X_{2}\arrow[r]& X_1\arrow[r]& X_{0}
	\end{tikzcd} 
	\end{equation*}
  where the dashed arrows exist by the $d$-kernel property. This gives a homotopy equivalence by \cite[Proposition 2.7]{Jas16}, and so $X_\bullet$ must be contained in $\xc$. The claim follows.
\end{proof}

\begin{remark}\label{AdmMonoEpiWeakIdemComp}
    Let $\mc$ and $\ec$ be as in \Cref{Theorem:dCTImpliesdExactWeakIdemPotent}. Then a morphism in $\mc$ is an admissible monomorphism (resp. admissible epimorphism) if and only if it is an inflation (resp. deflation) in $\ec$. Similarly, a sequence
    \[
    0\to M_{d+1}\to M_d\to \cdots \to M_{0} \quad (\text{resp. } N_{d+1}\to \cdots \to N_1\to N_{0}\to 0)
    \]
    in $\mc$ is left $d$-exact (resp. right $d$-exact) if and only if it is exact in $\ec$.
\end{remark}

\subsection{Left exact functors}\label{Subsection:The category of left exact functors}

Here we recall properties of the category of left exact functors of a $d$-exact category, following \cite{Ebr21,EN23}. We start with localizations of abelian categories. 

Let $\ac$ be an abelian category. Fix a \textit{Serre subcategory} $\mathcal{S}$ of $\ac$, i.e. a subcategory closed under extensions, subobjects and quotients. Let $\ac/\mathcal{S}$ denote the localization of $\ac$ by the class of morphisms whose kernel and cokernel lies in $\mathcal{S}$. Then $\ac/\mathcal{S}$ is an abelian category and the localization functor $\ac\to \ac/\mathcal{S}$ is an exact functor, see \cite[Proposition 2.2.6]{Kra22}. The Serre subcategory $\mathcal{S}$ is called a \textit{localizing subcategory} if the functor $\ac\to \ac/\mathcal{S}$ has a right adjoint. In this case the right adjoint induces an equivalence between $\ac/\mathcal{S}$ and the subcategory
\[
\mathcal{S}^{\perp_1}=\{A\in \ac\mid \Hom_\ac(S,A)=0=\Ext^1_\ac(S,A) \text{ for all }S\in \mathcal{S}\}
\]
with quasi-inverse given by the composite $\mathcal{S}^{\perp_1}\to \ac\to \ac/\mathcal{S}$, see \cite[Lemma 2.2.10]{Kra22}. Note that if $\ac$ has injective envelopes and $\mathcal{S}$ is a localizing subcategory, then $\ac/\mathcal{S}\cong \mathcal{S}^{\perp_1}$ has injective envelopes and the inclusion $\mathcal{S}^{\perp_1}\to \ac$ preserves injective envelopes, see \cite[Corollary 2.2.15]{Kra22}. For more information on Serre and localizing subcategories we refer to \cite[Section 2.2]{Kra22}.
 
 Let $\operatorname{Ab}$ denote the category of abelian groups. Given an additive category $\mc$, the category of additive functors from $\mc^{\operatorname{op}}$ to $\operatorname{Ab}$ is denoted $\operatorname{Mod}\mc$. This is an abelian category with injective envelopes, and where kernels and cokernels are computed objectwise. The \textit{Yoneda embedding} is denoted 
 $$Y\colon\mc\to \operatorname{Mod}\mc \quad \quad X \mapsto \mc(-,X).$$ It is a fully faithful functor by the Yoneda Lemma.  A functor $F$ in $\operatorname{Mod}\mc$ is \textit{finitely presented} if we can find an exact sequence
 \[
 \mc(-,X)\to \mc(-,Y)\to F\to 0
 \]
 for some objects $X,Y\in \mc$. The subcategory of finitely presented functors is denoted $\operatorname{mod}\mc$. Note that $\operatorname{mod}\mc$ is an essentially small category if $\mc$ is an essentially small category. 
 
 Assume $\mc$ is a $d$-exact category. A \textit{left exact functor} $F\colon \mc^{\operatorname{op}}\to \operatorname{Ab}$ is an additive functor such that for any admissible $d$-exact sequence $X_{d+1}\xrightarrow{g_{d+1}} \cdots \xrightarrow{g_1} X_{0}$ 
the sequence
\[
0\to F(X_{0})\xrightarrow{F(g_1)}F(X_1)\xrightarrow{F(g_{2})}F(X_{2})
\]
is exact. The subcategory of left exact functors is denoted $\mathcal{L}(\mc)$. A \textit{weakly effaceable functor} $F\colon \mc^{\operatorname{op}}\to \operatorname{Ab}$ is an additive functor such that for any object $X\in \mc$ and any element $x\in F(X)$ there exists an admissible epimorphism $Y\xrightarrow{g} X$ satisfying $F(g)(x)=0$. The subcategory of weakly effaceable functors is denoted $\operatorname{Eff}(\mc)$.

\begin{proposition}\cite[Proposition 3.5]{Ebr21}\label{Proposition:Ebrahimi1}
    Let $\mc$ be a $d$-exact category. The following hold.
    \begin{enumerate}
    \item $\operatorname{Eff}(\mc)$ is a localizing subcategory of $\operatorname{Mod}\mc$.
    \item $\mathcal{L}(\mc)=\{F\in \operatorname{Mod}\mc\mid \Hom_{\operatorname{Mod}\mc}(\operatorname{Eff}(\mc),F)=0=\Ext^1_{\operatorname{Mod}\mc}(\operatorname{Eff}(\mc),F)\}$.
    \end{enumerate}
\end{proposition}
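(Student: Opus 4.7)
The plan is to establish (1) by showing $\operatorname{Eff}\mc$ is a Serre subcategory of $\operatorname{Mod}\mc$ closed under arbitrary coproducts, whence the localizing property follows by Gabriel's classical theorem, see \cite[Section 2.2]{Kra22}. Closure under subfunctors and quotient functors is immediate since monomorphisms and epimorphisms in $\operatorname{Mod}\mc$ are computed objectwise. For extensions $0 \to F' \to F \to F'' \to 0$ with $F', F'' \in \operatorname{Eff}\mc$ and $x \in F(X)$, project $x$ to $F''(X)$ and choose an admissible epimorphism $g_1 \colon Y \to X$ killing the image; then $F(g_1)(x)$ lifts to some $y \in F'(Y)$, and weak effaceability of $F'$ yields a second admissible epimorphism $g_2 \colon Z \to Y$ killing $y$. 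Axiom (E1)$^{\operatorname{op}}$ ensures $g_1 \circ g_2$ is an admissible epimorphism annihilating $x$. The same iterative trick handles closure under coproducts, since elements of $(\bigoplus_i F_i)(X)$ have finite support.

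For part (2), the pivotal construction is the following. Given an admissible $d$-exact sequence $X_{d+1} \to \cdots \to X_0$, set $K = \operatorname{coker}(Y(X_1) \to Y(X_0))$ and $J = \operatorname{im}(Y(X_1) \to Y(X_0))$ in $\operatorname{Mod}\mc$. I would first verify that $K \in \operatorname{Eff}\mc$: an element of $K(W)$ is represented by some $\phi \colon W \to X_0$, and axiom (E2)$^{\operatorname{op}}$ applied along $\phi$ produces a $d$-pullback whose admissible epimorphism $X'_1 \to W$ comes equipped with a morphism $X'_1 \to X_1$ making $\phi \circ (X'_1 \to W)$ factor through $X_1 \to X_0$, so $K(X'_1 \to W)$ kills the class of $\phi$. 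Next, left $d$-exactness gives $\operatorname{im}(Y(X_2) \to Y(X_1)) = \ker(Y(X_1) \to Y(X_0))$, whence $J = \operatorname{coker}(Y(X_2) \to Y(X_1))$ and thus $\Hom(J, F) = \ker(F(X_1) \to F(X_2))$ for every $F$. Combining this with the long exact sequence of $\Hom(-, F)$ on $0 \to J \to Y(X_0) \to K \to 0$ and the projectivity of $Y(X_0)$ yields the key identifications $\Hom(K, F) = \ker(F(X_0) \to F(X_1))$ and $\Ext^1(K, F) = \operatorname{coker}(F(X_0) \to \ker(F(X_1) \to F(X_2)))$.

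Both inclusions of (2) now follow directly. If $F$ is left exact, the two identifications immediately give $\Hom(K, F) = 0 = \Ext^1(K, F)$ for every such $K$, and a presentation $\bigoplus_{X,x} K_{X,x} \twoheadrightarrow G$ of any $G \in \operatorname{Eff}\mc$ by direct sums of cokernels, constructed objectwise from weak effaceability, propagates the vanishing to all of $\operatorname{Eff}\mc$ via the Serre property from (1) and a standard long exact sequence argument. Conversely, if $\Hom(\operatorname{Eff}\mc, F) = 0 = \Ext^1(\operatorname{Eff}\mc, F)$, then in particular both vanishings hold for $K$, and the identifications force the exactness of $0 \to F(X_0) \to F(X_1) \to F(X_2)$. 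The main obstacle is verifying weak effaceability of the cokernels $K$; this is where axiom (E2)$^{\operatorname{op}}$ intervenes and plays the role analogous to the classical pullback-of-deflation construction familiar from the $d = 1$ case.
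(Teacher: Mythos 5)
The paper does not prove this proposition; it is imported verbatim from \cite[Proposition 3.5]{Ebr21}, so there is no in-paper argument to compare against. Your proof is correct and follows the standard route (essentially Ebrahimi's, generalizing the classical $d=1$ case): part (1) via the Serre-plus-coproducts criterion using (E1)$^{\operatorname{op}}$ to compose the two effacing admissible epimorphisms, and part (2) via the cokernels $K=\operatorname{coker}(Y(X_1)\to Y(X_0))$, whose weak effaceability comes from (E2)$^{\operatorname{op}}$ and whose $\Hom$- and $\Ext^1$-groups against $F$ compute exactness of $0\to F(X_0)\to F(X_1)\to F(X_2)$; the propagation of the vanishing from the generators $K$ to all of $\operatorname{Eff}\mc$ via the presentation $\bigoplus K_{X,x}\twoheadrightarrow G$ and the long exact sequence is sound, since $\Hom(-,F)$ is first shown to vanish on all of $\operatorname{Eff}\mc$ (in particular on the kernel of the presentation) before the $\Ext^1$ step.
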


Note that the essential image of the Yoneda embedding $Y\colon \mc\to \operatorname{Mod}\mc$
must lie in $\mathcal{L}(\mc)$ by definition of right $d$-exact sequences. 

\begin{theorem}\label{Proposition:YonedaImageEbrahimi}
Let $\mc$ be a $d$-exact category. The following hold.
\begin{enumerate}
    \item\label{Proposition:YonedaImageEbrahimi:1} The essential image of $Y$ is $d$-rigid in $\mathcal{L}(\mc)$.
    \item\label{Proposition:YonedaImageEbrahimi:2}  The essential image of $Y$ in $\mathcal{L}(\mc)$ is closed under $d$-extensions.
    \item\label{Proposition:YonedaImageEbrahimi:3} A complex $X_{d+1}\to \cdots \to X_{0}$ in $\mc$ is an admissible $d$-exact sequence if and only if 
    \[
    0\to Y(X_{d+1})\to \cdots \to Y(X_{0})\to 0
    \]
    is an exact sequence in $\mathcal{L}(\mc)$.
\end{enumerate}
\end{theorem}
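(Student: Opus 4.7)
The plan is to prove (3) ``only if'' first, deduce (1), and then derive (2) together with the ``if'' direction of (3). The main technical hurdle is (1).

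For the ``only if'' of (3), let $X_\bullet=(X_{d+1}\to\cdots\to X_0)$ be an admissible $d$-exact sequence. Evaluating at any $X\in\mc$ and using left $d$-exactness, $Y(X_\bullet)$ is exact at positions $Y(X_{d+1}),\ldots,Y(X_1)$ in $\operatorname{Mod}\mc$. The cokernel $C=\operatorname{coker}(Y(X_1)\to Y(X_0))$ is weakly effaceable: any $[f]\in C(X)$ represented by $f\colon X\to X_0$ is killed by the admissible epimorphism $X'\to X$ obtained from a $d$-pullback of $X_\bullet$ along $f$ (axiom (E2)$^{\operatorname{op}}$). Since $\operatorname{Eff}\mc$ is localizing by \Cref{Proposition:Ebrahimi1} and the reflection $q\colon\operatorname{Mod}\mc\to\mathcal{L}(\mc)$ is exact, $q(C)=0$ and $Y(X_\bullet)$ becomes exact in $\mathcal{L}(\mc)$.

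For (1), I would show $\Ext^i_{\mathcal{L}(\mc)}(Y(X),Y(Z))=0$ for $0<i<d$ by dimension shifting against admissible $d$-exact sequences in $\mc$. Given a class represented by $\eta\colon 0\to Y(Z)\to F_{i-1}\to\cdots\to F_0\to Y(X)\to 0$, epicness of $F_0\to Y(X)$ in $\mathcal{L}(\mc)$ forces the cokernel of a lift of $F_0\to Y(X)$ to $\operatorname{Mod}\mc$ to be weakly effaceable; applied to $\operatorname{id}_X$ this produces an admissible epimorphism $g\colon X'\to X$ and a morphism $Y(X')\to F_0$ composing to $Y(g)$. Consequently $Y(g)^{*}[\eta]=0$, since pulling back a Yoneda extension along a map factoring through its terminal epi is trivial. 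Completing $g$ to an admissible $d$-exact sequence $X_{d+1}\to\cdots\to X'\to X$ in $\mc$ and applying the ``only if'' of (3) produces an exact sequence in $\mathcal{L}(\mc)$ of length $d+2$; decomposing it into $d$ short exact pieces and iterating the connecting morphisms of the associated long $\Ext$ sequences lifts $[\eta]$ through $d$ successive dimension shifts, eventually landing in $\Ext^{i-d}_{\mathcal{L}(\mc)}(Y(X_{d+1}),Y(Z))=0$ since $i<d$. The key verification at each intermediate stage—that the lifted class lies in the kernel of the next pullback map—is obtained by combining further weak-effaceability arguments with the exactness of the Yoneda image of the $d$-exact sequence.

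Parts (2) and the ``if'' of (3) then follow. For (2), given a Yoneda extension in $\mathcal{L}(\mc)$ with endpoints in $Y(\mc)$, iteratively replace each intermediate term by a representable using weak effaceability and the vanishing from (1), gluing via Yoneda equivalence to produce an equivalent extension all of whose terms lie in $Y(\mc)$. For the ``if'' direction of (3), apply $\Hom_{\mathcal{L}(\mc)}(-,Y(W))$ to an exact Yoneda image; the intermediate $\Ext^{j}_{\mathcal{L}(\mc)}$ groups vanish by (1), giving right $d$-exactness of $X_\bullet$. Left $d$-exactness follows by evaluating at each $W\in\mc$ and using that $\mathcal{L}(\mc)\hookrightarrow\operatorname{Mod}\mc$ is closed under kernels. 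Admissibility of $X_\bullet$ then follows by comparing it to an admissible $d$-exact sequence constructed via axioms (E2)/(E2)$^{\operatorname{op}}$ and invoking closure of the $d$-exact structure under weak isomorphisms. The principal obstacle is the dimension-shift in (1): carefully organizing the iterated weak-effaceability arguments against the $d$-exact structure is the technical heart of \cite{Ebr21,EN23}.
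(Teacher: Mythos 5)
The first thing to note is that the paper does not prove this theorem at all: its ``proof'' is a pointer to \cite[Proposition 3.7]{Ebr21} and \cite[Proposition 3.2, Theorem 3.5]{EN23}. So your proposal is necessarily a different route from the paper — you are reconstructing the cited arguments from scratch. Your treatment of the ``only if'' direction of (3) (the cokernel of $Y(X_1)\to Y(X_0)$ is weakly effaceable via (E2)$^{\operatorname{op}}$, then apply exactness of the localization at $\operatorname{Eff}\mc$ from \Cref{Proposition:Ebrahimi1}) is exactly the mechanism of \cite{Ebr21}, and the reductions of (2) and the ``if'' direction of (3) to (1) are sound in outline.

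The one place where the write-up does not hold together as stated is the conclusion of the argument for (1). You say the class $[\eta]\in\Ext^i_{\mathcal{L}(\mc)}(Y(X),Y(Z))$ is lifted ``through $d$ successive dimension shifts, eventually landing in $\Ext^{i-d}_{\mathcal{L}(\mc)}(Y(X_{d+1}),Y(Z))=0$.'' Read literally this is circular: dimension shifting $\Ext^i(Y(X_0),-)$ down to $\Ext^{i-d}(Y(X_{d+1}),-)$ along the exact sequence $0\to Y(X_{d+1})\to\cdots\to Y(X_0)\to 0$ requires the intermediate terms $Y(X_j)$ to be $\Ext$-acyclic against $Y(Z)$ in the relevant degrees, which is precisely the rigidity being proved. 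Moreover, even granting an induction on $i$ to kill the groups $\Ext^{i-j}(Y(X_{j+1}),Y(Z))$, the connecting maps $\Ext^{i-j-1}(K_{j+1},Y(Z))\to\Ext^{i-j}(K_j,Y(Z))$ are then only \emph{surjective}; iterating them expresses $[\eta]$ as the Yoneda product of the truncated extension $0\to K_i\to Y(X_i)\to\cdots\to Y(X_0)\to 0$ with some $\phi\in\Hom_{\mathcal{L}(\mc)}(K_i,Y(Z))$, and one still has to show this product vanishes. The missing closing move is: precompose $\phi$ with the epimorphism $Y(X_{i+1})\twoheadrightarrow K_i$ to get a morphism $X_{i+1}\to Z$ in $\mc$ annihilating $X_{i+2}\to X_{i+1}$, and invoke the $d$-cokernel property of the admissible $d$-exact sequence to factor it through $X_{i+1}\to X_i$ — this is the only point of the whole proof where the hypothesis $i<d$ is actually used, and it is what makes the pushed-out extension split. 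Your phrase about ``combining further weak-effaceability arguments with the exactness of the Yoneda image'' gestures in this direction, but the stated endpoint ($\Ext^{i-d}=0$ because $i<d$) is not the mechanism and would not compile into a proof as written.
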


\begin{proof}
  Part \eqref{Proposition:YonedaImageEbrahimi:1} is \cite[Proposition 3.7 (ii)]{Ebr21}, part \eqref{Proposition:YonedaImageEbrahimi:2} is \cite[Theorem 3.5]{EN23}, while part \eqref{Proposition:YonedaImageEbrahimi:3} follows from \cite[Proposition 3.7 (i)]{Ebr21} and \cite[Proposition 3.2]{EN23}.
\end{proof}

\section{Maximal rigid subcategories}\label{Section:MaximalRigidSubcat}

\subsection{Properties and examples}
Fix an exact category $\ec$. Here we study maximal right (and left) $d$-rigid subcategories of $\ec$.  Their definition is a one-sided version of \Cref{Reformulation:d-CT}, and is weaker than being $d$-cluster tilting. However, they are still maximal rigid in the sense that there is no larger $d$-rigid subcategory containing them, see \Cref{Prop:RightMaxImplyLeftdCT} below.

		\begin{definition}\label{Definition:MaxRigid}
		Let $\uc$ be a $d$-rigid subcategory of $\ec$.
		\begin{enumerate}
			\item $\uc$ is called \textit{right maximal $d$-rigid} if it is closed under direct summands and for any $E\in \ec$ there exists an exact sequence
			\[
			0\to E\to U^1\to \cdots \to U^d\to 0
			\]
			in $\ec$ with $U_i\in \uc$ for $1\leq i\leq d$.
			\item $\uc$ is called \textit{left maximal $d$-rigid} if it is closed under direct summands and for any $E\in \ec$ there exists an exact sequence
			\[
			0\to U_d\to \cdots \to U_1\to E\to 0
			\]
			in $\ec$ with $U_i\in \uc$ for $1\leq i\leq d$.
		\end{enumerate}
  $\uc$ is called \textit{weakly left} or respectively, \textit{weakly right maximal $d$-rigid} if it satisfies the conditions above, but without necessarily being closed under direct summands.
  \end{definition}

  \begin{remark}
     A subcategory is $d$-cluster tilting if and only if it is left and right maximal $d$-rigid by \Cref{Reformulation:d-CT}.
 \end{remark}

  \begin{remark}
	$\uc$ is left maximal $d$-rigid if and only if $(\uc,\uc)$ is a left $(d-1)$-cotorsion pair in the sense of \cite{HM21a}. A similar statement holds for right maximal $d$-rigid and right $(d-1)$-cotorsion pairs.
\end{remark}

 \begin{example}\label{Example:LowGlobalDim}
 Let $k$ be a field, let $\Lambda$ be a finite-dimensional $k$-algebra, and let $\operatorname{proj}\Lambda$ be the category of finitely generated projective $\Lambda$-modules. Then $\operatorname{proj}\Lambda$ is left maximal $d$-rigid in $\operatorname{mod}\Lambda$ if and only if the global dimension of $\Lambda$ is $<d$, and $\operatorname{proj}\Lambda$ is right maximal $d$-rigid in $\operatorname{mod}\Lambda$ if and only if the global dimension of $\Lambda$ is $0$.
 \end{example}

\begin{example}\label{Example:VosnexProperty}
    Let $k$ be a field, and let $\Lambda$ be a finite-dimensional  $k$-algebra of global dimension $d$ with the vosnex property \cite[Notation 3.5]{IO13}. Then the subcategory
    \[
    \hat{\Lambda}\colonequals \add\{\tau_d^{-i}(\Lambda)\mid i\geq 0\}
    \]
    of $\operatorname{mod} \Lambda$ is left maximal $d$-rigid by \cite[Proposition 5.3 and Lemma 5.5]{IO13}, where $$\tau_d^-=\Ext^d_\Lambda(D(\Lambda),-)$$ is 
    the inverse $d$-Auslander--Reiten translation and $D(\Lambda)=\Hom_k(\Lambda,k)$. 
\end{example}

\begin{example}\label{Example:SubCatsOfPreProj}
     Let $\Pi=\Pi(Q)$ be the completion of the preprojective algebra of a non-Dynkin quiver $Q$ with vertices $1,2,\dots,n$. For each vertex $i\in \{1,2,\dots,n\}$ let $e_i$ denote the corresponding idempotent and let $I_i=\Pi(1-e_i)\Pi$ be the corresponding ideal of $\Pi$. 

     Assume $w=s_{i_1}s_{i_2}\cdots s_{i_k}$ is a reduced presentation of an element in the Weyl-group $W$ of $Q$, and let $I_w=I_{i_1}I_{i_2}\dots  I_{i_k}$ be the corresponding product of ideals. Then the algebra $\Pi/I_w$ is finite-dimensional by \cite[Theorem III.1.6]{BIRS09} and 
     \[
     \mc_w=\operatorname{add}\{\Pi/(I_{i_1}I_{i_2}\dots I_{i_j})\mid 0<j\leq k\} 
     \]
     is a left maximal $2$-rigid in $\operatorname{mod}\Pi/I_w$ by \cite[Lemma III.2.1 and Proposition III.2.5 (a)]{BIRS09}.

     Assume $s_{i_1},s_{i_2},\dots$ is an infinite sequence of reflections in the Weyl group $W$ of $Q$ such that the product $s_{i_1}s_{i_2}\cdots s_{i_k}$ is a reduced presentation of an element in $W$ for each $k$, and such that each number in $\{1,2,\dots,n\}$ occurs an infinite number of times as an index. Then the additive closure of the infinite sum
     \[
     \mc=\operatorname{add}\{\Pi/(I_{i_1}I_{i_2}\dots I_{i_j})\mid j>0\}
     \]
    is a left maximal $2$-rigid subcategory in the category $\operatorname{fl}\Pi$ of finite-dimensional modules over $\Pi$ by the proof of \cite[Theorem III.2.7]{BIRS09}. 

    It is not clear that $\mc_w$ or $\mc$ above are $2$-cluster tilting in $\operatorname{mod}\Pi/I_w$ or $\operatorname{fl}\Pi$, respectively, since they are not shown to be cogenerating. However, the subcategory
    \[
    \operatorname{Sub}\Pi/I_w\colonequals\{X\in \operatorname{mod}\Pi/I_w\mid \text{there exists a monomorphism }X\to P\text{ with }P\in \proj \Pi/I_w\}
    \]
    is extension closed in $\operatorname{mod}\Pi/I_w$ and $\mc_w$ is a $2$-cluster tilting subcategory in $\operatorname{Sub}\Pi/I_w$, by \cite[Theorem III.2.6]{BIRS09}. It follows that 
    \[
    \operatorname{Sub}\mc\colonequals\{X\in \operatorname{fl}\Pi\mid \text{there exists a monomorphism }X\to M\text{ with }M\in \mc\}
    \]
    is extension closed in $\operatorname{fl}\Pi$ and $\mc$ is $2$-cluster tilting in $\operatorname{Sub}\mc$, since  
    \[
    \mc=\cup_{j>0}\mc_{w_j} \quad \text{and} \quad \operatorname{Sub}\mc=\cup_{j>0}\operatorname{Sub}\Lambda/I_{w_j}
    \] 
    as subcategories of $\operatorname{fl}\Pi$ where $w_j=s_{i_1}s_{i_2}\cdots s_{i_j}$.
 \end{example}

Recall that $\uc$ is closed under additive complements if for any $W\in \ec$ and $U\in \uc$ such that $W\oplus U\in \uc$, then $W\in \uc$. 
The following lemma shows that we can replace being closed under direct summands with this weaker assumption in \Cref{Definition:MaxRigid}.

\begin{lemma}\label{Lemma:UClosedUnderDirectSummand}
	Assume $\uc$ is weakly right maximal $d$-rigid and closed under additive complements in $\ec$. Then $\uc$ is closed under direct summands, and is therefore right maximal $d$-rigid.
\end{lemma}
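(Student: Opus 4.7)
The plan is to show $W \in \uc$ whenever $W \oplus W' \in \uc$; once this is established, $W' \in \uc$ follows immediately from additive complements. My main tool will be a splitting lemma for $d$-rigid subcategories: if $0 \to X \to A^1 \to \cdots \to A^\ell \to 0$ is exact in $\ec$ with $X, A^1, \ldots, A^\ell \in \uc$ and $2 \leq \ell \leq d$, then the inflation $X \to A^1$ is split. I would prove this by breaking the sequence into short exact sequences with cohomologies $K_1, \ldots, K_{\ell-1} = A^\ell$, applying $\Hom_\ec(-,X)$ to each, and using $\Ext^i_\ec(\uc,\uc) = 0$ for $0 < i < d$ to produce a chain of connecting isomorphisms $\Ext^1_\ec(K_1, X) \cong \Ext^2_\ec(K_2, X) \cong \cdots \cong \Ext^{\ell-1}_\ec(A^\ell, X) = 0$, where the bound $\ell \leq d$ is what forces the last group to vanish; this lets $\operatorname{id}_X$ lift along $\Hom_\ec(A^1, X) \to \Hom_\ec(X, X)$ and yields a retraction.

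For the main argument I would apply weakly right maximal $d$-rigidity to $W$ and to $W'$ separately to obtain exact sequences $0 \to W \xrightarrow{f} U^1 \to \cdots \to U^d \to 0$ and $0 \to W' \xrightarrow{g} V^1 \to \cdots \to V^d \to 0$ with $U^i, V^i \in \uc$. Their direct sum
\[
0 \to W \oplus W' \xrightarrow{f \oplus g} U^1 \oplus V^1 \to \cdots \to U^d \oplus V^d \to 0
\]
has all terms in $\uc$, so the splitting lemma applies; writing a retraction in matrix form forces both $f$ and $g$ to split individually, giving $U^1 \cong W \oplus W_{(1)}$ and $V^1 \cong W' \oplus W'_{(1)}$. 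Additive complements applied to $U^1 \oplus V^1 = (W \oplus W') \oplus (W_{(1)} \oplus W'_{(1)}) \in \uc$ then gives $W_{(1)} \oplus W'_{(1)} \in \uc$, so the residual sequence $0 \to W_{(1)} \oplus W'_{(1)} \to U^2 \oplus V^2 \to \cdots \to U^d \oplus V^d \to 0$ again has all terms in $\uc$, now of length $d-1$, and the splitting lemma can be reapplied.

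Iterating this process $d-1$ times yields decompositions $U^k \cong W_{(k-1)} \oplus W_{(k)}$ (and analogously for $V^k$) with $W_{(k)} \oplus W'_{(k)} \in \uc$ at every stage, terminating in a length-$1$ residue $0 \to W_{(d-1)} \oplus W'_{(d-1)} \to U^d \oplus V^d \to 0$ that exhibits $W_{(d-1)} \cong U^d \in \uc$. A final downward propagation using additive complements on $U^k = W_{(k-1)} \oplus W_{(k)} \in \uc$ then yields $W_{(d-2)}, \ldots, W_{(0)} = W \in \uc$ successively. The main subtlety to monitor is that the starting term of each residual sequence must remain in $\uc$ for the splitting lemma to reapply; this is exactly what the additive-complement hypothesis buys us, and is the mechanism by which it upgrades the coresolutions provided by weakly maximal rigidity to full closure under summands.
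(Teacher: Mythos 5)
Your proof is correct, but it takes a noticeably different route from the paper's. The paper works with a single coresolution $0\to E\to U^1\to\cdots\to U^d\to 0$ of the summand $E$ and makes one key observation you avoid: since $E$ is a direct summand of $E\oplus E'\in\uc$ and $\Ext$ is additive, $\Ext^i_\ec(\uc,E)=0$ for $0<i<d$ already holds. Applying $\Hom_\ec(U^d,-)$ to the coresolution then shows the \emph{rightmost} deflation $U^{d-1}\to U^d$ splits, so its kernel lies in $\uc$ by additive complements, and one peels the coresolution from the right in a single downward pass. Your argument instead insists that every Ext-vanishing be between objects of $\uc$; to arrange this you sum the coresolutions of $W$ and $W'$ so that the leftmost term $W\oplus W'$ lies in $\uc$, invoke your splitting lemma to split the \emph{leftmost} inflation, and use the matrix form of the retraction to descend the splitting from $f\oplus g$ to $f$. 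This costs you two passes (one up the coresolution to produce the decompositions $U^k\cong W_{(k-1)}\oplus W_{(k)}$ with $W_{(k)}\oplus W'_{(k)}\in\uc$, one down to conclude $W_{(k)}\in\uc$), whereas the paper needs only one; on the other hand your isolated splitting lemma (all terms in $\uc$, length at most $d$ forces the first inflation to split) is a clean reusable statement, and the matrix trick is a genuinely different mechanism for exploiting the hypothesis $W\oplus W'\in\uc$. One cosmetic point: the connecting maps in your dimension-shifting chain are in general only injections $\Ext^i_\ec(K_i,X)\hookrightarrow\Ext^{i+1}_\ec(K_{i+1},X)$ under the stated Ext-vanishing (isomorphisms would require vanishing of the next Ext group of the middle term as well), but injectivity is all you need to conclude $\Ext^1_\ec(K_1,X)=0$.
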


\begin{proof}
Assume $E\oplus E'\in \uc$. We want to show that $E,E'\in \uc$. Choose an exact sequence
	\[
	0\to E\to U^1\to \cdots \to U^{d-1}\to U^d\to 0
	\]
	and let $E^j$ be the kernel of $U^j\to U^{j+1}$ for $0<j<d$, so that $E^1=E$. Hence, we get conflations
	\[
	0\to E^j\to U^j\to E^{j+1}\to 0
	\]
	for $0<j<d$, where $E^d\colonequals U^d$. Since $E\oplus E'\in \uc$ and $\Ext^i_\ec(\uc,\uc)=0$ for $0<i<d$, it follows that $\Ext^i_\ec(\uc,E)=0$ for $0<i<d$. Hence applying $\Hom_\ec(U^d,-)$ to the top sequence gives an exact sequence
	\[
	0\to \Hom_{\ec}(U^d,E)\to \Hom_{\ec}(U^d,U^1)\to \cdots \to \Hom_{\ec}(U^d,U^{d-1})\to \Hom_{\ec}(U^d,U^d)\to 0.
	\]
	In particular, since the rightmost map is surjective, the conflation
	\[
	0\to E^{d-1}\to U^{d-1}\to U^d\to 0
	\]
	must be split. Hence, $E^{d-1}\in \uc$ since $\uc$ is closed under additive complements. Thus, we have an exact sequence
	\[
	0\to E\to U^1\to \cdots \to U^{d-2}\to E^{d-1}\to 0
	\]
	where all terms are in $\uc$ except possibly $E$. Repeating this procedure, we can show recursively for $i\geq 1$ that $E^{d-i}\in \uc$. In particular, for $i=d-1$ this gives that $E\in \uc$, which proves the claim.  
\end{proof}

The following result shows that right maximal $d$-rigid implies left $d$-cluster tilting in the sense of \cite[Definition 5.1]{IO13}. The dual statement on left maximal $d$-rigid and right $d$-cluster tilting also holds. The proof is similar to the last part of the proof of \cite[Theorem 5.2]{IO13}. 

\begin{proposition}\label{Prop:RightMaxImplyLeftdCT}
		If $\uc$ is right maximal $d$-rigid, then 
		\[
		\uc=\{E\in \ec\mid \Ext^i_\ec(\uc,E)=0 \text{ for all }0<i<d\}.
		\]
\end{proposition}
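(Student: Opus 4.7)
My plan is to prove the nontrivial inclusion by taking the coresolution $0\to E\to U^1\to\cdots\to U^d\to 0$ provided by right maximal $d$-rigidity and showing that each of its syzygies lies in $\uc$. The inclusion $\uc\subseteq\{E\in\ec\mid \Ext^i_\ec(\uc,E)=0,\ 0<i<d\}$ is immediate from $\uc$ being $d$-rigid.

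For the reverse inclusion, fix $E$ in the right-hand set. By right maximal $d$-rigidity pick such a sequence and set $K^j:=\ker(U^j\to U^{j+1})$, so $K^1=E$, $K^d=U^d$, and we obtain short exact sequences
\[
0\to K^j\to U^j\to K^{j+1}\to 0 \quad (1\leq j\leq d-1)
\]
in $\ec$. The first step is a standard dimension shift: since $\Ext^i_\ec(\uc,U^j)=0$ for $0<i<d$, the long exact Ext-sequences yield $\Ext^i_\ec(\uc,K^{j+1})\cong \Ext^{i+1}_\ec(\uc,K^j)$ for $1\leq i\leq d-2$. Starting from the hypothesis that $\Ext^i_\ec(\uc,E)=0$ for $1\leq i\leq d-1$, an easy induction on $j$ gives $\Ext^i_\ec(\uc,K^j)=0$ for $1\leq i\leq d-j$. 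In particular $\Ext^1_\ec(\uc,K^j)=0$ whenever $j<d$.

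With this in hand I proceed by downward induction on $j$ to show $K^j\in\uc$. The base case is $K^d=U^d\in\uc$. For the inductive step, given $K^{j+1}\in\uc$ with $1\leq j\leq d-1$, apply $\Hom_\ec(K^{j+1},-)$ to the conflation $0\to K^j\to U^j\to K^{j+1}\to 0$; since $\Ext^1_\ec(K^{j+1},K^j)=0$ by the previous paragraph, the identity on $K^{j+1}$ lifts through $U^j$, so the sequence splits. Thus $K^j$ is a direct summand of $U^j\in\uc$, and closure of $\uc$ under direct summands (which is part of the definition of right maximal $d$-rigid) forces $K^j\in\uc$. Taking $j=1$ gives $E=K^1\in\uc$, as required.

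The argument is essentially mechanical, so the only real point to watch is the index bookkeeping in the dimension shift, ensuring that $\Ext^1_\ec(K^{j+1},K^j)$ vanishes for every $j=d-1,d-2,\dots,1$ simultaneously; this is exactly arranged by the inequality $1\leq d-j$ in the shifted vanishing range. I do not foresee a substantive obstacle: this is a one-sided variant of the splitting/dimension-shifting argument appearing in the last part of the proof of \cite[Theorem 5.2]{IO13}, and the tools used (long exact Ext-sequences from conflations and the usual $\Ext^1=0$ splitting criterion) are standard for Yoneda Ext in any exact category.
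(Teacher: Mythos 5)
Your proof is correct and follows essentially the same route as the paper's: both arguments peel off the coresolution from the right by showing each conflation $0\to K^j\to U^j\to K^{j+1}\to 0$ splits, using dimension shifting against the $d$-rigidity of $\uc$ and the hypothesis $\Ext^i_\ec(\uc,E)=0$, and then invoke closure under direct summands. The only cosmetic difference is that you isolate the Ext-vanishing $\Ext^1_\ec(\uc,K^j)=0$ as a separate dimension-shifting step, whereas the paper packages it as exactness of the sequence obtained by applying $\Hom_\ec(U^d,-)$ and then iterates.
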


\begin{proof}
	 Assume $E\in \ec$ satisfies $\Ext^i_\ec(\uc,E)=0$ for all $0<i<d$.  Choose an exact sequence
	\[
	0\to E\to U^1\to \cdots \to U^{d-1}\to U^d\to 0
	\]
	in $\ec$ with $U^i\in \uc$ for $1\leq i\leq d$. Applying $\Hom_{\ec}(U^d,-)$ to this and using that $\Ext^i_\ec(U^d,E)=0$ for $0<i<d$, we get an exact sequence
	\[
	0\to \Hom_\ec(U^d,E)\to \Hom_\ec(U^d,U^1)\to \cdots \to \Hom_\ec(U^d,U^{d-1})\to \Hom_\ec(U^d,U^d)\to 0.
	\]
	Hence, the deflation $U^{d-1}\to U^d$ is split, and so its kernel must be in $\uc$ since $\uc$ is closed under direct summands. Repeating the argument in a similar way as in the proof of \Cref{Lemma:UClosedUnderDirectSummand}, we get that $E\in \uc$. This shows that
	$$\{E\in \ec\mid \Ext^i_\ec(\uc,E)=0 \text{ for all }0<i<d\}\subseteq \uc.$$
	Since the other inclusion is clear, the claim follows.
\end{proof}

\begin{remark}
  Let $\Lambda$ be an algebra with the vosnex property as in \Cref{Example:VosnexProperty}.  By the dual of \Cref{Prop:RightMaxImplyLeftdCT} the subcategory $\hat{\Lambda}\colonequals \add\{\tau_d^{-i}(\Lambda)\mid i\geq 0\}$ is right $d$-cluster tilting, i.e.
    \[
    \hat{\Lambda}=\{E\in \operatorname{mod}\Lambda\mid \Ext^i_\Lambda(E,\hat{\Lambda})=0 \text{ for all }0<i<d\}.
    \]
    This also follows from \cite[Theorem 5.2]{IO13}.
\end{remark}

\subsection{Realizing rigid subcategories as maximal rigid}\label{Subsection:FiniteRes&Cores}
	 
	 Fix $\uc$ to be a $d$-rigid subcategory of $\ec$, and let 
	\begin{align*}
	\uc^{j}(\ec):=\{E\in \ec \mid \exists\text{ }0\to E\to U^1\to \cdots \to U^j\to 0 \text{ exact, }U^i\in \uc \text{, }1\leq i\leq j\} \\
	\uc_{j}(\ec):=\{E\in \ec \mid \exists\text{ }0\to U_j\to \cdots \to U_1\to E\to 0 \text{ exact, }U_i\in \uc \text{, }1\leq i\leq j\}.
	\end{align*}
The goal of this subsection is to investigate when $\uc$ is right maximal $d$-rigid in $\uc^{d}(\ec)$. The main challenge involves determining when $\uc^{d}(\ec)$ is closed under extensions in $\ec$, so that it becomes an exact category. We first show that this always holds for $j<d$.
	
	\begin{lemma}\label{Lemma:ExtClosureProperty-Rigid} $\uc^{j}(\ec)$ is closed under extensions in $\ec$ for $j<d$. 
	\end{lemma}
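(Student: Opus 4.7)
The plan is to proceed by induction on $j$, with the case $j=0$ being vacuous since $\uc^{0}(\ec)$ consists only of zero objects. For the base case $j=1$, the subcategory $\uc^{1}(\ec)$ is the isomorphism closure of $\uc$, and any conflation $0\to U\to F\to U'\to 0$ with $U,U'\in\uc$ represents an element of $\Ext^{1}_{\ec}(U',U)$, which vanishes because $j<d$ forces $d\geq 2$ and $\uc$ is $d$-rigid; hence the conflation splits and $F\cong U\oplus U'$ lies in $\uc\subseteq\uc^{1}(\ec)$.

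The heart of the argument is an auxiliary Ext-vanishing: for $1\leq j<d$, any $E'\in\uc^{j}(\ec)$, and any $U\in\uc$, one has $\Ext^{1}_{\ec}(E',U)=0$. I would establish the stronger statement $\Ext^{i}_{\ec}(E',U)=0$ for all $1\leq i\leq d-j$ by an induction on $j$: the base case $j=1$ is the defining $d$-rigidity, and the inductive step uses the short exact sequence $0\to E'\to V^{1}\to E'_{1}\to 0$ extracted from the coresolution (so $E'_{1}\in\uc^{j-1}(\ec)$), dimension-shifting in the long exact sequence of $\Hom_{\ec}(-,U)$, and observing that the relevant $\Ext^{i}_{\ec}(V^{1},U)$ vanish in the required range by $d$-rigidity of $\uc$.

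For the inductive step of the lemma itself, assume the statement for $j-1$ and let $0\to E\to F\to E'\to 0$ be a conflation in $\ec$ with $E,E'\in\uc^{j}(\ec)$, $j<d$. Choose inflations $E\to U^{1}$ and $E'\to V^{1}$ into $\uc$ with cokernels $E_{1},E'_{1}\in\uc^{j-1}(\ec)$. Forming the pushout along $E\to U^{1}$ yields a conflation $0\to U^{1}\to P\to E'\to 0$, which splits by the auxiliary Ext-vanishing, giving $P\cong U^{1}\oplus E'$ and an inflation $F\to U^{1}\oplus E'$ of cokernel $E_{1}$. Composing with the inflation $\operatorname{id}_{U^{1}}\oplus(E'\to V^{1})$ produces an inflation $F\to U^{1}\oplus V^{1}$, and the standard Noether/$3\times 3$ argument identifying the cokernel of a composite of inflations places its cokernel $F_{1}$ in a conflation $0\to E_{1}\to F_{1}\to E'_{1}\to 0$. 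By the induction hypothesis, $\uc^{j-1}(\ec)$ is extension-closed, so $F_{1}\in\uc^{j-1}(\ec)$; splicing a length-$(j-1)$ coresolution of $F_{1}$ onto $0\to F\to U^{1}\oplus V^{1}\to F_{1}\to 0$ exhibits $F\in\uc^{j}(\ec)$.

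The main delicate point is the auxiliary Ext-vanishing, because this is the only place where the hypothesis $j<d$ is essential: once $j=d$, $d$-rigidity no longer suffices to split the pushout conflation, which is exactly the obstruction motivating the $d$-extension closure condition introduced later in the paper.
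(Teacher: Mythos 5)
Your proof is correct and takes essentially the same route as the paper: the paper's entire proof is a one-line appeal to the dual of \cite[Lemma 3.7]{HM21a} together with the key observation that $\Ext^1_\ec(\uc^j(\ec),\uc)=0$ for $j<d$, and your dimension-shifting argument establishes exactly that vanishing while your pushout/composition-of-inflations induction fills in the details that the paper delegates to the citation. No gaps.
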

	
	\begin{proof}
Follows from the dual of \cite[Lemma 3.7]{HM21a}, since $\Ext^1_\ec(\uc^j(\ec),\uc)=0$ for $j<d$.
	\end{proof}
	
	\begin{lemma}\label{ClosureKernelDeflation}
		Assume $d\geq 2$, and let $$0\to E_3\to E_2\to E_1\to 0$$ be a conflation with $E_2\in \uc^{d}(\ec)$ and $E_1\in \uc^{d-1}(\ec)$. Then $E_3\in \uc^{d}(\ec)$.
	\end{lemma}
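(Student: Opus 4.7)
The plan is to build a length-$d$ coresolution of $E_3$ by splicing together an inflation into a single object of $\uc$ with a coresolution of length $d-1$ of a suitable quotient. The key idea is to use the coresolution of $E_2$ to move from $E_2$ into $\uc$ in one step, and then use the $d-1$ closure property (\Cref{Lemma:ExtClosureProperty-Rigid}) on the resulting cokernel.

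First I would choose a coresolution of $E_2$: since $E_2\in \uc^d(\ec)$, there is an exact sequence $0\to E_2\to V^1\to V^2\to \cdots \to V^d\to 0$ with $V^i\in \uc$. Setting $E_2'\colonequals \operatorname{Coker}(E_2\to V^1)$, the remainder $0\to E_2'\to V^2\to\cdots\to V^d\to 0$ witnesses $E_2'\in \uc^{d-1}(\ec)$. Next, compose the two inflations $E_3\rightarrowtail E_2$ and $E_2\rightarrowtail V^1$ to obtain an inflation $E_3\rightarrowtail V^1$ (using axiom \textbf{R1}), and let $C$ denote its cokernel. By the Noether isomorphism for exact categories (see e.g. \cite[Lemma 3.5]{Bue10}), we obtain a commutative diagram
\[
\begin{tikzcd}[column sep=20,row sep=18]
0\arrow[r]&E_3\arrow[r]\arrow[d,equal]&E_2\arrow[r]\arrow[d]&E_1\arrow[r]\arrow[d]&0\\
0\arrow[r]&E_3\arrow[r]&V^1\arrow[r]\arrow[d]&C\arrow[r]\arrow[d]&0\\
& & E_2'\arrow[r,equal]&E_2'&
\end{tikzcd}
\]
whose rightmost column is a conflation $0\to E_1\to C\to E_2'\to 0$.

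Now apply \Cref{Lemma:ExtClosureProperty-Rigid} to the conflation $0\to E_1\to C\to E_2'\to 0$: both $E_1$ and $E_2'$ lie in $\uc^{d-1}(\ec)$, and since $d-1<d$, the subcategory $\uc^{d-1}(\ec)$ is extension-closed in $\ec$. Hence $C\in \uc^{d-1}(\ec)$, so there exists an exact sequence $0\to C\to U^2\to \cdots\to U^d\to 0$ with $U^i\in \uc$. Splicing this with the middle row $0\to E_3\to V^1\to C\to 0$ along $C$ produces
\[
0\to E_3\to V^1\to U^2\to\cdots\to U^d\to 0,
\]
an exact sequence in $\ec$ with all terms after $E_3$ in $\uc$, which shows $E_3\in \uc^d(\ec)$.

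The only delicate point is the existence of the $3\times 3$-type diagram above, i.e.\ the induced conflation $0\to E_1\to C\to E_2'\to 0$. This is standard for exact categories and does not require weak idempotent completeness, so I expect this to be invoked as a citation rather than proved inline; everything else is a direct unwinding of the definitions of $\uc^j(\ec)$ combined with the already-established extension closure for $j<d$.
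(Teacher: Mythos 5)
Your proof is correct and is essentially identical to the paper's: the paper also takes a coresolution $0\to E_2\to U^1\to\cdots\to U^d\to 0$, forms the cokernel of the composite $E_3\to E_2\to U^1$, obtains the conflation $0\to E_1\to F\to C\to 0$ via Noether's isomorphism (\cite[Lemma 3.5]{Bue10}), and concludes with the extension-closure of $\uc^{d-1}(\ec)$ from \Cref{Lemma:ExtClosureProperty-Rigid}. Only the naming of the intermediate cokernels differs.
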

	
	\begin{proof}
		Choose an exact sequence
		\[
		0\to E_2\to U^1\to \cdots \to U^d\to 0
		\]
		with $U^1,\dots, U^{d}\in \uc$. Let $C$ be the cokernel of $E_2\to U^1$, and let $F$ be the cokernel of the composite $E_3\to E_2\to U^1$. Then by Noether's third isomorphism theorem we get an exact sequence
		\[
		0\to E_1\to F\to C\to 0
		\]
		see \cite[Lemma 3.5]{Bue10}. By construction $C\in \uc^{d-1}(\ec)$ and by assumption $E_1\in \uc^{d-1}(\ec)$. Hence $F\in \uc^{d-1}(\ec)$ by \Cref{Lemma:ExtClosureProperty-Rigid}, and so $E_3\in \uc^{d}(\ec)$. This proves the claim.
	\end{proof}

 \begin{proposition}\label{Prop:ClosusureAdditiveComplements}
    Assume $d\geq 2$. Then $\uc^d(\ec)$ is closed under additive complements in $\ec$.
\end{proposition}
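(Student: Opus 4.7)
The plan is to apply Noether's third isomorphism theorem to reduce the problem to showing an intermediate object $\bar U$ lies in $\uc^{d-1}(\ec)$, and then build the coresolution of $\bar U$ via a pushout whose key splitting is provided by dimension-shifting together with the $d$-rigidity of $\uc$.

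Let $E\oplus E'\in \uc^d(\ec)$ and $E\in \uc^d(\ec)$, with fixed length-$d$ coresolutions
\[
0\to E\oplus E'\to U^1\to\cdots\to U^d\to 0 \qquad\text{and}\qquad 0\to E\to V^1\to \cdots\to V^d\to 0
\]
in $\uc$. Composing the split inflation $E'\to E\oplus E'$ with $E\oplus E'\to U^1$ gives an inflation $E'\to U^1$; denote its cokernel by $\bar U$. Applying Noether's third isomorphism theorem to the split conflation $0\to E'\to E\oplus E'\to E\to 0$ and the inflation $E\oplus E'\to U^1$ (as in the proof of \Cref{ClosureKernelDeflation}) yields a conflation
\[
0 \to E \to \bar U \to C \to 0,
\]
where $C=\operatorname{Coker}(E\oplus E'\to U^1)\in \uc^{d-1}(\ec)$ by truncating the coresolution of $E\oplus E'$. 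Together with the conflation $0\to E'\to U^1\to \bar U\to 0$, the problem reduces to producing a length-$(d-1)$ coresolution of $\bar U$ in $\uc$, since splicing yields the required length-$d$ coresolution of $E'$.

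To construct the coresolution of $\bar U$, I would form the pushout $P$ of the inflations $E\to V^1$ and $E\to \bar U$. By axiom L2 this produces conflations
\[
0\to V^1\to P\to C\to 0 \qquad \text{and}\qquad 0\to \bar U\to P\to V^1/E\to 0,
\]
where $V^1/E\in \uc^{d-1}(\ec)$ from the truncation of $E$'s coresolution. Dimension-shifting along the length-$(d-1)$ coresolution of $C$, using that $\Ext^i_\ec(\uc,\uc)=0$ for $0<i<d$, gives $\Ext^1_\ec(C,V^1)\cong \Ext^{d-1}_\ec(U^d,V^1)=0$, so the first conflation splits as $P\cong V^1\oplus C$. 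Direct-summing the length-$(d-1)$ coresolutions of $V^1$ (trivially padded) and of $C$ then produces an explicit length-$(d-1)$ coresolution of $P$ in $\uc$, and a chain map to the length-$(d-1)$ coresolution of $V^1/E$ lifting the deflation $P\to V^1/E$ is then used to extract a coresolution of $\bar U$ as its degree-wise kernel.

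The main obstacle is the last step: extracting a length-$(d-1)$ coresolution of $\bar U$ in $\uc$ from the conflation $0\to \bar U\to P\to V^1/E\to 0$. A naive horseshoe applied directly to $0\to E\to \bar U\to C\to 0$ gives only $\bar U\in \uc^d(\ec)$, which is off by one. The reduction to length $d-1$ requires using the splitting $P\cong V^1\oplus C$ together with the shared structure of the coresolutions of $P$ and $V^1/E$ (both built out of $E$'s coresolution and $C$'s coresolution); rigidity-induced $\Ext$-vanishings must be used carefully to arrange that the lifted chain map $P^\bullet\to (V^1/E)^\bullet$ has kernels lying in $\uc$, rather than merely in $\uc^{d-1}(\ec)$.
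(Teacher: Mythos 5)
Your reductions up to the splitting $P\cong V^1\oplus C$ are all correct (the Noether argument, the pushout, and the vanishing $\Ext^1_\ec(C,V^1)=0$ via dimension shifting and $d$-rigidity all check out), but the proof is incomplete at exactly the point you flag, and that point carries the entire content of the proposition. Your reduction requires $\bar U=\Coker(E'\to U^1)\in \uc^{d-1}(\ec)$, whereas the machinery available here only delivers the weaker containment $\bar U\in\uc^{d}(\ec)$: the conflation $0\to \bar U\to P\to V^1/E\to 0$ has $P\in\uc^{d-1}(\ec)$ and $V^1/E\in\uc^{d-1}(\ec)$, which is precisely the hypothesis of \Cref{ClosureKernelDeflation} and yields only a length-$d$ coresolution of $\bar U$. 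The proposed repair --- lifting $P\to V^1/E$ to a chain map between the two length-$(d-1)$ coresolutions and taking degree-wise kernels --- does not work as stated: the components of such a lift (already $V^1\oplus U^2\to V^2$ in the first degree) need not be deflations, let alone split deflations with kernels in $\uc$, and since $\uc$ is only assumed $d$-rigid there is no $\Ext$-vanishing that forces this. So the decisive step is missing, and it is not a routine one; establishing $\bar U\in\uc^{d-1}(\ec)$ directly appears to be at least as hard as the proposition itself.

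The paper's proof avoids this ``off by one'' problem by never aiming at a $\uc^{d-1}(\ec)$-statement. In your notation: starting from the first coresolution step $0\to E\to V^1\to E''\to 0$ of the \emph{known} summand, it adds the trivial conflation to get $0\to E\oplus E'\to V^1\oplus E'\to E''\to 0$, takes the pushout of this along the first coresolution step $E\oplus E'\to U^1$ of the big object to obtain $G$ with conflations $0\to U^1\to G\to E''\to 0$ and $0\to V^1\oplus E'\to G\to C\to 0$, deduces $G\in\uc^{d-1}(\ec)$ from extension-closure (\Cref{Lemma:ExtClosureProperty-Rigid}), and then applies \Cref{ClosureKernelDeflation} to conclude $V^1\oplus E'\in\uc^{d}(\ec)$. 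The key final move is a \emph{second} application of \Cref{ClosureKernelDeflation}, to the split conflation $0\to E'\to V^1\oplus E'\to V^1\to 0$ with $V^1\in\uc\subseteq\uc^{d-1}(\ec)$, which peels off the $\uc$-summand and gives $E'\in\uc^{d}(\ec)$. If you want to salvage your argument, replace the target ``$\bar U\in\uc^{d-1}(\ec)$'' by ``$E'\oplus W\in\uc^{d}(\ec)$ for some $W\in\uc$'' and finish with this second use of \Cref{ClosureKernelDeflation}.
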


\begin{proof}
    Assume we have objects $E\in \ec$ and $F\in \uc^d(\ec)$ such that $E\oplus F\in \uc^d(\ec)$. By definition, we can find a conflation
    \[
    0\to F\to U\to F'\to 0
    \]
    with $U\in \uc$ and $F'\in \uc^{d-1}(\ec)$. Adding the trivial conflation $0\to E\xrightarrow{1}E\to 0\to 0$ to it, we get a conflation
    \[
    0\to E\oplus F\to E\oplus U\to F'\to 0.
    \]
    Now since $E\oplus F\in \uc^d(\ec)$, we can find a conflation $0\to E\oplus F\to V\to E'\to 0$ with $V\in \uc$ and $E'\in \uc^{d-1}(\ec)$. Taking the pushout of $E\oplus F\to E\oplus U$ along $E\oplus F\to V$, we get a commutative diagram
    \[
		\begin{tikzcd}[column sep=20, row sep=20]
		0\arrow[r] &E\oplus F \arrow[r] \arrow[d] & E \oplus U\arrow[r] \arrow[d] & F' \arrow[d,equal]\arrow[r] & 0 \\
		0\arrow[r] &V \arrow[r, ""] & G \arrow[r]  & F' \arrow[r]  & 0
		\end{tikzcd}
		\]
  where the rows are conflations. Since $\uc^{d-1}(\ec)$ is closed under extensions by \Cref{Lemma:ExtClosureProperty-Rigid}, it follows that $G\in \uc^{d-1}(\ec)$. Now since the cokernel of $E\oplus U\to G$ is equal to the cokernel of $E\oplus F\to V$, we have a conflation
  \[
  0\to E\oplus U\to G\to E'\to 0.
  \]
  Since $G\in \uc^{d-1}(\ec)$ and $E'\in \uc^{d-1}(\ec)$, it follows from \Cref{ClosureKernelDeflation} that $E\oplus U\in \uc^d(\ec)$. Now consider the (split) conflation
  \[
  0\to E\to E\oplus U\to U\to 0.
  \]
  Since $U\in \uc\subseteq \uc^{d-1}(\ec)$ and $E\oplus U\in \uc^d(\ec)$, we get that $E\in \uc^d(\ec)$ by \Cref{ClosureKernelDeflation}.
\end{proof}	

Next we turn to the question of when $\uc^d(\ec)$ is closed under extensions. We first show that if it holds, then we obtain a category where $\uc$ is maximal $d$-rigid. We need the following lemma. 

\begin{lemma} \label{lem: replaceterms}
	Assume $\uc$ is a cogenerating subcategory of $\ec$, and let $j\geq 1$ be an integer. Then any exact sequence 
	\begin{align*}
	0 \to E_{j+1} \xrightarrow{} E_j \xrightarrow{} E_{j-1} \to \dots \to E_{2} \to E_1 \xrightarrow{} E_{0} \to 0
	\end{align*}
 of length $j$ is Yoneda equivalent to an exact sequence 
	\begin{align*}
	0 \to E_{j+1} \xrightarrow{} U_j \xrightarrow{} U_{j-1} \to \dots \to U_{2} \to X \xrightarrow{} E_{0} \to 0
	\end{align*}
	where $U_2, \dots, U_{j} \in \uc$.
\end{lemma}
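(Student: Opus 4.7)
The plan is to induct on $j$. The base case $j=1$ is immediate: set $X \colonequals E_1$, and there are no middle positions requiring an object of $\uc$.

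For $j \geq 2$ I would first carry out a ``one-position replacement'' that trades $E_j$ for an object of $\uc$ at the cost of modifying the next term, and then apply the induction hypothesis to a length-$(j-1)$ tail. Since $\uc$ is cogenerating, fix an inflation $\mu \colon E_j \infl U_j$ with $U_j \in \uc$. Split the input at $E_j$ into the leftmost conflation $0 \to E_{j+1} \xrightarrow{\alpha} E_j \to K \to 0$ (where $K = \ker(E_{j-1} \to E_{j-2})$) together with the length-$(j-1)$ tail $0 \to K \to E_{j-1} \to \cdots \to E_0 \to 0$. The composite $\mu\alpha$ is again an inflation; letting $K'$ denote its cokernel, we obtain a conflation $0 \to E_{j+1} \to U_j \to K' \to 0$, and by Noether's third isomorphism theorem for exact categories \cite[Lemma~3.5]{Bue10} the induced comparison map $\phi \colon K \to K'$ is itself an inflation, producing a morphism of conflations
\[
\begin{tikzcd}[column sep=small]
0 \ar[r] & E_{j+1} \ar[r,"\alpha"] \ar[d,equal] & E_j \ar[r] \ar[d,"\mu"'] & K \ar[r] \ar[d,"\phi"] & 0 \\
0 \ar[r] & E_{j+1} \ar[r] & U_j \ar[r] & K' \ar[r] & 0.
\end{tikzcd}
\]
Pushing out the conflation $0 \to K \to E_{j-1} \to K_{j-2} \to 0$ along $\phi$ then yields a conflation $0 \to K' \to F_{j-1} \to K_{j-2} \to 0$, and splicing with the unaltered remainder of the tail produces
\[
0 \to E_{j+1} \to U_j \to F_{j-1} \to E_{j-2} \to \cdots \to E_0 \to 0,
\]
which is Yoneda equivalent to the input via the evident morphism of complexes (identity outside positions $j$ and $j-1$, equal to $\mu$ at position $j$, equal to the canonical pushout map at position $j-1$).

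To finish, split this new sequence off the leftmost conflation $0 \to E_{j+1} \to U_j \to K' \to 0$ and apply the induction hypothesis to the remaining length-$(j-1)$ tail $0 \to K' \to F_{j-1} \to E_{j-2} \to \cdots \to E_0 \to 0$. This returns a Yoneda equivalent length-$(j-1)$ sequence whose positions $j-1, \ldots, 2$ lie in $\uc$, whose position $0$ is still $E_0$, and whose position $1$ is some object $X$. Splicing back with $0 \to E_{j+1} \to U_j \to K' \to 0$ gives the claimed form, and Yoneda equivalence is preserved under splicing of conflations and under transitivity.

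The main technical input is the assertion that $\phi \colon K \to K'$ is an inflation, which is precisely Noether's third isomorphism theorem for exact categories and is the reason one needs $\uc$ cogenerating (to produce $\mu$ in the first place). Once this is granted, the remaining verifications — commutativity of the middle square at position $(j-1)$ in the replacement step (immediate from the defining relation of $\phi$ together with the universal property of the pushout) and the compatibility of splicing with Yoneda equivalence — are routine bookkeeping.
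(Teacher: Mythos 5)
Your proof is correct and follows essentially the same strategy as the paper's: both arguments replace the middle terms one position at a time, using the cogenerating hypothesis to produce an inflation into $\uc$ and a pushout to transfer the modification to the next term, and both read off the Yoneda equivalence from the resulting ladder of exact sequences. The only differences are organizational — you induct on $j$ and splice off the leftmost conflation, while the paper runs an explicit left-to-right iteration — and in the precise pushout taken (you pass through the Noether isomorphism to get the inflation $K\rightarrowtail K'$ and push out $K\rightarrowtail E_{j-1}$ along it, whereas the paper pushes out the possibly non-admissible map $E'_{j-i}\to E_{j-i-1}$ directly along the cogenerating inflation); both variants are valid.
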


\begin{proof}
We claim that any exact sequence
\begin{align}\label{ExactSeq}
0 \to E_{j+1} \xrightarrow{} U_j \xrightarrow{} U_{j-1} \to \dots \to U_{j-i+1}\to E'_{j-i}\to E_{j-i-1}\to \cdots \xrightarrow{} E_{0} \to 0
\end{align}
where $0\leq i<j-1$ and $U_j,\dots, U_{j-i+1}\in \uc$ is Yoneda equivalent to an exact sequence 
\begin{align*}
0 \to E_{j+1} \xrightarrow{} U_j \xrightarrow{} U_{j-1} \to \dots \to U_{j-i+1}\to U_{j-i}\to E'_{j-i-1}\to E_{j-i-2}\to \cdots  \xrightarrow{} E_{0} \to 0
\end{align*}
where also $U_{j-i}\in \uc$.
Starting with an exact sequence
\begin{align*}
	0 \to E_{j+1} \xrightarrow{} E_j \xrightarrow{} \dots \to E_{1}\xrightarrow{} E_{0} \to 0
	\end{align*}
and applying the claim iteratively for $i=0,1,\dots,j-2$ gives the result.

To prove the claim, assume we are given an exact sequence as in \eqref{ExactSeq}, and let $K$ and $C$ be the kernel and cokernel of $E_{j-i}'\to E_{j-i-1}$. Since $\uc$ is cogenerating, we can choose a conflation $0 \to E'_{j-i} \xrightarrow{} U_{j-i} \to C' \to 0$ with $U_{j-i}\in \uc$. Taking the pushout $E_{j-i-1}'$ of $E'_{j-i} \xrightarrow{} E_{j-i-1}$ along $E'_{j-i} \xrightarrow{} U_{j-i}$ yields a commutative diagram 
	\begin{equation*}
	\begin{tikzcd}[column sep=15]
	0 \arrow[r] & K\arrow[r] \arrow[d, equal] & E_{j-i}' \arrow[r] \arrow[d,hook] & E_{j-i-1}\arrow[r,""] \arrow[d, hook] & C \arrow[r] \arrow[d,equal] & 0 \\
	0 \arrow[r,""] & K \arrow[r] & U_{j-i} \arrow[r] \arrow[d, two heads] & E'_{j-i-1} \arrow[r] \arrow[d, two heads] & C \arrow[r] & 0 \\
	& & C' \arrow[r,equal] & C' & &  
	\end{tikzcd} 
	\end{equation*}
	with rows being exact and the two middle columns being conflations. By attaching the exact sequences in the two top rows to the exact sequences  $0\to E_{j+1}\to U_j\to \cdots \to U_{j-i+1}\to K\to 0$ and $0\to C\to E_{j-i-2}\to \cdots \to E_{0}\to 0$, we get a commutative diagram 
	\begin{equation*}
	\begin{tikzcd}[column sep=10]
	0 \arrow[r] & E_{j+1}\arrow[r] \arrow[d,equal] & U_j \arrow[r,] \arrow[d,equal] & \cdots \arrow[r] & U_{j-i+1} \arrow[r,""] \arrow[d,equal] & E_{j-i}' \arrow[r] \arrow[d,hook] & E_{j-i-1} \arrow[r] \arrow[d,hook]& E_{j-i-2} \arrow[r] \arrow[d,equal] & \cdots \arrow[r] & E_{0} \arrow[r] \arrow[d,equal] & 0 \\
	0 \arrow[r,""] & E_{j+1} \arrow[r] & U_{j} \arrow[r] & \cdots \arrow[r] & U_{j-i+1} \arrow[r]  & U_{j-i}\arrow[r] & E_{j-i-1}'\arrow[r]& E_{j-i-2} \arrow[r]  & \cdots \arrow[r] &  E_{0} \arrow[r] & 0.
	\end{tikzcd} 
	\end{equation*}
	This proves the claim.
\end{proof}

\begin{theorem}\label{Theorem:dExtImpliesRightMaxdRigid}
	 If $\uc^d(\ec)$ is extension-closed, then $\uc$ is weakly right maximal $d$-rigid in $\uc^d(\ec)$.
	 \end{theorem}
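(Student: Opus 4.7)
The task is to verify that $\uc$, viewed as a subcategory of $\uc^d(\ec)$ with the exact structure induced from $\ec$ by the extension-closure hypothesis, satisfies the two conditions to be weakly right maximal $d$-rigid: existence of a $\uc$-coresolution of length $d$ for every object, and $d$-rigidity with respect to the Ext groups computed in $\uc^d(\ec)$. The inclusion $\uc \subseteq \uc^d(\ec)$ itself follows by padding the trivial complex $0 \to U \xrightarrow{\mathrm{id}} U \to 0$ with zero terms on the right until its total length reaches $d$; the same padding trick will recur throughout.

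For the coresolution condition I fix $E \in \uc^d(\ec)$ and a witnessing exact sequence $0 \to E \to U^1 \to \cdots \to U^d \to 0$ in $\ec$. Writing $E^j := \ker(U^{j+1} \to U^{j+2})$ for the intermediate syzygies (with $E^0 = E$ and $E^{d-1} = U^d$), the truncation $0 \to E^j \to U^{j+1} \to \cdots \to U^d \to 0$ is exact of length $d-j$, and padding it on the right by zeros extends it to a length-$d$ $\uc$-coresolution, showing $E^j \in \uc^d(\ec)$. Hence every intermediate term already lies in $\uc^d(\ec)$, so the original sequence is exact in $\uc^d(\ec)$ as well.

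For the $d$-rigidity condition I must prove $\Ext^i_{\uc^d(\ec)}(U,V) = 0$ for all $U,V \in \uc$ and $0 < i < d$. The case $i = 1$ is immediate: a conflation $0 \to V \to X \to U \to 0$ in $\uc^d(\ec)$ is a conflation in $\ec$, and it splits because $\Ext^1_\ec(U,V) = 0$ by $d$-rigidity of $\uc$ in $\ec$, with the splitting automatically lying in $\uc^d(\ec)$. For $i \geq 2$ my plan is to apply \Cref{lem: replaceterms} inside the exact category $\uc^d(\ec)$, in which $\uc$ is cogenerating by the preceding paragraph, to reduce an arbitrary Yoneda representative to the normal form $\eta' : 0 \to V \to U_i \to \cdots \to U_2 \to X \to U \to 0$ with $U_j \in \uc$ and $X \in \uc^d(\ec)$, and then to transfer the vanishing of $\Ext^i_\ec(U,V)$ down to $\Ext^i_{\uc^d(\ec)}(U,V)$ for this normalized class.

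The hardest part will be this last transfer: the Yoneda equivalence of $\eta'$ to the split extension inside $\ec$ may a priori go through intermediate sequences outside of $\uc^d(\ec)$. I expect to handle it either by invoking the general principle that the comparison map $\Ext^i_{\uc^d(\ec)}(U,V) \to \Ext^i_\ec(U,V)$ is injective for extension-closed subcategories, or by an explicit dimension-shift induction on $i$ along the full $\uc$-coresolution $0 \to V \to W^1 \to \cdots \to W^d \to 0$ and the associated conflations $0 \to C_{k-1} \to W^k \to C_k \to 0$ in $\uc^d(\ec)$, arranging the inductive hypothesis so that each long exact sequence for $\Ext_{\uc^d(\ec)}(U,-)$ reduces to the base case $i=1$ and to the intermediate cokernels $C_k$, rather than looping back to degree-$i$ Ext between $\uc$-objects.
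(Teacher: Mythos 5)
Your setup matches the paper's: the coresolution condition is immediate from the definition of $\uc^d(\ec)$ (with the syzygies landing in $\uc^{d-j}(\ec)\subseteq \uc^d(\ec)$ exactly as you describe), and reducing an arbitrary class in $\Ext^j_{\uc^d(\ec)}(U,V)$ to the normal form via \Cref{lem: replaceterms} is the right first move. The problem is the step you yourself flag as the hardest one: neither of your two proposed ways of finishing it works. The comparison map $\Ext^i_{\ec'}(X,Y)\to \Ext^i_{\ec}(X,Y)$ for an extension-closed subcategory $\ec'$ is \emph{not} injective in general when $i\geq 2$, because a Yoneda trivialization in $\ec$ may pass through objects outside $\ec'$; indeed the paper has to work to establish such an injectivity even in degree $d$ (\Cref{Theorem:Ext^dIso}), and the argument used there is precisely the one missing here, so invoking injectivity as a general principle is circular. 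Your second alternative, a dimension-shift induction for $\Ext_{\uc^d(\ec)}(U,-)$ along a $\uc$-coresolution of $V$ with conflations $0\to C_{k-1}\to W^k\to C_k\to 0$, is also circular: each long exact sequence reintroduces a term $\Ext^i_{\uc^d(\ec)}(U,W^k)$ in the \emph{same} degree $i$ between objects of $\uc$, which is exactly what you are trying to prove vanishes, and there is no arrangement of the induction that avoids this.

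The missing idea is to run the dimension shift in the \emph{ambient} category $\ec$ on the normalized representative itself, and to conclude a splitting of one specific conflation rather than a vanishing of a Yoneda class. Writing your normal form as $0\to V\to U_i\to\cdots\to U_2\to X\xrightarrow{f} U\to 0$, the truncation $0\to V\to U_i\to\cdots\to U_2\to \Ker f\to 0$ has all middle terms in $\uc$, so applying $\Hom_\ec(U,-)$ and shifting dimensions --- using only the already known $d$-rigidity of $\uc$ in $\ec$ --- gives $\Ext^1_\ec(U,\Ker f)\cong \Ext^i_\ec(U,V)=0$. Hence the conflation $0\to \Ker f\to X\to U\to 0$ splits in $\ec$; since a splitting is witnessed by a single retraction between objects that all lie in $\uc^d(\ec)$ (the normal form is exact in $\uc^d(\ec)$, so $\Ker f\in \uc^d(\ec)$), the class factors through the zero element of $\Ext^1_{\uc^d(\ec)}(U,\Ker f)$ and is therefore zero. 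This is the paper's proof, and it subsumes your $i=1$ base case.
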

 
 \begin{proof}
 For any object $E$ in $\uc^d(\ec)$ we have an exact sequence 
 	\[
 	0\to E\to U^1\to \cdots \to U^d\to 0
 	\]
 	in $\uc^d(\ec)$ where $U^1,\dots, U^d\in \uc$, by definition of $\uc^d(\ec)$. Hence, we only need to show $d$-rigidity of $\uc$ in $\uc^d(\ec)$. Let $U,V\in \uc$, let $j$ be an integer satisfying $0<j<d$, and let $\delta$ be an element in $\Ext^j_{\uc^d(\ec)}(V,U)$ . Then $\delta$ can be represented by an exact sequence in $\uc^d(\ec)$ of the form
 	\begin{equation*}
 	0 \to U \xrightarrow{} U_j \xrightarrow{} U_{j-1} \to \dots \to U_{2} \to X \xrightarrow{f} V \to 0
 	\end{equation*}
 	where $U_2, \dots, U_{j} \in \uc$ by \cref{lem: replaceterms}. Now $d$-rigidity of $\uc$ in $\ec$ and a dimension shifting argument applied to 
 	\[
 	0\to U\to U_j\to \cdots \to U_{2}\to \operatorname{Ker}f\to 0
 	\]
 	shows that $\Ext^1_\ec(V,\Ker f)=0$. Hence the conflation 
    \[
    0\to \operatorname{Ker}f \to X\to V\to 0
    \]
    is split exact, and thus $\delta=0$. Since $\delta$ was arbitrary this shows that $\Ext^{j}_{\uc^d(\ec)}(V,U)=0$, which proves the claim.
 \end{proof}

To determine when $\uc^d(\ec)$ is extension-closed, we need the following lemma. 

	\begin{lemma}\label{Lemma:YonedaEquivMorphism}
		Let 
		\[
		0\to U\to E_d\to U_{d-1}\to \dots \to U_1\to U'\to 0
		\]
		be an exact sequence where $U',U_1,\dots, U_{d-1},U\in \uc$. Assume this sequence is Yoneda equivalent to an exact sequence
		\[
		0\to U\to V_d\to \dots \to V_1\to U'\to 0
		\]
		where $V_1,\dots, V_d\in \uc$. Then there exists morphisms $E_d\to V_d$ and $U_i\to V_i$ for $1\leq i\leq d-1$ making the diagram
		\begin{equation*}
		\begin{tikzcd}
		0 \arrow[r] & U\arrow[r] \arrow[d, equal] & E_d \arrow[r,""] \arrow[d, ""] & U_{d-1} \arrow[r,""] \arrow[d, ""] & \cdots \arrow[r,""] & U_1\arrow[r,""] \arrow[d,""] & U'\arrow[r] \arrow[d, equal] & 0 \\
		0 \arrow[r,""] & U \arrow[r,""] & V_d \arrow[r,""] & V_{d-1} \arrow[r,""] & \cdots  \arrow[r,""] & V_1 \arrow[r,""] & U' \arrow[r] & 0
		\end{tikzcd} 
		\end{equation*}
		commutative.
	\end{lemma}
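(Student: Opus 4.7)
The plan is to construct the chain map iteratively from the right using obstruction theory, exploiting the $d$-rigidity of $\uc$ throughout. Write $K_i = \Image(V_{i+1}\to V_i)$ and $K'_i = \Image(U_{i+1}\to U_i)$, with the convention $K_0 = K'_0 = U'$ and $K_d = K'_d = U$, so that the bottom sequence factors into conflations $0\to K_i\to V_i\to K_{i-1}\to 0$ in $\ec$. Denote the differentials by $g_i\colon U_i\to U_{i-1}$ and $h_i\colon V_i\to V_{i-1}$.

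First I would induct on $i\in\{0,1,\dots,d-1\}$ carrying the strengthened hypothesis $(H_i)$: morphisms $\alpha_0 = \operatorname{id}_{U'}, \alpha_1,\dots,\alpha_i$ have been constructed making the right part of the diagram commute, and the pullback along the induced map $\tilde\alpha_i\colon K'_i\to K_i$ of the truncated bottom extension $[0\to U\to V_d\to\cdots\to V_{i+1}\to K_i\to 0]$ coincides with the truncated top extension $[0\to U\to E_d\to\cdots\to U_{i+1}\to K'_i\to 0]$ inside $\Ext^{d-i}_{\ec}(K'_i,U)$. The base case $i=0$ is precisely the Yoneda equivalence assumed in the statement.

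The hard part will be the inductive step. Lifting $\alpha_{i-1}\circ g_i$ along $h_i$ reduces, after factoring through $K_{i-1}$ as $\tilde f_i$, to lifting $\tilde f_i$ along the deflation $V_i\twoheadrightarrow K_{i-1}$; the obstruction lies in $\Ext^1_{\ec}(U_i,K_i)$. Dimension shifting through the conflations $0\to K_{j+1}\to V_{j+1}\to K_j\to 0$ and using $\Ext^k_{\ec}(U_i,V_l)=0$ for $0<k<d$ from $d$-rigidity, $\Ext^1_{\ec}(U_i,K_i)$ injects into $\Ext^{d-i+1}_{\ec}(U_i,U)$ and sends the obstruction to the Yoneda product $[0\to U\to V_d\to\cdots\to V_i\to K_{i-1}\to 0]\cdot\tilde f_i$. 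Writing $\tilde f_i=\tilde\alpha_{i-1}\circ\tilde g_i$ with $\tilde g_i\colon U_i\to K'_{i-1}$ factoring $g_i$, and invoking $(H_{i-1})$, this transforms into the pullback of the truncated top extension along its own terminal deflation $\tilde g_i$, which splits the final conflation and therefore vanishes. Hence a lift $\alpha_i$ exists.

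It remains to arrange that $(H_i)$ propagates. Different lifts $\alpha_i$ differ by maps $U_i\to K_i$, and the surjection $\Hom_{\ec}(U_i,K_i)\twoheadrightarrow\Ext^{d-i}_{\ec}(U_i,U)$ (again by $d$-rigidity) shows the resulting ambiguity in $[\text{bot trunc}]\cdot\tilde\alpha_i$ fills out the image of $\Ext^{d-i}_{\ec}(U_i,U)\to\Ext^{d-i}_{\ec}(K'_i,U)$. On the other hand, splicing $(H_{i-1})$ with the morphism of short exact sequences from $0\to K'_i\to U_i\to K'_{i-1}\to 0$ to $0\to K_i\to V_i\to K_{i-1}\to 0$ induced by $\alpha_i$ shows the discrepancy $[\text{bot trunc}]\cdot\tilde\alpha_i-[\text{top trunc}]$ dies upon Yoneda composition with $[0\to K'_i\to U_i\to K'_{i-1}\to 0]$, and hence lies in the same image by the long exact sequence. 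So an appropriate shift of $\alpha_i$ secures $(H_i)$. Finally, at $i=d-1$ the hypothesis $(H_{d-1})$ is an equality in $\Ext^1_{\ec}(K'_{d-1},U)$, which is realized by an explicit isomorphism of short exact sequences; extracting its middle arrow gives $\alpha_d\colon E_d\to V_d$ restricting to $\operatorname{id}_U$ and satisfying $h_d\alpha_d=\alpha_{d-1}g_d$, completing the diagram.
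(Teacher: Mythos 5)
Your argument is correct, but it takes a genuinely different route from the paper's. The paper starts from the defining zigzag of the Yoneda equivalence (an auxiliary exact sequence $F_\bullet$ mapping to both rows), applies $\Hom_\ec(-,V_d)$ to the top row --- which is exact by $d$-rigidity --- to produce the map $E_d\to V_d$ first, then propagates morphisms rightwards along the sequence using exactness of the Hom-complexes, and only at the very end corrects the resulting endomorphism $\gamma_0$ of $U'$ to the identity by a single homotopy $\alpha\colon U'\to V_1$ read off from the exact sequence $\Hom_\ec(U',V_1)\to\Hom_\ec(U',U')\to\Ext^d_\ec(U',U)$. You instead fix $\operatorname{id}_{U'}$ from the outset and build leftwards, never invoking the zigzag: you use only the equality of classes in $\Ext^d_\ec(U',U)$, kill the lifting obstruction in $\Ext^1_\ec(U_i,K_i)$ by embedding it into $\Ext^{d-i+1}_\ec(U_i,U)$ via dimension shifting (all the required vanishing $\Ext^j_\ec(U_i,V_l)=0$ falls within the $d$-rigidity range since $i\geq 1$), and pay for this by carrying the invariant $(H_i)$ on truncated Yoneda classes and re-adjusting the lift at every stage via the exact sequence $\Ext^{d-i}_\ec(U_i,U)\to\Ext^{d-i}_\ec(K_i',U)\to\Ext^{d-i+1}_\ec(K_{i-1}',U)$. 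Both proofs rest on the same two inputs ($d$-rigidity and dimension shifting), but the paper localizes the correction to one step at the right-hand end, whereas yours distributes it across the induction; your version makes explicit which partial data (the classes of the truncations) controls the construction, at the cost of noticeably more bookkeeping.
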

	
	\begin{proof}
		To simplify notation we write  $(A,B):=\operatorname{Hom}_\ec(A,B)$ and $(A,B)^d:=\operatorname{Ext}^d_\ec(A,B)$ for the Hom and Ext-groups in $\ec$. Let $p\colon U_1\to U'$ and $q\colon V_1\to U'$ denote the rightmost morphisms in the exact sequences. 
		
		Since the two exact sequences in the lemma are Yoneda equivalent, there must exist a commutative diagram with exact rows
		\begin{equation*}
		\begin{tikzcd}
		0 \arrow[r] & U\arrow[r] & E_d \arrow[r,""]  & U_{d-1} \arrow[r,""] & \cdots \arrow[r,""] & U_1\arrow[r,"p"] & U'\arrow[r]  & 0 \\
		0 \arrow[r,""] & U \arrow[u,equal] \arrow[d,equal] \arrow[r,""] & F_d \arrow[u,"\phi_d"] \arrow[d,"\psi_d"] \arrow[r,""] & F_{d-1} \arrow[u,"\phi_{d-1}"] \arrow[d,"\psi_{d-1}"] \arrow[r,""] & \cdots  \arrow[r,""] & F_1 \arrow[u,"\phi_1"] \arrow[d,"\psi_1"] \arrow[r,""] & U' \arrow[r] \arrow[u,equal] \arrow[d,equal] & 0 \\
		0 \arrow[r,""] & U \arrow[r,""] & V_d \arrow[r,""]  & V_{d-1} \arrow[r,""]  & \cdots  \arrow[r,""]  & V_1 \arrow[r,"q"] & U'  \arrow[r]  & 0
		\end{tikzcd} 
		\end{equation*}
		where $F_1,\cdots,F_d\in \ec$, see e.g \cite[Proposition A.13]{Pos11}. Applying $(-,V_d)$ to the top part of the diagram, we get the following morphism of complexes
		\begin{equation*}
		\begin{tikzcd}[column sep=10]
		(U',V_d)^d \arrow[d,equal]  & (U,V_d)\arrow[l] \arrow[d,equal] & (E_d,V_d) \arrow[l,""] \arrow[d,"-\circ \phi_d"]  & (U_{d-1},V_d) \arrow[l,""] \arrow[d,"-\circ \phi_{d-1}"] & \cdots \arrow[l,""] & (U_1,V_d)\arrow[l,""] \arrow[d,"-\circ \phi_1"]  & (U',V_d)\arrow[l] \arrow[d,equal] & \arrow[l] 0 \\
		(U',V_d)^d  & (U,V_d) \arrow[l,""] & (F_d,V_d) \arrow[l,""] & (F_{d-1},V_d) \arrow[l,""] & \cdots  \arrow[l,""] & (F_1,V_d) \arrow[l,""]  & (U',V_d) \arrow[l] & \arrow[l] 0. 
		\end{tikzcd} 
		\end{equation*}
		Here $(U,V_d)\to (U',V_d)^d$ comes from the natural transformation $(U,-)\to (U',-)^d$ determined by the Yoneda equivalence class of our exact sequences. Note that the top row is exact since $\operatorname{}^i(\uc,V_d)=0$ for $0<i<d$. The bottom row is a complex, but is in general not exact.
		
		Now consider the element $\psi_d\in (F_d,V_d)$. Since the bottom row is a complex, this map gets sent to zero under the composite $(F_d,V_d)\to (U,V_d)\to (U',V_d)^d$. Since the top row is exact at $(U,V_d)$, there must exist an element $\gamma_d\in (E_d,V_d)$ whose image in $(U,V_d)$ is the same as $\psi_d$. This is equivalent to the leftmost square of the diagram
		\begin{equation}\label{Equation:ImportantCommDiag}
		\begin{tikzcd}
		0 \arrow[r] & U\arrow[r] \arrow[d, equal] & E_d \arrow[r,""] \arrow[d, "\gamma_d"] & U_{d-1} \arrow[r,""] \arrow[d,dashed, "\gamma_{d-1}"] & \cdots \arrow[r,""] & U_1\arrow[r,"p"] \arrow[d,dashed,"\gamma_1"] & U'\arrow[r] \arrow[d, dashed, "\gamma_{0}"] & 0 \\
		0 \arrow[r,""] & U \arrow[r,""] & V_d \arrow[r,""] & V_{d-1} \arrow[r,""] & \cdots  \arrow[r,""] & V_1 \arrow[r,"q"] & U' \arrow[r] & 0
		\end{tikzcd} 
		\end{equation}
		being commutative, where the two rows are our exact sequences. Next we want to construct the morphisms $\gamma_i$ for $d-1\geq i\geq 0$ making the diagram commutative as indicated. First note that we have an exact sequence
		\begin{equation*}
		\begin{tikzcd}[column sep=10]
		(U,V_{d-1}) & (E_d,V_{d-1}) \arrow[l,""]  & (U_{d-1},V_{d-1}) \arrow[l,""]& \cdots \arrow[l,""] & (U_1,V_{d-1})\arrow[l,""]  & (U',V_{d-1})\arrow[l,"-\circ p"] & \arrow[l] 0 
		\end{tikzcd} 
		\end{equation*}
		obtained by applying $(-,V_{d-1})$ to the top sequence in \eqref{Equation:ImportantCommDiag}. Since the composite \mbox{$E_d\xrightarrow{\gamma_d}V_d\to V_{d-1}$} lies in the kernel of $(E_d,V_{d-1})\to (U,V_{d-1})$, it must be in the image of \mbox{$(U_{d-1},V_{d-1})\to (E_d,V_{d-1})$}. In other words, there must exist $\gamma_{d-1}\colon U_{d-1}\to V_{d-1}$ making the second leftmost square in \eqref{Equation:ImportantCommDiag} commutative. Repeating this argument, we construct morphisms $\gamma_i\colon U_i\to V_i$ for $d-1\geq i\geq 1$ and $\gamma_{0}\colon U'\to U'$ making the diagram \eqref{Equation:ImportantCommDiag} commutative. 
  
  Now let $$(-,\gamma_{0})^d\colon {}^d(U',U)\to {}^d(U',U)$$ be the induced morphism on Ext-groups. If $\delta\in {}^d(U',U)$ denotes the Yoneda equivalence class of our exact sequence, then the existence of the commutative diagram \eqref{Equation:ImportantCommDiag} implies that $(-,\gamma_{0})^d(\delta)=\delta$, see e.g. \cite[Chapter III, Proposition 5.1]{Maclane63}. 
		
		Next apply $(U',-)$  to the lower exact sequence in \eqref{Equation:ImportantCommDiag}. Since ${}^i(U',\uc)=0$ for $0<i<d$, we get an exact sequence
		\[
		0\to (U',U)\to (U',V_d)\to (U',V_{d-1})\to \cdots \to (U',V_1)\xrightarrow{q \circ -} (U',U')\to (U',U)^d
		\]
		where the rightmost morphism sends a map $f\colon U'\to U'$ to $(-,f)^d(\delta)$. Now since $(-,\gamma_{0})^d(\delta)=\delta$, the morphism $1_{U'}-\gamma_{0}$ gets sent to $0$ by the map $(U',U')\to (U',U)^d$. Hence, there exists a morphism $\alpha\colon U'\to V_1$ such that $\gamma_{0}+q\circ \alpha=1_{U'}$. Therefore, if we set $\gamma_i'=\gamma_i$ for $2\leq i\leq d$ and $\gamma_1'=\gamma_1+\alpha\circ p$, we get a commutative diagram
		\begin{equation*}
		\begin{tikzcd}
		0 \arrow[r] & U\arrow[r] \arrow[d, equal] & E_d \arrow[r,""] \arrow[d, "\gamma_d'"] & U_{d-1} \arrow[r,""] \arrow[d, "\gamma_{d-1}'"] & \cdots \arrow[r,""] & U_1\arrow[r,"p"] \arrow[d,"\gamma_1'"] & U'\arrow[r] \arrow[d, equal] & 0 \\
		0 \arrow[r,""] & U \arrow[r,""] & V_d \arrow[r,""] & V_{d-1} \arrow[r,""] & \cdots  \arrow[r,""] & V_1 \arrow[r,"q"] & U' \arrow[r] & 0.
		\end{tikzcd} 
		\end{equation*}
		This proves the claim.
	\end{proof}

	We now give a characterization of when $\uc^{d}(\ec)$ is closed under extensions.
	
	\begin{theorem} \label{thm: extclosed}
	The subcategory	$\uc^{d}(\ec)$ is closed under extensions if and only if any exact sequence
		\[
		0\to U\to E\to U_{d-1}\to \cdots \to U_1\to U'\to 0
		\]
		with $U,U_1,\dots, U_{d-1},U'\in \uc$ is Yoneda equivalent to an exact sequence with all terms in $\uc$.
	\end{theorem}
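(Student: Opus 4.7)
I would prove both directions of the biconditional, exploiting the interplay between extension closure of $\uc^d(\ec)$ and $\uc$-coresolutions via pushout constructions.

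For $(\Rightarrow)$, assume $\uc^d(\ec)$ is extension-closed. Given the sequence $0\to U\to E\to U_{d-1}\to \cdots \to U_1\to U'\to 0$, I split it into $0 \to U \to E \to K_1 \to 0$ and the truncation $0 \to K_1 \to U_{d-1} \to \cdots \to U_1 \to U' \to 0$, where $K_1 = \ker(U_{d-1}\to U_{d-2})$ (with $U_0 = U'$). The truncation directly exhibits $K_1\in \uc^d(\ec)$, so by extension closure $E\in \uc^d(\ec)$, yielding a $\uc$-coresolution $0\to E\to V^1\to V^2\to \cdots \to V^d\to 0$. Forming the bicartesian square that pushes the inflation $E\to V^1$ out along the deflation $E\to K_1$ produces an object $P$ together with conflations $0\to U\to V^1\to P\to 0$ and $0\to K_1\to P\to V^1/E\to 0$, where $V^1/E$ is the image of $V^1\to V^2$. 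Pushing out further along $K_1\to U_{d-1}$ and splicing with the remainder of both the coresolution and the original sequence yields an exact sequence $0\to U\to V^1\to Q\to U_{d-2}\to \cdots \to U_1\to U'\to 0$ which is Yoneda equivalent to the original, has $V^1\in \uc$ as its first middle term, and whose only possibly non-$\uc$ middle term $Q$ is pushed one slot to the right. Iterating this procedure with $V^2,\ldots,V^d$ shifts the defect further right at each step, and after $d-1$ iterations produces a Yoneda-equivalent representative entirely in $\uc$.

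For $(\Leftarrow)$, assume every sequence of the stated form is Yoneda equivalent to one in $\uc$. Given a conflation $0\to A\to B\to C\to 0$ in $\ec$ with $A, C\in \uc^d(\ec)$, I take the first step $0\to A\to U^1\to A'\to 0$ of a $\uc$-coresolution of $A$, where $U^1\in \uc$ and $A'\in \uc^{d-1}(\ec)$. Pushing out along $A\to U^1$ produces conflations $0\to U^1\to B_1\to C\to 0$ and $0\to B\to B_1\to A'\to 0$. By \Cref{ClosureKernelDeflation} it suffices to show $B_1\in \uc^d(\ec)$. Splicing $0\to U^1\to B_1\to C\to 0$ with a $\uc$-coresolution of $C$ produces an exact sequence $0\to U^1\to B_1\to U^1_C\to \cdots \to U^d_C\to 0$ of the form required by the hypothesis, with $B_1$ as its only non-$\uc$ middle term. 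The hypothesis then supplies a Yoneda-equivalent sequence $0\to U^1\to V^1\to \cdots \to V^d\to U^d_C\to 0$ entirely in $\uc$, and \Cref{Lemma:YonedaEquivMorphism} provides a comparison morphism identity on the outer terms, yielding in particular a map $B_1\to V^1$ compatible with the inclusion of $U^1$. From this diagram I extract a $\uc$-coresolution of $B_1$ of length $d$.

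\textbf{Main obstacle.} In both directions the essential work lies in managing the Yoneda equivalence class through iterated pushouts in a general exact category. The forward iteration must be arranged so that each step preserves the Ext-degree $d$ while replacing one non-$\uc$ middle term with a $\uc$-object and transferring the remaining defect to the next middle position; verifying that this terminates after exactly $d-1$ steps in a sequence entirely in $\uc$ requires careful inductive bookkeeping and consistent use of bicartesian squares. The converse is more subtle: the comparison morphism from \Cref{Lemma:YonedaEquivMorphism} only yields a chain map identity on the outer terms, and extracting an actual $\uc$-coresolution of $B_1$ from it is not automatic. I expect this to rely on a diagram-chase combining the comparison with additional pushouts and the dual of \Cref{ClosureKernelDeflation} to identify successive pieces of the coresolution of $B_1$.
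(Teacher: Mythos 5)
Your overall strategy coincides with the paper's in both directions: the forward implication is the same induction that pushes the single non-$\uc$ middle term one slot to the right at each stage via pushouts, and the converse uses the same reduction (pushout along the first coresolution step of $A$ plus \Cref{ClosureKernelDeflation}) followed by splicing with a coresolution of $C$ and invoking \Cref{Lemma:YonedaEquivMorphism}. However, you leave the decisive step of the converse unresolved, and the device you guess at is not the one that works. Having obtained the comparison morphism from $0\to U^1\to B_1\to U^1_C\to\cdots\to U^d_C\to 0$ to the all-$\uc$ representative $0\to U^1\to V_d\to\cdots\to V_1\to U^d_C\to 0$ (identity on the end terms), the paper extracts the coresolution of $B_1$ not by further pushouts and the dual of \Cref{ClosureKernelDeflation}, but by a mapping cone argument: both complexes are acyclic, so the cone of the comparison morphism is acyclic by \cite[Lemma 1.1]{Nee90}, and deleting the two split identity components coming from the common end terms $U^1$ and $U^d_C$ leaves an acyclic complex
\[
0\to B_1\to U^1_C\oplus V_d\to\cdots\to U^{d-1}_C\oplus V_2\to V_1\to 0,
\]
which is exactly a $\uc$-coresolution of $B_1$ of length $d$. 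Without this (or an equivalent mechanism) your converse is incomplete: the comparison diagram alone does not force the individual components $B_1\to V_d$, etc., to assemble into conflations.

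A smaller issue concerns your forward iteration. After the first step the new defect $Q$ sits in a conflation $0\to U_{d-1}\to Q\to E^2\to 0$ with $E^2=\operatorname{im}(V^1\to V^2)$, so $Q\in\uc^{d-1}(\ec)$ by \Cref{Lemma:ExtClosureProperty-Rigid}; but you cannot literally reuse $V^2,\dots,V^d$ for the later steps, since the composite $Q\twoheadrightarrow E^2\rightarrowtail V^2$ need not be an inflation. Instead, each stage requires a fresh first coresolution step $0\to Q\to W\to Q'\to 0$ of the current defect, which exists precisely because the defect at stage $i$ lies in $\uc^{d-i}(\ec)$; this is how the paper's induction is organized. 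With that correction your forward direction goes through as described.
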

	
	\begin{proof}
	This is obviously true if $d=1$, so we assume $d\geq 2$.
 
 We prove the "if" direction. First we show that if $0\to U\to E_2'\to E_1'\to 0$ is a conflation with $U\in \uc$ and $E_1'\in \uc^{d}(\ec)$, then $E_2'\in \uc^{d}(\ec)$. To this end, choose an exact sequence $0\to E_1'\to U^1\to U^2\to \cdots \to U^d\to 0$ with $U^i\in \uc$ for $1\leq i\leq d$. Then we have an exact sequence
		\[
		0\to U\to E_2'\to U^1\to U^2\to \cdots \to U^d\to 0
		\]
		By assumption and \cref{Lemma:YonedaEquivMorphism} we have a commutative diagram
		\begin{equation*}
		\begin{tikzcd}
		0 \arrow[r] & U\arrow[r] \arrow[d, equal] & E_2' \arrow[r,""] \arrow[d, ""] & U^1 \arrow[r,""] \arrow[d, ""] & \cdots \arrow[r,""] & U^{d-1}\arrow[r,""] \arrow[d,""] & U^d\arrow[r] \arrow[d, equal] & 0 \\
		0 \arrow[r,""] & U \arrow[r,""] & V_{d} \arrow[r,""] & V_{d-1} \arrow[r,""] & \cdots  \arrow[r,""] & V_1 \arrow[r,""] & U^d \arrow[r] & 0
		\end{tikzcd} 
		\end{equation*}
		with exact rows and where $V_i \in \uc$ for $i=1,\dots, d$. If we consider this diagram as a morphism of complexes, then its cone must be acyclic by  \cite[Lemma 1.1]{Nee90}. Removing the identity maps at $U$ and $U_d$ in the cone, we get the complex 
		\[
		0\to E_2'\to U^1\oplus V_d\to \cdots \to U^{d-1}\oplus V_{2}\to V_1\to 0.
		\]
		which also must be acyclic. This shows that $E_2'\in \uc^{d}(\ec)$.
		
		Now assume $0\to E_3\to E_2\to E_1\to 0$ is a conflation with $E_1,E_3\in \uc^{d}(\ec)$. Choose a conflation
		\[
		0\to E_3\to U\to F\to 0
		\]
		with $U\in \uc$ and $F\in \uc^{d-1}(\ec)$. Taking the pushout of $E_3\to U$ along $E_3\to E_2$, we get a commutative diagram
		\[
		\begin{tikzcd}[column sep=20, row sep=20]
		& 0\arrow[d]  & 0\arrow[d] & \\
		0\arrow[r] &E_3 \arrow[r] \arrow[d, ""] & E_2 \arrow[r] \arrow[d] & E_1 \arrow[d, equal]\arrow[r] & 0 \\
		0\arrow[r] &U \arrow[r, ""] \arrow[d] & E \arrow[r] \arrow[d] & E_1 \arrow[r] & 0 \\
		& F \arrow[r, equal] \arrow[d] & F \arrow[d] & \\ 
		& 0  & 0 &
		\end{tikzcd}
		\]
		where the rows and columns are conflations. Since $U\in \uc$ and $E_1\in \uc^{d}(\ec)$, it follows from the argument above that $E\in \uc^{d}(\ec)$. Since $F\in \uc^{d-1}(\ec)$, we get that $E_2\in \uc^{d}(\ec)$ by applying \Cref{ClosureKernelDeflation} to the middle column. This shows that $\uc^{d}(\ec)$ is closed under extensions.
		
		Now we prove the "only if" direction of the claim. Fix an exact sequence 
		\begin{equation}\label{Equation:Fixedexactseq}
		0\to U\to E_d\to U_{d-1}\to \cdots \to U_1\to U'\to 0
		\end{equation}
		with $U,U'\in \uc$ and $U_1,\dots ,U_{d-1}\in \uc$.
		We prove by  induction on $0\leq i\leq d-1$ that \eqref{Equation:Fixedexactseq} is Yoneda equivalent to an exact sequence
		\[
		0\to U\to V_d\to \cdots \to V_{d-i+1}\to E_{d-i}\to V_{d-i-1}\to \cdots \to V_1\to U' \to 0
		\]  
		where $V_1,\dots, V_{d-i-1},V_{d-i+1},\dots ,V_d\in \uc$ and $E_{d-i}\in \uc^{d-i}(\ec)$. The case $i=d-1$ then proves the claim, since $E_1\in \uc^{1}(\ec)=\uc$.
		
		To see that the case $i=0$ holds we only need to check that $E_d\in \uc^{d}(\ec)$. For this, let $C$ be the cokernel of $U\to E_d$, and note that it is in $\uc^{d}(\ec)$. Since we have an exact sequence
		\[
		0\to U\to E_d\to C\to 0
		\]
		it follows that $E_d\in \uc^{d}(\ec)$ since $\uc^{d}(\ec)$ is extension-closed.
		
		Now assume the claim holds for $i<d-1$. We show that it holds for $i+1$. By the induction hypothesis \eqref{Equation:Fixedexactseq} is Yoneda-equivalent to an exact sequence
		\[
		0\to U\to V_d\to \cdots \to V_{d-i+1}\to E_{d-i}\to V_{d-i-1}\to \cdots \to V_1\to U' \to 0
		\]  
		where $V_1,\dots, V_{d-i-1},V_{d-i+1},\dots ,V_d\in \uc$ and $E_{d-i}\in \uc^{d-i}(\ec)$. Choose a conflation $$0\to E_{d-i}\to V\to E'_{d-i-1} \to 0$$ with $V\in \uc$ and $E'_{d-i-1}\in \uc^{d-i-1}(\ec)$. Let $K$ be the image of $V_{d-i+1}\to E_{d-i}$, let $C$ be the cokernel of $V_{d-i+1}\to E_{d-i}$, and let $C'$ be the cokernel of the composite $V_{d-i+1}\to E_{d-i}\to V$. Then we get a commutative diagram with exact rows
		\[
		\begin{tikzcd}[column sep=20, row sep=20]
		0\arrow[r] &K \arrow[r] \arrow[d, equal] & E_{d-i} \arrow[r] \arrow[d] & C \arrow[d]\arrow[r] & 0 \\
		0\arrow[r] &K \arrow[r, ""] & V \arrow[r]  & C' \arrow[r]  & 0.  
		\end{tikzcd}
		\]
		Note that the right hand square is bicartesian, so $C\to C'$ is an inflation with cokernel equal to $E_{d-i-1}'$. Taking the pushout of $C\to V_{d-i-1}$ along $C\to C'$ we get the following commutative diagram with exact rows 
		\[
		\begin{tikzcd}[column sep=20, row sep=20]
		0\arrow[r] &C \arrow[r] \arrow[d] & V_{d-i-1} \arrow[r] \arrow[d] & C'' \arrow[d,equal]\arrow[r] & 0 \\
		0\arrow[r] &C' \arrow[r, ""] & E_{d-i-1} \arrow[r]  & C'' \arrow[r]  & 0. 
		\end{tikzcd}
		\]
		Again, since the left hand square is bicartesian, the morphism $V_{d-i-1}\to E_{d-i-1}$ is an inflation with cokernel $E_{d-i-1}'$. Since $E'_{d-i-1}\in \uc^{d-i-1}(\ec)$ and $V_{d-i-1}\in \uc$, the object $E_{d-i-1}$ must be in $\uc^{d-i-1}(\ec)$ by \Cref{Lemma:ExtClosureProperty-Rigid}. Gluing these diagrams to the exact sequences
		\begin{align*}
		0\to U\to V_d\to \cdots \to V_{d-i+1}\to K\to 0 \\
		0\to C''\to V_{d-i-2}\to \cdots \to V_1\to U'\to 0
		\end{align*}
		we get a commutative diagram with exact rows
		\begin{equation*}
		\begin{tikzcd}[column sep=15]
		0 \arrow[r] & U\arrow[r] \arrow[d, equal] & \cdots \arrow[r,""] & V_{d-i+1}\arrow[r,""] \arrow[d,equal] & E_{d-i}\arrow[r] \arrow[d] &V_{d-i-1} \arrow[r] \arrow[d] & V_{d-i-2} \arrow[r]\arrow[d,equal] & \cdots \arrow[r] &U'\arrow[r]\arrow[d,equal] & 0 \\
		0 \arrow[r,""] & U \arrow[r,""] & \cdots  \arrow[r,""] & V_{d-i+1} \arrow[r] & V \arrow[r] & E_{d-i-1}\arrow[r] & V_{d-i-2} \arrow[r] & \cdots \arrow[r,""] & U' \arrow[r] & 0.
		\end{tikzcd} 
		\end{equation*}
		Since the lower exact sequence is Yoneda equivalent to the upper exact sequence, it is also Yoneda equivalent to our original exact sequence. This proves the claim.
	\end{proof}
 
We can deduce that $d$-cluster tilting subcategories are closed under $d$-extensions from \Cref{thm: extclosed}, recovering \cite[Theorem 1.2]{EN23} and \cite[A.1]{Iya07a}. 

\begin{proposition}\label{Corollary:d-CTClosedUnderdExt}
	If $\uc$ is generating and weakly right maximal $d$-rigid, then $\uc$ is closed under $d$-extensions. In particular, a $d$-cluster tilting subcategory is closed under $d$-extensions.
\end{proposition}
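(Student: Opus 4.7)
The plan is to deduce the proposition from \Cref{thm: extclosed} applied to $\ec$ itself. Since $\uc$ is weakly right maximal $d$-rigid, every object $E\in\ec$ admits an exact sequence $0\to E\to U^1\to\cdots\to U^d\to 0$ with $U^i\in\uc$, so $\uc^d(\ec)=\ec$ and in particular $\uc$ is cogenerating in $\ec$. Because $\ec$ is trivially extension-closed in itself, the ``only if'' direction of \Cref{thm: extclosed} gives that every exact sequence
\[
0\to U\to E\to U_{d-1}\to\cdots\to U_1\to U'\to 0
\]
with $U,U',U_1,\dots,U_{d-1}\in\uc$ is Yoneda-equivalent to one with all terms in $\uc$.

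To reach the full definition of $d$-extension closure I would then start from an arbitrary exact sequence $0\to U\to E_d\to\cdots\to E_1\to U'\to 0$ with $U,U'\in\uc$ and invoke the dual of \Cref{lem: replaceterms}: since $\uc$ is generating by hypothesis, an iterated pullback argument (dual to the pushout construction in the proof of \Cref{lem: replaceterms}) produces a Yoneda-equivalent exact sequence of the form
\[
0\to U\to X\to V_{d-1}\to\cdots\to V_1\to U'\to 0
\]
with $V_1,\dots,V_{d-1}\in\uc$. Splicing this Yoneda equivalence with the one from the previous paragraph replaces $X$ by an object of $\uc$ as well, and closure under $d$-extensions follows. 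The ``in particular'' clause is then immediate from \Cref{Reformulation:d-CT}, since any $d$-cluster tilting subcategory is generating and right maximal $d$-rigid.

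The only mild point of care is the built-in asymmetry of \Cref{thm: extclosed}: its ``arbitrary'' middle term sits adjacent to $U$, which forces the use of the \emph{generating} hypothesis through the dual of \Cref{lem: replaceterms}; the cogenerating version of the replacement lemma (which comes for free from being weakly right maximal $d$-rigid) would leave the free term adjacent to $U'$ instead and therefore could not be fed directly into \Cref{thm: extclosed}.
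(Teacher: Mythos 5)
Your proposal is correct and follows essentially the same route as the paper: reduce to the form required by \Cref{thm: extclosed} via the dual of \Cref{lem: replaceterms} (which is where the generating hypothesis enters), observe that $\uc^d(\ec)=\ec$ is extension-closed, and invoke the ``only if'' direction of \Cref{thm: extclosed}. Your remark about the asymmetry --- that the free middle term must sit adjacent to $U$, forcing the dual (generating) version of the replacement lemma rather than the cogenerating one --- correctly identifies the one point where the argument could go wrong and matches the paper's choice.
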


\begin{proof}
	Let $X,Y\in \uc$. By the dual of \Cref{lem: replaceterms} any element in $\Ext^d_\ec(X,Y)$ can be represented by an exact sequence
	\[
	0\to Y\to E_d\to U_{d-1}\to \cdots \to U_1\to X\to 0
	\]
	in $\ec$ with $U_i\in \uc$ for $1\leq i\leq d-1$. By assumption $\uc^d(\ec)=\ec$ since $\uc$ is weakly right maximal $d$-rigid in $\ec$. In particular, $\uc^d(\ec)$ is extension closed in $\ec$. Hence, the sequence above is Yoneda equivalent to an exact sequence where all the terms are in $\uc$ by \cref{thm: extclosed}. This proves the claim.
\end{proof}

	\section{Higher extension closure}\label{Section:Higher extension-closure}

    Throughout this section we fix an exact category $\ec$, an integer $d\geq 1$, and a $d$-rigid subcategory $\uc$. The goal of this section is to show that whenever $\uc$ is $d$-extension closed in $\ec$ as in \Cref{d-extension closure}, then its additive closure is a $d$-cluster tilting subcategory of some extension closed subcategory of $\ec$. The construction involves the subcategories $\uc^{d}(\ec)$ and $\uc_{d}(\ec)$ considered in the previous section. Combining the main result of this section with \Cref{Corollary:d-CTClosedUnderdExt}, we see that $d$-cluster tilting subcategories provide in a sense all the examples of $d$-extension closed subcategories.

	\subsection{Preservation of higher extensions}

We investigate the properties of $\uc^{d}(\ec)$ and $\uc_{d}(\ec)$ when $\uc$ is closed under $d$-extensions.

\begin{proposition} \label{cor: extclosure}
	Assume $\uc$ is $d$-extension closed in $\ec$. The following hold.
	\begin{enumerate}
		\item $\uc^{d}(\ec)$ and $\uc_{d}(\ec)$ are closed under extensions in $\ec$.
		\item $\uc$ is weakly right maximal $d$-rigid in $\uc^d(\ec)$ and weakly left maximal $d$-rigid in $\uc_d(\ec)$.
	\end{enumerate}
\end{proposition}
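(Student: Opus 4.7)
The proposition is a quick consequence of the machinery established in \Cref{Section:MaximalRigidSubcat}. The plan is to deduce part (1) directly from \Cref{thm: extclosed} (and its dual), and then obtain part (2) by feeding the extension-closedness into \Cref{Theorem:dExtImpliesRightMaxdRigid}.

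For part (1), I would verify that the $d$-extension closure hypothesis is precisely strong enough to trigger the criterion of \Cref{thm: extclosed}. That theorem requires that every exact sequence
\[
0\to U\to E\to U_{d-1}\to \cdots \to U_1\to U'\to 0
\]
whose end terms and all but the leftmost middle term lie in $\uc$ is Yoneda-equivalent to one with every term in $\uc$. But this is a special case of \Cref{d-extension closure}: our sequence already starts and ends in $\uc$, so $d$-extension closure yields a Yoneda-equivalent sequence with all terms in $\uc$ (with no restriction on whether the middle terms were initially in $\uc$ or not). Applying \Cref{thm: extclosed} then gives that $\uc^{d}(\ec)$ is extension closed in $\ec$. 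The statement for $\uc_{d}(\ec)$ follows by dualising: pass to $\ec^{\operatorname{op}}$, noting that $\uc^{\operatorname{op}}$ is $d$-rigid and $d$-extension closed in $\ec^{\operatorname{op}}$ with $(\uc^{\operatorname{op}})^{d}(\ec^{\operatorname{op}}) = \uc_{d}(\ec)^{\operatorname{op}}$, and apply the above.

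For part (2), since $\uc^{d}(\ec)$ is extension closed by part (1), \Cref{Theorem:dExtImpliesRightMaxdRigid} immediately gives that $\uc$ is weakly right maximal $d$-rigid in $\uc^{d}(\ec)$. The statement for $\uc_{d}(\ec)$ follows once again by dualising to $\ec^{\operatorname{op}}$.

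I do not anticipate any real obstacle here; the bulk of the work sits in \Cref{thm: extclosed} and \Cref{Theorem:dExtImpliesRightMaxdRigid}, and the only substantive observation is the one in the first paragraph, namely that \Cref{d-extension closure} subsumes the Yoneda-equivalence condition appearing in the hypothesis of \Cref{thm: extclosed}. The only care needed is in formulating the dual statements correctly, which is routine.
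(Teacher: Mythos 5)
Your proposal is correct and follows essentially the same route as the paper: the paper likewise deduces the statements for $\uc^{d}(\ec)$ directly from \Cref{thm: extclosed} and \Cref{Theorem:dExtImpliesRightMaxdRigid}, with the $\uc_{d}(\ec)$ case handled by duality. Your explicit remark that the hypothesis of \Cref{thm: extclosed} is a special case of \Cref{d-extension closure} is exactly the (implicit) observation the paper relies on.
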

\begin{proof}
	The statements for $\uc^{d}(\ec)$ follow directly from \cref{thm: extclosed} and \Cref{Theorem:dExtImpliesRightMaxdRigid}, while the statements for $\uc_{d}(\ec)$ follow by the dual results.
\end{proof}

Next we investigate the higher extensions of $\uc$ in $\uc^d(\ec)$. 

  \begin{proposition}\label{dExtClosedInU^d}
      Assume $\uc$ is $d$-extension closed in $\ec$. Then $\uc$ is $d$-extension-closed in $\uc^d(\ec)$.
  \end{proposition}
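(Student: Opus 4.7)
The plan is to reduce the statement to Lemma \ref{Lemma:YonedaEquivMorphism} by exploiting that $\uc$ is cogenerating inside $\uc^d(\ec)$: any $F \in \uc^d(\ec)$ fits in a conflation $0 \to F \to U^1 \to E \to 0$ in $\ec$ with $U^1 \in \uc$ and $E \in \uc^{d-1}(\ec) \subseteq \uc^d(\ec)$, and this conflation lives in the exact category $\uc^d(\ec)$ since the latter is extension closed in $\ec$ by Proposition \ref{cor: extclosure}.

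Given an exact sequence in $\uc^d(\ec)$
\[
\xi \colon 0 \to U \to F_d \to F_{d-1} \to \cdots \to F_1 \to U' \to 0
\]
with $U, U' \in \uc$, I would first apply Lemma \ref{lem: replaceterms} inside the exact category $\uc^d(\ec)$ to obtain a Yoneda-equivalent sequence in $\uc^d(\ec)$
\[
\xi' \colon 0 \to U \to U_d \to U_{d-1} \to \cdots \to U_2 \to X \to U' \to 0,
\]
where $U_2, \dots, U_d \in \uc$ and $X \in \uc^d(\ec)$. Viewing $\xi'$ as an exact sequence in $\ec$, the hypothesis that $\uc$ is $d$-extension closed in $\ec$ provides a Yoneda-equivalent (in $\ec$) exact sequence
\[
\eta \colon 0 \to U \to V_d \to V_{d-1} \to \cdots \to V_1 \to U' \to 0
\]
with all $V_i \in \uc$.

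The crux is then to construct a direct morphism of exact sequences $\eta \to \xi'$ in $\ec$ which is the identity on the endpoints $U, U'$. This is a dual version of Lemma \ref{Lemma:YonedaEquivMorphism} and follows by the same dimension-shifting argument, except that one applies $\Hom_{\ec}(V_1, -)$ to $\xi'$ instead of $\Hom_{\ec}(-, V_d)$; the required exactness comes from $\Ext^i_{\ec}(V_1, U_j) = 0$ for $0 < i < d$, by the $d$-rigidity of $\uc$. Since every object appearing in this morphism lies in $\uc^d(\ec)$, it is also a morphism of exact sequences in the exact category $\uc^d(\ec)$.

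A direct morphism $\eta \to \xi'$ with identity on endpoints inside $\uc^d(\ec)$ witnesses a Yoneda equivalence $\eta \sim \xi'$ in $\uc^d(\ec)$ (via the trivial zigzag with $\eta$ as middle sequence). Combined with $\xi \sim \xi'$ in $\uc^d(\ec)$ from the first step, this gives $\xi \sim \eta$ in $\uc^d(\ec)$, which is precisely $d$-extension closure of $\uc$ in $\uc^d(\ec)$. The main obstacle is formulating and verifying the dual of Lemma \ref{Lemma:YonedaEquivMorphism}; beyond this, all steps are routine applications of results already established.
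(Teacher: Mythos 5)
Your proof follows essentially the same route as the paper: reduce via Lemma \ref{lem: replaceterms} (using that $\uc$ is cogenerating in the extension-closed subcategory $\uc^d(\ec)$), apply the $d$-extension-closure hypothesis in $\ec$, and then upgrade the Yoneda equivalence in $\ec$ to one in $\uc^d(\ec)$ by producing a direct morphism of exact sequences via the dual of Lemma \ref{Lemma:YonedaEquivMorphism}. The only point stated too quickly is the claim that $\eta$ is automatically an exact sequence \emph{in} $\uc^d(\ec)$ because its objects lie there: one should also note that the intermediate cokernels of $V_{i+1}\to V_i$ lie in $\uc^{i}(\ec)\subseteq \uc^d(\ec)$, which is the one-line observation the paper supplies.
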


  \begin{proof}
      Let $U,V\in \uc$ be arbitrary. By \cref{lem: replaceterms} any $d$-extension of $U$ and $V$ in $\uc^d(\ec)$ can be represented by an exact sequence of the form
\begin{equation*}
0 \to U \xrightarrow{} U_d \xrightarrow{} U_{d-1} \to \dots \to U_{2} \to X \xrightarrow{} V \to 0
\end{equation*}
where $U_2, \dots, U_{d} \in \uc$. Since $\uc$ is closed under $d$-extensions as a subcategory of $\ec$, this sequence is Yoneda equivalent in $\ec$ to an exact sequence 
\begin{align*}
0 \to U \xrightarrow{} V_d \xrightarrow{} V_{d-1} \to \dots \to V_{2} \xrightarrow{} V_1 \xrightarrow{} V \to 0
\end{align*}
where $V_i \in \uc$ for all $i$. Now this also represents an exact sequence in $\uc^d(\ec)$, since the cokernel of $V_{i+1}\to V_{i}$ must lie in $\uc^{i}(\ec)\subseteq \uc^d(\ec)$ for $i\geq 0$ (where $V_{d+1}=U$). By the dual of \cref{Lemma:YonedaEquivMorphism} we have a commutative diagram
\begin{equation*}
\begin{tikzcd}
0 \arrow[r] & U\arrow[r]  & U_d \arrow[r,""] & U_{d-1} \arrow[r,""] & \cdots \arrow[r,""] & X\arrow[r,""]  & V\arrow[r] & 0 \\
0 \arrow[r,""] & U \arrow[r,""] \arrow[u, equal] & V_d \arrow[r,""] \arrow[u] & V_{d-1} \arrow[r,""] \arrow[u] & \cdots  \arrow[r,""] & V_1 \arrow[r,""] \arrow[u] & V \arrow[r] \arrow[u, equal] & 0.
\end{tikzcd} 
\end{equation*}
This implies in particular that the sequences are Yoneda equivalent in $\uc^d(\ec)$, and hence $\uc$ is closed under $d$-extensions in $\uc^d(\ec)$.
  \end{proof}

  \begin{proposition}\label{Theorem:Ext^dIso}
      Assume $\uc$ is $d$-extension closed in $\ec$. Then the canonical map $$\Ext^d_{\uc^d(\ec)}(U,V)\to \Ext^d_\ec(U,V)$$ is an isomorphism for all $U,V\in \uc$.  
  \end{proposition}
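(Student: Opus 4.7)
The inclusion $\uc^d(\ec)\hookrightarrow \ec$ is an exact functor by \Cref{cor: extclosure}, so it induces the canonical map on $\Ext$-groups. I will establish surjectivity and injectivity separately.

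\emph{Surjectivity.} Any class in $\Ext^d_\ec(U,V)$ is represented by an exact sequence $0\to V\to E_d\to\cdots\to E_1\to U\to 0$ in $\ec$ with endpoints in $\uc$. Because $\uc$ is $d$-extension closed in $\ec$, this is Yoneda equivalent in $\ec$ to a sequence $0\to V\to V_d\to\cdots\to V_1\to U\to 0$ with every $V_i\in\uc$. Each intermediate kernel inherits a coresolution by $\uc$ of length at most $d$ by truncation, so it lies in $\uc^d(\ec)$; the sequence is therefore also exact in $\uc^d(\ec)$ and provides the required preimage.

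\emph{Injectivity.} Let $\beta\in\Ext^d_{\uc^d(\ec)}(U,V)$ map to zero in $\Ext^d_\ec(U,V)$. By \Cref{dExtClosedInU^d} represent $\beta$ in $\uc^d(\ec)$ by $0\to V\to V_d\to\cdots\to V_1\to U\to 0$ with $V_i\in\uc$, and set $K_0=V$, $K_d=U$, and $K_k=\ker(V_{d-k}\to V_{d-k-1})$ for $1\leq k\leq d-1$ (with $V_0\colonequals U$). Each $K_k$ lies in $\uc^d(\ec)$ by the truncated coresolution $0\to K_k\to V_{d-k}\to\cdots\to V_1\to U\to 0$, so we obtain short exact sequences
\[
0\to K_k\to V_{d-k}\to K_{k+1}\to 0,\qquad 0\leq k\leq d-1,
\]
that are conflations in both $\uc^d(\ec)$ and $\ec$. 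Writing $\delta^{(k)}$ for the associated connecting morphism, the standard Yoneda decomposition yields
\[
\beta=\delta^{(0)}\circ\delta^{(1)}\circ\cdots\circ\delta^{(d-1)}(\operatorname{id}_U)
\]
in both $\Ext^d_{\uc^d(\ec)}(U,V)$ and $\Ext^d_\ec(U,V)$.

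\emph{Key technical step.} For each $0\leq k\leq d-2$ the map $\delta^{(k)}\colon \Ext^{d-k-1}(U,K_{k+1})\to \Ext^{d-k}(U,K_k)$ is injective in both ambient categories, because the preceding term in the long exact $\Ext$-sequence is $\Ext^{d-k-1}(U,V_{d-k})$, which vanishes by $d$-rigidity since $U,V_{d-k}\in\uc$ and $0<d-k-1<d$. Consequently $\beta=0$ in $\Ext^d_?(U,V)$ is equivalent to $\delta^{(d-1)}(\operatorname{id}_U)=0$ in $\Ext^1_?(U,K_{d-1})$ for each $?\in\{\uc^d(\ec),\ec\}$. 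Since $\uc^d(\ec)$ is extension-closed in $\ec$, the map $\Ext^1_{\uc^d(\ec)}(U,K_{d-1})\to \Ext^1_\ec(U,K_{d-1})$ is an isomorphism, so the vanishing in $\ec$ transfers to $\uc^d(\ec)$ and we conclude $\beta=0$. The case $d=1$ is immediate since then $\uc^1(\ec)=\uc$ is extension-closed in $\ec$.

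The main obstacle is the bookkeeping: confirming each $K_k\in\uc^d(\ec)$ so that the long exact $\Ext$-sequences are available there, and identifying precisely the range of $k$ for which $d$-rigidity makes $\delta^{(k)}$ injective. Once this is in place, the proof collapses onto the familiar $\Ext^1$-agreement for extension-closed subcategories.
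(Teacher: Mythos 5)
Your proposal is correct and follows essentially the same route as the paper: surjectivity is verified identically (replace the representative by a $\uc$-resolution via $d$-extension closure and check the intermediate syzygies lie in $\uc^d(\ec)$), and your injectivity argument is the same dimension-shifting reduction, merely packaged as iterated connecting homomorphisms rather than as a single long exact sequence obtained by applying $\Hom_\ec(U,-)$ to the whole $\uc$-resolution. In both versions the point is that $d$-rigidity forces the class to vanish exactly when the end deflation $V_1\to U$ splits, a condition that is visibly the same in $\uc^d(\ec)$ and in $\ec$.
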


  \begin{proof}
      We first prove injectivity of the map. Let $\delta$ denote an element in $\Ext^{d}_{\uc^d(\ec)}(V,U)$, and suppose $\delta$ gets sent to $0$ in $\Ext_\ec^d(V,U)$. By \Cref{dExtClosedInU^d} we may assume $\delta$ is represented by an exact sequence in $\uc^d(\ec)$
\begin{align*}
0 \to U \xrightarrow{} V_d \xrightarrow{} V_{d-1} \to \dots \to V_{2} \xrightarrow{} V_1 \xrightarrow{} V \to 0
\end{align*}
where $V_i \in \uc$ for all $i$. Applying $\Hom_{\ec}(V,-)$ and using that $\Ext^i_\ec(V,\uc)=0$ for $0<i<d$, we get an exact sequence
\[
0\to \Hom_{\ec}(V,U)\to \cdots \to \Hom_{\ec}(V,V_{1}) \to \Hom_{\ec}(V,V)\to \Ext^d_{\ec}(V,U).
\]
Since $\delta$ is $0$ in $\Ext_\ec^d(V,U)$,  the rightmost map above must be $0$. Hence, 
$$\Hom_{\ec}(V,V_{1}) \to \Hom_{\ec}(V,V)$$
must be surjective. This implies that the deflation $V_1 \to V$ is a split deflation. However, if this holds, then $\delta$ must be $0$ also as an element in $\Ext_{\uc^d(\ec)}^d(V,U)$ and thus we are done.

To prove surjectivity of the map, note that $\uc$ being closed under $d$-extensions in $\ec$ implies that any element of $\Ext_\ec^d(V,U)$ has a representative of the form 
\begin{align*}
0 \to U \xrightarrow{} V_d \xrightarrow{} V_{d-1} \to \dots \to V_{2} \xrightarrow{} V_1 \xrightarrow{} V \to 0
\end{align*}
where $V_i \in \uc$ for all $i$. Since the cokernel of $V_{i+1}\to V_{i}$ must lie in $\uc^{i}(\ec)\subseteq \uc^d(\ec)$, such a sequence must also represent an element in $\Ext_{\uc^d(\ec)}^d(V,U)$. This proves the claim.
  \end{proof}

 \subsection{Higher extension closure implies cluster tilting}\label{Subsection:d-Extension closed and d-cluster tilting}
 
The goal of this subsection is to show that if $\uc$ is $d$-extension-closed and closed under additive complements, then $\uc$ is a $d$-cluster tilting subcategory of $\uc_d(\uc^d(\ec))$. Note that $\uc_d(\uc^d(\ec))$ is extension closed in $\ec$ by \Cref{cor: extclosure}, since $\uc$ is $d$-extension closed in $\uc^d(\ec)$ by \Cref{dExtClosedInU^d}. Hence, we will always consider it as a fully exact subcategory of $\ec$.

First, we need the following lemma. 
 
 \begin{lemma}\label{Lemma:TowardsdCT}
 Assume $\ec'$ is a subcategory of $\ec$ closed under extensions and additive complements, such that
 	\begin{itemize}
 		\item $\uc$ is a cogenerating subcategory of $\ec'$.
 		\item $\uc$ is closed under $d$-extensions in $\ec'$.
 	    \item The canonical map $\Ext^d_{\ec'}(U,V)\to \Ext^d_\ec(U,V)$ is an isomorphism for all $U,V\in \uc$.
 	\end{itemize} 
 	Then for any conflation in $\ec$
 	\[
 	0\to E\to U\to G\to 0
 	\]
 	with $E\in \ec'$ and $U\in \uc$ and $G\in \uc^d(\ec)$, we must have that $G\in \ec'$.
 \end{lemma}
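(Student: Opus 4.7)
The plan is to use a pushout reduction together with the $\Ext^{d}$-isomorphism and $d$-extension closure hypotheses, then extract $G\in\ec'$ via a cone, a $3\times 3$, and a splitting argument based on closure under additive complements. I will proceed by induction on $d$, the base $d=1$ being immediate since $\uc^{1}(\ec)=\uc\subseteq \ec'$.

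For the inductive step, since $\uc$ cogenerates $\ec'$ and $E\in\ec'$, I choose an inflation $E\hookrightarrow V$ in $\ec'$ with $V\in\uc$ and cokernel $C\in\ec'$, and form the pushout of $E\to U$ along it. The resulting object $P$ lies in $\ec'$ by extension-closure applied to the conflation $0\to U\to P\to C\to 0$, and simultaneously sits in a conflation $0\to V\to P\to G\to 0$ in $\ec$. Splicing with the coresolution of $G$ yields a $d$-fold extension $0\to V\to P\to U^1\to\cdots\to U^d\to 0$ in $\ec$ whose class in $\Ext^d_\ec(U^d,V)$ lifts, via the hypothesized isomorphism, to a class in $\Ext^d_{\ec'}(U^d,V)$; by $d$-extension closure in $\ec'$, this latter class is representable by $0\to V\to W_1\to\cdots\to W_d\to U^d\to 0$ in $\ec'$ with each $W_i\in\uc$.

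Next, \Cref{Lemma:YonedaEquivMorphism} supplies a morphism of complexes between these two representatives that is the identity on $V$ and on $U^d$. Mimicking the cone argument from the proof of \Cref{thm: extclosed} produces an acyclic complex
\[
0\to P\to U^1\oplus W_1\to U^2\oplus W_2\to\cdots\to U^{d-1}\oplus W_{d-1}\to W_d\to 0
\]
in $\ec$. A $3\times 3$ combining this with $0\to V\to P\to G\to 0$ then gives a conflation $0\to G\to U^1\oplus H_1\to C_1\to 0$ in $\ec$, where $H_1=W_1/V\in\ec'$ and $C_1\in\uc^{d-1}(\ec)$.

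The main obstacle is concluding that $G\in\ec'$ from this data. Applying the inductive hypothesis at level $d-1$ to the conflation $0\to P\to U^1\oplus W_1\to C_1\to 0$ (which has $U^1\oplus W_1\in\uc$ and $C_1\in\uc^{d-1}(\ec)$, and for which the reduced level-$(d-1)$ hypotheses on $\ec'$ hold by $d$-rigidity of $\uc$) yields $C_1\in\ec'$. The deflation $U^1\oplus W_1\twoheadrightarrow C_1$ in $\ec'$ factors through the deflation $U^1\oplus W_1\twoheadrightarrow U^1\oplus H_1$ in $\ec'$, and pulling back this factorization against itself inside $\ec'$ produces an object $Q\in\ec'$ whose natural section (coming from the factorization) exhibits an additive decomposition $Q\cong (U^1\oplus W_1)\oplus G$; closure of $\ec'$ under additive complements then forces $G\in\ec'$. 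The delicate point is to verify rigorously that this decomposition splits inside $\ec$ rather than merely in an abelian ambient, which is where the interplay between idempotent-splitting in the ambient category and additive-complement closure of $\ec'$ plays its essential role.
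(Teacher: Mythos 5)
Your overall strategy is the same as the paper's: reduce to a conflation $0\to V\to P\to G\to 0$ with $V\in\uc$ and $P\in\ec'$, splice with a coresolution of $G$, use the $\Ext^d$-isomorphism together with $d$-extension closure to replace the spliced sequence by one in $\uc$, and invoke \Cref{Lemma:YonedaEquivMorphism} to get a comparison map. Where you diverge is the endgame, and there your route is both more complicated than necessary and not fully justified. The paper simply takes cokernels of the two left-hand inflations in the comparison diagram, obtaining a map of conflations $(0\to V\to P\to G\to 0)\to(0\to V\to W_1\to H_1\to 0)$ over $\mathrm{id}_V$; the right-hand square is then bicartesian (\cite[Proposition 2.12]{Bue10}), which puts $G\oplus W_1$ in the \emph{middle} of a conflation with ends $P$ and $H_1$ in $\ec'$, so extension closure plus additive complements finish immediately. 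You instead pass through the cone to produce $0\to G\to U^1\oplus H_1\to C_1\to 0$, where $G$ sits as a \emph{kernel} --- a position extension closure cannot exploit --- and then have to claw your way back with a pullback and a splitting.

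Two concrete issues with that detour. First, the induction on $d$ used to get $C_1\in\ec'$ is not actually available as stated: the lemma is a statement for one fixed $d$, and running your inductive step at level $j<d$ requires a length-$j$ analogue of \Cref{Lemma:YonedaEquivMorphism}, which the paper only proves for length $d+2$. (The analogue is true, and the three bullet hypotheses do degenerate correctly at lower levels by $d$-rigidity, but none of this is in your write-up.) More importantly, the induction is superfluous: form the pullback $Q$ of the deflation $U^1\oplus H_1\to C_1$ along $U^1\oplus W_1\to C_1$ in $\ec$ (axiom \textbf{R2}); then $Q$ sits in the conflation $0\to P\to Q\to U^1\oplus H_1\to 0$ with both ends in $\ec'$, so $Q\in\ec'$ by extension closure alone, with no need to know $C_1\in\ec'$. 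Second, your closing worry about the splitting is unfounded and signals a missing piece of your argument rather than a delicate one: the projection $Q\to U^1\oplus W_1$ is a deflation with kernel $G$ admitting the section $(\mathrm{id},q)$, and a conflation whose deflation admits a section splits in \emph{any} additive category (set $\pi$ with $i\pi=\mathrm{id}-sg$ via the kernel property; then $(\pi,g)$ and $(i,s)$ are mutually inverse up to a unitriangular automorphism). No idempotent splitting in the ambient category is involved; what does the work at the end is exactly closure of $\ec'$ under additive complements, as in the paper.
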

 
 \begin{proof}
 	Let $0\to E\xrightarrow{f} U\to G\to 0$ be an exact sequence as in the lemma. Since $\uc$ is cogenerating in $\ec'$, we can find a conflation $0\to E\xrightarrow{g} U'\to E'\to 0$ with $U'\in \uc$ and $E'\in \ec'$. Set $V=U\oplus U'$, let $\pi_1\colon V\to U$ and $\pi_2\colon V\to U'$ denote the projections, and let $h\colon E\to V$ be the morphism induced by $f\colon E\to U$ and $g\colon E\to U'$. Then, we have a commutative diagram 
 	\[
 	\begin{tikzcd}
 	0\arrow[r] &E \arrow[r,"h"] \arrow[d, equal] & V \arrow[r] \arrow[d,"\pi_2"] & E'' \arrow[d]\arrow[r] & 0 \\
 	0\arrow[r] &E \arrow[r, "g"] & U' \arrow[r]  & E' \arrow[r]  & 0  
 	\end{tikzcd}
 	\]
 	where $E''$ denotes the cokernel of $h$. Since the rightmost square is bicartesian, the morphism $E''\to E'$ must be a deflation, see e.g. \cite[Exercise 2.19]{Bue10}. Furthermore, the kernel of $E''\to E'$ must be isomorphic to the one of $\pi_2$, i.e. to $U$. Hence $E''$ is an extension of two objects in $\ec'$, and must therefore be contained in $\ec'$.
 	
 	Next, consider the commutative diagram  
 	\[
 \begin{tikzcd}
 0\arrow[r] &E \arrow[r,"h"] \arrow[d, equal] & V \arrow[r] \arrow[d,"\pi_1"] & E'' \arrow[d]\arrow[r] & 0 \\
 0\arrow[r] &E \arrow[r, "f"] & U \arrow[r]  & G \arrow[r]  & 0. 
 \end{tikzcd}
 \]
 Since the right hand square is bicartesian, we have a conflation
 \[
 0\to V\to U\oplus E''\to G\to 0. 
 \]	
 Since $G\in \uc^d(\ec)$, we can find an exact sequence 
 	\[
 	0\to G\to U^1\to \cdots \to U^d\to 0
 	\] 
 	in $\ec$ with $U^1,\dots,U^d\in \uc$. Combining this with the conflation above, we get an exact sequence
 	\[
 	0\to V\to U\oplus E''\to U^1\to U^2\to \cdots \to U^d\to 0.
 	\] Since $\Ext^d_{\ec'}(U^d,V)\cong \Ext^d_\ec(U^d,V)$, this sequence is Yoneda equivalent to an exact sequence in $\ec'$, and since $\uc$ closed under $d$-extensions in $\ec'$ and $U^d,V\in \uc$, we can assume the sequence is of the form
 	\[
 	0\to V\to V^1\to \cdots \to V^{d}\to U^d\to 0
 	\]
 	where $V^1,\dots, V^{d}\in \uc$. Now by \Cref{Lemma:YonedaEquivMorphism} we have a commutative diagram 
 	\begin{equation*}
 	\begin{tikzcd}
 	0 \arrow[r] & V\arrow[r] \arrow[d, equal] & U\oplus E'' \arrow[r,""] \arrow[d, ""] & U^1 \arrow[r,""] \arrow[d, ""] & \cdots \arrow[r,""] & U^{d-1}\arrow[r,""] \arrow[d,""] & U^d\arrow[r] \arrow[d, equal] & 0 \\
 	0 \arrow[r,""] & V \arrow[r,""] & V^1 \arrow[r,""] & V^2 \arrow[r,""] & \cdots  \arrow[r,""] & V^d \arrow[r,""] & U^d \arrow[r] & 0.
 	\end{tikzcd} 
 	\end{equation*}
 	In particular, taking the cokernel of the two leftmost horizontal morphisms, we get a commutative diagram 
 		\[
 	\begin{tikzcd}[column sep=10, row sep=20]
 	0\arrow[r] &V \arrow[r] \arrow[d, equal] & U\oplus E'' \arrow[r] \arrow[d,""] & G \arrow[d]\arrow[r] & 0 \\
 	0\arrow[r] &V \arrow[r] & V^1 \arrow[r]  & C \arrow[r]  & 0  
 	\end{tikzcd}
 	\]
 	where $C\in \ec'$. Since the rightmost square is bicartesian, we have a conflation
 	\[
 	0\to U\oplus E''\to G\oplus V^1\to C\to 0
 	\]
 	Since $\ec'$ is closed under extensions, it follows that $G\oplus V^1\in \ec'$. Finally, since $\ec'$ is closed under additive complements and $V^1\in \ec'$ it follows that $G\in \ec'$.
 \end{proof}

We can prove the main result of this section.

\begin{theorem}\label{Theorem:dCT}
	 Assume $\uc$ is $d$-extension closed in $\ec$. The following hold
  \begin{enumerate}
      \item\label{Theorem:dCT:1} $\uc$ is weakly right and weakly left maximal $d$-rigid in $\uc_d(\uc^d(\ec))$.
      \item\label{Theorem:dCT:2} If $\uc$ is closed under additive complements, then $\uc$ is $d$-cluster tilting in $\uc_d(\uc^d(\ec))$.
  \end{enumerate} 
\end{theorem}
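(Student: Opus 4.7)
The plan is to set $\ec':=\uc^d(\ec)$ and $\ec'':=\uc_d(\ec')$, so that part~(1) asserts $\uc$ is weakly left and right maximal $d$-rigid in $\ec''$ and part~(2) asserts the $d$-cluster tilting property in $\ec''$. I would first apply \Cref{cor: extclosure}, \Cref{dExtClosedInU^d}, and \Cref{Theorem:Ext^dIso} to $\uc\subseteq\ec$, and then their duals to $\uc\subseteq\ec'$ (using that $\uc$ is $d$-extension closed in $\ec'$ by \Cref{dExtClosedInU^d}). This simultaneously records: $\ec''$ is extension-closed in $\ec'$ and hence in $\ec$; $\uc$ is $d$-extension closed in both $\ec'$ and $\ec''$; $\uc$ is weakly right maximal $d$-rigid in $\ec'$ and weakly left maximal $d$-rigid in $\ec''$; and the canonical maps $\Ext^d_{\ec''}(U,V)\to\Ext^d_{\ec'}(U,V)\to\Ext^d_\ec(U,V)$ are isomorphisms for $U,V\in\uc$. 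The weak left maximality in $\ec''$ already delivers the easy half of~(1).

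The hard half of~(1) is to verify, for each $E\in\ec''$, that the right $\uc$-coresolution $0\to E\to U^1\to\cdots\to U^d\to 0$ guaranteed by $E\in\ec'$ can be arranged with all its syzygies $C^i=\Image(U^i\to U^{i+1})$ lying in $\ec''$. Since $C^0=E$ and $C^{d-1}=U^d$ are already in $\ec''$, and $C^i\in\uc^{d-i}(\ec)\subseteq\ec'$ automatically, the intermediate syzygies would follow by induction from the conflations $0\to C^{i-1}\to U^i\to C^i\to 0$ provided one can prove the following \emph{key lemma}: if $F\in\ec''$ and $0\to F\to U\to G\to 0$ is a conflation in $\ec'$ with $U\in\uc$ and $G\in\uc^d(\ec')$, then $G\in\ec''$. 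To establish it, I would invoke \Cref{Lemma:TowardsdCT} with ambient category $\ec'$ and inner subcategory $\ec''$: three of the four required hypotheses (extension-closure and closure under additive complements of $\ec''$ in $\ec'$, $d$-extension closure of $\uc$ in $\ec''$, and the $\Ext^d$-isomorphism $\Ext^d_{\ec''}(U,V)\cong\Ext^d_{\ec'}(U,V)$) were recorded above, and what remains is to verify that $\uc$ is cogenerating in $\ec''$ in the strong sense that such cokernels again lie in $\ec''$.

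The cogenerating property is the technical heart of the argument. Given $F\in\ec''$, I would take $U^1$ to be the first term of the right $\uc$-coresolution of $F$ in $\ec$, set $F':=U^1/F\in\uc^{d-1}(\ec)$, and then construct a length-$d$ left $\uc$-resolution of $F'$ in $\ec'$ witnessing $F'\in\uc_d(\ec')=\ec''$. Splicing the length-$d$ left $\uc$-resolution of $F$ provided by $F\in\ec''$ with the conflation $0\to F\to U^1\to F'\to 0$ produces a length-$(d+1)$ left $\uc$-resolution of $F'$ in $\ec'$, and the main obstacle is to compress this to length $d$. My plan is to do so by combining three ingredients in tandem: $d$-extension closure of $\uc$ in $\ec'$, to move freely within Yoneda equivalence classes of $\uc$-sequences; the isomorphism $\Ext^d_{\ec'}(\uc,\uc)\cong\Ext^d_\ec(\uc,\uc)$, to transfer identifications between the two exact structures; and the short right $\uc$-coresolution $0\to F'\to U^2\to\cdots\to U^d\to 0$, which via dimension shifting converts the obstruction to compression into an $\Ext$-class between $\uc$-objects that can be killed. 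Once the key lemma is in place, part~(2) is immediate: closure of $\uc$ under additive complements combined with the weak maximality from~(1) yields closure under direct summands via \Cref{Lemma:UClosedUnderDirectSummand} and its dual, and the $d$-cluster tilting property then follows from \Cref{Reformulation:d-CT}. The main obstacle I anticipate is exactly this compression step, since it is the only point where the three structural features of $\uc$ need to interact simultaneously inside $\ec'$; every other verification reduces either to an already-established result or to direct bookkeeping.
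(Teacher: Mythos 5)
Your overall skeleton (reduce everything to \Cref{Lemma:TowardsdCT}) matches the paper's, but the level at which you apply that lemma creates a genuine gap. You invoke it with ambient category $\ec'=\uc^d(\ec)$ and inner category $\ec''=\uc_d(\ec')$, which forces you to verify that $\uc$ is cogenerating in $\ec''$, i.e.\ that for $F\in\ec''$ the cosyzygy $F'=U^1/F$ of the conflation $0\to F\to U^1\to F'\to 0$ again lies in $\ec''$. But this is \emph{exactly} an instance of the conclusion of \Cref{Lemma:TowardsdCT} (take $E=F$, $U=U^1$, $G=F'\in\uc^{d-1}(\ec')\subseteq\uc^d(\ec')$), so your architecture is circular unless the cogenerating property is established independently. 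Your proposed independent argument --- compressing the spliced length-$(d{+}1)$ left $\uc$-resolution of $F'$ to length $d$ --- is not substantiated by the three ingredients you name. The obstruction to compression is the class of $0\to V_d\to V_{d-1}\to\cdots\to V_1\to U^1\to F'\to 0$ in $\Ext^d_{\ec'}(F',V_d)$ (equivalently, via the usual dimension shift, the class of $0\to V_d\to V_{d-1}\to C\to 0$ in $\Ext^1_{\ec'}(C,V_d)$, which injects into $\Ext^d_{\ec'}(F',V_d)$). Since $F'\notin\uc$ for $d\geq 3$, neither $d$-rigidity, nor $d$-extension closure of $\uc$, nor the comparison isomorphism $\Ext^d_{\ec'}(\uc,\uc)\cong\Ext^d_{\ec}(\uc,\uc)$ applies to this group; and dimension shifting along $0\to F'\to U^2\to\cdots\to U^d\to 0$ only relates it to Ext-groups in degrees $\geq d$ of $\uc$-objects, where no vanishing is available. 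There is no reason for this class to die, and even if it did, the resulting $d$-term resolution would end in an additive complement $C$ of an object of $\uc$ rather than an object of $\uc$, which is not enough for part~(1) since closure under additive complements is not assumed there.

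The paper sidesteps this by going one level \emph{up} rather than down: it introduces $\ec'''=\uc^d(\uc_d(\uc^d(\ec)))$, in which $\uc$ is cogenerating \emph{by construction} (the first cosyzygy of the defining coresolution of any $F\in\ec'''$ lies in $\uc^{d-1}(\ec'')\subseteq\ec'''$). The remaining hypotheses of \Cref{Lemma:TowardsdCT} for $\ec'''$ inside the original $\ec$ follow by iterating \Cref{Prop:ClosusureAdditiveComplements}, \Cref{cor: extclosure}, \Cref{dExtClosedInU^d}, \Cref{Theorem:Ext^dIso} and their duals, exactly as you do for $\ec''$. Applying the lemma repeatedly along the left $\uc$-resolution of an arbitrary $E\in\uc_d(\uc^d(\ec))$ then gives $\uc_d(\uc^d(\ec))\subseteq\ec'''$, hence equality $\uc_d(\uc^d(\ec))=\uc^d(\uc_d(\uc^d(\ec)))$, and both weak maximalities in part~(1) drop out simultaneously from this stabilization. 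Your treatment of part~(2) via \Cref{Lemma:UClosedUnderDirectSummand} is correct and matches the paper. To repair your proof, replace the cogenerating verification for $\ec''$ by the passage to $\ec'''$; the compression step should be abandoned.
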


\begin{proof}
	This clearly holds for $d=1$, so assume $d\geq 2$. Consider $\uc^d(\uc_d(\uc^d(\ec)))$ as a subcategory of $\ec$. By repeated applications of \Cref{Prop:ClosusureAdditiveComplements}, \Cref{cor: extclosure}, \Cref{dExtClosedInU^d} and \Cref{Theorem:Ext^dIso} and their duals, we get that $\uc^d(\uc_d(\uc^d(\ec)))$ is closed under extensions and additive complements in $\ec$, that $\uc$ is $d$-rigid and closed under $d$-extensions in $\uc^d(\uc_d(\uc^d(\ec)))$, and that the canonical map $\Ext^d_{\uc^d(\uc_d(\uc^d(\ec)))}(U,V)\to \Ext^d_{\ec}(U,V)$ is an isomorphism for all $U,V\in \uc$. This implies that $\uc^d(\uc_d(\uc^d(\ec)))$ satisfies the conditions of \Cref{Lemma:TowardsdCT}. Hence, by repeated applications of that lemma we get that whenever
	\[
	0\to U_d\to \cdots \to U_1\to E\to 0
	\]
	is an exact sequence in $\uc^d(\ec)$ where $U_i\in \uc$ for all $1\leq i\leq d$, then $E\in \uc^d(\uc_d(\uc^d(\ec)))$. In other words, we have $$\uc_d(\uc^d(\ec))\subseteq \uc^d(\uc_d(\uc^d(\ec))).$$ Since the other inclusion follows by definition, we get that $$\uc_d(\uc^d(\ec))=\uc^d(\uc_d(\uc^d(\ec))).$$ Hence, $\uc$ must be both weakly right and weakly left maximal $d$-rigid in $\uc_d(\uc^d(\ec))$. If $\uc$ is closed under additive complements in $\ec$, then $\uc$ must be closed under direct summands and therefore be $d$-cluster tilting in $\uc_d(\uc^d(\ec))$ by \Cref{Lemma:UClosedUnderDirectSummand}.
 \end{proof}

 We have the following uniqueness result for $\uc_d(\uc^d(\ec))$.

 \begin{theorem}\label{Theorem:UniquenessdCT}
Assume  $\uc$ is $d$-extension closed in $\ec$ for $d\geq 2$. A subcategory $\ec'$ of $\ec$ is equal to $\uc_d(\uc^d(\ec))$ if and only if 
  \begin{enumerate}
      \item\label{Theorem:UniquenessdCT:1} $\ec'$ is closed under extensions and additive complements in $\ec$.
      \item\label{Theorem:UniquenessdCT:2} $\uc$ is weakly right and weakly left maximal $d$-rigid in $\ec'$.
      \item\label{Theorem:UniquenessdCT:3} The canonical map $\Ext^d_{\ec'}(U,V)\to \Ext^d_\ec(U,V)$ is an isomorphism for all $U,V\in \uc$.
  \end{enumerate}
 \end{theorem}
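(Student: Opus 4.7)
For the ``only if'' direction, the subcategory $\uc_d(\uc^d(\ec))$ satisfies (1)--(3) by combining earlier results: (1) follows by iterated applications of $\Cref{cor: extclosure}$, $\Cref{Prop:ClosusureAdditiveComplements}$, and $\Cref{dExtClosedInU^d}$ together with their duals; (2) is part~\eqref{Theorem:dCT:1} of $\Cref{Theorem:dCT}$; and (3) follows from $\Cref{Theorem:Ext^dIso}$ applied to $\uc \subseteq \uc^d(\ec)$ combined with its dual applied to $\uc \subseteq \uc_d(\uc^d(\ec)) \subseteq \uc^d(\ec)$.

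For the ``if'' direction, assuming $\ec'$ satisfies (1)--(3), the plan is to prove the two inclusions $\ec' \subseteq \uc_d(\uc^d(\ec))$ and $\uc_d(\uc^d(\ec)) \subseteq \ec'$ separately. The first inclusion will be straightforward: for $E \in \ec'$, the weakly right-maximal part of (2) gives $\ec' \subseteq \uc^d(\ec)$, and the weakly left-maximal part produces an exact sequence $0 \to U_d \to \cdots \to U_1 \to E \to 0$ in $\ec'$ with $U_i \in \uc$, whose intermediate cokernels lie in $\ec' \subseteq \uc^d(\ec)$; hence the sequence is exact in $\uc^d(\ec)$ and $E \in \uc_d(\uc^d(\ec))$.

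The reverse inclusion $\uc_d(\uc^d(\ec)) \subseteq \ec'$ will be obtained by applying $\Cref{Lemma:TowardsdCT}$ iteratively to $\ec'$, mirroring the argument in the proof of $\Cref{Theorem:dCT}$. Given $E \in \uc_d(\uc^d(\ec))$ with a $\uc$-resolution exact in $\uc^d(\ec)$, starting from $U_d \in \uc \subseteq \ec'$ and applying the lemma to each successive conflation should push the intermediate cokernels $Z_1, \ldots, Z_{d-1}$, and finally $E$, into $\ec'$. The hypotheses of $\Cref{Lemma:TowardsdCT}$ needed for $\ec'$ are: (1) and (3), which are given; $\uc$ cogenerating in $\ec'$, which follows from the weakly right-maximal property of (2); and most delicately, that $\uc$ is $d$-extension closed in $\ec'$.

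This last condition is the main obstacle. I would establish it by combining $\Cref{lem: replaceterms}$ applied in $\ec'$ (reducing a given $d$-extension between $\uc$-objects to a representative with all but one middle term in $\uc$), the $d$-extension closure of $\uc$ in $\ec$ combined with the isomorphism~(3), and $\Cref{Lemma:YonedaEquivMorphism}$ to exhibit a $\uc$-middle representative that is exact in $\ec'$. This step mirrors $\Cref{dExtClosedInU^d}$, with the added difficulty that intermediate objects of a $\uc$-middle sequence must be shown to lie in $\ec'$ rather than merely in $\uc^d(\ec)$.
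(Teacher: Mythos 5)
Your overall architecture matches the paper's: the ``only if'' direction from the earlier results, and the ``if'' direction by verifying the hypotheses of \Cref{Lemma:TowardsdCT} for $\ec'$ to get $\uc_d(\uc^d(\ec))\subseteq \ec'$, with the reverse inclusion from weak left/right maximality. However, there is a gap at exactly the step you flag as ``the main obstacle'': showing that $\uc$ is $d$-extension closed in $\ec'$. The paper gets this in one line from \Cref{Corollary:d-CTClosedUnderdExt}: hypothesis \eqref{Theorem:UniquenessdCT:2} says $\uc$ is weakly left maximal $d$-rigid in $\ec'$ (hence generating in $\ec'$) and weakly right maximal $d$-rigid in $\ec'$, which are precisely the hypotheses of that proposition. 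You do not need hypothesis \eqref{Theorem:UniquenessdCT:3} or any comparison with $\ec$ for this step at all.

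Your proposed substitute --- reduce via \Cref{lem: replaceterms}, pass to an all-$\uc$ representative using $d$-extension closure in $\ec$ together with \eqref{Theorem:UniquenessdCT:3}, then pull back via \Cref{Lemma:YonedaEquivMorphism} --- is not carried out, and the difficulty you identify is real: an exact sequence $0\to U\to V_d\to\cdots\to V_1\to V\to 0$ with all terms in $\uc$ that is exact in $\ec$ need not be exact in $\ec'$, because its intermediate cokernels need not lie in $\ec'$. The natural tool for pushing those cokernels into $\ec'$ is \Cref{Lemma:TowardsdCT} itself, whose hypotheses include the very $d$-extension closure of $\uc$ in $\ec'$ you are trying to establish, so the argument as sketched is circular. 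Replacing this step by the direct appeal to \Cref{Corollary:d-CTClosedUnderdExt} closes the gap and recovers the paper's proof.
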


 \begin{proof}
      By \Cref{Prop:ClosusureAdditiveComplements}, \Cref{cor: extclosure}, \Cref{dExtClosedInU^d}, \Cref{Theorem:Ext^dIso} and \Cref{Theorem:dCT} the properties \eqref{Theorem:UniquenessdCT:1} to \eqref{Theorem:UniquenessdCT:3} hold for $\uc_d(\uc^d(\ec))$. Conversely, assume $\ec'$ is a subcategory satisfying these properties. Since $\uc$ is weakly right and weakly left maximal $d$-rigid, $\uc$ must be closed under $d$-extensions in $\ec'$ by \Cref{Corollary:d-CTClosedUnderdExt}. Hence, the conditions of \Cref{Lemma:TowardsdCT} hold for $\ec'$. Therefore, given an exact sequence
	\[
	0\to U_d\to \cdots \to U_1\to E\to 0
	\]
	 in $\uc^d(\ec)$ where $U_i\in \uc$ for all $1\leq i\leq d$, we must have that $E\in \ec'$. This shows that $\uc_d(\uc^d(\ec))\subseteq \ec'$. Since $\uc$ is weakly right and weakly left maximal $d$-rigid in $\ec'$, we must have that $\ec'\subseteq \uc_d(\uc^d(\ec))$. Hence, $\ec'=\uc_d(\uc^d(\ec))$, which proves the claim.
\end{proof}

We get the following corollary.

\begin{corollary}\label{Corollary:Equality}
	If $\uc$ is $d$-extension closed in $\ec$, then $\uc_d(\uc^d(\ec))=\uc^d(\uc_d(\ec))$.
\end{corollary}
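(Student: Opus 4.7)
The plan is to reduce the corollary to the uniqueness result in \Cref{Theorem:UniquenessdCT}. For $d=1$ there is nothing to show: by definition $\uc^1(\ec)=\uc=\uc_1(\ec)$ (any exact sequence $0\to E\to U\to 0$ or $0\to U\to E\to 0$ forces $E\cong U\in\uc$), so both iterated constructions collapse to $\uc$. Hence we may assume $d\geq 2$ throughout.

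The first step is to verify that $\uc_d(\uc^d(\ec))$ satisfies conditions \eqref{Theorem:UniquenessdCT:1}--\eqref{Theorem:UniquenessdCT:3} of \Cref{Theorem:UniquenessdCT}. This is essentially packaged in the proof of \Cref{Theorem:dCT}: extension-closure and closure under additive complements follow from iterated applications of \Cref{Prop:ClosusureAdditiveComplements} and \Cref{cor: extclosure}; weak right and left maximal $d$-rigidity follow from \Cref{Theorem:dCT}\eqref{Theorem:dCT:1}; and the $\Ext^d$-isomorphism follows from \Cref{Theorem:Ext^dIso} (applied first to the transition $\ec\rightsquigarrow \uc^d(\ec)$ and then again to $\uc^d(\ec)\rightsquigarrow \uc_d(\uc^d(\ec))$, composing to yield the desired canonical isomorphism $\Ext^d_{\uc_d(\uc^d(\ec))}(U,V)\to \Ext^d_\ec(U,V)$).

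The second step is to check that $\uc^d(\uc_d(\ec))$ satisfies the same three conditions of \Cref{Theorem:UniquenessdCT}. This is exactly the dual of the first step: every statement used (extension-closure of $\uc^d$ and $\uc_d$ of a subcategory in which $\uc$ is $d$-extension closed, preservation of $d$-extensions under passing to $\uc_d$, and the $\Ext^d$-isomorphism) has a dual analogue, since the hypothesis that $\uc$ is $d$-extension closed in $\ec$ is self-dual.

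Finally, the uniqueness assertion in \Cref{Theorem:UniquenessdCT} says that there is at most one subcategory $\ec'$ of $\ec$ satisfying conditions \eqref{Theorem:UniquenessdCT:1}--\eqref{Theorem:UniquenessdCT:3}; therefore $\uc_d(\uc^d(\ec))=\uc^d(\uc_d(\ec))$. There is no real obstacle here beyond checking that the three-condition characterization is genuinely symmetric in the roles of $\uc^d$ and $\uc_d$, which it is because conditions \eqref{Theorem:UniquenessdCT:1} and \eqref{Theorem:UniquenessdCT:3} are self-dual and condition \eqref{Theorem:UniquenessdCT:2} requires \emph{both} left and right weak maximal $d$-rigidity.
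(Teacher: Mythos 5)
Your proposal is correct and follows essentially the same route as the paper: the paper's proof likewise observes that conditions \eqref{Theorem:UniquenessdCT:1}--\eqref{Theorem:UniquenessdCT:3} of \Cref{Theorem:UniquenessdCT} are self-dual, so $\uc^d(\uc_d(\ec))$ satisfies them and must coincide with $\uc_d(\uc^d(\ec))$ by uniqueness. Your explicit treatment of the $d=1$ case is a reasonable (if minor) addition, since \Cref{Theorem:UniquenessdCT} is stated only for $d\geq 2$.
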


\begin{proof}
	Since the criteria \eqref{Theorem:UniquenessdCT:1}-\eqref{Theorem:UniquenessdCT:3} in \Cref{Theorem:UniquenessdCT} are self-dual, they are also satisfied by $\uc^d(\uc_d(\ec))$. Hence, $\uc^d(\uc_d(\ec))=\uc_d(\uc^d(\ec))$ by uniqueness.
\end{proof}

\subsection{Higher torsion classes}
We explain  how higher torsion classes in the sense of \cite{J16} can be realized as $d$-cluster tilting subcategories of extension-closed subcategories, using the results in the previous subsection. Fix a $d$-cluster tilting subcategory $\mc$ of an abelian category $\ac$.

A $d$\textit{-torsion class} of $\mc$ is a subcategory $\uc$ of $\mc$ closed under direct summands, $d$-extensions, and $d$-quotients. Here closure under $d$-quotients means that any morphism $X\to U$ with $X\in \mc$ and $U\in \uc$ has a $d$-cokernel in $\mc$
\[
U\to U^1\to U^2\to \cdots \to U^d\to 0
\]
where $U^i\in \uc$ for all $i$. This definition of $d$-torsion class is different to the one in \cite{J16}, but by \cite[Theorem 1.1]{AHJKPT25} they coincide when $\ac$ is an abelian length category.

\begin{theorem}\label{Theorem:HigherTorsion}
    Let $\uc$ be a $d$-torsion class of $\mc$. Then $\uc$ is a $d$-cluster tilting subcategory of
    \[
    \uc_d(\ac)=\{A\in \ac\mid \exists\text{ }0\to U_d\to \cdots \to U_1\to A\to 0 \text{ exact, }U_i\in \uc \text{, }1\leq i\leq d\}.
    \]
\end{theorem}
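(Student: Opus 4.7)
The plan is to apply \Cref{Theorem:dCT} to $\uc\subseteq\ac$ and then to identify the resulting ambient $\uc_d(\uc^d(\ac))$ with $\uc_d(\ac)$. The case $d=1$ is tautological since $\uc_1(\ac)=\uc$ and every subcategory is trivially $1$-cluster tilting in itself, so I assume $d\geq 2$.

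Three hypotheses of \Cref{Theorem:dCT} need verifying: $\uc$ is $d$-rigid in $\ac$ (inherited from $\mc$), closed under additive complements (from closure under direct summands), and — the key ingredient — $d$-extension closed in $\ac$ in the sense of \Cref{d-extension closure}. For the last, given any exact sequence $0\to U\to A_1\to \cdots\to A_d\to U'\to 0$ in $\ac$ with $U,U'\in\uc$, I use that $\mc$ is $d$-extension closed in $\ac$ (\Cref{Corollary:d-CTClosedUnderdExt}) to replace the middle terms by objects of $\mc$. By \Cref{AdmMonoEpiWeakIdemComp} the new sequence is an admissible $d$-exact sequence in the $d$-exact category $\mc$, so closure of $\uc$ under $d$-extensions inside $\mc$ (part of being a $d$-torsion class) supplies a further Yoneda-equivalent representative in $\mc$ with all terms in $\uc$. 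Yoneda equivalence in $\mc$ descends to Yoneda equivalence in $\ac$, since the intermediate complex and connecting maps live in $\mc\subseteq\ac$.

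\Cref{Theorem:dCT} now gives that $\uc$ is $d$-cluster tilting in $\uc_d(\uc^d(\ac))$; it remains to prove $\uc_d(\uc^d(\ac))=\uc_d(\ac)$. The inclusion $\subseteq$ is obvious. For $\supseteq$, take $A\in\uc_d(\ac)$ with a $\uc$-resolution $0\to U_d\to\cdots\to U_1\to A\to 0$ in $\ac$ and set $B_j\colonequals\operatorname{im}(U_j\to U_{j-1})$, where $U_0\colonequals A$. If every $B_j$ lies in $\uc^d(\ac)$, then the resolution is exact inside the extension-closed subcategory $\uc^d(\ac)$ (by \Cref{cor: extclosure}), placing $A$ in $\uc_d(\uc^d(\ac))$. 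For $1\leq j\leq d-1$ I apply closure under $d$-quotients to $U_{j+1}\to U_j$ (valid since $U_{j+1}\in\uc\subseteq\mc$ and $U_j\in\uc$), obtaining a $d$-cokernel $U_j\to V^1\to\cdots\to V^d\to 0$ in $\mc$ with $V^i\in\uc$; by \Cref{AdmMonoEpiWeakIdemComp} this corresponds to an exact sequence $U_{j+1}\to U_j\to V^1\to\cdots\to V^d\to 0$ in $\ac$. Since $\ker(U_j\to V^1)=\operatorname{im}(U_{j+1}\to U_j)=\ker(U_j\twoheadrightarrow B_j)$, the map $U_j\to V^1$ factors through an injection $B_j\hookrightarrow V^1$, and exactness at $V^1$ of the ambient sequence gives the exact sequence $0\to B_j\to V^1\to\cdots\to V^d\to 0$ in $\ac$, witnessing $B_j\in\uc^d(\ac)$. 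The case $j=d$ is immediate since $B_d=U_d\in\uc\subseteq\uc^d(\ac)$.

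The main obstacle is this last identification: every intermediate image in a $\uc$-resolution of $A$ must be given a length-$d$ $\uc$-coresolution, and the only tool available is closure under $d$-quotients applied to consecutive resolution terms $U_{j+1}\to U_j$ — a property strictly stronger than closure under ordinary quotients when $d\geq 2$.
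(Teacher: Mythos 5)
Your proof is correct and follows essentially the same route as the paper: the $d$-quotient axiom applied to consecutive terms $U_{j+1}\to U_j$ produces the needed $\uc$-coresolutions of the intermediate images, the ambient category is identified with $\uc_d(\uc^d(\ac))$, and \Cref{Theorem:dCT} finishes. The paper packages the identification more formally (one application of the $d$-quotient property shows $\uc_d(\ac)\subseteq\uc^d(\ac)$, then it applies $\uc_d(-)$ to both sides and uses $\uc_d(\uc_d(\ac))=\uc_d(\ac)$) and leaves the verification of the hypotheses of \Cref{Theorem:dCT} implicit, which you carry out explicitly.
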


\begin{proof}
    Let $0\to U_d\to \cdots \to U_1\to A\to 0$ be an exact sequence as in the definition of $\uc_d(\ac)$. Since $\uc$ is closed under $d$-quotients, we can find a $d$-cokernel $U_1\to V^1\to \cdots \to V^d\to 0$ of $U_2\to U_1$. This gives an exact sequence
    \[
    0\to A\to V^1\to \cdots \to V^d\to 0
    \]
    which shows that $\uc_d(\ac)\subseteq \uc^d(\ac)$. Applying $\uc_d(-)$ on both sides, we get that $$\uc_d(\uc_d(\ac))\subseteq \uc_d(\uc^d(\ac)).$$ Now clearly $\uc_d(\uc_d(\ac))=\uc_d(\ac)$, and hence $\uc_d(\ac)=\uc_d(\uc^d(\ac))$. The claim now follows from \Cref{Theorem:dCT}.
\end{proof}
\subsection{Higher wide subcategories}

Here we prove that any higher wide subcategory in the sense of \cite{HJV20} is $d$-cluster tilting in some abelian category. 

Fix a $d$-cluster tilting subcategory $\mc$ of an abelian category $\ac$. A $d$\textit{-wide subcategory} is a subcategory $\wc$ of $\mc$ closed under direct summands, $d$-extensions, $d$-cokernels, and $d$-kernels. Here closure of $d$-kernels and $d$-cokernels means that any morphism in $\wc$ has a $d$-kernel and $d$-cokernel in $\mc$ where all the terms are in $\wc$.

\begin{theorem}\label{Theorem:HigherWide}
    Let $\wc$ be a $d$-wide subcategory of $\mc$. Then there exists a unique wide subcategory $\bc$ of $\ac$ such that $\wc$ is a $d$-cluster tilting subcategory of $\bc$. Furthermore
    \begin{align*}
        \bc&=\{A\in \ac\mid \exists\text{ }0\to W_d\to \cdots \to W_1\to A\to 0 \text{ exact, }W_i\in \wc \text{, }1\leq i\leq d\} \\
        & =\{A\in \ac\mid \exists\text{ }0\to A\to W^1\to \cdots \to W^d \to 0 \text{ exact, }W^i\in \wc \text{, }1\leq i\leq d\}.
    \end{align*}
\end{theorem}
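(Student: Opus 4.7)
\emph{Strategy.} The plan is to apply \Cref{Theorem:dCT} and \Cref{Theorem:UniquenessdCT} to the abelian category $\ac$ with the rigid subcategory $\wc$. The key hypothesis to verify is that $\wc$ is $d$-extension closed in $\ac$. Once established, those theorems produce a unique extension-closed subcategory $\bc \colonequals \wc_d(\wc^d(\ac))$ in which $\wc$ is $d$-cluster tilting. The remaining tasks are to identify $\bc$ with both $\wc^d(\ac)$ and $\wc_d(\ac)$, and to upgrade ``extension closed'' to ``wide''.

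\emph{Verifying $d$-extension closure.} Given an exact sequence $0 \to W \to A_1 \to \cdots \to A_d \to W' \to 0$ in $\ac$ with $W, W' \in \wc$, I would first use that $\mc$ is $d$-cluster tilting, hence $d$-extension closed in $\ac$ by \Cref{Corollary:d-CTClosedUnderdExt}, to replace the $A_i$'s with objects of $\mc$ via Yoneda equivalence in $\ac$. The resulting admissible $d$-exact sequence in $\mc$ represents an element of $\Ext^d_\mc(W',W)$, which by closure of $\wc$ under $d$-extensions in $\mc$ is Yoneda-equivalent in $\mc$ to a sequence with all terms in $\wc$. The canonical isomorphism $\Ext^d_\mc(W',W) \cong \Ext^d_\ac(W',W)$ from \Cref{Theorem:Ext^dIso} (applied to $\mc \subseteq \ac$) transfers this back to $\ac$. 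Since $\wc$ is closed under summands, \Cref{Theorem:dCT} gives $\wc$ as $d$-cluster tilting in $\bc = \wc_d(\wc^d(\ac)) = \wc^d(\wc_d(\ac))$ (the last equality by \Cref{Corollary:Equality}).

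\emph{Identification and wideness.} The inclusion $\bc \subseteq \wc^d(\ac) \cap \wc_d(\ac)$ is tautological. Conversely, given $A \in \wc^d(\ac)$ with sequence $0 \to A \to W^1 \to \cdots \to W^d \to 0$, the morphism $W^1 \to W^2$ in $\wc$ admits a $d$-kernel $U_d \to \cdots \to U_1 \to W^1$ in $\wc$ by closure of $\wc$ under $d$-kernels. Viewed in the $d$-cluster tilting subcategory $\mc \subseteq \ac$, this left $d$-exact sequence is exact in $\ac$ at $U_d, \ldots, U_1, W^1$ (by \Cref{AdmMonoEpiWeakIdemComp}), with $\Image(U_1 \to W^1) = \ker(W^1 \to W^2) = A$. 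Hence $0 \to U_d \to \cdots \to U_1 \to A \to 0$ is exact, showing $A \in \wc_d(\ac) \subseteq \bc$; symmetrically $\wc_d(\ac) \subseteq \bc$. For wideness it remains to show $\bc$ is closed under kernels and cokernels in $\ac$. Given $f \colon A \to B$ in $\bc$, I would lift $f$ to a chain map $\phi \colon W^\bullet \to V^\bullet$ of $\wc$-coresolutions; the Ext-obstructions $\Ext^1_\ac(W^1/A, V^j)$ vanish because $W^1/A$ has a $\wc$-coresolution of length $d-1$, and dimension-shifting along it using $d$-rigidity of $\wc$ reduces the obstruction to $\Ext^{d-1}_\ac(W^d, V^j) = 0$ (valid for $d \geq 2$; the case $d=1$ is trivial since then $\mc = \ac$ and $\bc = \wc$). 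Each $\phi^i$ is a morphism in $\wc$, so its kernel, image, and cokernel in $\ac$ all lie in $\bc$ by $d$-kernel and $d$-cokernel closure of $\wc$. An acyclic mapping cone or higher snake-lemma argument then assembles these into $\wc$-(co)resolutions of $\ker f$ and $\coker f$, placing them in $\bc$.

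\emph{Uniqueness and main obstacle.} Uniqueness follows from \Cref{Theorem:UniquenessdCT}: any wide subcategory $\bc'$ of $\ac$ containing $\wc$ as $d$-cluster tilting automatically satisfies extension closure, closure under additive complements (wide abelian subcategories are closed under summands), weak left and right maximal $d$-rigidity of $\wc$ in $\bc'$ (by \Cref{Reformulation:d-CT}), and $\Ext^d_{\bc'}(W,W') \cong \Ext^d_\ac(W,W')$, because wideness lets any Yoneda representative be refined to one with all terms in $\bc'$ using closure under kernels and cokernels. The main obstacle I foresee is the wideness step: concretely producing $\wc$-coresolutions of $\ker f$ and $\coker f$ from the chain lift $\phi$. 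I expect the dictionary between acyclic complexes in $\ac$ and admissible $d$-exact sequences in $d$-cluster tilting subcategories, together with careful bookkeeping of the $d$-kernels and $d$-cokernels of the components $\phi^i$ in $\wc$, to suffice.
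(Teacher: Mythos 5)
Your overall strategy matches the paper's: verify that $\wc$ is $d$-extension closed in $\ac$ (a point the paper leaves implicit and you rightly spell out), apply \Cref{Theorem:dCT} to get $\wc$ as $d$-cluster tilting in $\bc=\wc_d(\wc^d(\ac))$, identify $\bc$ with $\wc^d(\ac)=\wc_d(\ac)$ using closure of $\wc$ under $d$-kernels and $d$-cokernels, and then prove wideness and uniqueness. The identification step and the uniqueness step are essentially sound (the paper argues uniqueness directly -- closure of $\bc'$ under kernels gives $\wc^d(\ac)\subseteq\bc'$, and right maximal $d$-rigidity of $\wc$ in $\bc'$ gives the reverse inclusion -- whereas you route it through \Cref{Theorem:UniquenessdCT}; both work, though your justification of the $\Ext^d$-isomorphism condition should invoke the $d$-extension closure of $\wc$ in $\ac$ rather than a vague appeal to wideness of $\bc'$).

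The genuine gap is exactly where you flag it: the proof that $\bc$ is closed under kernels and cokernels. Lifting $f\colon A\to B$ to a chain map $\phi^\bullet\colon W^\bullet\to V^\bullet$ of $\wc$-coresolutions is fine (your $\Ext$-obstruction computation goes through), but the cone of $\phi^\bullet$ is a complex whose homology is $\Ker f$ and $\Coker f$ sitting in two adjacent degrees; it is not an exact sequence, and there is no straightforward ``higher snake lemma'' that assembles the $d$-kernels and $d$-cokernels of the components $\phi^i$ into $\wc$-(co)resolutions of $\Ker f$ and $\Coker f$. As written, this step does not close. The paper avoids the chain-map machinery entirely: to show $\Coker(B\to B')\in\bc$ it picks an epimorphism $W\to B$ with $W\in\wc$ (possible since $B\in\wc_d(\ac)$) and a monomorphism $B'\to W'$ with $W'\in\wc$ (possible since $B'\in\wc^d(\ac)$), observes that $C=\Coker(B\to B')=\Coker(W\to B')$, forms the bicartesian square comparing this with $C'=\Coker(W\to W')$ to obtain a short exact sequence $0\to B'\to W'\oplus C\to C'\to 0$, notes $C'\in\wc^d(\ac)$ by $d$-cokernel closure of $\wc$, and concludes $C\in\wc^d(\ac)$ from extension closure plus closure under additive complements (\Cref{Prop:ClosusureAdditiveComplements}). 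You should replace your mapping-cone argument with a reduction of this kind.
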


\begin{proof}
   By a similar argument as in the proof of \Cref{Theorem:HigherTorsion} we have that $\wc_d(\ac)\subseteq \wc^d(\ac)$ and $\wc^d(\ac)\subseteq \wc_d(\ac)$. Hence, $\wc_d(\ac)=\wc^d(\ac)$. We denote this category by $\bc$. By \Cref{Theorem:dCT} it follows that $\wc$ is a $d$-cluster tilting subcategory of $\bc$.
   
   We show that $\bc$ is a wide subcategory of $\ac$.  By \Cref{cor: extclosure} we know that $\bc$ is closed under extensions. We show it is closed under cokernels. Let $B\to B'$ be a morphism in $\bc$, and let $C$ be its cokernel. Since $B\in \wc_d(\ac)$, we can find an epimorphism $W\to B$ with $W\in \wc$. Note that $C$ is also the cokernel of the composite $W\to B\to B'$. Now since $B'\in \wc^d(\ac)$, we can find a monomorphism $B'\to W'$. Let $C'$ denote the cokernel of the morphism $W\to B'\to W'$. We then have a commutative diagram  with right exact rows
   \[
 	\begin{tikzcd}
 	W \arrow[r] \arrow[d, equal] & B' \arrow[r] \arrow[d] & C \arrow[d]\arrow[r] & 0 \\
 	W \arrow[r] & W' \arrow[r]  & C' \arrow[r]  & 0.  
 	\end{tikzcd}
 	\]
  Since the right hand square is bicartesian, we get an exact sequence
\[
0\to B'\to W'\oplus C\to C'\to 0.
\]
Since $\wc$ is closed under $d$-cokernels,  the cokernel $C'$ of $W\to W'$ must be in $\wc^d(\ac)$. Hence, in the exact sequence above the rightmost and leftmost term are both in $\wc^d(\ac)$, and so $W'\oplus C$ is contained in $\wc^d(\ac)$ since $\wc^d(\ac)$ is closed under extensions. Finally, since  $\wc^d(\ac)$ is closed under additive complements by \Cref{Prop:ClosusureAdditiveComplements}, we get that $C\in \wc^d(\ac)=\bc$. This shows that $\bc$ is closed under cokernels. The fact that $\bc$ is closed under kernels is proved dually. 

Finally, assume $\bc'$ is an arbitrary wide subcategory of $\ac$ containing $\wc$ as a $d$-cluster tilting subcategory. Since $\bc'$ is closed under kernels, it follows that $\wc^d(\ac)\subseteq \bc'$. Since $\wc$ is $d$-cluster tilting in $\bc'$, it must be right maximal $d$-rigid in $\bc'$, and hence $\bc'\subseteq \wc^d(\ac)$. Therefore, $\bc=\wc^d(\ac)=\bc'$, which proves the claim.  
\end{proof}

\section{The embedding theorem}\label{Section:EmbeddingThm}

Here we combine the result of \cite{Ebr21} and \cite{EN23} with those of \Cref{Section:Higher extension-closure} to prove that any weakly idempotent complete $d$-exact category is equivalent to a $d$-cluster tilting subcategory of a weakly idempotent exact category.

Fix $\mc$ to be a $d$-exact category. For simplicity, we assume $d\geq 2$. Recall that the Yoneda functor induces a fully faithful functor $Y\colon \mc\to \mathcal{L}(\mc)$ into the category of left exact functors $\mathcal{L}(\mc)$ whose essential image is $d$-rigid and closed under $d$-extensions. See \Cref{Subsection:The category of left exact functors} for details.

 Consider the subcategories
\begin{align*}
	\mc^{d}:=\{F\in \mathcal{L}(\mc) \mid \exists\text{ }0\to F\to \mc(-,X^1)\to \cdots \to \mc(-,X^d)\to 0 \text{ exact in }\mathcal{L}(\mc)\} \\
	\mc_{d}:=\{F\in \mathcal{L}(\mc) \mid \exists\text{ }0\to \mc(-,X_d)\to \cdots \to \mc(-,X_1)\to F\to 0 \text{ exact in }\mathcal{L}(\mc)\}.
	\end{align*}
Note that $\mc^{d}$ and $\mc_{d}$ would be $Y(\mc)^{d}(\mathcal{L}(\mc))$ and $Y(\mc)_{d}(\mathcal{L}(\mc))$ in the notation of \Cref{Subsection:FiniteRes&Cores}, where $Y(\mc)$ denotes the essential image of $Y$ in $\mathcal{L}(\mc)$.

\begin{definition}\label{Definition:E(M)}
    Let $\mathcal{E}(\mc)$ be the subcategory of $\mathcal{L}(\mc)$ consisting of all objects $F$ for which there exists an exact sequence
\[
0\to \mc(-,X_d)\to \cdots \to \mc(-,X_1)\to F\to 0
\]
in $\mathcal{L}(\mc)$ where the cokernel of $\mc(-,X_{i+1})\to \mc(-,X_i)$ is in $\mc^{d}$ for all $1\leq i\leq d-1$.
\end{definition} 
Note that $\ec(\mc)$ can be identified with the subcategory $Y(\mc)_dY(\mc)^{d}(\mathcal{L}(\mc))$ studied in \Cref{Subsection:d-Extension closed and d-cluster tilting}. In particular, we have the following result.

\begin{proposition}\label{Cor:dExactAmbientExactSubcat}
The following hold.
\begin{enumerate}
    \item\label{Cor:dExactAmbientExactSubcat:1} $\mathcal{E}(\mc)$ is weakly idempotent complete and closed under extensions in $\mathcal{L}(\mc)$.
    \item\label{Cor:dExactAmbientExactSubcat:2} The canonical map $$\Ext^d_{\mathcal{E}(\mc)}(F,G)\to \Ext^d_{\mathcal{L}(\mc)}(F,G)$$ is an isomorphism for all $F,G\in Y(\mc)$.
    \item\label{Cor:dExactAmbientExactSubcat:3} Let $F\in \mathcal{L}(\mc)$. Then $F\in \ec(\mc)$ if and only if there exists an exact sequence in $\mathcal{L}(\mc)$
\[
0\to F\to  \mc(-,X^1)\to \cdots \to \mc(-,X^d)\to 0
\]
 where the kernel of $\mc(-,X^{i})\to \mc(-,X^{i+1})$ is in $\mc_{d}$ for all $1\leq i\leq d-1$.
\end{enumerate}
\end{proposition}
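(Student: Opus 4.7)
The plan is to observe that $\ec(\mc)$ in \Cref{Definition:E(M)} is precisely $\uc_d(\uc^d(\mathcal{L}(\mc)))$ in the notation of \Cref{Section:Higher extension-closure}, where $\uc = Y(\mc)$. By \Cref{Proposition:YonedaImageEbrahimi}, $\uc$ is $d$-rigid and $d$-extension closed in the abelian category $\mathcal{L}(\mc)$, so every result of \Cref{Section:Higher extension-closure} applies in this setup. The identification $\ec(\mc) = \uc_d(\uc^d(\mathcal{L}(\mc)))$ is then routine: the requirement in \Cref{Definition:E(M)} that the cokernels of $\mc(-,X_{i+1}) \to \mc(-,X_i)$ all lie in $\mc^d = \uc^d(\mathcal{L}(\mc))$ is exactly the condition that the resolution be exact inside the extension-closed subcategory $\mc^d$.

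With this identification, part (1) follows by chaining the results of \Cref{Section:Higher extension-closure} and their duals. By \Cref{cor: extclosure}, $\mc^d$ is extension closed in $\mathcal{L}(\mc)$, and by \Cref{dExtClosedInU^d}, $\uc$ is $d$-extension closed in $\mc^d$; hence the dual of \Cref{cor: extclosure} applied to $\uc \subseteq \mc^d$ yields that $\ec(\mc) = \uc_d(\mc^d)$ is extension closed in $\mc^d$, and therefore in $\mathcal{L}(\mc)$. For weak idempotent completeness, \Cref{Prop:ClosusureAdditiveComplements} together with its dual imply that $\ec(\mc)$ is closed under additive complements in the abelian, hence idempotent complete, category $\mathcal{L}(\mc)$, which suffices.

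For part (2), \Cref{Theorem:Ext^dIso} applied to $\uc \subseteq \mathcal{L}(\mc)$ gives that $\Ext^d_{\mc^d}(F,G) \to \Ext^d_{\mathcal{L}(\mc)}(F,G)$ is an isomorphism for $F,G \in \uc$, while its dual applied to $\uc \subseteq \mc^d$ gives that $\Ext^d_{\ec(\mc)}(F,G) \to \Ext^d_{\mc^d}(F,G)$ is an isomorphism. Composing yields the assertion. Finally, part (3) is a direct translation of the equality $\uc_d(\uc^d(\mathcal{L}(\mc))) = \uc^d(\uc_d(\mathcal{L}(\mc)))$ from \Cref{Corollary:Equality}: the right-hand side, once unpacked, is precisely the class of $F \in \mathcal{L}(\mc)$ admitting a coresolution of the stated form whose intermediate kernels all lie in $\mc_d = \uc_d(\mathcal{L}(\mc))$. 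There are no substantive obstacles; the entire proposition reduces to applying the machinery of \Cref{Section:Higher extension-closure} in both its original and dual forms, the only careful point being the initial identification of $\ec(\mc)$ with $\uc_d(\uc^d(\mathcal{L}(\mc)))$.
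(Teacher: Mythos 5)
Your proposal is correct and follows essentially the same route as the paper: the paper likewise identifies $\ec(\mc)$ with $Y(\mc)_d(Y(\mc)^d(\mathcal{L}(\mc)))$, invokes \Cref{Proposition:YonedaImageEbrahimi} for $d$-rigidity and $d$-extension closure, and then cites \Cref{Prop:ClosusureAdditiveComplements}, \Cref{cor: extclosure}, \Cref{dExtClosedInU^d}, \Cref{Theorem:Ext^dIso} for parts (1)--(2) and \Cref{Corollary:Equality} for part (3).
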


\begin{proof}
The subcategory $Y(\mc)$ is closed under $d$-extensions in $\mathcal{L}(\mc)$ by \Cref{Proposition:YonedaImageEbrahimi} \eqref{Proposition:YonedaImageEbrahimi:2}. Hence,  $\ec(\mc)$ is closed under extensions and additive complements in $\mathcal{L}(\mc)$ and the map in \eqref{Cor:dExactAmbientExactSubcat:2} is an isomorphism by \Cref{Prop:ClosusureAdditiveComplements}, \Cref{cor: extclosure}, \Cref{dExtClosedInU^d}, and \Cref{Theorem:Ext^dIso}. Note that $\ec(\mc)$ being closed under additive complements in $\mathcal{L}(\mc)$ implies that it is weakly idempotent complete since $\mathcal{L}(\mc)$ is weakly idempotent complete.  Part \eqref{Cor:dExactAmbientExactSubcat:3} follows from \Cref{Corollary:Equality}.
\end{proof}

\begin{theorem}\label{Cor:WeaklyIdemdExactdCT}
The following hold.
\begin{enumerate}
    \item\label{Cor:WeaklyIdemdExactdCT:1} The weak idempotent completion of $\mc$ is equivalent to a $d$-cluster tilting subcategory of $\ec(\mc)$.
    \item\label{Cor:WeaklyIdemdExactdCT:2} Assume $\mc$ is weakly idempotent complete. Then $Y\colon \mc\to Y(\mc)$ is an equivalence of $d$-exact categories where $Y(\mc)$ is endowed with the induced $d$-exact structure from being a $d$-cluster tilting subcategory. 
\end{enumerate}
\end{theorem}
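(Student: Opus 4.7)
The plan is to apply \Cref{Theorem:dCT} and \Cref{Reformulation:d-CT} to the Yoneda image $Y(\mc)\subseteq \mathcal{L}(\mc)$. By \Cref{Proposition:YonedaImageEbrahimi}, $Y(\mc)$ is $d$-rigid and closed under $d$-extensions in $\mathcal{L}(\mc)$, so \Cref{Theorem:dCT} \eqref{Theorem:dCT:1} makes it weakly left and right maximal $d$-rigid in $\ec(\mc)=Y(\mc)_d(Y(\mc)^d(\mathcal{L}(\mc)))$, and by \Cref{Cor:dExactAmbientExactSubcat} \eqref{Cor:dExactAmbientExactSubcat:1} the ambient category $\ec(\mc)$ is extension closed and weakly idempotent complete in $\mathcal{L}(\mc)$.

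For part \eqref{Cor:WeaklyIdemdExactdCT:1}, I would extend $Y$ via the universal property of the weak idempotent completion to a fully faithful functor $\hat{Y}\colon \hat{\mc}\to \ec(\mc)$ whose essential image $\widetilde{Y(\mc)}$ is the closure of $Y(\mc)$ under additive complements inside $\ec(\mc)$. Direct summands preserve $d$-rigidity, and any $E\in\ec(\mc)$ already admits the required $d$-term resolutions and coresolutions with middle terms in $Y(\mc)\subseteq\widetilde{Y(\mc)}$, so $\widetilde{Y(\mc)}$ is also weakly left and right maximal $d$-rigid in $\ec(\mc)$. \Cref{Lemma:UClosedUnderDirectSummand} and its dual upgrade closure under additive complements to closure under direct summands, and \Cref{Reformulation:d-CT} then identifies $\widetilde{Y(\mc)}\cong\hat{\mc}$ as a $d$-cluster tilting subcategory of $\ec(\mc)$.

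For part \eqref{Cor:WeaklyIdemdExactdCT:2}, weak idempotent completeness of $\mc$ gives cokernels of split monomorphisms in $\mc$ and thereby shows that $Y(\mc)$ is already closed under additive complements in $\mathcal{L}(\mc)$, so $\widetilde{Y(\mc)}=Y(\mc)$ is $d$-cluster tilting in $\ec(\mc)$ and inherits a $d$-exact structure via \Cref{Theorem:dCTImpliesdExactWeakIdemPotent}. By \Cref{Proposition:YonedaImageEbrahimi} \eqref{Proposition:YonedaImageEbrahimi:3} a complex $X_{d+1}\to\cdots\to X_0$ in $\mc$ is admissible $d$-exact if and only if its Yoneda image is exact in $\mathcal{L}(\mc)$, so it remains to show that any such exact image is acyclic in $\ec(\mc)$. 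This is the main obstacle, since acyclicity in the extension closed subcategory $\ec(\mc)$ is a priori stronger than exactness in $\mathcal{L}(\mc)$; the idea is that for each intermediate image $Z_i$, truncating the exact sequence on the right provides a coresolution of $Z_i$ of length at most $d$ by $Y(\mc)$ which, after padding with zero terms, places $Z_i$ in $Y(\mc)^d(\mathcal{L}(\mc))$, while truncating on the left gives a resolution of $Z_i$ of length at most $d$ by $Y(\mc)$ whose intermediate syzygies are again among the $Z_j$, hence likewise in $Y(\mc)^d(\mathcal{L}(\mc))$ by the same argument. After padding, this exhibits $Z_i$ in $Y(\mc)_d(Y(\mc)^d(\mathcal{L}(\mc)))=\ec(\mc)$, and once this is established $Y$ both preserves and reflects admissible $d$-exact sequences and is therefore an equivalence of $d$-exact categories.
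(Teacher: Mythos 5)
Your proof is correct and follows essentially the same route as the paper: part \eqref{Cor:WeaklyIdemdExactdCT:1} via \Cref{Theorem:dCT} and \Cref{Lemma:UClosedUnderDirectSummand} applied to the additive-complement closure of $Y(\mc)$ in the weakly idempotent complete category $\ec(\mc)$, and part \eqref{Cor:WeaklyIdemdExactdCT:2} via \Cref{Proposition:YonedaImageEbrahimi}~\eqref{Proposition:YonedaImageEbrahimi:3} combined with \Cref{Theorem:dCTImpliesdExactWeakIdemPotent}. The only difference is that you explicitly check that exactness of the Yoneda image in $\mathcal{L}(\mc)$ yields acyclicity in $\ec(\mc)$ by placing the intermediate images in $Y(\mc)_d(Y(\mc)^d(\mathcal{L}(\mc)))=\ec(\mc)$ — a step the paper's proof leaves implicit — and your argument for it is correct.
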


   \begin{proof}
   Since $\ec(\mc)$ is weakly idempotent complete by \Cref{Cor:dExactAmbientExactSubcat}, the Yoneda embedding $Y\colon \mc\to \ec(\mc)$ must induce a fully faithful functor 
   \[
   \hat{\mc}\to \ec(\mc)
   \]
   where $\hat{\mc}$ is the weak idempotent completion of $\mc$. Note that its essential image can be identified with the closure of the essential image $Y(\mc)$ of $Y$ under additive complements. Since $Y(\mc)$ is closed under $d$-extensions in $\mathcal{L}(\mc)$ by \Cref{Proposition:YonedaImageEbrahimi} \eqref{Proposition:YonedaImageEbrahimi:2}, it must be weakly right and weakly left maximal $d$-rigid in $\ec(\mc)$ by \Cref{Theorem:dCT}. Hence, the closure of $Y(\mc)$ under additive complements must be $d$-cluster tilting in $\ec(\mc)$ by \Cref{Lemma:UClosedUnderDirectSummand}. This proves the first claim. 
   
   The second claim follows immediately from the fact that a complex $X_{d+1}\to \cdots \to X_{0}$ in $\mc$ is admissibly $d$-exact if and only if $0\to Y(X_{d+1})\to \cdots \to Y(X_{0})\to 0$ is acyclic in $\ec(\mc)$ by \Cref{Proposition:YonedaImageEbrahimi} \eqref{Proposition:YonedaImageEbrahimi:3}, and the latter is equivalent to being an admissible $d$-exact sequence in $Y(\mc)$ by \Cref{Theorem:dCTImpliesdExactWeakIdemPotent}.
   \end{proof}

   \begin{remark}\label{Remark:WeakIdemPotentCompletion}
   The weak idempotent completion $\hat{\mc}$ of a $d$-exact category $\mc$ has a natural $d$-exact structure by \cite[Corollary 5.6]{KMS24}. We claim that with this structure the functor  $$\hat{\mc}\to \ec(\mc)$$ 
   in \Cref{Cor:WeaklyIdemdExactdCT} \eqref{Cor:WeaklyIdemdExactdCT:1} induces an equivalence of $d$-exact categories between $\hat{\mc}$ and its essential image $\nc$ (endowed with the $d$-exact structure from being a $d$-cluster tilting subcategory). 
   
   Indeed, we  know that $\hat{\mc}\to \nc$ is an equivalence of additive categories. Since $Y\colon \mc\to \nc$ is $d$-exact, the universal property of the $d$-exact structure on the weak idempotent completion implies that $\hat{\mc}\to \nc$ must also be $d$-exact, see \cite[Theorem 5.5]{KMS24} and \cite[Theorem 2.34]{BS21}. Hence,  we only need to show that if $X_\bullet=(X_{d+1}\to \cdots \to X_{0})$ is a complex in $\hat{\mc}$ such that its image in $\nc$ is an admissible $d$-exact sequence, then $X_\bullet$ must be an admissible $d$-exact sequence in $\hat{\mc}$. First note that $X_\bullet$ must be a $d$-exact sequence, since $\hat{\mc}\to \nc$ is an equivalence of additive categories. Furthermore, by adding two null-homotopic complexes of the form 
   $$\cdots\to  0\to X\xrightarrow{1_X}X\to 0\to \cdots$$
   for suitable choices of $X\in \hat{\mc}$, we may assume $X_0,X_{d+1}\in \mc$. By hypothesis the image $$0\to Y(X_{{d+1}})\to Z_d\to \cdots \to Z_{1}\to Y(X_0)\to 0$$ of $X_\bullet$ in $\nc$ is admissibly $d$-exact, which implies it is acyclic in $\ec(\mc)$.  Hence, it is Yoneda equivalent to $Y(X'_\bullet)$ for an admissible $d$-exact sequence $$X'_\bullet=(X_{d+1}\to X_d'\to \cdots \to X_1'\to X_{0})$$ 
   in $\mc$, by \Cref{Proposition:YonedaImageEbrahimi}. Therefore, by \Cref{Lemma:YonedaEquivMorphism} we have a commutative diagram
    \begin{equation*}
	\begin{tikzcd}[column sep=15]
	0\arrow[r]& Y(X_{d+1})\arrow[r] \arrow[d,equal] &  Z_d \arrow[r]\arrow[d] & \cdots \arrow[r]& Z_1 \arrow[r] \arrow[d] & Y(X_{0}) \arrow[d,equal] \arrow[r] & 0  \\
	0\arrow[r]& Y(X_{d+1})\arrow[r]&  Y(X_d')\arrow[r] & \cdots \arrow[r] & Y(X_1') \arrow[r] & Y(X_0)    \arrow[r] & 0.
	\end{tikzcd} 
	\end{equation*}
  Since the functor $\hat{\mc}\to \nc$ is fully faithful, the diagram gives a weak isomorphism in $\hat{\mc}$ between $X_\bullet$ and $X'_\bullet$. Hence, $X_\bullet$ must be an admissible $d$-exact sequence.
   \end{remark}

For $d=1$ following result was shown in \cite[Appendix A]{Kel90}. 

   \begin{corollary}\label{Corollary:E1IsRedundant}
       The axiom $\operatorname{(E1)}$ for $d$-exact categories is redundant.
   \end{corollary}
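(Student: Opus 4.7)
The plan is to reduce (E1) to the fact that inflations in an exact category are closed under composition, via the embedding $\mc\hookrightarrow \ec(\mc)$ constructed in \Cref{Section:EmbeddingThm}. For $d=1$ the analogous statement is Keller's classical result recalled just before the corollary, so I focus on $d\geq 2$.

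First I would check that no step in the proof of \Cref{Cor:WeaklyIdemdExactdCT} silently invokes (E1): \Cref{Proposition:YonedaImageEbrahimi} (drawn from \cite{Ebr21,EN23}), the rigidity machinery of \Cref{Section:MaximalRigidSubcat}, and \Cref{Theorem:dCT} all rely only on the existence of $d$-pushouts and $d$-pullbacks and on closure of admissible $d$-exact sequences under weak isomorphisms; composition of admissible monomorphisms plays no role in any of them. This audit is the main obstacle; once confirmed, the Yoneda functor $Y\colon \mc\to \ec(\mc)$ is fully faithful, its essential image $Y(\mc)$ is weakly right and left maximal $d$-rigid in $\ec(\mc)$, and a complex in $\mc$ is admissibly $d$-exact if and only if its image in $\ec(\mc)$ is acyclic.

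Granting this, let $f\colon X\to Y$ and $g\colon Y\to Z$ be admissible monomorphisms in $\mc$. Each is the leftmost map of an acyclic $(d+2)$-sequence in $\ec(\mc)$, so $Y(f)$ and $Y(g)$ are inflations in $\ec(\mc)$, and hence so is their composite $Y(gf)$. Letting $C$ denote its cokernel in $\ec(\mc)$ yields a conflation $0\to Y(X)\xrightarrow{Y(gf)} Y(Z)\to C\to 0$. Applying weak right maximal $d$-rigidity of $Y(\mc)$ (\Cref{Theorem:dCT} \eqref{Theorem:dCT:1}) to $C$ produces an exact sequence
\[
0\to C\to Y(W^1)\to \cdots \to Y(W^d)\to 0
\]
in $\ec(\mc)$ with $W^i\in \mc$---directly in $Y(\mc)$ rather than merely in the weak idempotent completion $\hat{\mc}$, so no further replacement of intermediate terms is required. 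Splicing with the previous conflation yields an exact sequence in $\ec(\mc)$ all of whose terms lie in $Y(\mc)$, which by \Cref{Proposition:YonedaImageEbrahimi} \eqref{Proposition:YonedaImageEbrahimi:3} descends to an admissible $d$-exact sequence $X\xrightarrow{gf} Z\to W^1\to \cdots \to W^d$ in $\mc$, witnessing that $gf$ is an admissible monomorphism.
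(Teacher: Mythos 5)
Your proof is correct, and its first half---auditing that the results imported from \cite{Ebr21,EN23} and the machinery leading to \Cref{Cor:WeaklyIdemdExactdCT} never invoke (E1)---is exactly the paper's starting point. Where you diverge is the conclusion. The paper does not verify (E1) by hand: it passes to the weak idempotent completion $\hat{\nc}$, which by \Cref{Cor:WeaklyIdemdExactdCT} is $d$-cluster tilting in $\ec(\nc)$ and hence a genuine $d$-exact category, observes that $\nc$ is $d$-extension closed in $\hat{\nc}$, and imports the whole $d$-exact structure back to $\nc$ via \cite[Theorem E]{Kla22}, identifying it with $\xc$ through \Cref{Proposition:YonedaImageEbrahimi}~\eqref{Proposition:YonedaImageEbrahimi:3}. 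You instead stay inside $\ec(\mc)$ and check the single axiom directly: compose the two inflations, coresolve the cokernel using weak right maximal $d$-rigidity (\Cref{Theorem:dCT}~\eqref{Theorem:dCT:1}), splice, and descend along $Y$. This buys independence from \cite{Kla22} and from the weak idempotent completion, at the cost of one detail you should make explicit: for $Y(f)$ and $Y(g)$ to be inflations \emph{in} $\ec(\mc)$ (so that axiom R1 for the exact category $\ec(\mc)$ applies and the cokernel $C$ of the composite lands in $\ec(\mc)$, which is what lets you invoke \Cref{Theorem:dCT} for $C$), you need the image under $Y$ of an admissible $d$-exact sequence to be acyclic in $\ec(\mc)$, not merely exact in the ambient abelian category $\mathcal{L}(\mc)$. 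This amounts to checking that each syzygy of the sequence $0\to Y(X_{d+1})\to\cdots\to Y(X_0)\to 0$ admits both a $Y(\mc)$-resolution and a $Y(\mc)$-coresolution of length at most $d$ with intermediate syzygies in $\mc^d$, which follows by truncating and zero-padding the sequence itself, but is not automatic from \Cref{Proposition:YonedaImageEbrahimi}~\eqref{Proposition:YonedaImageEbrahimi:3} alone.
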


   \begin{proof}
    Let $\nc$ be an additive category endowed with a class $\xc$ of $d$-exact sequences which are closed under weak isomorphisms and satisfy axioms (E0), (E1)$^{\operatorname{op}}$, (E2) and (E2)$^{\operatorname{op}}$. Note first that $\mathcal{L}(\nc)$ is still a localizing subcategory of $\operatorname{Mod}\nc$ and \Cref{Proposition:YonedaImageEbrahimi} still holds for $(\nc,\xc)$. Indeed, it can be checked that the proofs of these statements in \cite{Ebr21,EN23} do not use the axiom (E1) (in particular, the "if" direction of \Cref{Proposition:YonedaImageEbrahimi} \eqref{Proposition:YonedaImageEbrahimi:3} relies on the dual of the obscure axiom \cite[Proposition 4.11]{Jas16}, which again only needs \cite[Proposition 4.9]{Jas16}, \cite[Proposition 4.8 (i)$\implies$ (iv)]{Jas16}, and the dual of \cite[Proposition 4.8 (i)$\implies$ (ii)]{Jas16}. None of these statements relies on (E1)).
    
    Since \Cref{Proposition:YonedaImageEbrahimi} holds for $(\nc,\xc)$, so does \Cref{Cor:WeaklyIdemdExactdCT}. Therefore, the weak idempotent completion $\hat{\nc}$ of $\nc$ is equivalent to a $d$-cluster tilting subcategory of $\ec(\nc)$, and therefore 
  inherits the structure of a $d$-exact category. Since the essential image of $\nc\to \mathcal{L}(\nc)$ is closed under $d$-extensions by \Cref{Proposition:YonedaImageEbrahimi} \eqref{Proposition:YonedaImageEbrahimi:2}, the category $\nc$ must be closed under $d$-extensions as a subcategory of $\hat{\nc}$. Hence, $\nc$ inherits a $d$-exact structure from $\hat{\nc}$ by \cite[Theorem E]{Kla26}. By applying \Cref{Proposition:YonedaImageEbrahimi} \eqref{Proposition:YonedaImageEbrahimi:3} to $(\nc,\xc)$ we see that this structure coincides with $\xc$. Hence, axiom (E1) is redundant. 
 \end{proof}

 Next we consider the idempotent completion of a $d$-exact category. Let $\add\ec(\mc)$ denote the smallest subcategory of $\operatorname{Mod}\mc$ which is closed under direct summands and finite direct sums and contains $\ec(\mc)$. Note that $\add\ec(\mc)$ is closed under extensions in $\mathcal{L}(\mc)$ since $\ec(\mc)$ is closed under extensions in $\mathcal{L}(\mc)$.
 
 \begin{corollary}\label{Cor:IdemdExactdCT}
The idempotent completion of $\mc$ is equivalent to a $d$-cluster tilting subcategory of $\add\ec(\mc)$.
\end{corollary}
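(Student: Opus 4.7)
The plan is to derive this from \Cref{Cor:WeaklyIdemdExactdCT} \eqref{Cor:WeaklyIdemdExactdCT:1} by passing to idempotent completions via \Cref{Prop:dCTFor(Weak)IdemComp} \eqref{Prop:dCTFor(Weak)IdemComp:2}. First I would identify $\add\ec(\mc)$ with the idempotent completion of the exact category $\ec(\mc)$. Since $\mathcal{L}(\mc)$ is abelian and therefore idempotent complete, every idempotent in $\ec(\mc)$ splits inside $\mathcal{L}(\mc)$, and the subcategory of $\mathcal{L}(\mc)$ consisting of all direct summands of objects of $\ec(\mc)$ is precisely $\add\ec(\mc)$. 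In particular $\add\ec(\mc)$ is an idempotent complete additive category containing $\ec(\mc)$ fully faithfully, so it realizes the universal property of the idempotent completion. To promote this to the exact level, one verifies that $\add\ec(\mc)$ is extension closed in $\mathcal{L}(\mc)$: given a conflation $0\to A\to B\to C\to 0$ in $\mathcal{L}(\mc)$ with $A,C\in\add\ec(\mc)$, pick complements $A_0,C_0$ so that $A\oplus A_0,\,C\oplus C_0\in\ec(\mc)$ and form the conflation $0\to A\oplus A_0\to B\oplus A_0\oplus C_0\to C\oplus C_0\to 0$; extension-closedness of $\ec(\mc)$ gives $B\oplus A_0\oplus C_0\in\ec(\mc)$, whence $B\in\add\ec(\mc)$. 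Consequently $\add\ec(\mc)$ inherits an exact structure from $\mathcal{L}(\mc)$, which agrees with the canonical exact structure on the idempotent completion of $\ec(\mc)$ by the universal property in \cite[Section~6]{Bue10}.

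Next, \Cref{Cor:WeaklyIdemdExactdCT} \eqref{Cor:WeaklyIdemdExactdCT:1} places the weak idempotent completion $\hat{\mc}$ of $\mc$ as a $d$-cluster tilting subcategory of the weakly idempotent complete exact category $\ec(\mc)$. Applying \Cref{Prop:dCTFor(Weak)IdemComp} \eqref{Prop:dCTFor(Weak)IdemComp:2} to this inclusion then shows that the idempotent completion $\widetilde{\hat{\mc}}$ of $\hat{\mc}$ is a $d$-cluster tilting subcategory of the idempotent completion of $\ec(\mc)$, which by the previous paragraph is $\add\ec(\mc)$.

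Finally, I would identify $\widetilde{\hat{\mc}}$ with the idempotent completion $\tilde{\mc}$ of $\mc$ via the universal properties: the composite $\mc\to\hat{\mc}\to\widetilde{\hat{\mc}}$ lands in an idempotent complete category and therefore factors uniquely through $\tilde{\mc}$, while $\tilde{\mc}$ is in particular weakly idempotent complete so $\mc\to\tilde{\mc}$ extends uniquely to $\hat{\mc}\to\tilde{\mc}$ and then to $\widetilde{\hat{\mc}}\to\tilde{\mc}$, yielding mutually inverse equivalences. Transporting the $d$-cluster tilting property across this equivalence gives the statement. No serious obstacle appears; the only mildly delicate point is the compatibility between the induced exact structure on $\add\ec(\mc)$ and the canonical one on the idempotent completion of $\ec(\mc)$, and this is settled by the universal property recorded in the preliminaries.
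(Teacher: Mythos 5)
Your proposal is correct and follows essentially the same route as the paper: apply \Cref{Cor:WeaklyIdemdExactdCT}~\eqref{Cor:WeaklyIdemdExactdCT:1} to place $\hat{\mc}$ as $d$-cluster tilting in $\ec(\mc)$, pass to idempotent completions via \Cref{Prop:dCTFor(Weak)IdemComp}~\eqref{Prop:dCTFor(Weak)IdemComp:2}, and identify $\add\ec(\mc)$ with the idempotent completion of $\ec(\mc)$. The paper states this more tersely; your added verifications (extension-closedness of $\add\ec(\mc)$, compatibility of exact structures, and $\widetilde{\hat{\mc}}\cong\tilde{\mc}$) are all correct details the paper leaves implicit.
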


\begin{proof}
    By \Cref{Cor:WeaklyIdemdExactdCT} \eqref{Cor:WeaklyIdemdExactdCT:1} the weak idempotent completion of $\mc$ is a $d$-cluster tilting subcategory of $\ec(\mc)$. Hence by \Cref{Prop:dCTFor(Weak)IdemComp} \eqref{Prop:dCTFor(Weak)IdemComp:2} the idempotent completion of $\mc$ is a $d$-cluster tilting subcategory of the idempotent completion of $\ec(\mc)$. The claim now follows since $\add\ec(\mc)$ can be identified with the idempotent completion of $\ec(\mc)$.
\end{proof} 

\begin{remark}
    The idempotent completion of $\mc$ is equivalent to $\operatorname{add}\mc$, the smallest subcategory of $\operatorname{Mod}\mc$ which contains the essential image of $Y\colon \mc\to \operatorname{Mod}\mc$ and is closed under direct summands and finite direct sums. Hence, \Cref{Cor:IdemdExactdCT} gives an affirmative answer to a question in \cite[Remark 3.8]{Ebr21}.
\end{remark} 

\begin{remark}
Note that  $$\add\ec(\mc)=\ec(\operatorname{add}\mc)$$
 where $\ec(\operatorname{add}\mc)$ is obtained from \Cref{Definition:E(M)} by replacing $\mc$ with $\operatorname{add}\mc$ and identifying $\mathcal{L}(\mc)$ and $\mathcal{L}(\operatorname{add}\mc)$. Indeed, since $\add\mc$ is $d$-cluster tilting in $\add\ec(\mc)$ by \Cref{Cor:IdemdExactdCT}, it is $d$-extension closed in that subcategory, see \Cref{Corollary:d-CTClosedUnderdExt}. Now by \Cref{Cor:dExactAmbientExactSubcat} the map
 \[
   \Ext^d_{\ec(\mc)}(F,G)\to \Ext^d_{\mathcal{L}(\mc)}(F,G)
 \]
 is an isomorphism for all $F,G\in Y(\mc)$ and hence
 \begin{equation*}
     \Ext^d_{\add\ec(\mc)}(F,G)\to \Ext^d_{\mathcal{L}(\mc)}(F,G)
 \end{equation*}
 is an isomorphism for all $F,G\in \add \mc$. This implies that $\add\mc$ is also $d$-extension-closed in $\mathcal{L}(\mc)$. Applying \Cref{Theorem:UniquenessdCT} with $\ec'=\add\ec(\mc)$ gives the equality.
 \end{remark}

 \begin{remark}\label{Remark:IdemPotentCompletion}
   The idempotent completion of a $d$-exact category has a natural $d$-exact structure \cite[Corollary 4.34]{KMS24}, and with this the equivalence in \Cref{Cor:IdemdExactdCT} becomes an equivalence of $d$-exact categories. The argument for this is identical to the one in \Cref{Remark:WeakIdemPotentCompletion}, using that the $d$-exact structure of the idempotent completion has a universal property by \cite[Theorem 4.39]{KMS24} and \cite[Theorem 2.34]{BS21}.
 \end{remark}

\section{The universal property}\label{Section:UniversalProperty}

 We saw in the previous subsection that any weakly idempotent complete $d$-exact category is equivalent to a $d$-cluster tilting subcategory of a weakly idempotent complete exact category. The goal of this subsection is to show that the exact category satisfies a universal property, and is therefore unique up to exact equivalence. Throughout we fix a weakly idempotent complete exact category $\ec$ with a $d$-cluster tilting subcategory $\mc$. We will use the fact that any right $\mc$-approximation is a deflation, since $\mc$ is generating.

  We first need some preliminary results. Recall that $\operatorname{mod}\mc$ denotes the category of finitely presented functors, see \Cref{Subsection:The category of left exact functors} for the definition. 
  
  \begin{lemma}\label{Lemma:EFinitelyPresInmodM}
The functor $\Hom_{\ec}(-,E)|_{\mc}$ lies in $\operatorname{mod}\mc$ for any $E\in \ec$.
  \end{lemma}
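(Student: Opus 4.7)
The plan is to exhibit a finite presentation of $\Hom_{\ec}(-,E)|_{\mc}$ by two-step approximation by objects of $\mc$. Because $\mc$ is $d$-cluster tilting it is generating and contravariantly finite, and the preliminaries record that in a weakly idempotent complete exact category any right $\mc$-approximation is then automatically a deflation. So my first step is to pick a right $\mc$-approximation $\pi\colon M_0\to E$ with $M_0\in \mc$, which yields a conflation
\[
0\to K\to M_0\xrightarrow{\pi} E\to 0
\]
in $\ec$.

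Next I would apply $\Hom_{\ec}(M,-)$ for an arbitrary $M\in\mc$ to this conflation. Left exactness gives
\[
0\to \Hom_{\ec}(M,K)\to \Hom_{\ec}(M,M_0)\xrightarrow{\pi_*}\Hom_{\ec}(M,E)
\]
and since $\pi$ is a right $\mc$-approximation, $\pi_*$ is surjective. Hence, naturally in $M\in\mc$, we have a short exact sequence of abelian groups and so the sequence
\[
\mc(-,M_0)\xrightarrow{\pi_*}\Hom_{\ec}(-,E)|_{\mc}\to 0
\]
of functors on $\mc^{\operatorname{op}}$ is exact with kernel $\Hom_{\ec}(-,K)|_{\mc}$.

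It remains to surject onto this kernel by a representable. For this I take a right $\mc$-approximation $\iota\colon M_1\to K$ (which exists by contravariant finiteness of $\mc$), giving $\iota_*\colon \mc(-,M_1)\to \Hom_{\ec}(-,K)|_{\mc}$ that is pointwise surjective on $\mc$ by the approximation property. Composing $\iota$ with the inflation $K\to M_0$ and using the previous step, I conclude that
\[
\mc(-,M_1)\to \mc(-,M_0)\to \Hom_{\ec}(-,E)|_{\mc}\to 0
\]
is exact, which is exactly the definition of $\Hom_{\ec}(-,E)|_{\mc}\in \operatorname{mod}\mc$.

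There is no serious obstacle in this argument; the only point that must be invoked with care is that right $\mc$-approximations are genuine deflations, which is where the weak idempotent completeness of $\ec$ together with the fact that $\mc$ is generating are used. Everything else is a routine diagram chase.
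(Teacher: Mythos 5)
Your proof is correct and follows essentially the same route as the paper: take a right $\mc$-approximation $M_0\to E$ (a deflation since $\mc$ is generating and $\ec$ is weakly idempotent complete), form the kernel $K$, approximate $K$ by some $M_1\in\mc$, and splice the resulting epimorphisms into a presentation $\mc(-,M_1)\to\mc(-,M_0)\to\Hom_{\ec}(-,E)|_{\mc}\to 0$. No gaps.
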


  \begin{proof}
      Let $E\in \ec$ be arbitrary. Choose a right $\mc$-approximation $M_0\to E$. Let $K$ denote its kernel, and let $M_{1}\to K$ be a right $\mc$-approximation of $K$. Then the conflation $$0\to K\to M_0\to E\to 0$$ gives an exact sequence
      \[
      0\to \Hom_{\ec}(-,K)|_{\mc}\to \Hom_{\mc}(-,M_0)\to \Hom_{\ec}(-,E)|_{\mc}\to 0
      \]
      and the right $\mc$-approximation $M_{1}\to K$ gives an epimorphism 
      \[
      \Hom_{\mc}(-,M_{1})\to \Hom_{\ec}(-,K)|_{\mc}\to 0.
      \]
      Combining these, we get an exact sequence
      \[
      \Hom_{\mc}(-,M_{1})\to \Hom_{\mc}(-,M_0)\to \Hom_{\ec}(-,E)|_{\mc}\to 0
      \]
      which proves the claim.
  \end{proof}

A complex $M_{1}\to M_0\to E$ in $\ec$ with $M_{1},M_0\in \mc$ is called an $\mc$\textit{-presentation} of $E$ if
\[
\Hom_{\mc}(-,M_{1})\to \Hom_{\mc}(-,M_0)\to \Hom_{\ec}(-,E)|_{\mc}\to 0
\]
is exact. The association $E\mapsto \Hom_{\ec}(-,E)|_{\mc}$ in \Cref{Lemma:EFinitelyPresInmodM} gives a functor denoted $$\Hom_{\ec}(\mc,-)\colon \ec\to \operatorname{mod}\mc.$$
For the next result we also need the following functors.
\begin{itemize}
    \item If $F\colon \xc\to \yc$ is an additive functor between additive categories, then
    \[
    F_!\colon \operatorname{mod}\xc\to \operatorname{mod}\yc
    \]
    denotes the unique right exact functor extending $F$. See \cite[Lemma 2.6]{Kra98} for details.
    \item If $\ac$ is an abelian category, then $$L\colon\operatorname{mod}\ac\to \ac$$ denotes the unique right exact functor extending the identity functor on $\ac$. See \cite[Theorem 2.2]{Kra15} for details.
\end{itemize}

 Let $\nc$ be a $d$-exact category, and let $\ac$ be an abelian category. A functor $F\colon \nc\to \ac$ is \textit{right exact} 
 if for any admissible $d$-exact sequence $X_{d+1}\to \cdots \to X_{0}$ in $\nc$ the sequence $$F(X_{2})\to F(X_1)\to F(X_{0})\to 0$$ is exact in $\ac$. Note that if $\ac=\operatorname{Ab}$, then such functors can be identified with objects in $\mathcal{L}(\nc^{\operatorname{op}})$. 
 
 \begin{proposition}\label{Proposition:UnivPropRightExact}
Let $\ac$ be an abelian category and $F\colon \mc\to \ac$ a right exact functor. Then there exists a right exact functor $\overline{F}\colon \ec\to \ac$ extending $F$. Furthermore, $\overline{F}$ is unique up to natural isomorphism.
 \end{proposition}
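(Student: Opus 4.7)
The plan is to construct $\overline{F}\colon\ec\to\ac$ via $\mc$-presentations and verify right exactness using a presentation of the relevant cokernel functor in $\operatorname{mod}\mc$. For each $E\in\ec$, \Cref{Lemma:EFinitelyPresInmodM} produces an $\mc$-presentation $M_1\to M_0\to E$ in $\ec$ (with $M_0\to E$ a right $\mc$-approximation, which is automatically a deflation in the weakly idempotent complete $\ec$ since $\mc$ is generating). I would set $\overline{F}(E)\colonequals L(F_!(\Hom_\ec(\mc,E)))$, which unfolds to the cokernel in $\ac$ of $F(M_1)\to F(M_0)$, and well-definedness and functoriality follow from the universal property of $F_!$. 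The trivial presentation $0\to M\xrightarrow{1}M$ gives $\overline{F}(M)\cong F(M)$ for $M\in\mc$; independence from the choice of $\mc$-presentation uses the right exactness of $F$, since any $\mc$-presentation of $M\in\mc$ takes the form $M_1\to M\oplus K\to M$ with $K\in\mc$ and $M_1\to K$ a right $\mc$-approximation, and the admissible epimorphism $M_1\to K$ extends to an admissible $d$-exact sequence in $\mc$ on which right exactness of $F$ forces $F(M_1)\to F(K)$ to be surjective, whence $F(M_1)\to F(M\oplus K)\to F(M)\to 0$ is exact.

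For right exactness of $\overline{F}$ on a conflation $0\to E_2\to E_1\to E_0\to 0$ in $\ec$, the functor $\Hom_\ec(\mc,-)$ yields a left exact sequence
\[
0\to\Hom_\ec(\mc,E_2)\to\Hom_\ec(\mc,E_1)\to\Hom_\ec(\mc,E_0)\to G\to 0
\]
in $\operatorname{mod}\mc$, where $G$ is the cokernel (a subfunctor of $\Ext^1_\ec(-,E_2)|_\mc$). Applying the right exact composite $L\circ F_!$ produces the exact sequence $\overline{F}(E_2)\to\overline{F}(E_1)\to\overline{F}(E_0)\to L(F_!(G))\to 0$ in $\ac$, so it suffices to prove $L(F_!(G))=0$. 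I would build an explicit presentation of $G$ via the pullback trick: choose a right $\mc$-approximation $N_0\to E_0$, form the pullback $P=E_1\times_{E_0}N_0$, and take a right $\mc$-approximation $N\to P$. The composite $N\to P\to N_0$ is a deflation in $\ec$ and hence an admissible epimorphism in $\mc$ extending to an admissible $d$-exact sequence, and a direct check (using the universal property of $P$ and the approximation property of $N\to P$) shows that $\Hom_\mc(-,N)\to\Hom_\mc(-,N_0)\to G\to 0$ is exact in $\operatorname{mod}\mc$. Applying $L\circ F_!$ gives $F(N)\to F(N_0)\to L(F_!(G))\to 0$ exact; right exactness of $F$ on the $d$-exact sequence extending $N\to N_0$ makes $F(N)\to F(N_0)$ surjective, forcing $L(F_!(G))=0$.

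For uniqueness, let $\overline{F}'\colon\ec\to\ac$ be any right exact extension of $F$. Given $E\in\ec$ and iterated right $\mc$-approximations $M_0\to E$ and $M_1\to K\colonequals\ker(M_0\to E)$, right exactness of $\overline{F}'$ on the conflations $0\to K\to M_0\to E\to 0$ and $0\to K'\to M_1\to K\to 0$ gives $\overline{F}'(E)\cong\operatorname{coker}(F(M_1)\to F(M_0))\cong\overline{F}(E)$ naturally in $E$. The key step is verifying the exactness of the presentation $\Hom_\mc(-,N)\to\Hom_\mc(-,N_0)\to G\to 0$: the image equals the kernel because, for $\phi\colon M\to N_0$ with $\phi$ vanishing in $G$, the induced composite $M\to E_0$ lifts through $E_1\to E_0$, yielding by the universal property of $P$ a factorization $M\to P$, which by the approximation property of $N\to P$ lifts further to $M\to N$ with the required compatibility. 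The main obstacle is therefore this pullback-and-approximate identification of the kernel, which relies crucially on $\mc$ being generating and contravariantly finite and $\ec$ being weakly idempotent complete, so that right $\mc$-approximations are deflations and the obscure axiom applies.
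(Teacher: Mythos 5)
Your construction of $\overline{F}$ as $L\circ F_!\circ\Hom_\ec(\mc,-)$ is exactly the paper's, and your presentation of the cokernel functor $G$ via the pullback $P=E_1\times_{E_0}N_0$ and the approximation $N\to P$ is correct (as is the verification that $L(F_!(G))=0$). The gap is in the sentence ``Applying the right exact composite $L\circ F_!$ produces the exact sequence $\overline{F}(E_2)\to\overline{F}(E_1)\to\overline{F}(E_0)\to L(F_!(G))\to 0$.'' A right exact functor applied to a four-term exact sequence $0\to A\to B\to C\to G\to 0$ preserves exactness at $C$ and $G$ (since $G=\operatorname{coker}(B\to C)$), but \emph{not} at $B$: writing $I=\operatorname{im}(B\to C)$, exactness at $T(B)$ is equivalent to injectivity of $T(I)\to T(C)$, which fails in general (e.g.\ $T=-\otimes_{\mathbb Z}\mathbb Z/2$ applied to $0\to 0\to\mathbb Z\xrightarrow{2}\mathbb Z\to\mathbb Z/2\to 0$). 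Consequently, your argument establishes only that $\overline{F}(E_1)\to\overline{F}(E_0)$ is surjective; it does not show that $\ker(\overline{F}(E_1)\to\overline{F}(E_0))$ equals the image of $\overline{F}(E_2)$, which is the delicate half of right exactness and the part the paper spends most of its proof on (via the case-by-case bootstrap through conflations with $Y,Z\in\mc$, then $Z\in\mc$, then $Y\in\mc$, then the general $3\times 3$ argument, each time feeding a concatenated admissible $d$-exact sequence into the right exactness of $F$).

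Your route is salvageable, but it needs one more step that you have not supplied: with $T=L\circ F_!$ and $I=\operatorname{im}(\Hom_\ec(\mc,E_1)\to\Hom_\ec(\mc,E_0))$, you must show $T(I)\to T(\Hom_\ec(\mc,E_0))$ is injective, equivalently that the first left derived functor $L_1T(G)$ vanishes (in addition to $T(G)=0$). This can be done in your framework: let $K=\ker(N\to N_0)$, choose an exact sequence $0\to N_d\to\cdots\to N_1\to K\to 0$ with $N_i\in\mc$ (\cref{Reformulation:d-CT}); a dimension shift using $d$-rigidity shows $N_1\to K$ is a right $\mc$-approximation, so $\Hom_\mc(-,N_1)\to\Hom_\mc(-,N)\to\Hom_\mc(-,N_0)\to G\to 0$ continues the projective resolution, and the glued complex $0\to N_d\to\cdots\to N_1\to N\to N_0\to 0$ is an admissible $d$-exact sequence in $\mc$, on which right exactness of $F$ yields exactness of $F(N_1)\to F(N)\to F(N_0)\to 0$ at $F(N)$, i.e.\ $L_1T(G)=0$. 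Without this (or an equivalent argument), the proof as written does not establish exactness at the middle term and is incomplete.
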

 
 \begin{proof}
Uniqueness of $\overline{F}$ follows from the fact that any right exact functor is uniquely determined up to natural isomorphism by its restriction to a generating subcategory.

We show existence.   Define $\overline{F}\colon \ec\to \ac$ as the composite
    \[
    \ec\xrightarrow{\Hom_{\ec}(\mc,-)} \operatorname{mod}\mc\xrightarrow{F_!} \operatorname{mod}\ac \xrightarrow{L}\ac.
    \]
    Explicitly, if $E\in \ec$ then $\overline{F}(E)$ is the cokernel of $F(M_{1})\to F(M_0)$ where $M_{1}\to M_0\to E$ is a choice of an $\mc$-presentation of $E$. In particular, $\overline{F}(M)\cong F(M)$ naturally in $M\in \mc$, since $0\to M$ is a $\mc$-presentation of $M$. Hence $\overline{F}$ extends $F$.
    
    It remains to show that $\overline{F}$ is right exact. Fix a conflation 
     \begin{equation}\label{Equation:SES}
    0\to X\to Y\to Z\to 0
    \end{equation} 
    in $\ec$. Our strategy is to show that the sequence $\overline{F}(X)\to \overline{F}(Y)\to \overline{F}(Z)\to 0$ is right exact by first making different additional assumptions on the conflation. The right exactness of $\overline{F}$ will be deduced from these special cases.

    First assume that $Y\in \mc$ and $Y\to Z$ is a right $\mc$-approximation in \eqref{Equation:SES}. Choose a right $\mc$-approximation $M\to X$. Then the morphism $M\to Y$ gives a $\mc$-presentation of $Z$, so the sequence
    \[
    \overline{F}(M)\to \overline{F}(Y)\to \overline{F}(Z)\to 0
    \]
    must be right exact by definition of $\overline{F}(Z)$ and the fact that $\overline{F}(M)\cong F(M)$ and $\overline{F}(Y)\cong F(Y)$. Hence, the sequence
     \[
    \overline{F}(X)\to \overline{F}(Y)\to \overline{F}(Z)\to 0
    \]
    must also be right exact. In particular,  $\overline{F}$ sends right $\mc$-approximations to epimorphisms.

    Next assume $Y,Z\in \mc$ in \eqref{Equation:SES}. Choose an exact sequence
    \[
   0\to  M_{d}\to \cdots \to M_1\to X\to 0.
    \]
    Concatenating with the conflation $0\to X\to Y\to Z\to 0$ gives an exact sequence
    \[
    0\to M_d\to \cdots \to M_1\to Y\to Z\to 0
    \]
    in $\ec$ with terms in $\mc$. Hence, it must be an admissible $d$-exact sequence in $\mc$. Applying $\overline{F}$, we get a right exact sequence
    \[
    \overline{F}(M_1)\to  \overline{F}(Y)\to \overline{F}(Z)\to 0
    \]
    since $F$ is right exact and $\overline{F}$ coincides with $F$ on $\mc$. Since $\overline{F}(M_1)\to  \overline{F}(Y)$ factors through $\overline{F}(X)\to \overline{F}(Y)$, the sequence
     \[
    \overline{F}(X)\to \overline{F}(Y)\to \overline{F}(Z)\to 0
    \]
    must also be right exact.

     Now assume $Z\in \mc$ in \eqref{Equation:SES}. Choose a right $\mc$-approximation $M\to Y$, and let $X'$ denote the kernel of the composite $M\to Y\to Z$. We then have a commutative diagram
     \[
		\begin{tikzcd}[column sep=20, row sep=20]
		0\arrow[r] &X' \arrow[r] \arrow[d] & M \arrow[r] \arrow[d] & Z \arrow[d,equal]\arrow[r] & 0 \\
		0\arrow[r] &X \arrow[r, ""] & Y \arrow[r]  & Z \arrow[r]  & 0
		\end{tikzcd}
		\]
    where the rows are conflations.  Applying $\overline{F}$, we get the commutative diagram
 \[
		\begin{tikzcd}[column sep=20, row sep=20]
		 \overline{F}(X') \arrow[d] \arrow[r]  & \overline{F}(M) \arrow[d]\arrow[r]   & \overline{F}(Z) \arrow[r] \arrow[d,equal]   &  0 \\
		\overline{F}(X) \arrow[r]   & \overline{F}(Y) \arrow[r]   & \overline{F}(Z)\arrow[r]   &  0. 
		\end{tikzcd}
		\]
 Since $M,Z\in \mc$, the upper row $\overline{F}(X') \to  \overline{F}(M) \to \overline{F}(Z)\to 0$ must be right exact by the argument above. Similarly, since $M\to Y$ is a right $\mc$-approximation, the map $\overline{F}(M)\to \overline{F}(Y)$ must be an epimorphism. These two facts imply that the lower row
 $$
  \overline{F}(X)\to \overline{F}(Y)\to \overline{F}(Z)\to 0
  $$
  is also right exact.

 Now assume $Y\in \mc$ in \eqref{Equation:SES}. Choose a right $\mc$-approximation $M\to Z$. Then $Y\to Z$ factors through $M\to Z$, so we get a commutative diagram
  \[
		\begin{tikzcd}[column sep=20, row sep=20]
		0\arrow[r] &X \arrow[r] \arrow[d] & Y \arrow[r] \arrow[d] & Z \arrow[d,equal]\arrow[r] & 0 \\
		0\arrow[r] &X' \arrow[r, ""] & M \arrow[r]  & Z \arrow[r]  & 0
		\end{tikzcd}
		\]
    where the rows are conflations. Hence, the sequence
    \[
    0\to X\to Y\oplus X'\to M\to 0
    \]
    must be a conflation, see \cite[Proposition 2.12]{Bue10}. Now since $M\in \mc$ it follows that 
    \[
    \overline{F}(X)\to \overline{F}(Y)\oplus \overline{F}(X')\to \overline{F}(M)\to 0
    \]
    is right exact by the previous case. Hence, the left hand square in the diagram
      \[
		\begin{tikzcd}[column sep=20, row sep=20]
		\overline{F}(X) \arrow[r] \arrow[d] & \overline{F}(Y) \arrow[r] \arrow[d] & \overline{F}(Z) \arrow[d,equal]\arrow[r] & 0 \\
		\overline{F}(X') \arrow[r, ""] & \overline{F}(M) \arrow[r]  & \overline{F}(Z) \arrow[r]  & 0 
		\end{tikzcd}
		\]
   must be cocartesian. Therefore, the cokernel of $\overline{F}(X)\to \overline{F}(Y)$ is isomorphic to the cokernel of $\overline{F}(X')\to \overline{F}(M)$. Now since the $M\to Z$ is a right $\mc$-approximation, the bottom row must be right exact, and so the cokernel must be $\overline{F}(Z)$. This shows that the top row is right exact.

  We now prove the general case. Let $0\to X\to Y\to Z\to 0$ be an arbitrary conflation in $\ec$. Choose a deflation $M_1\to X$ and a deflation $M_2\to Y$ with $M_1,M_2\in \mc$. Then we get a commutative diagram
   \[
		\begin{tikzcd}[column sep=20, row sep=20]
		0\arrow[r] &M_1 \arrow[r] \arrow[d] & M_1\oplus M_2 \arrow[r] \arrow[d] & M_2 \arrow[d]\arrow[r] & 0 \\
		0\arrow[r] &X \arrow[r, ""] & Y \arrow[r]  & Z \arrow[r]  & 0
		\end{tikzcd}
		\]
  where the top row is a split exact sequence, $M_1\to Y$ is given by the composite $M_1\to X\to Y$, and $M_2\to Z$ is given by the composite $M_2\to Y\to Z$. Since the vertical morphisms are deflations, we can take their kernels to get a conflation $0\to X'\to Y'\to Z'\to 0$ by the $(3\times3)$-Lemma, see \cite[Corollary 3.6]{Bue10}. Similarly, we can find a commutative diagram
  \[
		\begin{tikzcd}[column sep=20, row sep=20]
		0\arrow[r] &N_1 \arrow[r] \arrow[d] & N_1\oplus N_2 \arrow[r] \arrow[d] & N_2 \arrow[d]\arrow[r] & 0 \\
		0\arrow[r] &X' \arrow[r, ""] & Y' \arrow[r]  & Z'\arrow[r]  & 0
		\end{tikzcd}
		\]
  with $N_1,N_2\in \mc$, where the vertical morphisms are deflations, and where the top row is a split exact sequence. Combining these, we get the commutative diagram
    \[
		\begin{tikzcd}[column sep=20, row sep=20]
  0\arrow[r] &N_1 \arrow[r] \arrow[d] & N_1\oplus N_2 \arrow[r] \arrow[d] & N_2 \arrow[d]\arrow[r] & 0 \\
		0\arrow[r] &M_1 \arrow[r] \arrow[d] & M_1\oplus M_2 \arrow[r] \arrow[d] & M_2 \arrow[d]\arrow[r] & 0 \\
		0\arrow[r] &X \arrow[r, ""] \arrow[d] & Y \arrow[r] \arrow[d]  & Z \arrow[r] \arrow[d] & 0. \\
  & 0& 0& 0&
		\end{tikzcd}
		\]
  Applying $\overline{F}$, we get the commutative diagram
  \[
		\begin{tikzcd}[column sep=20, row sep=20]
  0\arrow[r] &\overline{F}(N_1) \arrow[r] \arrow[d] & \overline{F}(N_1)\oplus \overline{F}(N_2) \arrow[r] \arrow[d] & \overline{F}(N_2) \arrow[d]\arrow[r] & 0 \\
		0\arrow[r] &\overline{F}(M_1) \arrow[r] \arrow[d] & \overline{F}(M_1)\oplus \overline{F}(M_2) \arrow[r] \arrow[d] & \overline{F}(M_2) \arrow[d]\arrow[r] & 0 \\
		 &\overline{F}(X) \arrow[r, ""] \arrow[d] & \overline{F}(Y) \arrow[r] \arrow[d]  & \overline{F}(Z) \arrow[r] \arrow[d] & 0. \\
  & 0& 0& 0&
		\end{tikzcd}
		\]
  Note that the columns are right exact since $M_1,M_2\in \mc$, and the two top rows are exact since $\overline{F}$ preserves split exact sequences. Hence, the lower sequence must be right exact by the snake lemma. This shows that $\overline{F}$ is right exact. 
 \end{proof}

  Let $\nc$ be a $d$-exact category, and let $\ec'$ be a weakly idempotent complete exact category. A functor $F\colon \nc\to \ec'$ is \textit{exact} if it sends admissible $d$-exact sequence in $\nc$ to acyclic complexes in $\ec'$. By \Cref{Theorem:dCTImpliesdExactWeakIdemPotent} the inclusion of a $d$-cluster tilting subcategory into its ambient weakly idempotent complete exact category is exact.

 \begin{proposition}\label{Proposition:LiftExact}
     Let $F\colon \mc\to \ac$ and $\overline{F}\colon \ec\to \ac$ be as in \Cref{Proposition:UnivPropRightExact}. Then $\overline{F}$ is exact if and only if $F$ is exact.
 \end{proposition}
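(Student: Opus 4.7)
The plan is to dispatch the two directions separately. For ``$\overline F$ exact $\Rightarrow$ $F$ exact'', any admissible $d$-exact sequence $X_{d+1}\to\cdots\to X_0$ in $\mc$ is an acyclic complex in $\ec$ by \Cref{AdmMonoEpiWeakIdemComp}, hence splices from conflations in $\ec$; applying the exact $\overline F$ sends these to short exact sequences in $\ac$ that splice back, and since $\overline F|_{\mc}=F$ we conclude $F$ is $d$-exact.

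The converse is the substantive direction. Fix a conflation $0\to X\to Y\to Z\to 0$ in $\ec$; by \Cref{Proposition:UnivPropRightExact} it suffices to show $\overline F(X)\to\overline F(Y)$ is monic (the case $d=1$ is trivial since then $\mc=\ec$). My plan is a Horseshoe-type construction of compatible length-$d$ $\mc$-resolutions $\Phi^X,\Phi^Y,\Phi^Z$ of $X,Y,Z$. At each level $i<d$ I would pick a deflation $M_i\to K^Y_{i-1}$ with $M_i\in\mc$ (where $K^Y_{-1}:=Y$), so that its composite with the canonical deflation $K^Y_{i-1}\to K^Z_{i-1}$ automatically provides the required lift; a separate deflation $N_i\to K^X_{i-1}$ from $\mc$ then yields, via the $3\times 3$-lemma, a deflation $N_i\oplus M_i\to K^Y_{i-1}$ whose kernel sits in a conflation $0\to K^X_i\to K^Y_i\to K^Z_i\to 0$. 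Using that $\mc$ is $d$-cluster tilting and closed under extensions (a standard consequence of the Ext-characterization of $d$-CT subcategories), these resolutions cap off at level $d$ with a conflation $0\to P^X_d\to P^Y_d\to P^Z_d\to 0$ in $\mc$. Applying $F$: the split SESs at levels $i<d$ are preserved automatically, while the level-$d$ conflation, padded with zeros to an admissible $d$-exact sequence in $\mc$, is preserved by the $d$-exact $F$; hence $0\to F(\Phi^X)\to F(\Phi^Y)\to F(\Phi^Z)\to 0$ is a short exact sequence of bounded complexes in $\ac$.

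Since $H_0(F(\Phi^E))=\operatorname{coker}(F(P^E_1)\to F(P^E_0))=\overline F(E)$ for each $E$ by the very definition of $\overline F$, the long exact sequence in homology attached to this SES of complexes reduces the desired exactness of $0\to\overline F(X)\to\overline F(Y)\to\overline F(Z)\to 0$ to the vanishing lemma $H_i(F(\Phi^E))=0$ for all $i\geq 1$ and all $E\in\ec$. To prove this I would splice $\Phi^E$ with an $\mc$-coresolution $0\to E\to E^0\to\cdots\to E^{d-1}\to 0$ (which exists by the cogenerating property of $\mc$), producing a $(2d+1)$-term acyclic complex in $\ec$ with all terms in $\mc$, and argue that $F$ preserves its acyclicity by decomposition into admissible $d$-exact sequences in $\mc$ together with a careful diagram chase; since the $H_i$ of $F(\Phi^E)$ at interior positions coincide with those of $F$ of the spliced complex, the vanishing would follow.

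The main obstacle is exactly this preservation claim. In an arbitrary $\mc$-resolution of $E$, the intermediate cohomology objects $K^E_i$ typically lie in $\ec\setminus\mc$, so the spliced complex does not straightforwardly decompose into admissible $d$-exact pieces that are known to be preserved by $F$. While the extremal intermediates (namely $K^E_{d-1}=P_d\in\mc$ and $L_{d-1}=E^{d-1}\in\mc$) cooperate, bridging the remaining gap at the central positions—most likely by induction on the $\mc$-resolution dimension of $E$ together with iteratively replacing intermediate kernels by $\mc$-approximations—is the technical heart of the argument.
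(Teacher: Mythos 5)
Your outer strategy is sound: the easy direction is indeed immediate, and the horseshoe-type reduction of a general conflation to a degreewise split short exact sequence of $\mc$-resolutions, followed by the long exact homology sequence, parallels the paper's own reduction (which uses a $3\times 3$ diagram of split resolutions and the snake lemma instead). But the proof is genuinely incomplete at exactly the point you flag: the vanishing $H_i(F(\Phi^E))=0$ for $i\geq 1$, equivalently the claim that $F$ carries a right-bounded acyclic complex in $\ec$ with components in $\mc$ to an acyclic complex in $\ac$. This is not a loose end but the entire content of the proposition; everything else is bookkeeping around it. As you observe, the spliced acyclic complex does not decompose into admissible $d$-exact sequences of $\mc$ because its interior syzygies lie outside $\mc$, and re-choosing the resolution cannot change that. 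Your suggested fallback (induction on the ``$\mc$-resolution dimension'' of $E$) does not get off the ground either, since every object of $\ec$ has an $\mc$-resolution of length at most $d$, so there is nothing to induct on.

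The paper closes this gap by inducting instead on the homological degree $i$, ranging over \emph{all} right-bounded acyclic complexes $M_\bullet$ with components in $\mc$ simultaneously. The cases $i=0,1$ follow from right exactness of $\overline{F}$. For the inductive step one chooses a $d$-kernel $N_\bullet$ of $M_1\to M_0$ (an admissible $d$-exact sequence, hence sent by $F$ to an acyclic complex by hypothesis), lifts the identity on $M_1\to M_0$ to a chain map $M_\bullet\to N_\bullet$ using left $d$-exactness, and forms the cone $C_\bullet$: it is acyclic, and after cancelling the two identity components it is homotopy equivalent to a two-fold shift of another right-bounded acyclic complex with components in $\mc$, so the induction hypothesis applies to it; the long exact homology sequence of the triangle $F(M_\bullet)\to F(N_\bullet)\to F(C_\bullet)\to F(M_\bullet)[1]$, whose middle term is acyclic, then gives $H_{n+1}F(M_\bullet)\cong H_{n+2}F(C_\bullet)=0$. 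Some argument of this kind is indispensable for your proof to go through. A secondary, fixable issue: for your resolutions to ``cap off at level $d$'' the deflations must be right $\mc$-approximations rather than arbitrary deflations from $\mc$, and in your variant of the horseshoe (resolving $Y$ and $X$ and inducing the resolution of $Z$) the composites $M_i\to K^Z_{i-1}$ need not be approximations, so termination of the resolution of $Z$ inside $\mc$ requires a separate check; resolving $X$ and $Z$ by approximations and taking direct sums for $Y$ avoids this.
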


 \begin{proof}
    Clearly $F$ is exact if $\overline{F}$ is exact, so we only need to show the other implication. We first prove that $F$ sends any acyclic complex
     \[
     M_{\bullet}=(\cdots \to M_{1}\to M_0\to 0)
     \]
     in $\ec$ with components in $\mc$ to an acyclic complex
     \[
    F(M_{\bullet})=(\cdots \to F(M_{1})\to F(M_0)\to 0)
     \]
     in $\ac$. This is equivalent to showing that for any acyclic complex $M_\bullet$ concentrated in degrees $\geq 0$ with components in $\mc$ we have $H_{i}(F(M_\bullet))=0$ for $i\geq 0$. We prove this latter statement by induction on $i$. 
     
     The cases $i=0$ and $i=1$ follows immediately from $\overline{F}$ being right exact. Assume the induction hypothesis holds for $i=0,1,\dots,n$. We show it holds for $n+1$. For this, let $M_{\bullet}$ be any acyclic complex  in $\ec$ with components in $\mc$. Choose a $d$-kernel of $M_1\to M_0$, so we get a complex $$N_{\bullet}=(\cdots \to 0\to N_{d+1}\to \cdots \to N_{2}\to M_{1}\to M_0\to 0\to \cdots)$$ in $\mc$ which is acyclic in $\ec$. Since
     \[
     0\to \Hom_{\ec}(M,N_{d+1})\to \cdots \to \Hom_{\ec}(M,N_{2})\to \Hom_{\ec}(M,M_{1})\to \Hom_{\ec}(M,M_0)
     \]
     is exact for all $M\in \mc$, we can find dashed arrows
 \[
		\begin{tikzcd}[column sep=20, row sep=20]
		\cdots \arrow[r] & M_{d+2} \arrow[d] \arrow[r] &M_{d+1}\arrow[r] \arrow[d,dashed] & \cdots \arrow[r] &M_{2} \arrow[r] \arrow[d,dashed] & M_{1} \arrow[r] \arrow[d,equal] & M_0 \arrow[d,equal]\arrow[r] & 0 \\
		\cdots \arrow[r] & 0\arrow[r] & N_{d+1} \arrow[r] & \cdots \arrow[r] & N_{2} \arrow[r]  & M_{1} \arrow[r]  & M_0\arrow[r]& 0 
		\end{tikzcd}
		\]
     making the diagram commutative. This gives a morphism $M_\bullet\to N_\bullet$ of complexes. Let $C_\bullet$ denote its cone. Note that $C_\bullet$ is acyclic by \cite[Lemma 1.1]{Nee90}, since $M_\bullet$ and $N_\bullet$ are acyclic. Furthermore, $C_\bullet$ is homotopy equivalent to the complex
     \begin{equation}\label{Equation:RightBoundedComplex}
     C_\bullet'= (\cdots \to M_{3}\oplus N_{4}\to  M_{2}\oplus N_{3}\to N_{2}\to 0 \to \cdots )
     \end{equation}
     obtained by removing the columns containing $M_{1}$ and $M_0$ in the diagram above and then taking the total complex of the remaining part. Note that $ M_{i}\oplus N_{i+1}$ is in degree $i+1$. Applying $F$ componentwise, we get a complex
     \begin{equation*}
     F(C_\bullet')=(\cdots \to F(M_{3})\oplus F(N_{4})\to  F(M_{2})\oplus F(N_{3})\to F(N_{2})\to 0\to \cdots).
     \end{equation*}
    Since $C'_\bullet[-2]$ is an  acyclic complex with components in $\mc$ concentrated in degrees $\geq 0$, we know by the induction hypothesis that $H_{j}F(C'_\bullet)=0$ for all $j\leq n+2$. 
 Since the complex  $F(C_\bullet')$ is homotopy equivalent to $F(C_\bullet)$, this implies that 
 $$H_{j}F(C_\bullet)=0$$
     for $j\leq n+2$. Now consider the triangle
     \[
     F(M_\bullet)\to F(N_\bullet)\to F(C_\bullet)\to F(M_\bullet)[1]
     \]
     in $K^b(\ac)$ obtained by applying $F$ to the triangle $M_\bullet\to N_\bullet\to C_\bullet\to M_\bullet[1]$ in $K^b(\mc)$.  By assumption, $F(N_\bullet)$ is acyclic, so from the long exact sequences in homology it follows that 
     $$H_{j+1}F(C_\bullet)\cong H_{j}F(M_\bullet)$$ 
     for all $j\in \mathbb{Z}$. Hence, $H_{n+1}F(M_\bullet)\cong H_{n+2}F(C_\bullet)=0$, which proves the claim.

Now we show that $\overline{F}$ is exact. We know from \Cref{Proposition:UnivPropRightExact} that $\overline{F}$ is right exact, so we only need to show that $\overline{F}$ sends inflations to monomorphisms. To this end, let $X\to Y$ be an inflation, and let $Z$ denote its cokernel. Assume that $Y\in \mc$. By \Cref{Reformulation:d-CT} we can find exact sequences
\[
0\to M_d\to \cdots \to M_1\to X\to 0 \quad \text{and} \quad 0\to Z\to N^1\to \cdots \to N^d\to 0
\]
with $M_i,N^i\in \mc$ for all $i$. Combining these, we get a long exact sequence
\[
0\to M_d\to \cdots \to M_1\to Y\to N^1\to \cdots \to N^d\to 0
\]
with components in $\mc$. Since $\overline{F}$ coincides with $F$ on $\mc$ and $F$ sends acyclic complexes concentrated in degrees $\leq 0$ with components in $\mc$ to exact sequence, the sequence
\[
0\to \overline{F}(M_d)\to \cdots \to \overline{F}(M_1)\to \overline{F}(Y)\to \overline{F}(N^1)\to \cdots \to \overline{F}(N^d)\to 0
\]
must be exact. Since $\overline{F}$ is right exact, $\overline{F}(X)$ is isomorphic to the cokernel of $\overline{F}(M_2)\to \overline{F}(M_1)$. Hence, $\overline{F}(X)\to \overline{F}(Y)$ must be a monomorphism. 

 Now assume $X\to Y$ is an arbitrary inflation in $\ec$. Choose an inflation $Y\to M$ with $M\in \mc$. Then the composite $X\to Y\to M$ must also be an inflation. Hence, the composite $\overline{F}(X)\to \overline{F}(Y)\to \overline{F}(M)$ must be a monomorphism, so $\overline{F}(X)\to \overline{F}(Y)$ is a monomorphism. This shows that $\overline{F}$ is exact, which proves the claim.
 \end{proof}

To prove the universal property of $\ec$, we must relax the assumptions in \Cref{Proposition:LiftExact} so that the functor only takes values in an exact category. This requires the following lemmatas, which tells us how $\ec$ can be obtained as the extension-closure of smaller subcategories. The subcategories in question are 
 \[
\ec_{m,n}\colonequals\{E\in \ec\mid \Ext^i_\ec(E,\mc)=0=\Ext^j_\ec(\mc,E) \text{ for all } 0<i<m \text{ and } 0<j<n\}
\]
where $m,n>0$ and $m+n=d$.

\begin{lemma}\label{Lemma:DescriptionAsKerinD-Exact}
  Let $E\in \ec$, and let $m,n$ be positive integers satisfying $m+n=d$. Then $E$ is contained in $\ec_{m,n}$ if and only if there exists an admissible $d$-exact sequence
  \[
  0\to M_{d+1}\to M_{d}\to \cdots \to M_0\to 0
  \]
where $E$ is isomorphic to the image of $M_{n+1}\to M_{n}$.
\end{lemma}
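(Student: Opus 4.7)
The plan is to prove both directions by decomposing (or constructing) the $d$-exact sequence into two halves meeting at $E$:
\[
(\ast)\colon 0 \to M_{d+1} \to \cdots \to M_{n+1} \to E \to 0, \quad (\ast\ast)\colon 0 \to E \to M_n \to \cdots \to M_0 \to 0,
\]
each exact in $\ec$ with all terms in $\mc$ except $E$. For the ``if'' direction, acyclicity of the given sequence in $\ec$ yields conflations $0 \to Z_i \to M_i \to Z_{i-1} \to 0$ for $1 \leq i \leq d$, with $Z_d = M_{d+1}$, $Z_0 = M_0$, and $Z_n \cong E$; splicing these produces $(\ast)$ and $(\ast\ast)$. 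Fix $X \in \mc$ and apply $\Hom_\ec(X,-)$ iteratively to the conflations forming $(\ast\ast)$. The $d$-rigidity $\Ext^k_\ec(X, M_i) = 0$ for $0 < k < d$ gives dimension-shifting isomorphisms $\Ext^j_\ec(X,E) \cong \Ext^1_\ec(X, Z_{n-j+1})$ whenever $1 < j < n$, while the remaining $\Ext^1$ vanishing reduces to lifting a morphism $X \to Z_{n-k}$ along the deflation $M_{n-k+1} \twoheadrightarrow Z_{n-k}$. Such a lift exists because $Z_{n-k} \hookrightarrow M_{n-k} \to M_{n-k-1}$ is zero and the original sequence is left $d$-exact when evaluated at $\Hom_\ec(X,-)$. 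This gives $\Ext^j_\ec(X,E) = 0$ for $0 < j < n$; a dual argument on $(\ast)$ yields $\Ext^i_\ec(E, X) = 0$ for $0 < i < m$.

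For the ``only if'' direction, I construct $(\ast)$ and $(\ast\ast)$ separately from the Ext vanishing hypotheses. To build $(\ast)$, take a right $\mc$-approximation $M_{n+1} \twoheadrightarrow E$, which is automatically a deflation by the standing assumption, and set $K_1 = \ker(M_{n+1} \to E)$. The conflation $0 \to K_1 \to M_{n+1} \to E \to 0$ together with $d$-rigidity of $\mc$ shifts the given vanishing up by one: $\Ext^i_\ec(\mc, K_1) = 0$ for $0 < i < n+1$, where the degree-$1$ instance is exactly the approximation property. Iterating this $m$ times yields $K_m$ with $\Ext^i_\ec(\mc, K_m) = 0$ for all $0 < i < n + m = d$, so $K_m \in \mc$ by the characterization of $\mc$ as a $d$-cluster tilting subcategory. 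Setting $M_{d+1} \colonequals K_m$ and splicing the iterated conflations gives $(\ast)$. A dual construction, starting from a left $\mc$-approximation $E \hookrightarrow M_n$ (an inflation since $\mc$ is cogenerating) and iterating $n$ times using $\Ext^i_\ec(E, \mc) = 0$ for $0 < i < m$, produces $(\ast\ast)$.

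Splicing $(\ast)$ and $(\ast\ast)$ at $E$ via the composition $M_{n+1} \twoheadrightarrow E \hookrightarrow M_n$ yields a complex $0 \to M_{d+1} \to \cdots \to M_0 \to 0$ in $\mc$ that is acyclic in $\ec$ and satisfies $E \cong \mathrm{Im}(M_{n+1} \to M_n)$; by \Cref{Theorem:dCTImpliesdExactWeakIdemPotent} it is an admissible $d$-exact sequence. The main bookkeeping challenge is matching degrees precisely: the arithmetic $n + m = d$ is exactly what makes the iterations terminate with an object in $\mc$, relying on simultaneous use of $d$-rigidity (for the dimension shifts in degrees $1 < k < d$) and the approximation property (to kill the degree-$1$ cokernel term at each step).
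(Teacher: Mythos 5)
Your proposal is correct and follows essentially the same route as the paper: both directions reduce to the equivalence between the Ext-vanishing conditions defining $\ec_{m,n}$ and the existence of an $\mc$-resolution of length $m$ and an $\mc$-coresolution of length $n$, which are then spliced at $E$ and seen to form an admissible $d$-exact sequence because $m+n=d$ forces exactly $d+2$ nonzero terms. The paper simply asserts these two equivalences, whereas you prove them via iterated approximations and dimension shifting; this is the standard argument being left implicit, not a different method.
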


\begin{proof}
     Note that  $\Ext^j_\ec(\mc,E)=0$ for $0<j<n$  if and only if we can find an exact sequence
     \[
     0\to M_{d-n}\to \cdots \to M_{1}\to M_0\to E\to 0.
     \]
     Similarly,  $\Ext^i_\ec(E,\mc)=0$ for $0<i<m$ if and only if we can find an exact sequence 
     \[
     0\to E\to N^0\to \cdots \to N^{d-m}\to 0.
     \]
    Hence, $E$ belongs to $\ec_{m,n}$ if and only if there exists an exact sequence
    \[
     0\to M_{d-n}\to \cdots \to M_{1}\to M_0\to N^0\to N^1\to \cdots \to N^{d-m}\to 0
    \]
     in $\ec$ with terms in $\mc$, and such that $E$ is the image of $M_{0}\to N^0$. Since $m+n=d$, this sequence must have $d+2$ non-zero terms, and hence be an admissible $d$-exact sequence. This proves the claim.
\end{proof}

An object $E\in \ec$ is \textit{filtered} by a class of objects $\mathcal{S}$ if there exists a sequence of inflations $$0=E^0\to E^1\to \cdots \to E^k\to E^{k+1}=E$$ such that the cokernel of $E^i\to E^{i+1}$ is in $\mathcal{S}$ for all $0\leq i\leq k$. The subcategory of $\ec$ consisting of all objects that can be filtered by objects in $\mathcal{S}$ is denoted $\operatorname{Filt}(\mathcal{S})$. Note that $\operatorname{Filt}(\mathcal{S})$ is the smallest subcategory of $\ec$ closed under extensions and containing $\mathcal{S}$.

\begin{lemma}\label{Lemma:d-CTFiltrations}
   For any $E\in \ec$ there exists $M\in \mc$ such that 
    \[
    E\oplus M\in\operatorname{Filt}(\bigcup_{\substack{m,n>0\\m+n=d}}\ec_{m,n}).
    \]
\end{lemma}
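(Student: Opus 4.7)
My plan is to construct the filtration using the two admissible exact sequences afforded by the $d$-cluster tilting property:
\[
0 \to M_d \to \cdots \to M_1 \to E \to 0, \quad 0 \to E \to M^1 \to \cdots \to M^d \to 0,
\]
with all $M_i, M^j \in \mc$. I would splice these into a long exact sequence
\[
0 \to M_d \to \cdots \to M_1 \to M^1 \to \cdots \to M^d \to 0
\]
of length $2d$ in $\ec$, all of whose terms lie in $\mc$. Denoting the successive images by $Q_i$ so that $Q_d = E$, $Q_{d+j} = K_j$ is the $j$-th syzygy, and $Q_{d-j} = C^j$ is the $j$-th cosyzygy of $E$, the induced conflations $0 \to Q_{i+1} \to P_i \to Q_i \to 0$ (with $P_i \in \mc$) provide the basic building blocks. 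A dimension shift along them would yield $\Ext^k(\mc, K_j) = 0$ for $0 < k \le j$ and dually $\Ext^k(C^j, \mc) = 0$ for $0 < k \le j$, so that $K_j$ and $C^j$ lie in $\bigcup_{m+n=d,\,m,n\ge 1} \ec_{m,n}$ precisely when $j \ge d-2$.

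For the filtration of $E \oplus M$, I would take $M$ to be a direct sum of $\mc$-terms $P_i$ appearing in the long sequence (possibly with repetitions). For $d=2$ the union equals $\ec$ and $M=0$ works trivially. For larger $d$, I would proceed by induction on the "span of problematic indices", meaning the set of $i$ with $3\le i\le 2d-3$ for which $Q_i$ is not known to lie in the union. The key operation is to replace any adjacent pair of conflations $0\to Q_{i+1}\to P_i\to Q_i\to 0$ and $0\to Q_i\to P_{i-1}\to Q_{i-1}\to 0$ around a problematic $Q_i$ with the single $4$-term exact sequence $0\to Q_{i+1}\to P_i\to P_{i-1}\to Q_{i-1}\to 0$; treating the latter as a two-step filtration of $P_i\oplus P_{i-1}$ whose subquotients involve only $Q_{i+1}$ and $Q_{i-1}$, the problematic $Q_i$ no longer appears as a subquotient. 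Iterating this should eventually reduce to the good outermost images $Q_2=C^{d-2}$ and $Q_{2d-2}=K_{d-2}$ (which lie in the union) together with $\mc$-summands.

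The main obstacle will be to turn this heuristic into a genuine chain of inflations in $\ec$: in the naive direct-sum filtration of $\bigoplus_i P_i$ each $Q_j$ appears twice as a subquotient (once as the quotient of $P_{j-1}$ and once as a sub of $P_j$), and merely invoking the splice does not automatically cancel one of these appearances. The remedy should be to construct the filtration of $E\oplus M$ using non-trivial "graph-type" subobjects coming from the composite $gf\colon M_1\to M^1$ (whose kernel, image, and cokernel are respectively $K_1$, $E$, and $C^1$) and its higher analogues, so that the single external copy of $E$ in the summand cancels with one of the two appearances of $E$ in the resulting chain. The weak idempotent completeness of $\ec$ is essential here: it is what allows the summand $M$ to be split off cleanly at the end, and it is in play when we identify direct-summand subobjects with the graphs of morphisms. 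The careful bookkeeping of which $\mc$-summands must be added to absorb each duplicate subquotient is the heart of the argument, and I anticipate this to be the most delicate part of the proof.
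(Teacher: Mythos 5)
Your plan has a genuine gap at exactly the point you yourself flag as "the heart of the argument", and the key operation you propose to close it is false as stated. You claim that the four-term exact sequence $0\to Q_{i+1}\to P_i\to P_{i-1}\to Q_{i-1}\to 0$ can be read as a two-step filtration of $P_i\oplus P_{i-1}$ whose subquotients involve only $Q_{i+1}$ and $Q_{i-1}$. It cannot: such a filtration is a single conflation $0\to Q_{i+1}\to P_i\oplus P_{i-1}\to Q_{i-1}\to 0$, which would force $[P_i]+[P_{i-1}]=[Q_{i+1}]+[Q_{i-1}]$, whereas the two defining conflations give $[P_i]+[P_{i-1}]=[Q_{i+1}]+2[Q_i]+[Q_{i-1}]$; already in a module category a length count rules this out whenever $Q_i\neq 0$. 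Splicing two conflations into a four-term sequence changes the Yoneda class one works with, not the filtration data, so the problematic subquotient $Q_i$ does not "no longer appear". The subsequent "graph-type subobject" remedy is left entirely unexecuted, and you concede as much, so the argument is not complete. A smaller point: weak idempotent completeness is not what lets you "split off $M$ at the end" --- the conclusion is a statement about $E\oplus M$ itself and nothing needs to be split off; the real difficulty is that the stabilizing summand $M$ cannot be read off from a single spliced sequence but has to be produced inside an induction.

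The paper avoids your cancellation problem by an induction of a different shape. It sets $\ec'=\{E\in\ec\mid E\oplus M\in \operatorname{Filt}(\bigcup_{m+n=d}\ec_{m,n})\text{ for some }M\in\mc\}$, shows $\ec'$ is closed under extensions (by padding conflations with trivial ones), and proves $\mc^{k}(\ec)\subseteq \ec'$ by induction on $k$, the case $k=d$ giving $\ec=\ec'$. In the inductive step one compares the long acyclic complex $M_\bullet$ with $E=Z_k(M_\bullet)$ to a genuine admissible $d$-exact sequence $N_\bullet$ ending in $M_1\to M_0$, whose cycles lie in $\bigcup\ec_{m,n}$ by \Cref{Lemma:DescriptionAsKerinD-Exact}; the cycles of the mapping cone $C_\bullet$ of a chain map $M_\bullet\to N_\bullet$ lie in $\mc^{k}(\ec)\subseteq\ec'$ by the induction hypothesis, and a pushout together with the resulting conflation $0\to G\to C_{k+1}\oplus E\to M_{k}\to 0$ places $C_{k+1}\oplus E$, hence $E$, in $\ec'$. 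If you want to rescue your approach, the duplicate appearances of the $Q_i$ have to be absorbed by precisely this kind of cone-and-pushout bookkeeping, at which point you have essentially rederived the paper's proof.
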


\begin{proof}
We want to prove that $\ec'=\ec$, where
     \[
      \ec'=\{E\in \ec\mid E\oplus M\in \operatorname{Filt}(\bigcup_{\substack{m,n>0\\m+n=d}}\ec_{m,n})\text{ for some }M\in \mc\}.
     \]
      
We first show that $\ec'$ is closed under extensions. Indeed, let 
     \[
     0\to E_3\to E_2\to E_1\to 0
     \]
     be a conflation in $\ec$ with $E_1,E_3\in \ec'$. By definition there exist $M_1,M_3\in \mc$ such that \begin{align*}
     E_1\oplus M_1\in \operatorname{Filt}(\bigcup_{\substack{m,n>0\\m+n=d}}\ec_{m,n}) \quad \text{and} \quad E_3\oplus M_3\in \operatorname{Filt}(\bigcup_{\substack{m,n>0\\m+n=d}}\ec_{m,n}).
    \end{align*} 
    Adding the trivial conflations $0\to M_1\xrightarrow{1}M_1\to 0\to 0$ and  $0\to 0\to M_3\xrightarrow{1} M_3\to 0$ to the conflation above, we get the conflation
    \[
     0\to E_1\oplus M_1\to E_2 \oplus M_1\oplus M_3\to E_3\oplus M_3\to 0.
     \]
     Hence, $E_2 \oplus M_1\oplus M_3$ is a filtration by objects in $\ec_{m,n}$ for $m+n=d$, and so $E_2$ must be contained in $\ec'$
   
   Next we prove by induction on $k$ that the subcategory
     \[
     \mc^{k}(\ec)=\{E\in \ec \mid \exists\text{ }0\to E\to M^{1}\to \dots \to M^{k} \to 0 \text{ exact, }M^i\in \mc \text{ for all }i\}
     \]
     is contained in $\ec'$. The case $k=d$ implies that $\ec=\mc^{d}(\ec)=\ec'$, which proves the lemma. 
     
     Clearly $\mc^1(\ec)=\mc$ and $\mc^2(\ec)=\ec_{d-1,1}$ are both contained in $\ec'$. Assume $\mc^{k}(\ec)$ is contained in $\ec'$ for some $2\leq k<d$. We show that $\mc^{k+1}(\ec)$ is contained in $\ec'$. To this end,  let $E\in \mc^{k+1}(\ec)$ be arbitrary, and let
     \[
     0\to E\to M_{k}\to \dots \to M_0\to 0
     \]
     be an exact sequence with $M_i\in \mc$ for all $i$. Choose an exact sequence
     \[
     0\to M_{d+k}\to \cdots \to M_{k+1}\to E\to 0
     \]
     in $\ec$ with $M_i\in \mc$ for all $i$. Let $M_\bullet$ denote the acyclic complex 
     \[
     0\to M_{d+k}\to \cdots \to M_0\to 0 
     \]
     obtained by combining the two exact sequences and concentrated in degrees $d+k,\dots,0$. Let $F$ be the kernel of $M_{1}\to M_{0}$, and let  
     $0\to  N_{d+1}\to \cdots \to N_{2}\to F\to 0$
     be an exact sequence in $\ec$ with $N_i\in \mc$ for all $i$. Let $N_\bullet$ denote the acyclic complex 
     \[
     0\to N_{d+1}\to \cdots \to N_{2}\to M_{1}\to M_0\to 0
     \]
      concentrated in degrees $d+1,\dots,0$ so that $N_{1}=M_{1}$ and $N_0=M_0$. Since
     \[
     0\to \Hom_{\ec}(M,N_{d+1})\to \cdots \to \Hom_{\ec}(M,N_{2})\to \Hom_{\ec}(M,M_{1})\to \Hom_{\ec}(M,M_0)
     \]
     is exact for all $M$ in $\mc$, we can find dashed arrows
 \[
		\begin{tikzcd}[column sep=15, row sep=15]
		 \cdots  \arrow[r] & 0\arrow[r] \arrow[d] & M_{d+k} \arrow[r] \arrow[d] & \cdots \arrow[r]&M_{d+2}\arrow[r] \arrow[d] & M_{d+1} \arrow[r] \arrow[d,dashed] & \cdots \arrow[r] &M_{2} \arrow[r]  \arrow[d,dashed] & M_{1} \arrow[d,equal] \arrow[r]  & M_0 \arrow[d,equal]\arrow[r] & 0 \\
  \cdots \arrow[r] & 0\arrow[r] & 0 \arrow[r] & \cdots \arrow[r]&0\arrow[r]  & N_{d+1} \arrow[r] & \cdots \arrow[r] &N_{2} \arrow[r] & M_{1} \arrow[r]  & M_0 \arrow[r] & 0
		\end{tikzcd}
		\]
     making the diagram commutative. This gives a morphism $M_\bullet\to N_\bullet$ of complexes. Let $C_\bullet$ be its cone, which is acyclic by \cite[Lemma 1.1]{Nee90}.
     
     We claim that 
     \begin{equation}\label{Equation:InFc}
         Z_{i}(C_\bullet)\in \ec'
     \end{equation}
     for $i\leq k+1$. Indeed, by definition of the cone we have an exact sequence
    \[
    0\to Z_{i}(C_\bullet)\to N_{i}\oplus M_{i-1}\to N_{i-1}\oplus M_{i-2}\to \cdots \to N_{2}\oplus M_{1} \to N_{1}\oplus M_0\to N_0\to 0.
    \]
    Since the rightmost morphisms $M_{1}\to N_{1}$ and $M_0\to N_0$ are identities, we can remove these to get the exact sequence 
    \[
    0\to Z_{i}(C_\bullet)\to  N_{i}\oplus M_{i-1}\to N_{i-1}\oplus M_{i-2}\to \cdots \to N_{2}\to 0.
    \]
    Since $i\leq k+1$, this implies that $Z_{i}(C_\bullet)\in \mc^{k}(\ec)$, and hence it must be contained in $\ec'$ by the induction hypothesis.
    
    Next consider the componentwise split exact sequence
     \[
     0\to N_\bullet\to C_\bullet\to M_\bullet[1]\to 0
     \]
of complexes. This gives commutative diagrams
 \begin{equation}\label{CyclesDiagram}
		\begin{tikzcd}[column sep=20, row sep=20]
  0\arrow[r] &Z_i(N_\bullet) \arrow[r] \arrow[d] & N_i \arrow[r] \arrow[d] & Z_{i-1}(N_\bullet) \arrow[d]\arrow[r] & 0 \\
		0\arrow[r] &Z_i(C_\bullet) \arrow[r] \arrow[d] & C_i \arrow[r] \arrow[d] & Z_{i-1}(C_\bullet) \arrow[d]\arrow[r] & 0 \\
		0\arrow[r] &Z_{i-1}(M_\bullet) \arrow[r, ""] & M_{i-1} \arrow[r]   & Z_{i-2}(M_\bullet) \arrow[r]  & 0
		\end{tikzcd}
	\end{equation}
    for each integer $i$ where the middle column is a split exact sequence. Note that the rows of the diagram are conflations since the complexes $N_\bullet$ and $C_\bullet$ and $M_\bullet$ are acyclic. 
    
    We claim that the left column 
    \begin{align*}
        Z_i(N_\bullet)\to Z_i(C_\bullet)\to Z_{i-1}(M_\bullet)
    \end{align*}
    of \eqref{CyclesDiagram} is a conflation for all $i$. Indeed, by the $(3\times3)$-lemma \cite[Lemma 3.6]{Bue10} it is a conflation if $Z_{i-1}(N_\bullet)\to Z_{i-1}(C_\bullet)\to Z_{i-2}(M_\bullet)$ is a conflation. Hence, it suffices to prove the claim for $i$ sufficiently small. Since $$Z_i(N_\bullet)=Z_i(C_\bullet)=Z_{i-1}(M_\bullet)=0$$
    for $i<0$, the claim holds.

    Now recall that we have $E=Z_{k}(M_\bullet)$. Consider the two leftmost columns of the diagram \eqref{CyclesDiagram}
    \begin{equation*}
		\begin{tikzcd}[column sep=20, row sep=20]
  &Z_{k+1}(N_\bullet) \arrow[r] \arrow[d] & N_{k+1}  \arrow[d] \\
		&Z_{k+1}(C_\bullet) \arrow[r] \arrow[d] & C_{k+1} \arrow[d]  \\
		&E \arrow[r, ""] & M_{k} 
		\end{tikzcd}
	\end{equation*}
    for $i=k+1$. Let $G$ be the pushout of $Z_{k+1}(N_\bullet)\to N_{k+1}$ along $Z_{k+1}(N_\bullet)\to Z_{k+1}(C_\bullet)$. Then we get the commutative diagram 
     \begin{equation}\label{ThreeExactSeqDiag}
		\begin{tikzcd}[column sep=20, row sep=20]
  0\arrow[r] &Z_{k+1}(N_\bullet) \arrow[r] \arrow[d] & N_{k+1} \arrow[r,equal] \arrow[d] & N_{k+1} \arrow[d]\arrow[r] & 0 \\
		0\arrow[r] &Z_{k+1}(C_\bullet) \arrow[r] \arrow[d] & G \arrow[r] \arrow[d] & C_{k+1} \arrow[d]\arrow[r] & 0 \\
		0\arrow[r] &E \arrow[r,equal] & E \arrow[r]   & M_{k} \arrow[r]  & 0 
		\end{tikzcd}
	\end{equation}
 for some object $G$, where the columns are conflations. Since the top left square is cocartesian and $Z_{k+1}(N_\bullet)\to N_{k+1}$ is an inflation with cokernel $Z_{k}(N_\bullet)$, the same must hold for the morphism $Z_{k+1}(C_\bullet)\to G$. Note that $Z_{k}(N_\bullet)\in \ec'$ by \Cref{Lemma:DescriptionAsKerinD-Exact} since $N_\bullet$ is an admissible $d$-exact sequence. Also, $Z_{k+1}(C_\bullet)\in \ec'$ by \eqref{Equation:InFc}. Since  $\ec'$ is closed under extensions, it follows that $G\in \ec'$. Finally, since the lower right square in \eqref{ThreeExactSeqDiag} is bicartesian, we have a conflation
 \[
 0\to G\to C_{k+1}\oplus E\to M_{k}\to 0.
 \]
Since $\ec'$ is closed under extensions, it follows that $C_{k+1}\oplus E\in \ec'$. Since $C_{k+1}\in \mc$, it follows from the definition of $\ec'$ that $E\in \ec'$. This proves the claim. 
\end{proof}

\begin{remark}
    We have seen in \Cref{Reformulation:d-CT} that $\mc$ generates $\ec$ by finite resolutions and coresolutions. \Cref{Lemma:d-CTFiltrations} shows the surprising fact that $\ec$ can also be generated via filtrations and direct summands.
\end{remark}

We can now prove the universal property of the ambient exact category of a $d$-cluster tilting subcategory, under the assumption of weak idempotent completeness.

\begin{theorem}\label{Theorem:UnivProperty}
   The following hold.
    \begin{enumerate}
        \item\label{Theorem:UnivProperty:1} The inclusion $\mc\to \ec$ is exact.
        \item\label{Theorem:UnivProperty:2} Let $\ec'$ be a weakly idempotent complete exact category and $F\colon \mc\to \ec'$ an exact functor. Then there exists a unique (up to natural isomorphism) exact functor $\overline{F}\colon \ec\to \ec'$ extending $F$.
    \end{enumerate} 
\end{theorem}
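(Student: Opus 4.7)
For Part (1), this is immediate from the description of the $d$-exact structure on $\mc$ given in \Cref{Theorem:dCTImpliesdExactWeakIdemPotent}: by definition, the admissible $d$-exact sequences in $\mc$ are precisely the complexes with terms in $\mc$ that are acyclic in $\ec$, so the inclusion $\mc\hookrightarrow\ec$ sends admissible $d$-exact sequences to acyclic complexes, which is exactly the definition of an exact functor from a $d$-exact category to an exact category.

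The plan for Part (2) is to reduce to the abelian case already handled by \Cref{Proposition:UnivPropRightExact} and \Cref{Proposition:LiftExact}. First, embed $\ec'$ as an extension-closed subcategory $\iota\colon\ec'\hookrightarrow\ac'$ of an abelian category (using the standard embedding recalled in the preliminaries). The composite $\iota\circ F\colon \mc\to\ac'$ is an exact, hence right exact, functor from $\mc$ to an abelian category. \Cref{Proposition:UnivPropRightExact} then produces a right exact extension $\widetilde{F}\colon\ec\to\ac'$, unique up to natural isomorphism, and \Cref{Proposition:LiftExact} upgrades $\widetilde{F}$ to an exact functor to $\ac'$. Uniqueness of the desired $\overline{F}\colon\ec\to\ec'$ will then follow from uniqueness of $\widetilde{F}$ combined with the full faithfulness of $\iota$, once we know $\widetilde{F}$ factors through $\ec'$.

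The main obstacle, and the heart of the argument, is to show that $\widetilde{F}$ actually takes values inside $\ec'$. For this I would invoke \Cref{Lemma:d-CTFiltrations}: for any $E\in\ec$ there exists $M\in\mc$ so that $E\oplus M$ is filtered by objects of $\bigcup_{m+n=d}\ec_{m,n}$. Since $\widetilde{F}$ is exact into $\ac'$ and $\ec'$ is extension-closed in $\ac'$, it suffices to verify that $\widetilde{F}(X)\in\ec'$ for every $X\in\ec_{m,n}$ with $m,n>0$ and $m+n=d$. By \Cref{Lemma:DescriptionAsKerinD-Exact}, any such $X$ is realized as the cycle object $Z_n(M_\bullet)$ of an admissible $d$-exact sequence $M_\bullet=(0\to M_{d+1}\to\cdots\to M_0\to 0)$ in $\mc$. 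Applying the exact functor $F$ yields an acyclic complex $F(M_\bullet)$ in $\ec'$ whose cycles all lie in $\ec'$. A short induction on $i$, comparing the conflations $0\to Z_i(M_\bullet)\to M_i\to Z_{i-1}(M_\bullet)\to 0$ in $\ec$ with the corresponding conflations in $\ec'$ via exactness of $\widetilde{F}$, identifies $\widetilde{F}(Z_i(M_\bullet))$ with $Z_i(F(M_\bullet))$ inside $\ac'$; specializing to $i=n$ gives $\widetilde{F}(X)\in\ec'$.

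Finally, once $\widetilde{F}(E\oplus M)\in\ec'$ is established, weak idempotent completeness of $\ec'$ recovers $\widetilde{F}(E)\in\ec'$: the split monomorphism $F(M)\to\widetilde{F}(E)\oplus F(M)$ in $\ac'$ has both endpoints in $\ec'$, so it is a split inflation in $\ec'$, and its cokernel there (which agrees with the cokernel in $\ac'$, namely $\widetilde{F}(E)$) must also lie in $\ec'$. This defines the required functor $\overline{F}\colon\ec\to\ec'$, and uniqueness is inherited from \Cref{Proposition:UnivPropRightExact} as discussed above. I expect the delicate point to be the interplay in the third paragraph between extension-closure of $\ec'$ in $\ac'$ and the inductive identification of $\widetilde{F}$ on cycle objects; once this is clean, the reduction via \Cref{Lemma:d-CTFiltrations} and the final use of weak idempotent completeness are largely formal.
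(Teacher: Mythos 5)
Your proposal is correct and follows essentially the same route as the paper: part (1) from the description of the induced $d$-exact structure, and part (2) by embedding $\ec'$ into an abelian category (the paper uses $\mathcal{L}(\ec')$ via \cite[Proposition A.2]{Kel90}), extending via \Cref{Proposition:UnivPropRightExact} and \Cref{Proposition:LiftExact}, and then showing the image lands in $\ec'$ by combining \Cref{Lemma:DescriptionAsKerinD-Exact} with \Cref{Lemma:d-CTFiltrations}, extension-closure, and closure under additive complements. The only difference is presentational: the paper phrases the containment step as the preimage $\overline{G}^{-1}(\ec')$ being closed under extensions and additive complements and containing each $\ec_{m,n}$, which is exactly your argument unfolded.
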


\begin{proof}
    Part \eqref{Theorem:UnivProperty:1} follows by \Cref{Theorem:dCTImpliesdExactWeakIdemPotent}. 
    
    We prove part \eqref{Theorem:UnivProperty:2}. First note that the category $\ec'$ is equivalent to an extension closed subcategory of $\mathcal{L}(\ec')$ by \cite[Proposition A.2]{Kel90}. For simplicity we identify $\ec'$ with this subcategory, and assume it is closed under isomorphisms in $\mathcal{L}(\ec')$. Let $G$ denote the composite $\mc\xrightarrow{F}\ec'\to \mathcal{L}(\ec')$. By \Cref{Proposition:UnivPropRightExact} and \Cref{Proposition:LiftExact} it has a unique extension to an exact functor $\overline{G}\colon \ec\to \mathcal{L}(\ec')$. It remains to show that the image of $\overline{G}$ lies in $\ec'$. To this end, consider the preimage 
    \[
    (\overline{G})^{-1}(\ec')\colonequals \{E\in \ec\mid \overline{G}(E)\in \mathcal{E}'\}.
    \] 
   Since $\overline{G}$ sends admissible $d$-exact sequences to acyclic complexes in $\ec'$, the subcategories $\ec_{m,n}$ must be contained in $\overline{G}^{-1}(\ec')$ for all integers $m,n$ satisfying $m+n=d$ by \Cref{Lemma:DescriptionAsKerinD-Exact}.  Also, since $\mathcal{E}'$ is closed under extensions and additive complements in $\mathcal{L}(\ec')$, the same must hold for $\overline{G}^{-1}(\ec')$ as a subcategory of $\ec$. Now by \Cref{Lemma:d-CTFiltrations} we know that $\ec$ is the smallest subcategory satisfying these properties. Hence $\overline{G}^{-1}(\ec')=\ec$, which proves the claim. 
\end{proof}

In the following we use concepts for $2$-categories, following \cite{nlab:2-category,nlab:2-functor,nlab:equivalence_of_2-categories,nlab:pseudonatural_transformation}. Let $d\operatorname{-EX}$ be the strict $2$-category whose objects are weakly idempotent complete $d$-exact categories, whose $1$-morphisms are $d$-exact functors, and whose $2$-morphisms are natural transformations. Let $d\operatorname{-CT_{ex}}$ be the strict $2$-category whose objects are pairs $(\mc,\ec)$ where $\ec$ is a weakly idempotent complete exact category and $\mc$ is a $d$-cluster tilting subcategory of $\ec$, whose $1$-morphisms $(\mc,\ec)\to (\mc',\ec')$ are exact functors $\ec\to \ec'$ which sends $\mc$ to $\mc'$, and whose $2$-morphisms are natural transformations. Since any $d$-cluster tilting subcategory inherits a $d$-exact structure, we have a strict $2$-functor 
$$\operatorname{res}\colon d\operatorname{-CT_{ex}}\to d\operatorname{-EX}$$
sending a pair $(\mc,\ec)$ to $\mc$ with its induced $d$-exact structure.

\begin{theorem}\label{Theorem:2equivalence}
The $2$-functor $\operatorname{res}\colon d\operatorname{-CT_{ex}}\to d\operatorname{-EX}$ gives an equivalence of $2$-categories.
\end{theorem}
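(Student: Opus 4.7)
The plan is to show $\operatorname{res}$ is a $2$-equivalence by verifying (i) essential surjectivity on objects and (ii) that for each pair of objects the induced restriction functor on hom-categories is an equivalence of categories. For (i), given any $\mc\in d\text{-EX}$, the pair $(Y(\mc),\ec(\mc))$ constructed in \Cref{Section:EmbeddingThm} lies in $d\text{-CT}_{\operatorname{ex}}$, and by \Cref{Cor:WeaklyIdemdExactdCT} \eqref{Cor:WeaklyIdemdExactdCT:2} the Yoneda functor is an equivalence $\mc\cong Y(\mc)$ of $d$-exact categories, so $\operatorname{res}(Y(\mc),\ec(\mc))\cong \mc$ in $d\text{-EX}$.

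For (ii), fix objects $(\mc,\ec)$ and $(\mc',\ec')$ of $d\text{-CT}_{\operatorname{ex}}$. Essential surjectivity on $1$-morphisms follows directly from \Cref{Theorem:UnivProperty}: given a $d$-exact functor $F\colon \mc\to \mc'$, the composite with the exact inclusion $\mc'\hookrightarrow \ec'$ (exact by \Cref{Theorem:UnivProperty} \eqref{Theorem:UnivProperty:1}) is an exact functor $\mc\to \ec'$, and \Cref{Theorem:UnivProperty} \eqref{Theorem:UnivProperty:2} extends it to an exact functor $\overline{F}\colon \ec\to \ec'$. Since $\overline{F}$ restricts to $F$ on $\mc$ and $F$ takes values in $\mc'$, the extension $\overline{F}$ is a $1$-morphism in $d\text{-CT}_{\operatorname{ex}}$ restricting to $F$.

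For full faithfulness on $2$-morphisms, I fix exact functors $G_1, G_2\colon \ec\to \ec'$ sending $\mc$ to $\mc'$ and study the restriction $\operatorname{Nat}(G_1,G_2)\to \operatorname{Nat}(G_1|_\mc, G_2|_\mc)$. Faithfulness is immediate: if $\alpha\colon G_1\to G_2$ vanishes on $\mc$, then for any $E\in \ec$ I take a deflation $M\twoheadrightarrow E$ with $M\in\mc$ (possible since $\mc$ is generating); exactness of $G_1$ makes $G_1(M\twoheadrightarrow E)$ a deflation, hence an epimorphism in $\ec'$, and naturality of $\alpha$ then forces $\alpha_E=0$.

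The main obstacle is fullness. Given $\alpha'\colon G_1|_\mc \to G_2|_\mc$ and $E\in \ec$, I would choose a right $\mc$-approximation $g\colon M_0\twoheadrightarrow E$ (automatically a deflation, as in the proof of \Cref{Lemma:EFinitelyPresInmodM}), let $K = \ker g$, and take a right $\mc$-approximation $p\colon M_1\twoheadrightarrow K$ composing via the inflation $i\colon K\hookrightarrow M_0$ to $f = i\circ p\colon M_1\to M_0$. Exactness of $G_j$ yields conflations $0\to G_j(K)\to G_j(M_0)\to G_j(E)\to 0$, while $G_j(p)\colon G_j(M_1)\twoheadrightarrow G_j(K)$ remains a deflation. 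Since $gf=0$, naturality of $\alpha'$ gives $G_2(g)\circ \alpha'_{M_0}\circ G_1(f) = G_2(gf)\circ \alpha'_{M_1} = 0$; pre-cancelling the epimorphism $G_1(p)$ yields $G_2(g)\circ \alpha'_{M_0}\circ G_1(i) = 0$, so $G_2(g)\circ \alpha'_{M_0}$ factors uniquely through the cokernel $G_1(g)$ as a morphism $\alpha_E\colon G_1(E)\to G_2(E)$. The technical crux is verifying that $\alpha_E$ is independent of the chosen presentation and natural in $E$, which I would handle by the standard device of comparing two presentations via a common $\mc$-refinement and repeatedly using epi-cancellation against deflations of the form $G_j(g)$. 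Restricting to $E = M\in \mc$ with the trivial presentation $0\to M\xrightarrow{\operatorname{id}} M$ recovers $\alpha_M = \alpha'_M$, completing surjectivity.
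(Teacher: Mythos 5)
Your proof is correct, and it reaches the conclusion by a different formal route than the paper. The paper constructs an explicit quasi-inverse pseudofunctor $\ec(-)\colon d\operatorname{-EX}\to d\operatorname{-CT_{ex}}$ (object by object via $\mc\mapsto(\mc,\ec(\mc))$, morphisms via \Cref{Theorem:UnivProperty}) and then exhibits the two composites as (pseudonaturally) equivalent to the identities; you instead verify the local characterization of a biequivalence, namely essential surjectivity on objects together with each restriction functor on hom-categories being essentially surjective, full, and faithful. The inputs are the same in both cases --- \Cref{Cor:WeaklyIdemdExactdCT} for objects and \Cref{Theorem:UnivProperty} for $1$-morphisms --- but your packaging avoids having to assemble the coherence data of a pseudofunctor and pseudonatural transformation, at the cost of invoking the (choice-dependent) criterion for $2$-equivalences. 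Where you genuinely add content is on $2$-morphisms: the paper simply asserts that a natural transformation $F\to G$ has a ``unique extension'' to the ambient exact categories, whereas you construct the extension explicitly from an $\mc$-presentation $M_1\to M_0\twoheadrightarrow E$ and prove faithfulness by epi-cancellation against $G_1$ of a deflation from $\mc$; this is a correct and worthwhile elaboration. The only soft spot is that independence of the presentation and naturality of the extended transformation are deferred to ``the standard device of comparing two presentations via a common $\mc$-refinement''; the sketch is the right one (lift a morphism $E\to E'$ through the right $\mc$-approximation of $E'$ and cancel the epimorphism $G_1(g)$), but it should be written out, since it is exactly the step the paper leaves implicit. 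One minor point worth recording: the extension $\overline{F}$ of \Cref{Theorem:UnivProperty} agrees with $F$ on $\mc$ only up to natural isomorphism, so to conclude that $\overline{F}$ sends $\mc$ into $\mc'$ you should note that $\mc'$ is closed under isomorphisms in $\ec'$ (being cut out by Ext-vanishing conditions); this is harmless but should be said.
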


\begin{proof}
    We construct a pseudofunctor $\ec(-)\colon d\operatorname{-EX}\to d\operatorname{-CT_{ex}}$ as follows. To an object  $\mc$ in $d\operatorname{-EX}$ we associate the pair $(\mc,\ec(\mc))$ where $\mc$ is identified with its essential image under the Yoneda embedding $\mc\to \ec(\mc)$. This is well-defined by \Cref{Cor:WeaklyIdemdExactdCT}. 
    To a $1$-morphism $F\colon \mc\to \nc$ we associate the exact functor $\ec(F)\colon \ec(\mc)\to \ec(\nc)$ obtained by applying the universal property in \Cref{Theorem:UnivProperty} to the exact functor 
    $$\mc\xrightarrow{F} \nc\to \ec(\nc).$$ Finally, to a natural transformation $F\to G$ we associate its unique extension to a natural transformation $\ec(F)\to \ec(G)$. 
    
    We show that the pair $(\operatorname{res},\ec(-))$ form an equivalence of $2$-categories. Clearly the composite 
    $$d\operatorname{-EX}\xrightarrow{\ec(-)} d\operatorname{-CT_{ex}}\xrightarrow{\operatorname{res}}d\operatorname{-EX}$$ is equal to the identity $2$-functor on $d\operatorname{-EX}$. Conversely, if $(\mc,\ec)$ is an object in $d\operatorname{-CT_{ex}}$, then by \Cref{Theorem:UnivProperty} we can find exact functors $\ec\to \ec(\mc)$ and $\ec(\mc)\to \ec$ commuting with the inclusions of $\mc$ into $\ec$ and $\ec(\mc)$. By the uniqueness part of \Cref{Theorem:UnivProperty} the composites $\ec(\mc)\to \ec\to \ec(\mc)$ and $\ec\to \ec(\mc)\to \ec$ are naturally isomorphic to the identity functors. Hence, for each object $(\mc,\ec)$ we have an equivalence $(\mc,\ec)\cong (\mc,\ec(\mc))$ in the $2$-category $d\operatorname{-CT_{ex}}$. Furthermore these equivalences can be extended to a pseudonatural equivalence from
    $$d\operatorname{-CT_{ex}}\xrightarrow{\operatorname{res}}d\operatorname{-EX}\xrightarrow{\ec(-)} d\operatorname{-CT_{ex}}$$
    to the identity $2$-functor on $d\operatorname{-CT_{ex}}$. This can be seen from the natural isomorphism in \Cref{Theorem:UnivProperty} being unique if they are also assumed to be identity on objects in $\mc$. The claim follows.
\end{proof}

 We immediately get the following uniqueness results of the ambient exact category of a $d$-cluster tilting subcategory from \Cref{Theorem:2equivalence}, assuming weak idempotent completeness. For $d$-abelian categories it has already been shown in \cite{Kva22}.  

 \begin{corollary}\label{Theorem:UniquenessAmbientExact}
     Assume we have an equivalence $\mc\xrightarrow{\cong}\nc$ of $d$-exact categories, where $\nc$ is a $d$-cluster tilting subcategory of a weakly idempotent complete exact category $\mathcal{E}'$. Then there exists an exact equivalence $\ec\xrightarrow{\cong} \ec'$ making the diagram
     \[
     \begin{tikzcd}
  \ec \arrow[r,"\cong"] & \ec'  \\
  \mc \arrow[u] \arrow[r,"\cong"]&\nc \arrow[u] & 
 \end{tikzcd}
    \]
    commutative, where the vertical functors are the canonical inclusions.
 \end{corollary}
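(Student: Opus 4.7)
The plan is to derive this uniqueness statement directly from the universal property established in \Cref{Theorem:UnivProperty}; alternatively, it is a formal consequence of the $2$-equivalence in \Cref{Theorem:2equivalence}. Let $\phi \colon \mc \xrightarrow{\cong} \nc$ denote the given equivalence of $d$-exact categories, with quasi-inverse $\psi \colon \nc \xrightarrow{\cong} \mc$. Since the inclusions $i_\ec \colon \mc \hookrightarrow \ec$ and $i_{\ec'} \colon \nc \hookrightarrow \ec'$ are exact by \Cref{Theorem:UnivProperty} \eqref{Theorem:UnivProperty:1}, so are the composite $d$-exact functors $i_{\ec'} \circ \phi \colon \mc \to \ec'$ and $i_\ec \circ \psi \colon \nc \to \ec$.

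Applying \Cref{Theorem:UnivProperty} \eqref{Theorem:UnivProperty:2} to each composite, I would obtain exact functors $\Phi \colon \ec \to \ec'$ and $\Psi \colon \ec' \to \ec$ whose restrictions to $\mc$ and $\nc$ are naturally isomorphic to $i_{\ec'} \circ \phi$ and $i_\ec \circ \psi$, respectively. By construction the functor $\Phi$ fits into the required commutative square, so the only remaining task is to verify that $\Phi$ is an exact equivalence, which I would do by producing natural isomorphisms $\Psi \circ \Phi \cong \operatorname{id}_\ec$ and $\Phi \circ \Psi \cong \operatorname{id}_{\ec'}$.

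For the first of these, observe that $\Psi \circ \Phi$ and $\operatorname{id}_\ec$ are both exact functors $\ec \to \ec$, and their restrictions to $\mc$ are both naturally isomorphic to $i_\ec$: for $\operatorname{id}_\ec$ this is trivial, and for $\Psi \circ \Phi$ it follows from $i_\ec \circ \psi \circ \phi \cong i_\ec$ via the unit of the equivalence $\phi \dashv \psi$. The uniqueness (up to natural isomorphism) of extensions in \Cref{Theorem:UnivProperty} \eqref{Theorem:UnivProperty:2} then yields $\Psi \circ \Phi \cong \operatorname{id}_\ec$, and the symmetric argument gives $\Phi \circ \Psi \cong \operatorname{id}_{\ec'}$. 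Hence $\Phi$ is the desired exact equivalence $\ec \xrightarrow{\cong} \ec'$ making the diagram commute.

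I do not anticipate a substantial obstacle: all the real work is already contained in \Cref{Theorem:UnivProperty} (or in \Cref{Theorem:2equivalence}, from which the corollary can be read off essentially directly as the statement that equivalences in $d\operatorname{-EX}$ lift to equivalences in $d\operatorname{-CT_{ex}}$). The only point requiring a modicum of care is tracking the natural isomorphisms through the uniqueness clause so that the equivalence $\Phi$ does genuinely restrict to the given $\phi$ modulo the inclusions, but this is guaranteed by the defining property of the extension.
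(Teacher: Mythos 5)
Your proof is correct and follows essentially the same route as the paper: the paper deduces the corollary immediately from \Cref{Theorem:2equivalence}, whose proof consists precisely of the argument you give (lifting the equivalence and its quasi-inverse via \Cref{Theorem:UnivProperty} and using the uniqueness clause to identify the composites with the identities). You have simply unfolded the $2$-categorical packaging, and you correctly note both presentations.
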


\begin{corollary}
   We have an exact equivalence $\ec\cong \ec(\mc)$.
\end{corollary}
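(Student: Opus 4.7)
The plan is to derive this corollary by combining \Cref{Cor:WeaklyIdemdExactdCT} with the uniqueness statement in \Cref{Theorem:UniquenessAmbientExact}. Since $\mc$ is a $d$-cluster tilting subcategory of $\ec$, it is closed under direct summands in $\ec$, and in particular closed under additive complements. As $\ec$ is weakly idempotent complete, it follows that $\mc$ is itself weakly idempotent complete.

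Next, I would invoke \Cref{Cor:WeaklyIdemdExactdCT} \eqref{Cor:WeaklyIdemdExactdCT:2}: since $\mc$ is a weakly idempotent complete $d$-exact category (with the $d$-exact structure inherited from $\ec$ by \Cref{Theorem:dCTImpliesdExactWeakIdemPotent}), the Yoneda embedding yields an equivalence of $d$-exact categories
\[
Y\colon \mc \xrightarrow{\cong} Y(\mc),
\]
where $Y(\mc)$ is a $d$-cluster tilting subcategory of the weakly idempotent complete exact category $\ec(\mc)$ (cf.\ \Cref{Cor:dExactAmbientExactSubcat}).

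Thus we are precisely in the hypotheses of \Cref{Theorem:UniquenessAmbientExact}, with $\nc = Y(\mc)$ and $\mathcal{E}' = \ec(\mc)$. That theorem produces an exact equivalence $\ec \xrightarrow{\cong} \ec(\mc)$ making the obvious square with the inclusions of $\mc$ and $Y(\mc)$ commute, which gives the claim.

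There is no genuine obstacle here: the real content was already established in \Cref{Theorem:UnivProperty} and \Cref{Theorem:2equivalence}. The only thing to verify is that $\mc$ inherits enough structure (weak idempotent completeness and its $d$-exact structure) from $\ec$ to serve as the input to \Cref{Cor:WeaklyIdemdExactdCT} \eqref{Cor:WeaklyIdemdExactdCT:2}, which is immediate from the standing hypotheses.
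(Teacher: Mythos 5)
Your proposal is correct and follows exactly the route the paper intends: the corollary is an immediate consequence of \Cref{Cor:WeaklyIdemdExactdCT}~\eqref{Cor:WeaklyIdemdExactdCT:2} together with the uniqueness statement \Cref{Theorem:UniquenessAmbientExact}, and your verification that $\mc$ is weakly idempotent complete (being closed under summands in the weakly idempotent complete $\ec$) is the only hypothesis that needed checking.
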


The following example shows that \Cref{Theorem:UniquenessAmbientExact} does not hold without the assumption of weak idempotent completeness.

\begin{example}\label{Example:NonUniqueness}
Let $\mc$ be any $d$-cluster tilting subcategory of an abelian category $\ac$, and assume there exists a simple object $S\in \ac$ which is not contained in $\mc$ (this holds for example whenever $d\geq 2$ and $\ac=\operatorname{mod}\Lambda$ where $\Lambda$ is a non-semisimple finite-dimensional algebra). Let $\ec$ be the subcategory consisting of all objects in $\ac$ except $S$. Then $\ec$ is clearly extension-closed in $\ac$, and therefore inherits an exact structure. Now the smallest exact structure on $\ac$ which contains the conflations in $\ec$ must necessarily contain all exact sequences in $\ac$. Hence, the weak idempotent completion of $\ec$ is equivalent to $\ac$ as an exact category. In particular,  $$\Ext^i_{\ec}(X,Y)\cong \Ext^i_{\ac}(X,Y)$$
for all $i>0$ by \cite[Remark 1.12.3]{Nee90} and so $\mc$ must be $d$-cluster tilting in $\ec$. However, $\ec$ can't be equivalent to $\ac$, since $\ec$ is not weakly idempotent complete.
\end{example}

\section{Algebraic \texorpdfstring{$(d+2)$}{}-angulated categories}\label{Section:(d+2)-Angulated}

Here we consider algebraic $(d+2)$-angulated categories in the sense of \cite{GKO13} and \cite{Jas16}, and prove that they are $d$-cluster tilting in an algebraic triangulated category, using \Cref{Cor:WeaklyIdemdExactdCT}. We first prove some general results on the relationship between a $d$-cluster tilting subcategory and its ambient exact category. 

Let $\mc$ be a $d$-exact category. Following \cite{Jas16} we call an object $P\in \mc$ \textit{projective} if for every admissible epimorphism $M\to M'$ the map $\mc(P,M)\to \mc(P,M')$ is an epimorphism. The $d$-exact category $\mc$ is \textit{projectively generated} if for every object $M\in \mc$ there exists an admissible epimorphism $P\to M$ with $P$ projective, and $\mc$ \textit{has enough projectives} if for every object $M\in \mc$ there exists an admissible $d$-exact sequence
\[
0\to M'\to P_d\to \cdots \to P_1\to M\to 0
\]
with $P_i$ being projective for all $1\leq i\leq d$. The notion of \textit{injective} object, \textit{injectively cogenerated}, and \textit{having enough injectives} is defined dually.

In the following we show how projectivity can equivalently be characterized in the ambient exact category. Recall that a $d$-cluster tilting subcategory $\mc$ of an exact category $\ec$ is called $d\mathbb{Z}$\textit{-cluster tilting} if $\Ext^i_{\ec}(X,Y)=0$ for all $i\notin d\mathbb{Z}$.

\begin{lemma}\label{Lemma:ProjectivedCTvsExact}
 Let $\mc$ be a $d$-cluster tilting subcategory of a weakly idempotent complete exact category $\ec$. The following hold.
    \begin{enumerate}
\item\label{Lemma:ProjectivedCTvsExact:1} An object is projective in the $d$-exact structure on $\mc$ if and only if it is projective in the exact structure on $\ec$.
        \item\label{Lemma:ProjectivedCTvsExact:2} $\mc$ is projectively generated if and only if $\ec$ has enough projectives.
        \item\label{Lemma:ProjectivedCTvsExact:3} $\mc$ has enough projectives if and only if $\ec$ has enough projectives and $\mc$ is a $d\mathbb{Z}$-cluster tilting subcategory of $\ec$.
    \end{enumerate}
\end{lemma}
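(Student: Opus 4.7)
The plan is to prove the three parts in order, using Part (1) to bootstrap Parts (2) and (3). The workhorse throughout is \Cref{AdmMonoEpiWeakIdemComp}, which identifies admissible monomorphisms (resp.\ epimorphisms) of $\mc$ with inflations (resp.\ deflations) of $\ec$ between objects of $\mc$.

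For Part (1), the direction $(\Leftarrow)$ is immediate from this identification. For $(\Rightarrow)$, I would first upgrade projectivity of $P$ in $\mc$ to the vanishing $\Ext^i_\ec(P,N)=0$ for $0<i\leq d$ and $N\in\mc$: the range $0<i<d$ is $d$-rigidity, while the case $i=d$ uses that every $d$-extension of $N$ by $P$ in $\ec$ is represented by an admissible $d$-exact sequence in $\mc$ (by \Cref{Corollary:d-CTClosedUnderdExt}), which splits by projectivity of $P$ in $\mc$. Then for arbitrary $E\in\ec$, I would take the resolution $0\to M_d\to\cdots\to M_1\to E\to 0$ of \Cref{Reformulation:d-CT}, decompose it into conflations $0\to K_i\to M_i\to K_{i-1}\to 0$, and iterate the dimension shifts $\Ext^{j-1}_\ec(P,K_{i-1})\cong\Ext^j_\ec(P,K_i)$, valid because $\Ext^j_\ec(P,M_i)=0$ for $1\le j\le d$, to conclude $\Ext^1_\ec(P,E)\cong\Ext^d_\ec(P,M_d)=0$.

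Part (2) then follows by quick assembly: for $(\Leftarrow)$, any projective $P\in\ec$ satisfies $\Ext^i_\ec(P,\mc)=0$ for all $i>0$ and hence lies in $\mc$ by the characterization of $d$-cluster tilting, so (1) transfers projectivity and a projective cover $P\to M$ is an admissible epimorphism in $\mc$; for $(\Rightarrow)$, compose a deflation $M\to E$ (from generation) with an admissible epimorphism $P\to M$ in $\mc$ from a projective (from the hypothesis), and invoke (1). For Part (3) $(\Rightarrow)$, Part (2) provides enough projectives in $\ec$, and iterating the admissible $d$-exact projective resolutions in $\mc$ builds an infinite projective resolution $P_\bullet\to M$ in $\ec$ whose $d$-fold syzygies again lie in $\mc$; dimension shifting through the conflations $0\to K_i\to P_i\to K_{i-1}\to 0$ yields $\Ext^i_\ec(M,N)\cong\Ext^{i-d}_\ec(\operatorname{syz}^d M,N)$ for $i\geq d+1$, which combined with $d$-rigidity forces $\Ext^i_\ec(\mc,\mc)=0$ for every $i>0$ not a multiple of $d$.

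The remaining direction, Part (3) $(\Leftarrow)$, is where I anticipate the main difficulty. Given $\ec$ with enough projectives and $\mc$ as $d\mathbb{Z}$-cluster tilting, I would take a projective resolution of $M\in\mc$ in $\ec$ (all projectives lying in $\mc$ by (2)) and aim to show that $K_d:=\operatorname{syz}^d M$ lies in $\mc$, after which $0\to K_d\to P_d\to\cdots\to P_1\to M\to 0$ becomes an admissible $d$-exact sequence in $\mc$ by \Cref{Theorem:dCTImpliesdExactWeakIdemPotent}. The vanishing $\Ext^i_\ec(K_d,\mc)=0$ for $0<i<d$ follows cleanly from $\Ext^i_\ec(K_d,N)\cong\Ext^{i+d}_\ec(M,N)$ and the $d\mathbb{Z}$-CT hypothesis. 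The reverse vanishing $\Ext^i_\ec(\mc,K_d)=0$ for $0<i<d$ is the crucial hurdle: a direct dimension shift only reduces it to cokernels of the form $\Ext^1_\ec(N,K_{d-i+1})$ that do not visibly vanish. I expect to overcome this by refining the resolution so that each deflation $P_j\to K_{j-1}$ is a right $\mc$-approximation, a property that should become available under $d\mathbb{Z}$-CT by combining projective covers with right $\mc$-approximations iteratively and exploiting the $d$-coresolutions furnished by $d$-cluster tilting.
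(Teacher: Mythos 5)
Your overall route is sound and genuinely different from the paper's, but the ``crucial hurdle'' you anticipate in Part (3) $(\Leftarrow)$ is not actually there. Once you have shown $\Ext^i_\ec(K_d,\mc)=0$ for $0<i<d$ via dimension shifting, you are done: the definition of $d$-cluster tilting includes the equality $\mc=\{E\in\ec\mid \Ext^i_\ec(E,\mc)=0 \text{ for all }0<i<d\}$, so $K_d\in\mc$ follows immediately, without ever needing the reverse vanishing $\Ext^i_\ec(\mc,K_d)=0$ or the refinement by right $\mc$-approximations. (You appear to be reaching for the resolution/coresolution characterization of \Cref{Reformulation:d-CT} when the one-sided Ext-characterization already closes the argument.) With $K_d\in\mc$, the resolution is an acyclic complex in $\ec$ with all terms in $\mc$, hence an admissible $d$-exact sequence by \Cref{Theorem:dCTImpliesdExactWeakIdemPotent}, and its terms are projective in $\mc$ by Part (1); this is exactly ``enough projectives''.

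As for the comparison: the paper proves Part (1) in one line by applying the dual of \Cref{Proposition:LiftExact} to $\Hom_\ec(P,-)$, and disposes of Part (3) by citing the equivalence between $d\mathbb{Z}$-cluster tilting and closure under $d$-syzygies from Iyama--Jasso. Your argument replaces both by explicit dimension shifting: for (1)$(\Rightarrow)$ you first upgrade to $\Ext^i_\ec(P,\mc)=0$ for $0<i\le d$ (the case $i=d$ correctly using $d$-extension closure, \Cref{Corollary:d-CTClosedUnderdExt}, plus splitting of the admissible epimorphism onto $P$) and then shift along the left $\mc$-resolution of an arbitrary object; for (3) you shift along projective resolutions in both directions. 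This is longer but self-contained, avoids the external citation, and makes the mechanism behind the $d\mathbb{Z}$-condition transparent. Parts (1)$(\Leftarrow)$ and (2) coincide with the paper's argument via \Cref{AdmMonoEpiWeakIdemComp}. Modulo deleting the unnecessary final step, the proposal is correct.
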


\begin{proof}
By definition $P$ is projective in $\mc$ if and only if 
$$\Hom_\ec(P,-)|_{\mc}\colon \mc\to \operatorname{Ab}$$
sends admissible $d$-exact sequences to acyclic complexes of abelian groups. By the dual of \Cref{Proposition:LiftExact} this holds if and only if the functor
\[
\Hom_\ec(P,-)\colon \ec\to \operatorname{Ab}
\]
is exact. Since this is equivalent to $P$ being projective in $\ec$, this proves \eqref{Lemma:ProjectivedCTvsExact:1}.

For \eqref{Lemma:ProjectivedCTvsExact:2}, assume $\ec$ has enough projectives. Then for any $M\in \mc$ there exists a deflation $P\to M$ with $P$ projective, which must be an admissible epimorphism by the description of the $d$-exact structure on $\mc$, see \Cref{Theorem:dCTImpliesdExactWeakIdemPotent}. Hence, $\mc$ is projectively generated. 

Conversely, assume $\mc$ is projectively generated.  For any $E\in \ec$ we can find a deflation $M \to E$ with $M\in \mc$ since $\mc$ is generating in $\ec$. Furthermore, we can find a deflation $P\to M$ with $P$ projective since $\mc$ is projectively generated.  Then the composite $P\to M\to E$ must be a deflation, which shows that $\ec$ has enough projectives. 

Finally, for \eqref{Lemma:ProjectivedCTvsExact:3} note that if $\ec$ has enough projectives, then $\mc$ is $d\mathbb{Z}$-cluster tilting if and only if it is closed under $d$-syzygies, i.e. for any exact sequence 
\[
0\to M'\to P_d\to \cdots \to P_1\to M\to 0
\]
in $\ec$ with $P_i$ being projective for $1\leq i\leq d$ and $M\in \mc$, then $M'\in \mc$, see \cite[Definition-Proposition 2.15]{IJ17}. This is equivalent to $\mc$ having enough projectives, proving part \eqref{Lemma:ProjectivedCTvsExact:3}. 
\end{proof}

Recall that a \textit{Frobenius} $d$\textit{-exact} category is a $d$-exact category which has enough projectives and injectives, and for which the projective and injective objects coincide \cite[Definition 5.5]{Jas16}. 

\begin{proposition}\label{Proposition:FrobdExactImpliesFrobExact}
    Let $\ec$ be a weakly idempotent complete exact category, and let $\mc$ be a $d$-cluster tilting subcategory of $\ec$. Then $\mc$ is a Frobenius $d$-exact category if and only if $\ec$ is a Frobenius exact category and $\mc$ is $d\mathbb{Z}$-cluster tilting in $\ec$.
\end{proposition}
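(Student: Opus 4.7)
The plan is to deduce the proposition from \Cref{Lemma:ProjectivedCTvsExact} and its dual, together with the observation that any object in $\ec$ which is projective or injective must automatically lie in $\mc$ (a consequence of the $d$-cluster tilting characterization via $\Ext$-vanishing). Concretely, I will first record the following preliminary fact: if $P \in \ec$ is projective, then $\Ext^i_\ec(P,-)=0$ for all $i>0$, so in particular $\Ext^i_\ec(P,\mc)=0$ for $0<i<d$, which forces $P\in \mc$ by the definition of $d$-cluster tilting. Dually, every injective object of $\ec$ lies in $\mc$. Combined with \Cref{Lemma:ProjectivedCTvsExact} \eqref{Lemma:ProjectivedCTvsExact:1} (and its dual), this shows that the projective (resp.\ injective) objects of $\ec$ coincide exactly with the projective (resp.\ injective) objects of the $d$-exact category $\mc$.

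For the $(\Leftarrow)$ direction, I will suppose $\ec$ is Frobenius and $\mc$ is $d\mathbb{Z}$-cluster tilting. \Cref{Lemma:ProjectivedCTvsExact} \eqref{Lemma:ProjectivedCTvsExact:3} and its dual give that $\mc$ has enough projectives and enough injectives. The preliminary observation identifies the classes of projectives and injectives in $\mc$ with those in $\ec$, and the latter coincide since $\ec$ is Frobenius; hence $\mc$ is Frobenius. For the $(\Rightarrow)$ direction, I will suppose $\mc$ is Frobenius. Again \Cref{Lemma:ProjectivedCTvsExact} \eqref{Lemma:ProjectivedCTvsExact:3} and its dual show that $\ec$ has enough projectives and enough injectives and that $\mc$ is $d\mathbb{Z}$-cluster tilting. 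The preliminary observation plus the hypothesis that projectives and injectives agree in $\mc$ then yields that they agree in $\ec$, so $\ec$ is Frobenius.

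There is no serious obstacle here: the content is essentially bookkeeping once \Cref{Lemma:ProjectivedCTvsExact} is in hand. The only subtlety worth stating explicitly is the containment of $\ec$-projectives (and $\ec$-injectives) inside $\mc$, since without it one could not transport the Frobenius condition between $\ec$ and $\mc$. I would therefore isolate this fact as a short remark at the start of the proof and then invoke \Cref{Lemma:ProjectivedCTvsExact} twice in each direction.
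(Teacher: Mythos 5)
Your proof is correct and follows the same route as the paper, which simply cites \Cref{Lemma:ProjectivedCTvsExact} and its dual; you have usefully made explicit the one point the paper leaves implicit, namely that every projective (resp.\ injective) object of $\ec$ has vanishing $\Ext^i_\ec(-,\mc)$ (resp.\ $\Ext^i_\ec(\mc,-)$) for $0<i<d$ and hence lies in $\mc$, which is what lets the condition ``projectives coincide with injectives'' be transported between $\mc$ and $\ec$.
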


\begin{proof}
 This is an immediate consequence of \Cref{Lemma:ProjectivedCTvsExact} and its dual.   
\end{proof}

\begin{theorem}\label{Theorem:FrobdExactIsdZCTInFrobExact}
    Let $\mc$ be a weakly idempotent complete Frobenius $d$-exact category. Then $\mc$ is equivalent as a $d$-exact category to a $d\mathbb{Z}$-cluster tilting subcategory of a weakly idempotent complete Frobenius exact category.
\end{theorem}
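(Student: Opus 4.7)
The proof is essentially a direct combination of the embedding theorem from the previous section with the characterization of Frobenius $d$-exact categories just established. The plan is as follows.

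First, I would apply \Cref{Cor:WeaklyIdemdExactdCT}\eqref{Cor:WeaklyIdemdExactdCT:2}: since $\mc$ is weakly idempotent complete, the Yoneda functor $Y\colon \mc \to \ec(\mc)$ is an equivalence of $d$-exact categories onto its essential image $Y(\mc)$, which is a $d$-cluster tilting subcategory of the weakly idempotent complete exact category $\ec(\mc)$. So $\mc$ is identified (as a $d$-exact category) with a $d$-cluster tilting subcategory of a weakly idempotent complete exact category.

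Next, I would invoke \Cref{Proposition:FrobdExactImpliesFrobExact} applied to the pair $Y(\mc)\subseteq \ec(\mc)$. Since $Y(\mc)$ inherits from $\mc$ the structure of a Frobenius $d$-exact category, the proposition (in the ``only if'' direction) yields that $\ec(\mc)$ is a Frobenius exact category and that $Y(\mc)$ is a $d\mathbb{Z}$-cluster tilting subcategory of $\ec(\mc)$. Combined with the first step this gives exactly the desired conclusion.

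There is no real obstacle here: both ingredients have already been developed. The only thing to double-check is that the embedding $Y$, being an equivalence of $d$-exact categories, transports the Frobenius property (enough projectives and injectives coinciding) from $\mc$ to $Y(\mc)$, but this is immediate since projectivity and injectivity in a $d$-exact category are intrinsic notions preserved by $d$-exact equivalences.
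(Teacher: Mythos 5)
Your proposal is correct and follows exactly the paper's own argument: the paper proves this theorem by citing \Cref{Cor:WeaklyIdemdExactdCT}~\eqref{Cor:WeaklyIdemdExactdCT:2} together with \Cref{Proposition:FrobdExactImpliesFrobExact}, just as you do. Your additional remark that the Frobenius property transports along the $d$-exact equivalence is a reasonable (and correct) point to flag, even though the paper leaves it implicit.
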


\begin{proof}
    This follows from \Cref{Cor:WeaklyIdemdExactdCT} \eqref{Cor:WeaklyIdemdExactdCT:2} and \Cref{Proposition:FrobdExactImpliesFrobExact}. 
\end{proof}

Let $\mc$ be a $d$-exact category. The stable category $\underline{\mc}$ of $\mc$ is the additive category with the same objects as $\mc$, and whose morphisms spaces are quotients of the morphism spaces in $\mc$ by the ideal of morphisms factoring through projective objects. If $\mc$ is Frobenius, then by \cite[Theorem 5.11]{Jas16} the category $\underline{\mc}$ has the structure of a $(d+2)$-angulated category in the sense of \cite[Definition 1.1]{GKO13}. A $(d+2)$-angulated category $\fc$ is called \textit{algebraic} if  there exists a Frobenius $d$-exact category $\mc$ and an equivalence $\fc\cong \underline{\mc}$ of $(d+2)$-angulated categories \cite[Definition 5.12]{Jas16}.

\begin{lemma}\label{Lemma:WeaklyIdemComFrobModel}
    Let $\fc$ be a weakly idempotent  complete algebraic $(d+2)$-angulated category. Then there exists a weakly idempotent complete Frobenius $d$-exact category $\mc$ and an equivalence $\fc\cong \underline{\mc}$ of $(d+2)$-angulated categories.
\end{lemma}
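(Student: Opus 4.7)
The natural strategy is to promote the given Frobenius $d$-exact model of $\fc$ by weakly idempotent completing it. Since $\fc$ is algebraic, fix a Frobenius $d$-exact category $\mc'$ together with an equivalence $\underline{\mc'}\cong \fc$ of $(d+2)$-angulated categories; in particular $\underline{\mc'}$ inherits weak idempotent completeness from $\fc$. I would set $\mc:=\hat{\mc'}$, the weak idempotent completion of $\mc'$, and verify that (i) $\mc$ is Frobenius $d$-exact, and (ii) the inclusion $\mc'\hookrightarrow\mc$ induces an equivalence $\underline{\mc'}\xrightarrow{\sim}\underline{\mc}$ of $(d+2)$-angulated categories. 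The composite with $\fc\cong\underline{\mc'}$ then produces the desired equivalence.

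For (i), $\mc$ carries a canonical $d$-exact structure making $\mc'\to\mc$ a $d$-exact functor that is universal among $d$-exact functors into weakly idempotent complete $d$-exact categories, by the discussion recalled in \Cref{Remark:WeakIdemPotentCompletion}. The projective-injective objects of $\mc$ are exactly the additive complements of projective-injective objects of $\mc'$, because the class of projective-injectives in a $d$-exact category is closed under additive complements. To exhibit enough projectives in $\mc$, given $X\in\mc$ choose $Y\in\mc'$ with $X\oplus Y\in\mc'$, pick an admissible epimorphism $P\to X\oplus Y$ in $\mc'$ with $P$ projective-injective, and postcompose with the split projection $X\oplus Y\to X$, which is an admissible epimorphism in $\mc$ by weak idempotent completeness. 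A dual argument produces enough injectives, so $\mc$ is Frobenius.

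For (ii), fully faithfulness of $\underline{\mc'}\to\underline{\mc}$ amounts to showing that any factorization $A\to\hat P\to B$ in $\mc'$ through a projective-injective $\hat P\in\mc$ can be rewritten as a factorization through a projective-injective of $\mc'$: choose $Q$ with $\hat P\oplus Q$ projective-injective in $\mc'$, and insert the identity $\hat P\to\hat P\oplus Q\to\hat P$. For essential surjectivity, given $X\in\mc$ choose $Y\in\mc'$ with $Z:=X\oplus Y\in\mc'$; the split monomorphism $j\colon Y\to Z$ lives inside $\mc'$, and its retraction remains a split epimorphism in $\underline{\mc'}$. Since $\underline{\mc'}$ is weakly idempotent complete, this split epimorphism admits a kernel $X'\in\underline{\mc'}$, and hence $Z\cong Y\oplus X'$ in $\underline{\mc'}$. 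By fully faithfulness, the image of $X'$ is still a cokernel of $j$ in $\underline{\mc}$; but in $\underline{\mc}$ this cokernel is manifestly $X$, so $X\cong X'$ in $\underline{\mc}$ and $X$ lies in the essential image.

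Finally, the $(d+2)$-angulated structures on both $\underline{\mc'}$ and $\underline{\mc}$ are produced by Jasso's construction \cite[Theorem 5.11]{Jas16}, and the $d$-exact inclusion $\mc'\to\mc$ preserves projective-injectives and admissible $d$-exact sequences. Consequently the induced additive equivalence $\underline{\mc'}\xrightarrow{\sim}\underline{\mc}$ automatically commutes with the $d$-suspensions and sends $(d+2)$-angles to $(d+2)$-angles, yielding an equivalence of $(d+2)$-angulated categories. The subtle point, and the step I expect to be the main obstacle, is the essential surjectivity argument: it is crucial that the definition of weak idempotent completion provides a complement $Y$ that already lies in $\mc'$, so that $j\colon Y\to Z$ is an honest split monomorphism of $\mc'$ whose cokernel can be formed in $\underline{\mc'}$, circumventing any Krull--Schmidt-type cancellation that would otherwise be needed in $\underline{\mc}$.
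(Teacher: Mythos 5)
Your proposal is correct and follows essentially the same route as the paper: pass to the weak idempotent completion $\hat{\mc'}$ with its canonical $d$-exact structure (the paper cites \cite[Corollary 5.6]{KMS24}, cf.\ \Cref{Remark:WeakIdemPotentCompletion}), check that projective-injectives are preserved so the completion is Frobenius, and use weak idempotent completeness of $\underline{\mc'}$ to get the stable equivalence. The only difference is one of detail: the paper asserts the equivalence $\underline{\mc'}\to\underline{\mc}$ in one line, whereas you spell out full faithfulness and essential surjectivity (your cokernel-of-a-split-mono argument correctly avoids any Krull--Schmidt cancellation), which is a faithful elaboration of the same idea.
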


\begin{proof}
    By assumption there exists a Frobenius $d$-exact category $\nc$ and an equivalence $\fc\cong \underline{\nc}$ of $(d+2)$-angulated categories. Let $\mc=\hat{\nc}$ denote the weak idempotent completion of $\nc$. By \cite[Corollary 5.6]{KMS24} (see also \Cref{Remark:WeakIdemPotentCompletion}) the category $\mc$ can be endowed with a unique $d$-exact structure having the property that a sequence in $\mc$ with components in $\nc$ is an admissible $d$-exact sequence in $\mc$ if and only if it is an admissible $d$-exact sequence in $\nc$. This implies that an object in $\nc$ is projective (resp injective) in $\mc$ if and only if it is projective (resp injective) in $\nc$, and hence $\mc$ must be Frobenius $d$-exact. Finally, since $\underline{\nc}$ is weakly idempotent complete, the inclusion $\nc\to \mc$ must induce an equivalence $\underline{\nc}\to \underline{\mc}$  of $(d+2)$-angulated categories. This proves the claim.
\end{proof}

By \cite[Theorem 1]{GKO13} a $d\mathbb{Z}$-cluster tilting subcategory of a triangulated category inherits the structure of a $(d+2)$-angulated category. This is called the "standard construction". The main theorem of this subsection states that up to equivalence, all weakly idempotent complete algebraic $(d+2)$-angulated categories arise via the standard construction.

\begin{theorem}\label{Theorem:Algd+2AngulatedisdCT}
    Let $\fc$ be a weakly idempotent complete algebraic $(d+2)$-angulated category. Then there exists an equivalence $\fc\cong \mathcal{G}$ of $(d+2)$-angulated categories, where $\mathcal{G}$ is a $d\mathbb{Z}$-cluster tilting subcategory of an algebraic triangulated category $\mathcal{T}$.
\end{theorem}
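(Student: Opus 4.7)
The plan is to combine the Frobenius model provided by \Cref{Lemma:WeaklyIdemComFrobModel} with the embedding \Cref{Theorem:FrobdExactIsdZCTInFrobExact} and then stabilize via Happel's construction. First, by \Cref{Lemma:WeaklyIdemComFrobModel} I can fix a weakly idempotent complete Frobenius $d$-exact category $\mc$ together with an equivalence $\fc \cong \underline{\mc}$ of $(d+2)$-angulated categories. Next, by \Cref{Theorem:FrobdExactIsdZCTInFrobExact}, there is a $d$-exact equivalence $\mc \cong \mc'$ where $\mc'$ is a $d\mathbb{Z}$-cluster tilting subcategory of a weakly idempotent complete Frobenius exact category $\ec$. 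Identifying $\mc$ with $\mc'$, I may assume $\mc$ is a $d\mathbb{Z}$-cluster tilting subcategory of the Frobenius exact category $\ec$.

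I would then set $\mathcal{T} \colonequals \underline{\ec}$, the stable category of $\ec$, which is triangulated and algebraic by Happel's theorem. By \Cref{Lemma:ProjectivedCTvsExact} \eqref{Lemma:ProjectivedCTvsExact:1} and its dual, the projective-injective objects of $\mc$ as a $d$-exact category coincide with the projective-injective objects of $\ec$ as an exact category, and these all lie in $\mc$. Consequently, the composite $\mc \hookrightarrow \ec \to \underline{\ec}$ factors through a fully faithful additive functor $\underline{\mc} \to \underline{\ec} = \mathcal{T}$; I denote its essential image by $\mathcal{G}$. This yields equivalences $\fc \cong \underline{\mc} \cong \mathcal{G}$ of additive categories.

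The next step is to check that $\mathcal{G}$ is a $d\mathbb{Z}$-cluster tilting subcategory of $\mathcal{T}$. Functorial finiteness transfers from $\mc\subseteq \ec$ to $\mathcal{G}\subseteq \mathcal{T}$ by passing to the stable quotient. Using the standard isomorphism $\Hom_{\mathcal{T}}(X, Y[i]) \cong \Ext^i_\ec(X, Y)$ for $X, Y \in \ec$ and $i > 0$, together with the $d\mathbb{Z}$-cluster tilting property of $\mc$ in $\ec$, one obtains $\Hom_{\mathcal{T}}(\mathcal{G}, \mathcal{G}[i]) = 0$ for $i \notin d\mathbb{Z}$ as well as the required characterizing vanishing condition. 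The identity $\mathcal{G}[d] = \mathcal{G}$ follows because the $d$-cosyzygy of any object of $\mc$ lies again in $\mc$: this is a direct consequence of $\mc$ being $d\mathbb{Z}$-cluster tilting inside the Frobenius exact category $\ec$, and symmetrically for $d$-syzygies.

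Finally, I would verify that the equivalence $\fc \cong \mathcal{G}$ is compatible with the $(d+2)$-angulated structures: on $\fc \cong \underline{\mc}$ via \cite[Theorem 5.11]{Jas16}, and on $\mathcal{G} \subseteq \mathcal{T}$ via the standard construction of \cite[Theorem 1]{GKO13}. I expect this last step to be the main technical obstacle. The verification amounts to tracing both constructions through the common ambient Frobenius exact category $\ec$: a $(d+2)$-angle in $\underline{\mc}$ is represented by an admissible $d$-exact sequence of $\mc$, which by \Cref{Theorem:dCTImpliesdExactWeakIdemPotent} is precisely an acyclic complex in $\ec$ whose terms lie in $\mc$; splicing it into conflations of $\ec$ produces the corresponding $(d+2)$-angle in $\underline{\ec}$ via the standard construction, while the suspension functors are identified through the natural isomorphism $\Sigma_{\underline{\mc}} \cong [d]|_{\mathcal{G}}$ arising from the same comparison. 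Putting everything together yields an equivalence $\fc \cong \mathcal{G}$ of $(d+2)$-angulated categories, proving the theorem.
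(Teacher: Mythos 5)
Your proposal follows essentially the same route as the paper: reduce to a weakly idempotent complete Frobenius $d$-exact model via \Cref{Lemma:WeaklyIdemComFrobModel}, embed it as a $d\mathbb{Z}$-cluster tilting subcategory of a Frobenius exact category via \Cref{Theorem:FrobdExactIsdZCTInFrobExact}, and pass to stable categories. The only difference is the final step you flag as the main obstacle -- the compatibility of the two $(d+2)$-angulated structures -- which the paper dispatches by citing \cite[Theorem 5.16]{Jas16} rather than re-verifying it; your sketch of that verification is consistent with what that cited result establishes.
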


\begin{proof}
 By \Cref{Lemma:WeaklyIdemComFrobModel} we can find a weakly idempotent complete Frobenius $d$-exact category $\mc'$ and an equivalence $\fc\cong \underline{\mc'}$ of $(d+2)$-angulated categories. By \Cref{Theorem:FrobdExactIsdZCTInFrobExact} there exists an equivalence $\mc'\cong \mc$ of $d$-exact categories where $\mc$ is a $d\mathbb{Z}$-cluster tilting subcategory of a weakly idempotent complete Frobenius exact category $\ec$. Hence, $\underline{\mc}$ must be $d\mathbb{Z}$-cluster tilting in the triangulated category $\underline{\ec}$ and therefore inherit the structure of a $(d+2)$-angulated category. By \cite[Theorem 5.16]{Jas16} this $(d+2)$-angulated category is equivalent to the one obtained from the Frobenius $d$-exact structure on $\mc$. Therefore, $\underline{\mathcal{M}}$ must be equivalent to $\underline{\mc}'$ as a $(d+2)$-angulated category, since $\mc$ and $\mc'$ are equivalent as $d$-exact categories. Combining this with the equivalence $\fc\cong \underline{\mc'}$ proves the claim. 
\end{proof}

We have the following result covering the idempotent complete case.

\begin{lemma}\label{IdempotentCompleteCaseTriang}
    Let $\fc$ be a $d$-cluster tilting subcategory of a triangulated category $\tc$. Then $\fc$ is idempotent complete if and only if $\tc$ is idempotent complete.
\end{lemma}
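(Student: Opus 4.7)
The forward direction is immediate: being $d$-cluster tilting requires $\fc$ to be closed under direct summands (\Cref{Reformulation:d-CT}), so any idempotent in $\fc$ splits in $\tc$ with both summands in $\fc$, and therefore splits in $\fc$.

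For the reverse direction, assume $\fc$ is idempotent complete, and let $\widetilde{\tc}$ denote the Balmer--Schlichting idempotent completion of $\tc$, which is triangulated with a fully faithful triangulated embedding $\iota\colon\tc\hookrightarrow\widetilde{\tc}$. It suffices to show $\iota$ is essentially surjective, for then $\tc=\widetilde{\tc}$ is idempotent complete. First, $\fc$ is functorially finite in $\widetilde{\tc}$: any object of $\widetilde{\tc}$ is a summand of some object of $\tc$, and a right (resp.\ left) $\fc$-approximation in $\tc$ of the ambient object composed with the splitting projection yields a right (resp.\ left) approximation of the summand in $\widetilde{\tc}$.

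Given $\tilde{T}\in\widetilde{\tc}$, iterate right $\fc$-approximations in $\widetilde{\tc}$ to build triangles $\tilde{T}_{i+1}\to F_i\to\tilde{T}_i\to\tilde{T}_{i+1}[1]$ with $\tilde{T}_0=\tilde{T}$ and $F_i\in\fc$. A Hom-long-exact-sequence computation, using the approximation property together with the $d$-rigidity $\Hom_\tc(\fc,\fc[k])=0$ for $0<k<d$, inductively yields $\Hom_{\widetilde{\tc}}(\fc,\tilde{T}_i[j])=0$ for $1\le j\le i$. Hence after $d-1$ steps, $\tilde{T}_{d-1}$ satisfies the Ext-vanishing condition characterizing $\fc$ in $\tc$; a dual iteration with left approximations gives the dual Ext-vanishing. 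Combined with the closure of $\fc$ under direct summands in $\widetilde{\tc}$ (a consequence of idempotent completeness), these conditions force $\tilde{T}_{d-1}\in\fc$ via a splitting argument for a suitable triangle. Then $\tilde{T}_{d-1}$ together with every $F_i$ lies in $\fc\subseteq\tc$; since $\tc$ is a triangulated subcategory of $\widetilde{\tc}$, hence closed under fibers, working backwards through the triangles yields $\tilde{T}_{d-2},\dots,\tilde{T}_0=\tilde{T}\in\tc$.

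The main obstacle is the final step concluding $\tilde{T}_{d-1}\in\fc$ from Ext-vanishing in $\widetilde{\tc}$; this amounts to extending the characterization of $\fc$ from $\tc$ to $\widetilde{\tc}$, and relies on combining the tower resolution with the closure of $\fc$ under direct summands in $\widetilde{\tc}$ to split the final triangle of the iteration.
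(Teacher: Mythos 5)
Your proposal is correct and follows essentially the same route as the paper: pass to the Balmer--Schlichting idempotent completion $\tilde{\tc}$, observe that $\fc$ remains functorially finite and closed under direct summands there so that the Ext-vanishing characterization (hence $d$-cluster tilting) extends to $\tilde{\tc}$, resolve an arbitrary object by a tower of triangles with terms in $\fc$, and conclude by downward induction using that $\tc$ is closed under cones in $\tilde{\tc}$. The only differences are cosmetic: the paper imports the tower directly from \cite[Corollary 3.3]{IY08} instead of rebuilding it via approximations and the Hom-long-exact-sequence, and note that your ``dual iteration'' is superfluous, since either one of the two Ext-vanishing conditions already characterizes membership in $\fc$.
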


\begin{proof}
Clearly, if $\tc$ is idempotent complete, then $\fc$ is idempotent complete, so we only need to prove the converse. By \cite[Theorem 1.5]{BS01} the idempotent completion $\tilde{\tc}$ of $\tc$ has the structure of a triangulated category such that $\tc\to \tilde{\tc}$ is a triangulated functor. Note that $\fc$ is functorially finite in $\tilde{\tc}$ since it is functorially finite in $\tc$. Furthermore, $\fc$ is closed under direct summands in $\tilde{\tc}$ since $\fc$ is idempotent complete. Using this, we see that
\begin{align*}
 	\fc &=\{E\in \tilde{\tc}\mid \Hom_{\tilde{\tc}}(E,\fc[i])=0 \text{ for all }0<i<d\} \\
 		&=\{E\in \tilde{\tc}\mid \Hom_{\tilde{\tc}}(\fc,E[i])=0 \text{ for all }0<i<d\}
 \end{align*}
since $\fc$ satisfies the same properties as a subcategory of $\tc$. This implies $\fc$ is $d$-cluster tilting in $\tilde{\tc}$. Hence, for any $T\in \tilde{\tc}$ we can find triangles
 \[
 T_{i+1}\to M_{i}\to T_{i}\to T_{i+1}[1] \quad \text{for }0< i< d-1
 \]
 and
 \[
 M_{d}\to M_{d-1}\to T_{d-1}\to M_{d}[1]
 \]
 where $T_1=T$ and $M_i\in \mc$ for all $i$ by \cite[Corollary 3.3]{IY08}. Since $\tc\to \tilde{\tc}$ is triangulated, the subcategory $\tc$ must be closed under cones in $\tilde{\tc}$, and hence by downwards induction it follows that $T_i\in \tc$ for all $i$. In particular, $T=T_1\in \tc$, which shows that $\tc=\tilde{\tc}$, so $\tc$ must be idempotent complete. 
\end{proof}

We end this section by showing that the notion of algebraic $(d+2)$-angulated category given in \cite[Definition 3.2.5]{JKM22}  coincides with the notion of idempotent complete algebraic $(d+2)$-angulated category given in \cite[Definition 5.12]{Jas16} and used above. Note that the definition in \cite{JKM22} uses differential graded (DG) categories. We recall the necessary terminology we need below. For more information on DG categories we refer to \cite[Section 3.1]{JKM22} or \cite{Kel06}.

For a small DG category $\mathfrak{A}$ we let $D(\mathfrak{A})$ denote its derived category,  $D^c(\mathfrak{A})$ the thick subcategory of $D(\mathfrak{A})$ consisting of the compact objects, and $H^0(h)\colon H^0(\mathfrak{A})\to D^c(\mathfrak{A})$ the fully faithful functor given by taking the $0$th homology of the DG Yoneda embedding $h$. The DG category $\mathfrak{A}$ is called \textit{pre-}$(d+2)$\textit{-angulated} if $H^0(h)\colon H^0(\mathfrak{A})\to D^c(\mathfrak{A})$ induces an equivalence between $H^0(\mathfrak{A})$ and a $d\mathbb{Z}$-cluster tilting subcategory of $D^c(\mathfrak{A})$. In this case $H^0(\mathfrak{A})$ has a $(d+2)$-angulated structure by \cite[Theorem 1]{GKO13}. 

\begin{proposition}\label{(d+2)AngulatedDefCoincides}
    Let $\fc$ be a small idempotent complete $(d+2)$-angulated category. Then $\fc$ is algebraic if and only if there exists a pre-$(d+2)$-angulated DG category $\mathfrak{A}$ and an equivalence $H^0(\mathfrak{A})\cong \fc$ of $(d+2)$-angulated categories.
\end{proposition}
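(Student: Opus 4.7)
The plan is to prove each direction separately, translating between algebraic triangulated categories (in Keller's sense, i.e.\ stable categories of Frobenius exact categories) and pretriangulated DG enhancements, and then using \Cref{Theorem:Algd+2AngulatedisdCT} together with the results of \Cref{Section:(d+2)-Angulated} to move between the $d$-exact and $d\mathbb{Z}$-cluster tilting pictures.

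For $(\Leftarrow)$, suppose $\mathfrak{A}$ is pre-$(d+2)$-angulated with $H^0(\mathfrak{A})\cong\fc$. Then by definition $\fc$ identifies with a $d\mathbb{Z}$-cluster tilting subcategory of the idempotent complete algebraic triangulated category $\tc=D^c(\mathfrak{A})$. By Keller's theorem, $\tc\cong\underline{\ec}$ for some idempotent complete Frobenius exact category $\ec$. Let $\mc\subseteq\ec$ be the preimage of $\fc$ under $\ec\to\underline{\ec}$, together with all projective-injective objects of $\ec$. Then $\mc$ is $d\mathbb{Z}$-cluster tilting in $\ec$, so by \Cref{Proposition:FrobdExactImpliesFrobExact} it is a Frobenius $d$-exact category, and by \cite[Theorem 5.16]{Jas16} the $(d+2)$-angulated structure on $\underline{\mc}$ coming from the Frobenius $d$-exact structure agrees with the one obtained via the standard construction from $\tc$. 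Hence $\fc\cong\underline{\mc}$ as $(d+2)$-angulated categories, so $\fc$ is algebraic in Jasso's sense.

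For $(\Rightarrow)$, assume $\fc\cong\underline{\mc}$ for some Frobenius $d$-exact $\mc$. Passing to idempotent completion (which is Frobenius $d$-exact by \Cref{Remark:IdemPotentCompletion} and whose stable category still recovers $\fc$, since $\fc$ is already idempotent complete) we may assume $\mc$ is idempotent complete. Applying \Cref{Theorem:FrobdExactIsdZCTInFrobExact} followed by taking the idempotent completion of the ambient exact category (using \Cref{Prop:dCTFor(Weak)IdemComp}\eqref{Prop:dCTFor(Weak)IdemComp:2}) we embed $\mc$ as a $d\mathbb{Z}$-cluster tilting subcategory of an idempotent complete Frobenius exact category $\ec$. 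Set $\tc=\underline{\ec}$, and pick a pretriangulated DG enhancement $\mathfrak{E}$ of $\tc$. Let $\mathfrak{A}$ be the full DG subcategory of $\mathfrak{E}$ on the objects of $\fc$; then $H^0(\mathfrak{A})\cong\fc$. The inclusion $\mathfrak{A}\hookrightarrow\mathfrak{E}$ induces via extension of scalars a triangulated functor $D^c(\mathfrak{A})\to D^c(\mathfrak{E})\cong\tc$ that is fully faithful on representables (by fullness of $\mathfrak{A}\subseteq\mathfrak{E}$) and hence, being triangulated, on all of $D^c(\mathfrak{A})$; its essential image is the thick subcategory of $\tc$ generated by $\fc$. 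Since every object of $\ec$ admits a finite resolution by objects of $\mc$, this thick subcategory is all of $\tc$, and we obtain $D^c(\mathfrak{A})\cong\tc$. The $d\mathbb{Z}$-cluster tilting property of $\fc$ in $\tc$ then transfers to $D^c(\mathfrak{A})$, confirming that $\mathfrak{A}$ is pre-$(d+2)$-angulated.

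The main technical obstacle is the identification $D^c(\mathfrak{A})\cong\tc$ in the forward direction, which relies on standard facts about DG enhancements of algebraic triangulated categories (see \cite{Kel06}). One has to check that the DG subcategory on the objects of $\fc$, although typically much smaller than the enhancement of $\tc$, still determines $\tc$ up to Karoubi completion because $\fc$ generates $\tc$ as a thick subcategory; this in turn follows from $\mc$ being $d\mathbb{Z}$-cluster tilting in $\ec$. Once this is in place, the remainder of the argument is bookkeeping built on top of \Cref{Theorem:Algd+2AngulatedisdCT} and \Cref{Proposition:FrobdExactImpliesFrobExact}.
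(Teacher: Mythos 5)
Your proof is correct and follows essentially the same route as the paper: the backward direction is nearly identical (realize $D^c(\mathfrak{A})$ as $\underline{\ec}$ for a Frobenius exact $\ec$, lift $\fc$ to a $d\mathbb{Z}$-cluster tilting subcategory $\mc\subseteq\ec$, and apply \Cref{Proposition:FrobdExactImpliesFrobExact} together with \cite[Theorem 5.16]{Jas16}), while the forward direction uses the same chain \Cref{Theorem:FrobdExactIsdZCTInFrobExact}/\Cref{Theorem:Algd+2AngulatedisdCT} plus \Cref{IdempotentCompleteCaseTriang} to land $\fc$ as a $d\mathbb{Z}$-cluster tilting subcategory of an idempotent complete algebraic triangulated category. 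The only difference is that where the paper simply cites Keller's theorem \cite[Theorem 4.3]{Kel94} to produce $\mathfrak{A}$ with $D^c(\mathfrak{A})\cong\tc$ restricting to $\fc\cong H^0(\mathfrak{A})$, you re-derive it by taking the full DG subcategory of an enhancement of $\tc$ on the objects of $\fc$ and checking generation; that argument is sound but is precisely the content of the cited result.
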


\begin{proof}
    Assume $\fc$ is algebraic. By \Cref{Theorem:Algd+2AngulatedisdCT} and \Cref{IdempotentCompleteCaseTriang} we can find an idempotent complete algebraic triangulated category $\tc$, a $d\mathbb{Z}$-cluster tilting subcategory $\mathcal{G}$ of $\tc$, and an equivalence $\fc\cong \mathcal{G}$ of $(d+2)$-angulated categories. Furthermore, we can find a small DG category $\mathfrak{A}$ and an equivalence $F\colon \tc\to D^c(\mathfrak{A})$ of triangulated categories which restricts to an equivalence $\mathcal{G}\cong H^0(\mathfrak{A})$ by \cite[Theorem 4.3]{Kel94}. This implies that $\mathfrak{A}$ is pre-$(d+2)$-angulated in such a way that $\mathcal{G}\cong H^0(\mathfrak{A})$ is an equivalence of $(d+2)$-angulated categories. Combining this with the equivalence $\fc\cong \mathcal{G}$ proves the "only if" direction of the claim.

    Now assume we have a pre-$(d+2)$-angulated DG category $\mathfrak{A}$ and an equivalence $H^0(\mathfrak{A})\cong \fc$ of $(d+2)$-angulated categories. It is well-known that $D(\mathfrak{A})$ is algebraic, see e.g. \cite[Section 7.5 Example (2)]{Kra07}. Since $D^c(\mathfrak{A})$ is a thick subcategory of $D(\mathfrak{A})$, it must also be algebraic, see e.g. \cite[Section 7.5 Lemma (3)]{Kra07}. Let $\ec$ be a Frobenius exact enhancement of $D^c(\mathfrak{A})$, and let $\mc$ be the full subcategory of $\ec$ consisting of all objects which are isomorphic in $D^c(\mathfrak{A})$ to objects in $H^0(\mathfrak{A})$. Since $H^0(\mathfrak{A})$ is $d\mathbb{Z}$-cluster tilting in $D^c(\mathfrak{A})$, it follows that $\mc$ is a $d\mathbb{Z}$-cluster tilting subcategory of $\ec$, see e.g. \cite[Theorem 6.1]{Kva21}. Hence $\mc$ is a Frobenius $d$-exact category by \Cref{Proposition:FrobdExactImpliesFrobExact}, and so $\underline{\mc}$ has the structure of a $(d+2)$-angulated category such that we have an equivalence $\underline{\mc}\cong H^0(\mathfrak{A})$ of $(d+2)$-angulated categories \cite[Theorem 5.16]{Jas16}. Therefore $\fc\cong H^0(\mathfrak{A})$ must be algebraic, which proves the claim.
\end{proof}

\section{The non-weakly idempotent complete case}\label{Section:NonWeaklyIdemComp}
We know that a $d$-cluster tilting subcategory of a weakly idempotent complete exact category inherits a $d$-exact structure by \Cref{Theorem:dCTImpliesdExactWeakIdemPotent}. However, if the exact category is not weakly idempotent complete, then the proposed structure in that theorem is not $d$-exact, since acyclicity is not preserved by weak isomorphisms.
  
  In this section we show that the closure under homotopy equivalences of the class of $d$-exact sequences in \Cref{Theorem:dCTImpliesdExactWeakIdemPotent} is a $d$-exact structure for any exact category, see \Cref{Theorem:dCTImpliesdExactNotWeakIdemPotent}. Note that this is the same as the closure under weak isomorphisms, see \Cref{Remark:EquivDescrdExactStructure}. In \Cref{Example:CounterExdExNotdCTNotValid} we explain why \cite[Example 2.5]{Ebr21} is not a counterexample to \Cref{Question:NonweaklyIdemComplete}, since it relies on the incorrect description in \cite[Theorem 4.14]{Jas16}.

\begin{theorem}\label{Theorem:dCTImpliesdExactNotWeakIdemPotent}
    Let $\mc$ be a $d$-cluster tilting subcategory of an exact category $\ec$. Then $\mc$ has the structure of a $d$-exact category, where a complex 
    \[
0\to M_{d+1}\to \cdots \to M_{0}\to 0
    \]
   in $\mc$ is an admissible $d$-exact sequence if and only if it is homotopy equivalent to an acyclic complex in $\ec$. 
\end{theorem}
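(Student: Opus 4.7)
The plan is to reduce to the weakly idempotent complete setting via the weak idempotent completion $\hat{\ec}$ of $\ec$, in which the weak idempotent completion $\hat{\mc}$ of $\mc$ is a $d$-cluster tilting subcategory by \Cref{Prop:dCTFor(Weak)IdemComp} \eqref{Prop:dCTFor(Weak)IdemComp:1}. Since $\hat{\ec}$ is weakly idempotent complete, \Cref{Theorem:dCTImpliesdExactWeakIdemPotent} equips $\hat{\mc}$ with a $d$-exact structure $\hat{\xc}$ whose admissible $d$-exact sequences are the complexes in $\hat{\mc}$ that are acyclic in $\hat{\ec}$.

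Next I would show that $\mc$ is closed under $d$-extensions in $\hat{\mc}$ in the sense of \Cref{d-extension closure}. The two ingredients are: $\mc$ is $d$-extension-closed in $\ec$ by \Cref{Corollary:d-CTClosedUnderdExt} (since it is $d$-cluster tilting), and the inclusion $\ec \hookrightarrow \hat{\ec}$ induces isomorphisms $\Ext^i_\ec(M,M') \xrightarrow{\cong} \Ext^i_{\hat{\ec}}(M,M')$ for all $M,M'\in \mc$ and $i>0$, as used in the proof of \Cref{Prop:dCTFor(Weak)IdemComp}. Given any admissible $d$-exact sequence in $\hat{\mc}$ with both ends in $\mc$, these isomorphisms let us transfer the Yoneda class to $\ec$, apply closure there, and transfer back, producing an equivalent representative with all terms in $\mc$. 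By the dual of \cite[Theorem E]{Kla22}, $\mc$ then inherits a $d$-exact structure from $\hat{\mc}$ whose admissible $d$-exact sequences are the complexes in $\mc$ that lie in $\hat{\xc}$.

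The final step is to identify the complexes in $\mc$ that are acyclic in $\hat{\ec}$ with those homotopy equivalent to acyclic complexes in $\ec$. One direction is immediate: any acyclic complex in $\ec$ remains acyclic in $\hat{\ec}$ (the inclusion is exact), and since $\hat{\ec}$ is weakly idempotent complete, acyclicity there is preserved under homotopy equivalence by \cite[Corollary 10.14]{Bue10}. For the converse, suppose $X_\bullet$ has components in $\mc\subseteq \ec$ and is acyclic in $\hat{\ec}$, with cycle objects $Z_i(X_\bullet)\in\hat{\ec}$. Choose $Z_i'\in\hat{\ec}$ with $Z_i(X_\bullet)\oplus Z_i'\in \ec$ and absorb suitable contractible complexes built from these complements into $X_\bullet$ to produce, in $K^b(\ec)$, a homotopy equivalent complex whose cycles all land in $\ec$, hence an acyclic complex in $\ec$.

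The main obstacle will be this last technical identification—constructing the homotopy equivalence back to a genuinely $\ec$-acyclic complex—since the rest of the argument is a clean reduction via the weak idempotent completion. Once established, combining all the steps yields the desired $d$-exact structure on $\mc$, matching the description in the theorem.
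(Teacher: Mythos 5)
Your proposal is correct and follows essentially the same route as the paper: pass to the weak idempotent completions $\hat{\mc}\subseteq\hat{\ec}$, establish that $\mc$ is $d$-extension closed in $\hat{\mc}$, invoke the inheritance result of Klapproth, and then identify the inherited structure with the homotopy-equivalence description by absorbing contractible complexes built from additive complements of the cycle objects (using that $\ec$ is extension-closed in $\hat{\ec}$). The only divergence is minor: where you derive $d$-extension closure from the isomorphisms $\Ext^d_\ec\cong\Ext^d_{\hat{\ec}}$ together with \Cref{Corollary:d-CTClosedUnderdExt}, the paper obtains it directly from the same complement construction by choosing the complements inside $\mc$ via the cogenerating property, so that one construction does double duty.
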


\begin{proof}
 Let $\xc$ be the class of complexes described in the theorem, and let $\hat{\mc}$ and $\hat{\ec}$ denote the weak idempotent completions of $\mc$ and $\ec$, respectively. By \Cref{Prop:dCTFor(Weak)IdemComp} we know that $\hat{\mc}$ is a $d$-cluster tilting subcategory of $\hat{\ec}$, and hence by \Cref{Theorem:dCTImpliesdExactWeakIdemPotent} it is a $d$-exact category. We denote its $d$-exact structure by $\hat{\xc}$. Our strategy  is to prove that $\mc$ is closed under $d$-extensions in $\hat{\mc}$ in the sense of \cite{Kla26}, then apply \cite[Corollary 5.15]{Kla26} to conclude that $\mc$ inherits a $d$-exact structure from $\hat{\mc}$ which is equal to $\xc$.

As a first step we show that for any $\hat{E}\in \hat{\mathcal{E}}$ there exists $M\in \mc$ such that $\hat{E}\oplus M\in \ec$. Indeed, there exists an object $F\in \ec$ such that $\hat{E}\oplus F\in \ec$ by the construction of the weak idempotent completion. Now since $\mc$ is cogenerating, we can find a conflation 
 \[
 0\to F\to M\to G\to 0
 \]
 in $\ec$ where $M\in \mc$. Adding the trivial conflation $0\to \hat{E}\xrightarrow{1_{\hat{E}}}\hat{E}\to 0\to 0$, we get the conflation 
 \[
 0\to \hat{E}\oplus F\to \hat{E}\oplus M\to G\to 0
 \]
 in $\hat{\ec}$. Since $\ec$ is closed under extensions in $\hat{\ec}$ (e.g. since $\ec\to \hat{\ec}$ induces a derived equivalence by \cite[Remark 1.12]{Nee90}) it follows that $\hat{E}\oplus M\in \ec$, which proves the claim.
 
 Next we show that for any $\hat{\xc}$-admissible $d$-exact sequence
 \[
 \hat{M}_{d+1}\to \cdots \to \hat{M}_{0}
 \]
 there exists objects $N_i\in \mc$ for $0\leq i\leq d$ such that 
  \begin{multline}\label{Equation:XTildeToX}
  (\cdots \to 0\to \hat{M}_{d+1}\to \cdots \to \hat{M}_{0}\to 0\to \cdots)\oplus \bigoplus_{i=0}^{d} (\cdots \to 0\to N_i\xrightarrow{1_{N_i}}N_i\to 0 \to \cdots) \cong \\
  (\cdots \to 0 \to \hat{M}_{d+1}\oplus N_{d}\to \hat{M}_d\oplus N_{d}\oplus N_{d-1}\to \cdots \to \hat{M}_{1}\oplus N_{1}\oplus N_0\to \hat{M}_{0}\oplus N_0\to 0\to \cdots)
 \end{multline}
is an acyclic complex in $\ec$. Indeed, for $1\leq i\leq d$ let $\hat{K}_i$ denote the kernel of $\hat{M}_{i}\to \hat{M}_{i-1}$, so that we have conflations
 \[
 0\to \hat{K}_{i
}\to \hat{M}_i\to \hat{K}_{i-1}\to 0
 \]
 in $\hat{\ec}$, where $\hat{K}_d\colonequals \hat{M}_{d+1}$. We also set $\hat{K_0}=N_0$. Using the previous claim, we choose objects $N_i\in \mc$ such that $N_i\oplus \hat{K}_i\in \ec$ for all $i$. Adding the trivial conflations 
 \[
 0\to N_{i}\xrightarrow{1_{N_{i}}}N_{i}\to 0\to 0 \quad \text{and} \quad 0 \to 0 \to N_{i-1}\xrightarrow{1_{N_{i-1}}}N_{i-1}\to 0
 \]
 to the conflation above, we get a conflation 
 \begin{equation*}
    0\to \hat{K}_{i}\oplus N_{i}\to \hat{M}_i \oplus N_{i}\oplus N_{i-1}\to \hat{K}_{i-1}\oplus N_{i-1}\to 0 
 \end{equation*}
 where all the terms are in $\ec$ since $\ec$ is closed under extensions in $\hat{\ec}$. Now the complex \eqref{Equation:XTildeToX} is obtained by gluing together these conflations, and hence it must be acyclic in $\ec$. This proves the claim. 

 Finally, if $\hat{M}_0,\hat{M}_{d+1}\in \mc$ in the previous claim, then we can set $N_0=N_d=0$ to get that any $d$-exact sequence in $\hat{\xc}$ with end terms in $\mc$ is both Yoneda equivalent and homotopic to a $d$-exact sequence in $\hat{\xc}\cap \xc$. This implies in particular that $\mc$ is $d$-extension-closed in $\hat{\mc}$, and also that a $d$-exact sequence in $\hat{\xc}$ with components in $\mc$ must lie in $\xc$, since $\xc$ is closed under homotopy equivalence. Also any $d$-exact sequence in $\xc$ must lie in $\hat{\xc}$ since any complex in $\xc$ is acyclic in $\hat{\ec}$. Therefore, $\xc$ is precisely the class of $d$-exact sequences in $\hat{\xc}$ with components in $\mc$. Hence, $\xc$ is a $d$-exact structure on $\mc$ by \cite[Corollary 5.15]{Kla26}. 
 
\end{proof}

\begin{remark}\label{Remark:AdmissibleEpiVSDeflation}
We claim that a morphism $X\to Y$ in $\mc$ is an admissible monomorphism in $\mc$ if and only if it is an inflation in the weak idempotent completion $\hat{\ec}$. Indeed we only need to show that if $X\to Y$ is an inflation in $\hat{\ec}$, then it is an admissible monomorphism in $\mc$, since the other direction is clear. First note that $X\to Y$ must be an admissible monomorphism in $\hat{\mc}$, see \Cref{AdmMonoEpiWeakIdemComp}. Hence, we can find an admissible $d$-exact sequence in $\hat{\mc}$
 \[
 X\xrightarrow{}Y \to \hat{M}_{d-1}\to \cdots \to \hat{M}_{0}.
  \]
Applying the second claim in the proof of \Cref{Theorem:dCTImpliesdExactNotWeakIdemPotent} and setting $N_0=0$ in the construction, we get an acyclic complex 
 \[
X\xrightarrow{}Y\oplus N_{d-1} \to \hat{M}_{d-1}\oplus N_{d-1}\oplus N_{d-2}\to \hat{M}_{d-2}\oplus N_{d-2} \oplus N_{d-3}\to \cdots\to \hat{M}_1\oplus N_{1}\oplus N_0 \to \hat{M}_{0} \oplus N_0
 \]
 in $\ec$ with terms in $\mc$. This complex is homotopy equivalent to the complex
  \[
X\xrightarrow{}Y \to \hat{M}_{d-1}\oplus N_{d-2}\to \hat{M}_{d-2}\oplus N_{d-2} \oplus N_{d-3}\to \cdots\to \hat{M}_1\oplus N_{1}\oplus N_0 \to \hat{M}_{0} \oplus N_0.
 \]
Therefore the latter must be an admissible $d$-exact sequence in $\mc$, which implies that $X\to Y$ is an admissible monomorphism in $\mc$. 

It follows from the claim that if a morphism in $\mc$ is an inflation in $\ec$, then it is an admissible monomorphism in $\mc$. The converse does not hold in general though. 
\end{remark}

\begin{remark}\label{Remark:EquivDescrdExactStructure}
  The $d$-exact structure in \Cref{Theorem:dCTImpliesdExactNotWeakIdemPotent} can also be characterized in the following two equivalent ways, cf. \cite[Theorem 3.7]{Kla24}:
\begin{enumerate}
    \item The smallest class of $d$-exact sequences in $\mc$, closed under homotopy equivalence, and which contains all complexes
    $0\to X_{d+1}\to \cdots \to X_0\to 0$
     in $\mc$ which are acyclic in $\ec$.
    \item The smallest class of $d$-exact sequences in $\mc$, closed under weak isomorphisms, and which contains all complexes
    $0\to X_{d+1}\to \cdots \to X_0\to 0$
     in $\mc$ which are acyclic in $\ec$.
\end{enumerate}
  Indeed, this is clear for $d=1$, so we assume $d\geq 2$. Now we know that the $d$-exact structure on $\mc$ is closed under weak isomorphisms and homotopy equivalences, and hence must contain the two classes of $d$-exact sequences described above. So we only need to prove the converse inclusions. 
  
  From the second claim in the proof of \Cref{Theorem:dCTImpliesdExactNotWeakIdemPotent} we see that the $d$-exact structure on $\mc$ consists of the complexes 
    $
    X_\bullet=(X_{d+1}\to \cdots \to X_0)
    $
    in $\mc$ for which there exists objects $N_i\in \mc$ for $0\leq i\leq d$ such that 
  \[
  X_\bullet\oplus \bigoplus_{i=0}^d (\cdots \to 0\to N_i\xrightarrow{1_{N_i}}N_i\to 0 \to \cdots) 
 \]
is an acyclic complex in $\ec$ with components in $\mc$. Since this is clearly homotopy equivalent to $X_\bullet$, this shows that the $d$-exact structure is equal to the class of $d$-exact sequences described in (1). To see that it is equal to the class (2), note that the homotopy equivalence can be realized as a composite of inclusions
\[
X_\bullet\oplus \bigoplus_{i=0}^{n-1} (\cdots \to 0\to N_i\xrightarrow{1_{N_i}}N_i\to 0 \to \cdots) \to X_\bullet\oplus \bigoplus_{i=0}^n (\cdots \to 0\to N_i\xrightarrow{1_{N_i}}N_i\to 0 \to \cdots) 
\]
for $0\leq n\leq d$. Since each such inclusion is a weak isomorphism, this proves the claim.
\end{remark}

In the last part we show that the $d$-exact structure from \cite[Example 2.5]{Ebr21} arises from a $d$-cluster tilting subcategory of an exact category, contradicting a claim in that  example. The confusion stems from the differences in the $d$-exact structure for weakly idempotent complete categories, see \Cref{Theorem:dCTImpliesdExactWeakIdemPotent}, and non-weakly idempotent complete categories, see \Cref{Theorem:dCTImpliesdExactNotWeakIdemPotent}. 

\begin{example}\label{Example:CounterExdExNotdCTNotValid}
  Fix a field $k$. Following  \cite[Example 2.5]{Ebr21}, consider the non-weakly idempotent complete category $\mathcal{V}$ of finite-dimensional $k$-vector spaces of dimension $\neq 1$. It is an extension-closed subcategory of the category of all $k$-vector spaces, and therefore inherits an exact structure which is trivial in the sense that $\Ext^i_{\vc}(V,V')=0$ for all $i>0$ and $V,V'\in \vc$.  Hence, $\vc$ is a $d$-cluster tilting subcategory of itself for any $d>0$.  Its $d$-exact structure is given by all sequences
  \[
V_{d+1}\to \cdots \to V_0
  \]
  which are homotopic to acyclic complexes in $\vc$, by \Cref{Theorem:dCTImpliesdExactNotWeakIdemPotent}. These are precisely the complexes which are acyclic in the weak idempotent completion of $\vc$, i.e. in the category of all finite-dimensional $k$-vector spaces. Since all $d$-exact sequences must satisfy this, all $d$-exact sequences must be admissible. Hence, we recover the $d$-exact structure discussed in \cite[Example 2.5]{Ebr21}, which contradicts the claim in the last line in that example.
\end{example}

\begin{remark}\label{Remark:NonUniqueness}
    Assume we have an equivalence $\mc\cong \nc$ of $d$-exact categories where $\nc$ is a $d$-cluster tilting subcategory of an exact category $\ec$. If $\mc$ is not weakly idempotent complete, then neither is $\nc$, and therefore $\ec$ is not weakly idempotent complete. Hence, \Cref{Theorem:UniquenessAmbientExact} cannot be applied, so it is not clear in which way the pair $(\nc,\ec)$ can be unique if $\mc$ is fixed. In fact, if we do not assume any additional assumptions on $\ec$, then it will not be unique, see \Cref{Example:NonUniqueness}.
\end{remark}

\section*{Acknowledgements}

The author would like to thank Jenny August, Ramin Ebrahimi, Johanne Haugland, Karin Marie Jacobsen, Yann Palu and Hipolto Treffinger for useful discussions. Part of this work were carried out during the author's stay at the Centre for Advanced Study of the Norwegian
Academy of Science and Letters from November 2022 to January 2023.

	\bibliographystyle{alpha}
	\bibliography{Mybibtex}
	
\end{document}